\documentclass[11pt]{amsbook}
\usepackage{amsfonts, amsthm, amssymb, amsmath, stmaryrd, color, enumerate, relsize}
\usepackage[pdftex]{graphicx}
\usepackage{mathrsfs,array}
\usepackage{xy}
\usepackage{hyperref}
\usepackage{tikz}
\input xy
\xyoption{all}

\DeclareMathOperator{\Spa}{Spa}

\DeclareMathOperator{\Spv}{Spv}

\DeclareMathOperator{\GL}{GL}

\DeclareMathOperator{\Hom}{Hom}
\DeclareMathOperator{\Ext}{Ext}
\newcommand{\intHom}{\underline{\Hom}}

\DeclareMathOperator{\Spec}{Spec}

\DeclareMathOperator{\Sets}{Sets}

\DeclareMathOperator{\Cond}{Cond}
\DeclareMathOperator{\Ab}{Ab}
\DeclareMathOperator{\Ch}{Ch}

\DeclareMathOperator{\Latt}{Latt}
\DeclareMathOperator{\Fun}{Fun}
\DeclareMathOperator{\LCA}{LCA}

\newcommand{\proet}{{\mathrm{pro\acute{e}t}}}

\newcommand{\solid}{{\mathsmaller{\square}}}
\DeclareMathOperator{\Solid}{Solid}

\newcommand{\Mod}{{\text-\mathrm{Mod}}}

\renewcommand*{\tilde}{\widetilde}
\renewcommand{\det}{\mathrm{det}}

\numberwithin{equation}{section}
\newtheorem{theorem}{Theorem}
\numberwithin{theorem}{section}
\newtheorem{lemma}[theorem]{Lemma}
\newtheorem{corollary}[theorem]{Corollary}
\newtheorem{proposition}[theorem]{Proposition}

\theoremstyle{definition}
\newtheorem{remark}[theorem]{Remark}

\newtheorem{warning}[theorem]{Warning}
\newtheorem{definition}[theorem]{Definition}
\newtheorem{question}[theorem]{Question}
\newtheorem{example}[theorem]{Example}
\newtheorem{examples}[theorem]{Examples}
\newtheorem{observation}[theorem]{Observation}

\date{\today}

\title{Lectures on Condensed Mathematics}

\author{Peter Scholze}

\begin{document}

\maketitle

\tableofcontents

\chapter*{Lectures on Condensed Mathematics}

\section*{Preface}

This is an updated version of the lectures notes for a course on condensed mathematics taught in the summer term 2019 at the University of Bonn. The material presented is joint work with Dustin Clausen.\\

This is intended as a stable citable version of the original lectures, with mostly cosmetic changes to the original document, together with some small corrections. The content of these lectures has been expanded on at various places, including \cite{Asgeirsson}, \cite{Brink}, \cite{Deglise}, \cite{leStum}, \cite{Mair}, \cite{MannThesis}, and the reader is strongly encouraged to consider these sources as well.\\

\hfill{May 2026, Peter Scholze}

\newpage

\section{Lecture I: Condensed Sets}

The basic question to be addressed in this course is the following.

\begin{question} How to do algebra when rings/modules/groups carry a topology?
\end{question}

This question comes up rather universally, and many solutions have been found so that in any particular situation it is usually clear what to do. Here are some sample situations of interest:

\begin{enumerate}
\item[{\rm (i)}] Representations of topological groups like $\GL_n(\mathbb R)$, $\GL_n(\mathbb Q_p)$, possibly on Banach spaces or other types of topological vector spaces, or more generally topological abelian groups.
\item[{\rm (ii)}] Continuous group cohomology: If $G$ is a topological group acting on a topological abelian group $M$, there are continuous group cohomology groups $H^i_{\mathrm{cont}}(G,M)$ defined via an explicit cochain complex of continuous cochains.
\item[{\rm (iii)}] Algebraic (or rather analytic) geometry over topological fields $K$ such as $\mathbb R$ or $\mathbb Q_p$.
\end{enumerate}

\noindent However, various foundational problems remain, for example:\\

{\bf Problems.}\begin{enumerate}
\item[{\rm(i)}] Topological abelian groups do not form an abelian category. For example, consider the map
\[
(\mathbb R,\mathrm{discrete\ topology})\to (\mathbb R,\mathrm{natural\ topology})\ .
\]
In an abelian category, the failure of this map to be an isomorphism has to be explained by a nontrivial kernel or cokernel. Note that this example is not pathological: We certainly want to include all abstract discrete abelian groups, as well as $\mathbb R$, and we certainly want this map to be a map of topological abelian groups. So if we want to change the setup to obtain an abelian category, we need to include also some kernel or cokernel to the displayed map.
\item[{\rm (ii)}] For a topological group $G$, a short exact sequence of continuous $G$-modules does not in general give long exact sequences of continuous group cohomology groups. More abstractly, the theory of derived categories does not mix well with topological structures.
\item[{\rm (iii)}] In the context of analytic geometry over some topological field $K$, a theory of quasicoherent sheaves presents problems: For example, if $A\to B$ is a map of topological $K$-algebras and $M$ is a topological $A$-module, one would like to form a base change $M\widehat{\otimes}_A B$, but it is usually not clear how to complete.
\end{enumerate}

The goal of this course is to give a unified approach to the problem of doing algebra when rings/modules/groups/... carry a topology, and resolve those and other foundational problems. The ideas started in this course have been further developed in our joint work with Clausen, see \cite{Analytic}, \cite{Complex} and \cite{AnalyticStacks}.

The solution comes from a somewhat unexpected (to the author) direction. In \cite{BhattScholzeProEtale}, the pro-\'etale site $X_\proet$ of any scheme $X$ was defined, whose objects are roughly projective limits of \'etale maps to $X$. For this course, only the simplest case where $X$ is a (geometric) point is relevant. Indeed, unlike in classical topologies such as the Zariski or \'etale topology, sheaves on the pro-\'etale site of a point are not merely sets. What seemed like a bug of the pro-\'etale site is actually a feature for us here, as Clausen explained to the author.

\begin{definition}\label{def:condensed} The pro-\'etale site $\ast_\proet$ of a point is the category of profinite sets $S$, with finite families of jointly surjective maps as covers. A \emph{condensed set} is a sheaf of sets on $\ast_\proet$. Similarly, a \emph{condensed ring/group/...} is a sheaf of rings/groups/... on $\ast_\proet$.

Generally, if $\mathcal C$ is any category, the category
\[
\Cond(\mathcal C)
\]
of condensed objects of $\mathcal C$ is the category of $\mathcal C$-valued sheaves on $\ast_\proet$.
\end{definition}

In other words, a condensed set/ring/group/... is a functor
\[\begin{aligned}
T: \{\mathrm{profinite\ sets}\}^{\mathrm{op}}&\to \{\mathrm{sets/rings/groups/...}\}\\
S&\mapsto T(S)
\end{aligned}\]
satisfying $T(\emptyset) = \ast$ and the following two conditions (which are equivalent to the sheaf condition):
\begin{enumerate}
\item[{\rm (i)}] For any profinite sets $S_1,S_2$, the natural map
\[
T(S_1\sqcup S_2)\to T(S_1)\times T(S_2)
\]
is a bijection.
\item[{\rm (ii)}] For any surjection $S^\prime\to S$ of profinite sets with the fibre product $S^\prime\times_S S^\prime$ and its two projections $p_1,p_2$ to $S^\prime$, the map
\[
T(S)\to \{x\in T(S^\prime)\mid p_1^\ast(x) = p_2^\ast(x)\in T(S^\prime\times_S S^\prime)\}
\]
is a bijection.
\end{enumerate}

Given a condensed set $T$, we sometimes refer to $T(\ast)$ as its underlying set. The following example presents the key translation from topological structures to condensed structures:

\begin{example} Let $T$ be any topological space. There is an associated condensed set $\underline{T}$, defined via sending any profinite set $S$ to the set of continuous maps from $S$ to $T$. This clearly satisfies (i) above, while (ii) follows from the fact that any surjection $S^\prime\to S$ of compact Hausdorff spaces is a quotient map, so any map $S\to T$ for which the composite $S^\prime\to S\to T$ is continuous is itself continuous.

If $T$ is a topological ring/group/..., then $\underline{T}$ is a condensed ring/group/... .
\end{example}

\begin{remark} Definition~\ref{def:condensed} presents set-theoretic problems: The category of profinite sets is large, so it is not a good idea to consider functors defined on all of it. We will resolve this issue over this and the next lecture, and in doing so we will actually pass to a minor modification of the category above, cf.~also Warning~\ref{war:septopcardinal} in relation to the previous example.

In a first step, we will choose a cardinal bound on the profinite sets, i.e.~fix a suitable uncountable cardinal $\kappa$ and use only profinite sets $S$ of cardinality less than $\kappa$ to define a site $\ast_{\kappa\text-\proet}$. In these lectures, we will always assume that $\kappa$ is an uncountable strong limit cardinal, i.e.~$\kappa$ is uncountable and for all $\lambda<\kappa$, also $2^\lambda<\kappa$.\footnote{However, the case of regular $\kappa$ is also interesting and useful; in particular, the choice $\kappa=\aleph_1$ leads to the notion of ``light condensed sets'' which are used in the theory of analytic stacks \cite{AnalyticStacks}.} It is easy to construct such $\kappa$: For example, define $\beth_\alpha$ inductively for all ordinals $\alpha$ via $\beth_0=\aleph_0$, $\beth_{\alpha^+} = 2^{\beth_\alpha}$ for a successor ordinal and for a limit ordinal as the union of all smaller $\beth_{\alpha}$'s. Then for any limit ordinal $\alpha$, the cardinal $\kappa=\beth_\alpha$ is an uncountable strong limit cardinal.

For any uncountable strong limit cardinal $\kappa$, the category of $\kappa$-condensed sets is the category of sheaves on $\ast_{\kappa\text-\proet}$.
\end{remark}

\begin{remark} If $\kappa^\prime>\kappa$ are uncountable strong limit cardinals, there is an obvious forgetful functor from $\kappa^\prime$-condensed sets to $\kappa$-condensed sets. This functor admits a natural left adjoint, which is a functor from $\kappa$-condensed sets to $\kappa^\prime$-condensed sets. This functor is fully faithful and commutes with all colimits and many limits (in particular, all finite limits), as we will prove in Proposition~\ref{prop:changekappa} below. We will then define the category of condensed sets as the (large) colimit of the category of $\kappa$-condensed sets along the filtered poset of all uncountable strong limit cardinals $\kappa$.\footnote{In \cite{BarwickHaine}, Barwick--Haine set up closely related foundations, but using different set-theoretic conventions. In particular, they assume the existence of universes, fixing in particular a ``tiny" and a ``small" universe, and look at sheaves on tiny profinite sets with values in small sets; they term these \emph{pyknotic} sets. In our language, placing ourselves in the small universe, this would be $\kappa$-condensed sets for the first strongly inaccessible cardinal $\kappa$ they consider (the one giving rise to the tiny universe).}
\end{remark}

Recall that a topological space $X$ is \emph{compactly generated} if a map $f: X\to Y$ to another topological space $Y$ is continuous as soon as the composite $S\to X\to Y$ is continuous for all compact Hausdorff spaces $S$ mapping to $X$. The inclusion of compactly generated spaces into all topological spaces admits a right adjoint $X\mapsto X^{\mathrm{cg}}$ that sends a topological space $X$ to its underlying set equipped with the quotient topology for the map $\sqcup_{S\to X} S\to X$, where the disjoint union runs over all compact Hausdorff spaces $S$ mapping to $X$.

Note that any compact Hausdorff space $S$ admits a surjection from a profinite set (which is then automatically a quotient map), for example by taking the Stone--\v{C}ech compactification of $S$ considered as a discrete set. This means that in the preceding paragraph, one can replace compact Hausdorff spaces by profinite sets without altering the definition of compactly generated spaces and the functor $X\mapsto X^{\mathrm{cg}}$.

We will need the following set-theoretic variant:\footnote{Removing the cardinal bounds from Proposition~\ref{prop:condtopkappa} below results in some surprising twist, cf.~Warning~\ref{war:septopcardinal}, so we are careful with the cardinal bounds here.} choose an uncountable strong limit cardinal $\kappa$ as above, and say that a topological space $X$ is $\kappa$-compactly generated if it is equipped with the quotient topology from $\sqcup_{S\to X} S\to X$ where now $S$ runs only over compact Hausdorff spaces with $|S|<\kappa$. Any compact Hausdorff space $S$ with $|S|<\kappa$ admits a surjection from the profinite set $S^\prime$ which is the Stone--\v{C}ech compactification of $S$ considered as a discrete set, and then $|S^\prime|\leq 2^{2^{|S|}}<\kappa$ as $\kappa$ is a strong limit cardinal and $S^\prime$ is the set of ultrafilters on the set $S$, i.e.~a subset of the powerset of the powerset of $S$. We write $X\mapsto X^{\kappa\text-\mathrm{cg}}$ for the right adjoint of the inclusion of $\kappa$-compactly generated topological spaces into all topological spaces; here $X^{\kappa\text-\mathrm{cg}}$ has the underlying set of $X$ equipped with the quotient topology of $\sqcup_{S\to X} S\to X$ where $S$ runs over $\kappa$-small compact Hausdorff $S$.

\begin{remark}\label{rem:firstcountable} Any first-countable topological space $X$ (in particular, any metrizable topological space) is compactly generated; in fact $\kappa$-compactly generated for any uncountable $\kappa$. Indeed, assume that $V\subset X$ is a subset such that for all $\kappa$-small compact Hausdorff spaces $S$ mapping to $X$, the preimage of $V$ in $S$ is closed. We need to see that $V$ is closed. Take any point $x\in X$ in the closure of $V$, and choose a sequence $U_1\supset U_2\supset \ldots $ of cofinal open neighborhoods of $x$ (using that $X$ is first-countable). Then $V\cap U_n$ is nonempty for all $n$, so choosing a point $x_n\in V\cap U_n$ we get a map $\mathbb N\cup \{\infty\}\to X$ sending $n$ to $x_n$ and $\infty$ to $x$; this map is continuous for the usual profinite topology on $\mathbb N\cup \{\infty\}$. But then the preimage of $V$ in $\mathbb N\cup \{\infty\}$ is closed and contains $\mathbb N$, and thus also $\{\infty\}$, so that indeed $x\in V$, i.e.~$V$ is closed.
\end{remark}

\begin{proposition}\label{prop:condtopkappa} The functor $X\mapsto \underline{X}$ from the category of topological spaces/rings/groups/... to the category of $\kappa$-condensed sets/rings/groups/... is faithful, and fully faithful when restricted to the full subcategory of all $T$ that are $\kappa$-compactly generated as a topological space.

The functor $X\mapsto \underline{X}$ from topological spaces to $\kappa$-condensed sets admits a left adjoint $T\mapsto T(\ast)_{\mathrm{top}}$ sending any condensed set $T$ to its underlying set $T(\ast)$ equipped with the quotient topology of $\sqcup_{S\to T} S\to T(\ast)$ where the disjoint union runs over all $\kappa$-small profinite sets $S$ with a map to $T$, i.e.~an element of $T(S)$. The counit $\underline{X}(\ast)_{\mathrm{top}}\to X$ of the adjunction agrees with the counit $X^{\kappa\text-\mathrm{cg}}\to X$ of the adjunction between $\kappa$-compactly generated spaces and all topological spaces; in particular, $\underline{X}(\ast)_{\mathrm{top}}\cong X^{\kappa\text-\mathrm{cg}}$.
\end{proposition}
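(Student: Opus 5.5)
I would begin with the adjunction, because both fully faithfulness statements follow from it together with the description of the counit. Let $T$ be a $\kappa$-condensed set and $X$ a topological space. A map $T\to\underline{X}$ is a compatible family of maps $T(S)\to C(S,X)$ for $\kappa$-small profinite $S$. Evaluating at $S=\ast$ gives a map of sets $f:T(\ast)\to X$. Naturality in the points $\ast\to S$ shows that the map attached to $\alpha\in T(S)$ is the composite $S\to T(\ast)\xrightarrow{f}X$, where $S\to T(\ast)$ sends $s$ to $s^\ast\alpha$. So a natural transformation is the same as a map of sets $f$ such that $f\circ\alpha$ is continuous for every $\alpha\in T(S)$. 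By definition of the quotient topology on $T(\ast)_{\mathrm{top}}$, this says exactly that $f$ is continuous from $T(\ast)_{\mathrm{top}}$ to $X$. This gives the natural bijection $\Hom(T,\underline{X})\cong\Hom(T(\ast)_{\mathrm{top}},X)$.

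Next I would identify the counit. For $T=\underline{X}$, the maps $S\to T(\ast)=X$ indexed by $\underline{X}(S)$ are exactly the continuous maps from $\kappa$-small profinite $S$ to $X$. Hence $\underline{X}(\ast)_{\mathrm{top}}$ is the set $X$ with the quotient topology from all such $S$, and the counit is the identity on underlying sets. This agrees with $X^{\kappa\text-\mathrm{cg}}$, which is defined using $\kappa$-small compact Hausdorff $S$. By the remark preceding Remark~\ref{rem:firstcountable}, every $\kappa$-small compact Hausdorff $S$ admits a surjection from a $\kappa$-small profinite $S'$, and this surjection is a quotient map. So a subset of $X$ whose preimage is closed in all such $S'$ also has closed preimage in all such $S$. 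Therefore the two topologies coincide, and the counits agree. I expect this comparison to be the only step that needs care, and it is handled by the strong-limit bound $|S'|\le 2^{2^{|S|}}<\kappa$.

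The remaining statements are formal. Faithfulness holds because $\underline{X}(\ast)=X$, so a morphism $X\to Y$ is recovered from the induced map on underlying sets. For full faithfulness, take $X$ $\kappa$-compactly generated. Then the counit $\underline{X}(\ast)_{\mathrm{top}}\to X$ is a homeomorphism, so
\[
\Hom(\underline{X},\underline{Y})\cong\Hom(\underline{X}(\ast)_{\mathrm{top}},Y)=\Hom(X,Y).
\]
One checks that this composite is inverse to $f\mapsto\underline f$.

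For rings, groups and so on, I would reduce to the case of sets. Faithfulness is inherited from sets. For fullness, let $\phi:\underline{X}\to\underline{Y}$ be a map of condensed groups. It comes from a continuous map $f:X\to Y$, and $f$ is a homomorphism because $\underline{X}(\ast)=X$ and $\phi_\ast$ is a homomorphism. The left adjoint is stated only for spaces, so nothing further is needed there.
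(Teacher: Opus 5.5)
Your proposal is correct and follows essentially the same route as the paper. A map $T\to\underline{X}$ is determined by a map of sets $T(\ast)\to X$, because $\underline{X}(S)\to\prod_{s\in S}X$ is injective, and that map must be continuous on every $\alpha\in T(S)$; this gives the adjunction, and faithfulness, full faithfulness and the algebraic variants then follow formally. Your comparison of $\kappa$-small profinite test spaces with $\kappa$-small compact Hausdorff ones, via the quotient map $S^\prime\to S$, only spells out what the paper calls ``clear from the definitions'', using the remark just before the proposition.
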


\begin{remark} If $X$ is a condensed group (or ring/...), it is not necessarily the case that $\underline{X}(\ast)_{\mathrm{top}}$ is a topological group (or ring/...). The problem is that the functor $X\mapsto X(\ast)_{\mathrm{top}}$ does not necessarily commute with products.
\end{remark}

\begin{proof} Faithfulness is clear as a map of topological spaces is determined by the underlying map of sets. It suffices to check fully faithfulness for the functor from $\kappa$-compactly generated topological spaces to $\kappa$-condensed sets, as the other cases (of rings and groups) are just certain diagram categories. Here, the claim reduces formally to the claim about the adjoint functor and its counit.

Thus, let $X$ be a topological space and $T$ a $\kappa$-condensed set. We claim that there is a functorial bijection
\[
\Hom(T,\underline{X}) = \Hom(T(\ast)_{\mathrm{top}},X)\ .
\]
Note that as for all profinite $S$, the map $\underline{X}(S)\to \prod_{s\in S} \underline{X}(\{s\}) = \prod_{s\in S} X$ is injective (it identifies with the inclusion from the set of continuous maps from $S$ to $X$ into all maps from $S$ to $X$), giving a map $T\to \underline{X}$ is equivalent to giving a map of sets $T(\ast)\to X$ such that for all $\kappa$-small profinite $S$ with a map $S\to T$, the induced map $S\to T(\ast)\to X$ is continuous. But these are precisely the continuous maps $T(\ast)_{\mathrm{top}}\to X$ by definition of the topology on $T(\ast)_{\mathrm{top}}$.

It is clear from the definitions that for a topological space $X$, there is natural identification $\underline{X}(\ast)_{\mathrm{top}}\cong X^{\kappa\text-\mathrm{cg}}$.
\end{proof}

\begin{example} Let $\mathbb R_{\mathrm{disc}}$ denote the real numbers with the discrete topology, and $\mathbb R$ the real numbers with their natural topology. Then the map
\[
\underline{\mathbb R_{\mathrm{disc}}}\to \underline{\mathbb R}
\]
is injective, and its cokernel $Q$ is a condensed abelian group with ``underlying'' abelian group $Q(\ast)=0$. However, for a general profinite set $S$, one has
\[
Q(S) = \{\mathrm{continuous\ maps}\ S\to \mathbb R\}/\{\mathrm{locally\ constant\ maps}\ S\to \mathbb R\}\neq 0\ .
\]
We note that a priori $Q$ is the sheafification of this functor, but it turns out that sheafification is unnecessary, by the vanishing $H^1_{\mathrm{cond}}(S,\mathbb R_{\mathrm{disc}})=0$ proved in Theorem~\ref{thm:cohomcorrect}.
\end{example}

In fact, condensed abelian groups form an abelian category of the nicest possible sort. In particular, out of Grothendieck's axioms, it satisfies the same as the category of abelian groups.

\begin{theorem} The category of condensed abelian groups is an abelian category which satisfies Grothendieck's axioms (AB3), (AB4), (AB5), (AB6), (AB3*) and (AB4*), to wit: all limits (AB3*) and colimits (AB3) exist, arbitrary products (AB4*), arbitrary direct sums (AB4) and filtered colimits (AB5) are exact, and (AB6) for any index set $J$ and filtered categories $I_j$, $j\in J$, with functors $i\mapsto M_i$ from $I_j$ to condensed abelian groups, the natural map
\[
\varinjlim_{(i_j\in I_j)_j} \prod_{j\in J} M_{i_j}\to \prod_{j\in J} \varinjlim_{i_j\in I_j} M_{i_j}
\]
is an isomorphism.

Moreover, the category of condensed abelian groups is generated by compact projective objects.
\end{theorem}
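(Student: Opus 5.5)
The plan is to work first at a fixed uncountable strong limit cardinal $\kappa$ and use the extremally disconnected profinite sets as a basis of the site. Recall that a profinite set $S$ is extremally disconnected if every surjection $S'\to S$ of profinite sets splits. The Stone--\v{C}ech compactification $\beta X$ of a discrete set $X$ is extremally disconnected, and every profinite $S$ receives a surjection $\beta(S_{\mathrm{disc}})\to S$. If $|S|<\kappa$, then $|\beta(S_{\mathrm{disc}})|\le 2^{2^{|S|}}<\kappa$, so the $\kappa$-small extremally disconnected sets $S$ form a basis of $\ast_{\kappa\text-\proet}$. On such $S$, every cover is split, so the sheaf condition (ii) becomes automatic. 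Hence $\kappa$-condensed sets (resp.\ abelian groups) are equivalent to functors on $\kappa$-small extremally disconnected $S$ taking finite disjoint unions to products.

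The key consequence is that, in this description, limits and colimits are computed pointwise. Limits of sheaves are always computed pointwise. For colimits, a colimit of abelian-group-valued functors computed objectwise still sends finite disjoint unions to finite products, because finite products of abelian groups are also finite direct sums and hence commute with all colimits. So the pointwise colimit is already a sheaf on the basis. Therefore evaluation at each extremally disconnected $S$ is exact and commutes with all limits and colimits, and these evaluation functors are jointly conservative. Consequently the category is abelian with all limits and colimits, i.e.\ (AB3) and (AB3*) hold. The axioms (AB4), (AB4*), (AB5) and (AB6) hold because they hold in $\Ab$ and can be checked after evaluating at each $S$.

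For generation, take for each extremally disconnected $S$ the free condensed abelian group $\mathbb Z[S]$, the sheafification of $T\mapsto \mathbb Z[\Hom(T,S)]$. By Yoneda, $\Hom(\mathbb Z[S],M)=M(S)$. This functor is exact by the previous paragraph, so $\mathbb Z[S]$ is projective. It commutes with filtered colimits, so $\mathbb Z[S]$ is compact, in the sense that $\Hom(\mathbb Z[S],-)$ commutes with filtered colimits. These objects generate: any $M$ receives the surjection $\bigoplus_{S,\,x\in M(S)}\mathbb Z[S]\to M$, where surjectivity can be checked on sections over extremally disconnected $S$. Everything up to here is at fixed $\kappa$.

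The main obstacle is passing to the colimit over $\kappa$, which is how the paper defines condensed abelian groups. For this I would invoke Proposition~\ref{prop:changekappa}: the left adjoints between the $\kappa$-categories are fully faithful and commute with colimits and finite limits. I would also need to check that these left adjoints commute with arbitrary products (for (AB3*), (AB4*) and (AB6)) and preserve the compact projectives $\mathbb Z[S]$. On the extremally disconnected basis, the left adjoint should be given by left Kan extension along the inclusion of $\kappa$-small into $\kappa'$-small extremally disconnected sets. Compatibility with products of abelian groups should then follow by writing a $\kappa'$-small extremally disconnected $S'$ as a cofiltered limit of $\kappa$-small ones. Verifying this is where I expect the most work. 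Any given small diagram lives at some $\kappa$, and the exactness statements there transport along these functors.
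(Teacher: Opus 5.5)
Your fixed-$\kappa$ argument is the paper's proof of Theorem~\ref{thm:niceabcat}, and reducing to it via Proposition~\ref{prop:changekappa} is also the paper's strategy. However, the step you single out as the main obstacle is false as stated. The transition functors $T\mapsto T_{\kappa'}$ do \emph{not} commute with arbitrary products. Proposition~\ref{prop:changekappa} only gives $\lambda$-small limits for $\lambda=\mathrm{cf}(\kappa)$, and this is the best one can say in general. The reason is that $T_{\kappa'}(\tilde S)=\varinjlim_{\tilde S\to S}T(S)$ is a colimit over the $\kappa$-small extremally disconnected $S$ under $\tilde S$, and this index category is only $\mathrm{cf}(\kappa)$-filtered. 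So writing $\tilde S$ as a limit of $\kappa$-small sets cannot let you interchange this colimit with larger products.

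Here is a concrete failure. Take $\kappa=\beth_\omega$, $S_n=\beta(\beth_n)$, and a $\kappa'$-small extremally disconnected $\tilde S$ surjecting onto $\prod_n S_n$. Suppose $([\mathrm{pr}_n])_n\in\prod_n\mathbb Z[S_n](\tilde S)$ came from $(\prod_n\mathbb Z[S_n])_{\kappa'}(\tilde S)$. Evaluating at points would show that all $\mathrm{pr}_n$ factor set-theoretically through one map $\tilde S\to S$ with $S$ $\kappa$-small. That is impossible, since each $\mathrm{pr}_n$ is surjective and $\sup_n|S_n|=\kappa$. So with your plan you cannot even show that products exist in the colimit category, and (AB3*), (AB4*) and (AB6) remain unproved.

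The missing idea is to let the level depend on the index set. Given a product or an (AB6) diagram indexed by $J$, choose $\kappa$ such that all objects involved live at level $\kappa$ and $\mathrm{cf}(\kappa)>|J|$. Such $\kappa$ are cofinal; for example, take $\beth_\alpha$ with $\alpha$ a limit ordinal of cofinality $>|J|$. Then every transition functor out of level $\kappa$ preserves all colimits and all $J$-indexed products. Hence the pointwise product at level $\kappa$ is the product in the colimit category, and (AB3*), (AB4*) and (AB6) follow from the fixed-level statements. This is how the appendix to Lecture II argues.

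The remaining points only use colimits and finite limits, so they go through as you say. These are abelianness, (AB3), (AB4), (AB5), and the fact that $\mathbb Z[S]$ stays compact projective with $\Hom(\mathbb Z[S],M)=M(S)$.
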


Recall that an object $M$ of an abelian category $C$ is compact if $\Hom(M,-)$ commutes with filtered colimits.

\begin{remark} Sheaves of abelian groups on any site form an abelian category satisfying (AB3), (AB4), (AB5) and (AB3*). However, in almost all cases, they fail (AB4*) and (AB6). These axioms are however a simple consequence of the existence of compact projective generators. In our case, the compact projective generators will come from extremally disconnected sets, as explained in the next lecture.
\end{remark}
\newpage

\section{Lecture II: Condensed Abelian Groups}

For a choice of uncountable strong limit cardinal $\kappa$, we made the following definition.

\begin{definition} The site $\ast_{\kappa\text-\proet}$ is the site of $\kappa$-small profinite sets $S$ with covers given by finite families of jointly surjective maps. A $\kappa$-condensed set/group/ring/... is a sheaf of sets/groups/rings/... on $\ast_{\kappa\text-\proet}$.
\end{definition}

Today, we prove the following proposition.

\begin{theorem}\label{thm:niceabcat} The category of $\kappa$-condensed abelian groups is an abelian category which satisfies Grothendieck's axioms (AB3), (AB4), (AB5), (AB6), (AB3*) and (AB4*), to wit: all limits (AB3*) and colimits (AB3) exist, arbitrary products (AB4*), arbitrary direct sums (AB4) and filtered colimits (AB5) are exact, and (AB6) for any index set $J$ and filtered categories $I_j$, $j\in J$, with functors $i\mapsto M_i$ from $I_j$ to $\kappa$-condensed abelian groups, the natural map
\[
\varinjlim_{(i_j\in I_j)_j} \prod_{j\in J} M_{i_j}\to \prod_{j\in J} \varinjlim_{i_j\in I_j} M_{i_j}
\]
is an isomorphism.

Moreover, the category of $\kappa$-condensed abelian groups is generated by compact projective objects.
\end{theorem}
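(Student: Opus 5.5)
The plan is to compare sheaves on $\ast_{\kappa\text-\proet}$ with presheaves on the full subcategory of $\kappa$-small \emph{extremally disconnected} profinite sets. Recall that a profinite set $S$ is extremally disconnected if the closure of every open subset is open. Equivalently, by Gleason's theorem, $S$ is projective in compact Hausdorff spaces: every surjection $S'\to S$ of compact Hausdorff spaces admits a section. Examples are the Stone--\v{C}ech compactifications $\beta X$ of discrete sets $X$.

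\textbf{Step 1: covers and sections.} Any $\kappa$-small profinite $S$ admits a surjection $\beta(S_{\mathrm{disc}})\to S$. Since $\kappa$ is a strong limit, $|\beta(S_{\mathrm{disc}})|\le 2^{2^{|S|}}<\kappa$, so this cover stays in the site. Thus extremally disconnected sets form a basis for the site. Moreover, every cover of an extremally disconnected $S$ by a finite family $S_i\to S$ splits, because $\sqcup_i S_i\to S$ has a section. Hence on extremally disconnected sets the sheaf condition reduces to condition (i), finite products.

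\textbf{Step 2: the equivalence.} I will show that restriction gives an equivalence between $\kappa$-condensed sets (and abelian groups) and functors $T$ from $\kappa$-small extremally disconnected sets to sets (and abelian groups) with $T(\emptyset)=\ast$ and $T(S_1\sqcup S_2)=T(S_1)\times T(S_2)$. The inverse sends $T$ to the sheaf $S\mapsto \mathrm{eq}\bigl(T(\tilde S)\rightrightarrows T(\tilde S')\bigr)$. Here $\tilde S\to S$ is an extremally disconnected cover, and $\tilde S'\to \tilde S\times_S\tilde S$ is again an extremally disconnected cover, because fibre products need not be extremally disconnected. Checking independence of choices and the sheaf property is the standard comparison lemma for a basis of a site. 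I expect this bookkeeping to be the main technical point, even though it is formal.

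\textbf{Step 3: exactness.} In the presheaf description, all limits and colimits of condensed abelian groups are computed pointwise on extremally disconnected $S$. The product-preserving condition is a finite limit, and finite limits commute with both arbitrary limits and filtered colimits in $\Ab$. For finite products this commutes with colimits of abelian groups because finite products are biproducts, which handles cokernels and direct sums. One must check that the pointwise colimit satisfies (i). It does, since finite products of abelian groups commute with arbitrary colimits, as $\oplus=\times$ for finite families. The category is therefore a full subcategory of $\Fun(\mathrm{Extr}^{\mathrm{op}},\Ab)$ closed under all limits and colimits. Abelianness and (AB3), (AB3*), (AB4), (AB4*), (AB5), (AB6) are then inherited pointwise from $\Ab$, which satisfies all of them.

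\textbf{Step 4: generators.} For extremally disconnected $S$, let $\mathbb Z[S]$ be the sheafification of $S'\mapsto \mathbb Z[\Hom(S',S)]$. It corepresents $M\mapsto M(S)$. By Step 3, evaluation at $S$ is exact and commutes with filtered colimits, so $\mathbb Z[S]$ is compact projective. These objects generate: a nonzero $M$ has $M(S)\ne0$ for some $S$, since by Step 1 every profinite set is covered by an extremally disconnected one. A nonzero element of $M(S)$ then gives a nonzero map $\mathbb Z[S]\to M$.
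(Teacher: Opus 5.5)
Your proposal is correct and follows the paper's proof essentially step by step. You restrict to $\kappa$-small extremally disconnected sets, where covers split and only the finite-product condition remains. You compute limits and colimits pointwise there, so abelianness and all the AB axioms are inherited from $\Ab$. You then take the objects $\mathbb Z[S]$ corepresenting the exact, colimit-preserving functors $M\mapsto M(S)$ as compact projective generators.

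The differences are cosmetic. You build $\mathbb Z[S]$ as the sheafified free presheaf on $h_S$ rather than via the adjoint functor theorem, which is exactly the description in the paper's footnote. Your ``detects nonzero objects'' form of generation implies the usual one because the $\mathbb Z[S]$ are projective; the paper instead checks directly that $\bigoplus_{S,\alpha\in M(S)}\mathbb Z[S]\to M$ is surjective pointwise.
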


What makes condensed sets a priori hard to control is the sheaf condition: Forming a colimit requires sheafification, and it is not clear that this can be controlled. In fact, a priori the sheafification involved might range over larger and larger diagrams depending on the choice of $\kappa$. Fortunately, this turns out not to be the case. To prove this, we study two different sites whose categories of sheaves are equivalent to condensed sets.

\begin{proposition}\label{prop:chausvsprofin} Consider the site of all $\kappa$-small compact Hausdorff spaces, with covers given by finite families of jointly surjective maps. Its category of sheaves is equivalent to $\kappa$-condensed sets via restriction to profinite sets.
\end{proposition}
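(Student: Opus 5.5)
This is an instance of the comparison lemma: profinite sets form a full subcategory of compact Hausdorff spaces that is ``dense'' for the topology. I would verify the hypotheses by hand and write down the inverse functor explicitly.

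Write $\mathcal{C}$ for the site of $\kappa$-small compact Hausdorff spaces and $\mathcal{P}\subset\mathcal{C}$ for the full subcategory of $\kappa$-small profinite sets. Two observations are needed.
\begin{enumerate}
\item[{\rm (a)}] A finite family of maps in $\mathcal{P}$ is a cover in $\mathcal{P}$ if and only if it is a cover in $\mathcal{C}$, since both mean jointly surjective. Fibre products of profinite sets are the same in both categories.
\item[{\rm (b)}] Every $X\in\mathcal{C}$ admits a surjection $S\to X$ from some $S\in\mathcal{P}$. Take $S$ to be the Stone--\v{C}ech compactification of $X$ regarded as a discrete set. As noted before Remark~\ref{rem:firstcountable}, $|S|\le 2^{2^{|X|}}<\kappa$ because $\kappa$ is a strong limit cardinal.
\end{enumerate}
By (a), restricting a sheaf $F$ on $\mathcal{C}$ to $\mathcal{P}$ gives a $\kappa$-condensed set.

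For the inverse, let $T$ be a $\kappa$-condensed set and $X\in\mathcal{C}$. Choose a surjection $S\to X$ as in (b). The fibre product $R=S\times_X S$ is a closed subset of $S\times S$, hence profinite and $\kappa$-small. Set
\[
\tilde T(X)=\{x\in T(S)\mid p_1^\ast x=p_2^\ast x\in T(R)\}.
\]

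\textbf{Step 1: $\tilde T(X)$ is independent of the choice of $S$ and functorial in $X$.} Given two surjections $S\to X$ and $S'\to X$, compare both with $S\times_X S'$, which is again a $\kappa$-small profinite surjection onto $X$. Using the sheaf condition (ii) for $T$ along the surjections $S\times_X S'\to S$ and $S\times_X S'\to S'$, one checks that the two equalizers agree. Functoriality in a map $Y\to X$ follows the same way by pulling back along $S_Y\times_X S_X\to S_Y$. When $X$ is itself profinite we may take $S=X$, so $\tilde T$ restricts to $T$.

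\textbf{Step 2: $\tilde T$ is a sheaf on $\mathcal{C}$.} The disjoint-union axiom holds because covers of $X_1\sqcup X_2$ split. For descent along a surjection $Y\to X$ in $\mathcal{C}$, choose a profinite surjection $S\to Y$. Then $S\to X$ is also a profinite surjection, so both $\tilde T(X)$ and $\tilde T(Y)$ are computed as equalizers in values of $T$ on profinite sets. The descent condition then reduces to a diagram chase with the profinite covers of $Y\times_X Y$ and of $S\times_X S$.

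\textbf{Step 3: the two functors are mutually inverse.} We already have $\tilde T|_{\mathcal{P}}=T$. Conversely, for a sheaf $F$ on $\mathcal{C}$, the sheaf condition for $F$ along $S\to X$ identifies $F(X)$ with the equalizer defining $\widetilde{F|_{\mathcal{P}}}(X)$. Hence $F\cong\widetilde{F|_{\mathcal{P}}}$. Morphisms correspond because a morphism of sheaves on $\mathcal{C}$ is determined by its values on profinite covers.

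The main obstacle is the bookkeeping in Steps 1 and 2: independence of choices and descent along non-profinite covers. Rather than check these by hand, I would cite the comparison lemma (SGA4 III.4.1), whose hypotheses are exactly (a) and (b). Note that these hypotheses use only that surjections of compact Hausdorff spaces are stable under fibre product and composition.
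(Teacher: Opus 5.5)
Your proposal is correct and takes the same route as the paper. You cover each $\kappa$-small compact Hausdorff space $X$ by the Stone--\v{C}ech compactification $S$ of $X$ viewed as a discrete set, where $|S|\le 2^{2^{|X|}}<\kappa$, and recover the sheaf's value on $X$ as the equalizer of its values on $S$ and on the profinite $S\times_X S$. Beyond that, you carry out the independence-of-choice and descent checks that the paper dismisses as ``follows easily'', and correctly recognize the result as an instance of the comparison lemma.
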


\begin{proof} For any compact Hausdorff space $S$, one can find a profinite $S^\prime$ with a surjection $S^\prime\to S$. For example, one can take for $S^\prime$ the Stone--\v{C}ech compactification of $S$ considered as a discrete set. For any sheaf $T$, this determines the value $T(S)$ in terms of the values $T(S^\prime)$, $T(S^\prime\times_S S^\prime)$ on the profinite sets $S^\prime$, $S^\prime\times_S S^\prime\subset S^\prime\times S^\prime$. The desired equivalence follows easily (noting that $|S^\prime|\leq 2^{2^{|S|}}<\kappa$ to verify that the cardinal bounds check out).
\end{proof}

The Stone--\v{C}ech compactifications of discrete sets appearing in the previous proof are actually a very special type of profinite set:

\begin{definition}[Gleason, \cite{Gleason}] A compact Hausdorff space $S$ is \emph{extremally disconnected} if any surjection $S^\prime\to S$ from a compact Hausdorff space splits.
\end{definition}

\begin{example} Assume that $S=\beta S_0$ is the Stone--\v{C}ech compactification of a discrete set $S_0$. Then $S$ is extremally disconnected. Indeed, for any surjection $S^\prime\to S$, we may first lift the discrete set $S_0\subset S$ arbitrarily to $S^\prime$; this extends to a unique section $S=\beta S_0\to S^\prime$ by the universal property of the Stone--\v{C}ech compactification.

In particular, any compact Hausdorff space $S$ admits a surjection from an extremally disconnected space $S^\prime = \beta S$, with $|S^\prime|\leq 2^{2^{|S|}}$.
\end{example}

\begin{warning} Being extremally disconnected is a much stronger condition than being totally disconnected, and it is in fact hard to come up with examples that are not Stone--\v{C}ech compactifications or very related. (Indeed, any extremally disconnected set is a retract of a Stone--\v{C}ech compactification.) In particular, it is known that for two infinite extremally disconnected sets $S$, $S^\prime$, their product $S\times S^\prime$ is never extremally disconnected. Also, if $S$ is extremally disconnected and $x_0,x_1,\ldots$ is a sequence of points in $S$ that converges, then it is eventually constant, cf.~\cite{Gleason}.
\end{warning}

\begin{remark} Extremally disconnected compact Hausdorff spaces play an important role in set theory, or more particularly the theory of forcing, to find interesting models of ZFC. Namely, any extremally disconnected compact Hausdorff space can be used to construct a forcing extension of the ground model, and conversely any forcing extension arises this way. In the language ETCSR of the elementary theory of the category of sets with replacement --- which is essentially an equivalent presentation of ZFC, characterizing it in terms of its category of sets, cf.~e.g.~\cite{ShulmanSetTheory} --- the category of sets of the forcing extension is given as follows, cf.~\cite{SheavesGeometryLogic}. Let $S$ be an extremally disconnected compact Hausdorff space, and pick a point $s\in S$. Consider the site of open and closed subsets of $S$, where a family $\{U_i\subset U\}_i$ is a cover if $\bigcup_i U_i\subset U$ is dense, and let $\mathrm{Shv}^\wedge(S)$ be the resulting category of sheaves. The same construction can be applied to any open and closed subset $U\subset S$. One can form the category
\[
\mathrm{colim}_{U\ni s} \mathrm{Shv}^\wedge(U),
\]
where the filtered colimit along the pullback functors is taken in the $2$-category of categories. This category turns out to be a model of ETCSR (which can then be reinterpreted as a model of ZFC). If $S$ is a Stone--\v{C}ech compactification, this is simply an ultraproduct of the category $\mathrm{Set}$, and hence passing from $\mathrm{Set}$ to this extension does not change the truth of any ZFC-formula. However, for general extremally disconnected profinite sets, this extension is very different.

This presentation shows that there is a tight relation between some of the structures of condensed mathematics, and set theory/forcing --- both can be seen to be concerned with (different kinds of) sheaves on extremally disconnected spaces. See \cite{BergfalkLambieHanson}, \cite{BannisterBasak} for very interesting work relating set theory and condensed mathematics.
\end{remark}

\begin{proposition} Consider the site of $\kappa$-small extremally disconnected sets, with covers given by finite families of jointly surjective maps. Its category of sheaves is equivalent to $\kappa$-condensed sets via restriction from profinite sets.
\end{proposition}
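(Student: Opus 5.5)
We follow the pattern of Proposition~\ref{prop:chausvsprofin}: the extremally disconnected sets form a full subcategory of $\ast_{\kappa\text-\proet}$ such that every object is covered by an object of the subcategory. Moreover, on extremally disconnected sets covers are split. The key step, which we carry out first, is to understand sheaves on the extremally disconnected site. Let $\{S_i\to S\}$ be a finite jointly surjective family of extremally disconnected sets. Then $\sqcup_i S_i$ is again extremally disconnected: a surjection onto a finite disjoint union decomposes into surjections onto each summand, and each of these splits. Also $\sqcup_i S_i\to S$ is surjective, so it splits. Hence the sheaf condition on this site amounts to two things: $T(\emptyset)=\ast$ together with finite disjoint unions going to products (condition (i)), and condition (ii) for split surjections. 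For a split surjection $S'\to S$ with section $s$, the equalizer condition holds automatically for any presheaf, since the diagram $S'\times_S S'\rightrightarrows S'\to S$ is split. One caveat is that $S'\times_S S'$ need not be extremally disconnected. We handle this by checking that the restriction map is injective (it has a retraction given by $s^\ast$) and that any $x\in T(S')$ equalized on all extremally disconnected $E\to S'\times_S S'$ equals $p^\ast s^\ast x$. This follows from the usual split-fork argument, applied to the maps $E\to S'\times_S S'$ composed with $(\mathrm{id}, s\circ p)$. So sheaves on extremally disconnected sets are exactly the presheaves sending finite disjoint unions to products.

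Next we construct the inverse functor. Given such a $T$ on extremally disconnected sets and a profinite $S$, choose a surjection $S'\to S$ with $S'=\beta S$ extremally disconnected. Then choose an extremally disconnected surjection $S''\to S'\times_S S'$. Define $\tilde T(S)$ as the equalizer of the two maps $T(S')\rightrightarrows T(S'')$. We check that this is independent of the choices: any two choices are dominated by a third, since covers can be refined using splittings, and the resulting maps between equalizers are bijections by the split-cover analysis above. We then check that $\tilde T$ is functorial, by lifting maps $S_1\to S_2$ to covers via projectivity of extremally disconnected sets, and that $\tilde T$ is a sheaf on $\ast_{\kappa\text-\proet}$.

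Finally we verify the equivalence. Restricting $\tilde T$ back to extremally disconnected sets recovers $T$, because for extremally disconnected $S$ we may take $S'=S$. Conversely, a $\kappa$-condensed set is determined by its restriction through the sheaf condition for $S'\to S$, combined with injectivity of $T(S'\times_S S')\to T(S'')$. The cardinal bounds hold since $|\beta S|\le 2^{2^{|S|}}<\kappa$.

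We expect the main obstacle to be proving the sheaf property of $\tilde T$ on all profinite sets, including the independence of choices. Rather than grinding through this by hand, we would first try to invoke the general comparison lemma for a dense full subcategory closed under the relevant covers. That lemma says sheaves on the subcategory, with the induced topology, agree with sheaves on the whole site whenever every object admits a covering by objects of the subcategory. We would then only need to identify the induced topology with the finite jointly surjective families, which is the split-cover computation above.
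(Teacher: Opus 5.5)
Your proposal is correct and follows the paper's argument. The paper also recovers $T(S)$ as the equalizer of $T(\tilde S)\rightrightarrows T(\tilde{\tilde S})$, using an extremally disconnected cover $\tilde S\to S$ and an extremally disconnected cover $\tilde{\tilde S}\to \tilde S\times_S\tilde S$, and it notes separately that covers of extremally disconnected sets split, so sheaves there are exactly the presheaves taking finite disjoint unions to products. Your additional details fill in what the paper compresses into ``this easily implies the proposition'': the split-fork check that avoids the non-extremally-disconnected fibre product, independence of choices via projectivity, and the option of invoking the comparison lemma for the sheaf property of $\tilde T$.
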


\begin{proof} For any profinite set $S$, we can find an extremally disconnected set $\tilde{S}$ surjecting onto $S$ and another extremally disconnected set $\tilde{\tilde{S}}$ surjecting onto $\tilde{S}\times_S \tilde{S}$. Then the value $T(S)$ is determined as the equalizer of the two maps $T(\tilde{S})\to T(\tilde{\tilde{S}})$. This easily implies the proposition.
\end{proof}

Using the proposition, we see that the category of $\kappa$-condensed sets/rings/groups/... is equivalent to the category of functors
\[
T: \{\kappa\text-\mathrm{small\ extremally\ disconnected\ sets}\}^{\mathrm{op}}\to \{\mathrm{sets/rings/groups/...}\}
\]
such that $T(\emptyset) = \ast$ and for all $\kappa$-small extremally disconnected sets $S_1$, $S_2$, the natural map
\[
T(S_1\sqcup S_2)\to T(S_1)\times T(S_2)
\]
is a bijection. Indeed, as any cover of extremally disconnected sets splits, the analogue of condition (ii) above is automatic.

\begin{proof}[Proof of Theorem~\ref{thm:niceabcat}] We identify the category of $\kappa$-condensed abelian groups with the category of functors
\[
M: \{\kappa\text-\mathrm{small\ extremally\ disconnected\ sets}\}^{\mathrm{op}}\to \{\mathrm{abelian\ groups}\}
\]
sending finite disjoint unions to finite products. As the formation of all limits and colimits in the category of abelian groups commutes with finite products (=finite coproducts), one sees that this category is stable under formation of pointwise limits and colimits. In other words, for any category $I$ and any functor $i\mapsto M_i$ to $\kappa$-condensed abelian groups, the limit (resp.~colimit) of $i\mapsto M_i$ is given by sending any $\kappa$-small extremally disconnected set $S$ to the limit (resp.~colimit) of $i\mapsto M_i(S)$. With this information, all but the last sentence of the theorem follow at once from the same assertions in the category of abelian groups.

To prove the existence of compact projective generators, we use that by the adjoint functor theorem, the forgetful functor from $\kappa$-condensed abelian groups to $\kappa$-condensed sets has a left adjoint $T\mapsto \mathbb Z[T]$.\footnote{Concretely, $\mathbb Z[T]$ is the sheafification of the functor that sends an extremally disconnected set $S$ to the free abelian group $\mathbb Z[T(S)]$ on the set $T(S)$.} In particular, for any $\kappa$-small extremally disconnected set $S$, there is a $\kappa$-condensed abelian group $\mathbb Z[S]$ such that for any $\kappa$-condensed abelian group $M$, we have $\Hom(\mathbb Z[S],M) = M(S)$. As $M\mapsto M(S)$ commutes with all limits and colimits, it follows that $\mathbb Z[S]$ is compact and projective; and they generate, as $M=0$ if and only if $M(S)=0$ for all $\kappa$-small extremally disconnected sets $S$, which implies that any $M$ admits a surjection from a direct sum of $\mathbb Z[S]$'s, via
\[
\bigoplus_{S,\alpha\in M(S)} \mathbb Z[S]\to M.
\]
\end{proof}

In the appendix to this lecture, we study how the previous notions change when one changes $\kappa$. In particular, a notion of condensed abelian groups is defined that is both independent of $\kappa$ and which poses no set-theoretic problems. Theorem~\ref{thm:niceabcat} continues to hold. We will use this formalism in the following.

Let us mention some further properties of the category $\Cond(\Ab)$ of condensed abelian groups.

\begin{enumerate}
\item[{\rm (i)}] It has a symmetric monoidal tensor product $-\otimes -$, where $M\otimes N$ is the sheafification of $S\mapsto M(S)\otimes N(S)$. Thus, $M\otimes N$ represents bilinear maps $M\times N\to -$, and one sees that the functor $T\mapsto \mathbb Z[T]$ from condensed sets to condensed abelian groups is symmetric monoidal with respect to the product and the tensor product, i.e. $\mathbb Z[T_1]\otimes \mathbb Z[T_2] = \mathbb Z[T_1\times T_2]$.

For any condensed set $T$, the condensed abelian group $\mathbb Z[T]$ is flat. To check this, it suffices to check that tensoring with the presheaf $S\mapsto \mathbb Z[T(S)]$ (of which $\mathbb Z[T]$ is the sheafification) on the presheaf level is exact, which follows from all $\mathbb Z[T(S)]$ being flat (even free) abelian groups.
\item[{\rm (ii)}] For any condensed abelian groups $M$, $N$, the group of homomorphisms $\Hom(M,N)$ has a natural enrichment to a condensed abelian group, defining an internal $\Hom$-functor object $\intHom(M,N)$. Abstractly, this can be defined via the adjunction
\[
\Hom(P,\intHom(M,N))\cong \Hom(P\otimes M,N)\ ;
\]
concretely, applying this to the free objects $P=\mathbb Z[S]$, one has
\[
\intHom(M,N)(S) = \Hom(\mathbb Z[S]\otimes M,N)
\]
for any extremally disconnected set $S$.
\item[{\rm (iii)}] As $\Cond(\Ab)$ has enough projectives, one can form the derived category $D(\Cond(\Ab))$. If $P\in \Cond(\Ab)$ is compact and projective, then $P[0]\in D(\Cond(\Ab))$ is a compact object of the derived category, i.e.~$\Hom(P,-)$ commutes with arbitrary direct sums. In particular, $D(\Cond(\Ab))$ is compactly generated. Using projective resolutions, one also defines a symmetric monoidal tensor product $-\otimes^L-$ on $D(\Cond(\Ab))$ and a derived internal Hom $R\intHom(-,-)$ on $D(\Cond(\Ab))$, still satisfying the adjunction
\[
\Hom(P,R\intHom(M,N))\cong \Hom(P\otimes^L M,N)\ .
\]
\end{enumerate}

\newpage

\section*{Appendix to Lecture II}

We make some somments on how to build a theory of condensed sets that does not suffer from set-theoretic issues while also not depending on the choice of an auxiliary cardinal $\kappa$. For this, we first prove the following result on (in)dependence of $\kappa$.

\begin{proposition}\label{prop:changekappa} Let $\kappa^\prime>\kappa$ be uncountable strong limit cardinals. There is a natural functor from $\kappa$-condensed sets to $\kappa^\prime$-condensed sets, given by sending a $\kappa$-condensed set $T$ to the $\kappa^\prime$-condensed set $T_{\kappa^\prime}$ given by the sheafification of
\[
\tilde{S}\mapsto \varinjlim_{\tilde{S}\to S} T(S)
\]
where the filtered colimit is taken over all $\kappa$-small profinite sets $S$ with a map $\tilde{S}\to S$.\footnote{In other words, $T_{\kappa^\prime}$ is the pullback of $T$ along the map of sites $\ast_{\kappa^\prime\text-\proet}\to \ast_{\kappa\text-\proet}$.}

The functor $T\mapsto T_{\kappa^\prime}$ is fully faithful and commutes with all colimits and all $\lambda$-small limits where $\lambda$ is the cofinality of $\kappa$.
\end{proposition}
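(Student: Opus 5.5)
The plan is to work throughout with extremally disconnected sets, using the equivalences from Lecture II. A $\kappa$-condensed set is then a functor on $\kappa$-small extremally disconnected sets that sends finite disjoint unions to products, with no further sheaf condition. The key observation concerns a $\kappa^\prime$-small extremally disconnected $\tilde S$ and the colimit $\varinjlim_{\tilde S\to S} T(S)$. Here we may restrict to $\kappa$-small extremally disconnected $S$: any map $\tilde S\to S$ with $S$ profinite factors through a finite-type quotient, and the continuous images of $\tilde S$ in $\kappa$-small profinite sets can be refined to extremally disconnected ones via Stone--\v{C}ech covers. I would first check carefully that the presheaf $\tilde S\mapsto \varinjlim_{\tilde S\to S}T(S)$, restricted to extremally disconnected $\tilde S$, already takes finite disjoint unions to products. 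This should follow because the index category is filtered (it has finite products $S_1\times S_2$, which remain $\kappa$-small) and a map from $\tilde S_1\sqcup\tilde S_2$ to $S$ splits as maps from each piece. Given this, no sheafification is needed on extremally disconnected sets, and $T_{\kappa^\prime}(\tilde S)$ is the displayed colimit.

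Full faithfulness then follows from the formula. For $\tilde S$ itself $\kappa$-small, the index category has the initial object $\mathrm{id}:\tilde S\to\tilde S$, so $T_{\kappa^\prime}(\tilde S)=T(\tilde S)$. Restricting back to $\kappa$-small objects therefore recovers $T$, i.e.\ the unit $T\to (T_{\kappa^\prime})|_\kappa$ is an isomorphism. Since $T\mapsto T_{\kappa^\prime}$ is left adjoint to restriction (it is pullback along the map of sites), this gives full faithfulness formally.

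Colimits are preserved because the functor is a left adjoint. For limits, first compute limits in both categories pointwise on extremally disconnected sets, as in the proof of Theorem~\ref{thm:niceabcat}; this is legitimate because product preservation is stable under limits. It then suffices to show that the filtered colimit $\varinjlim_{\tilde S\to S}$ commutes with $\lambda$-small limits of sets. This holds if the index category is $\lambda$-filtered, since $\lambda$-filtered colimits commute with $\lambda$-small limits in $\Sets$. To see $\lambda$-filteredness, take fewer than $\lambda$ maps $\tilde S\to S_i$. The product $\prod_i S_i$ has cardinality at most $\mu^{\nu}$ for some $\mu<\kappa$ and $\nu<\lambda\le\kappa$. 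This is less than $\kappa$: choose $\mu^\prime<\kappa$ with $\mu,\nu\le\mu^\prime$, so $\mu^\nu\le 2^{\mu^\prime\cdot\mu^\prime}<\kappa$ by the strong limit property. Hence $\prod_i S_i$ is a $\kappa$-small profinite set through which all the maps factor. Coequalizing parallel families of fewer than $\lambda$ maps is handled by the closed equalizer subset, which is again profinite and $\kappa$-small.

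The main obstacle I expect is the first step: justifying that the colimit presheaf on extremally disconnected $\tilde S$ needs no sheafification, and doing so compatibly with the $\lambda$-filteredness used for limits. If restricting to extremally disconnected $S$ in the index is awkward, my fallback is to keep $S$ ranging over all $\kappa$-small profinite sets, where products and equalizers are available. The price is that I would then have to verify the finite-product condition on $\tilde S$ directly, again using that maps from a disjoint union decompose.
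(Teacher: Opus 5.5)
Your proposal is correct and follows the paper's proof. As there, $T\mapsto T_{\kappa^\prime}$ is left Kan extension along the inclusion of $\kappa$-small into $\kappa^\prime$-small extremally disconnected sets; you usefully verify the finite-product condition that the paper leaves implicit. The unit is an isomorphism, so the functor is fully faithful and colimit-preserving. Finally, $\lambda$-small limits follow from $\lambda$-filteredness of the index category via the strong-limit bound on $|\prod_i S_i|$; your exponent $\nu=|I|$ rather than $\lambda$ is the right one and also covers regular $\kappa$.

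Two small points. First, make explicit that $\sup_i|S_i|<\kappa$ is exactly where $|I|<\lambda=\mathrm{cf}(\kappa)$ is used. Second, with extremally disconnected $S$ in the index, the paper handles the non-extremally-disconnected $\varprojlim_i S_i$ by choosing a $\kappa$-small extremally disconnected surjection onto it and lifting $\tilde S\to\varprojlim_i S_i$ via projectivity of $\tilde S$. So your profinite fallback is optional, though it works equally well.
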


\begin{remark} Except for the commutation with $\lambda$-small limits, this proposition applies verbatim to condensed objects in any category that admits all filtered colimits. For commutation with $\lambda$-small limits, one has to be careful in general; it is however certainly true if there is a conservative forgetful functor to sets that commutes with limits and filtered colimits (such as for rings, groups, modules, etc.).
\end{remark}

\begin{proof} We identify $\kappa$-condensed sets with functors
\[
\{\kappa\text-\mathrm{small\ extremally\ disconnected\ sets}\}^{\mathrm{op}}\to \{\mathrm{sets}\}
\]
sending finite disjoint unions to finite products, and similarly for $\kappa^\prime$. Under this equivalence, the functor $T\mapsto T_{\kappa^\prime}$ corresponds to left Kan extension along the full inclusion of $\kappa$-small extremally disconnected sets into $\kappa^\prime$-small extremally disconnected sets. In particular, the functor is left adjoint to the forgetful functor from $\kappa^\prime$-condensed sets to $\kappa$-condensed sets and the unit of the adjunction is an isomorphism. This implies that the functor $T\mapsto T_{\kappa^\prime}$ is fully faithful and commutes with all colimits.

To see that it commutes with $\lambda$-small limits, where $\lambda$ is the cofinality of $\kappa$, it is enough to see that for all $\kappa^\prime$-small extremally disconnected sets $\tilde{S}$, the category of all $\kappa$-small extremally disconnected sets $S$ with a map $\tilde{S}\to S$ is $\lambda$-filtered. (Here, we use the general fact that $\lambda$-filtered colimits commute with $\lambda$-small limits in sets; this is true for any regular cardinal $\lambda$, but cofinalities are always regular.) This comes down to showing that if $S_i$, $i\in I$ for some $\lambda$-small index category $I$, is a diagram of $\kappa$-small extremally disconnected sets with compatible maps $\tilde{S}\to S_i$, then there is a $\kappa$-small extremally disconnected set $S$ with a map $\tilde{S}\to S$ over which all $\tilde{S}\to S_i$ factor compatibly. Note that the limit $\varprojlim_i S_i$ is $\kappa$-small: It suffices to prove this for $\prod_i S_i$, and then the supremum $\mu$ of all $|S_i|$ is less than $\kappa$ as the cofinality of $\kappa$ is $\lambda$ and $|I|<\lambda$, so $|\prod_{i\in I} S_i|\leq \mu^\lambda\leq 2^{\mu\times \lambda}<\kappa$ as $\mu\times \lambda<\kappa$ and $\kappa$ is a strong limit cardinal. We can now take for $S$ any $\kappa$-small extremally disconnected set with a surjection $S\to \varprojlim_i S_i$; as $\tilde{S}$ is extremally disconnected, the map $\tilde{S}\to \varprojlim_i S_i$ can be lifted to a map $\tilde{S}\to S$.
\end{proof}

We are now justified in making the following definition, which is the official definition of condensed sets.

\begin{definition} The category of condensed sets is given by the filtered colimit of the category of $\kappa$-condensed sets along the filtered poset of all uncountable strong limit cardinals $\kappa$.
\end{definition}

Equivalently, this is the category of functors
\[\begin{aligned}
T: \{\mathrm{extremally\ disconnected\ sets}\}^{\mathrm{op}}&\to \{\mathrm{sets}\}\\
S&\mapsto T(S)
\end{aligned}\]
sending finite disjoint unions to finite products, and such that for some uncountable strong limit cardinal $\kappa$, it is the left Kan extension of its restriction to $\kappa$-small extremally disconnected sets.\footnote{Yet another equivalent characterization is that a sheaf $T$ defined on all of $\ast_\proet$ is a condensed set when the corresponding functor from the opposite category of profinite sets to sets, i.e.~from the ind-category of (finite sets)${}^{\mathrm{op}}$ to sets, is accessible (noting that both sets and ind-(finite sets)${}^{\mathrm{op}}$ are presentable categories).}

\begin{remark} The category of condensed sets is a (large) category like the category of all sets; the essential difference is that it does not admit a set of generators (but merely a class of generators). In particular, the resulting category is not the category of sheaves on any site, although it shares most of the features. This has been studied in detail by Brink, \cite{Brink}, under the term big topos (and big presentable categories).
\end{remark}

\begin{remark} In forming the colimit over all $\kappa$, one can restrict attention to $\kappa$ of cofinality at least $\lambda$ for any fixed $\lambda$ (as these are cofinal), and thus ensure that formation of $\lambda$-small limits is universal for any given $\lambda$. Slightly better, one can by transfinite induction define a sequence $\kappa_\alpha$ of uncountable strong limit cardinals, enumerated by ordinals $\alpha$, such that the cofinality of $\kappa_\alpha$ is at least the supremum of $\kappa_{\alpha^\prime}$ over all $\alpha^\prime<\alpha$.
\end{remark}

With this definition, the category of condensed sets has all small colimits and all small limits. Here limits can be calculated pointwise; to see that the limit is still the left Kan extension from its restriction to $\kappa$-small objects for some $\kappa$, choose $\kappa$ so that the cofinality of $\kappa$ is at least the cardinality of the index category, and all terms in the limit are left Kan extended from $\kappa$-small objects. Colimits commute with the left Kan extensions, so it is clear that they preserve left Kan extensions. Filtered colimits can be computed pointwise.

The same discussion applies to condensed objects in any category $C$ that admits all filtered colimits. In other words,
\[
\Cond(C) = \varinjlim_\kappa \Cond_\kappa(C)
\]
with (hopefully) obvious notation; this also agrees with the category of functors
\[
\{\mathrm{extremally\ disconnected\ sets}\}^{\mathrm{op}}\to C
\]
sending finite disjoint unions to finite products, and which are their left Kan extensions from the restriction to $\kappa$-small objects for some uncountable strong limit cardinal $\kappa$.

This category is large (even if $C$ is small), but is locally small, i.e.~$\Hom$ sets are actually sets. Indeed, as all transition maps are fully faithful, any $\Hom$ can be calculated in some $\Cond_\kappa(C)$.

\begin{warning}\label{war:septopcardinal} The natural functor from topological spaces to sheaves on the full pro-\'etale site $\ast_\proet$ of a point does not land in condensed sets. The problem are nonseparated spaces like the Sierpinski space $X=\{s,\eta\}$ where $\{\eta\}$ is open but $\{s\}$ is not. Indeed, then $\underline{X}$ sends any extremally disconnected set $S$ to the set of all closed subsets of $S$, and this functor is not the left Kan extension of its restriction to $\kappa$-small extremally disconnected sets for any $\kappa$ (otherwise any closed subset $Z\subset S$ of an extremally disconnected set $S$ would be an intersection of $\kappa$ clopen subsets of $S$ for some cardinal $\kappa$ independent of $S$ and $Z$). 
\end{warning}

Despite the warning, we note that for any condensed set $T$, the underlying set $T(\ast)$ can be regarded as a topological space $T(\ast)_{\mathrm{top}}$ by endowing it with the quotient topology from $\sqcup_{S\to T} S\to T(\ast)$ where $S$ runs over $\kappa$-small extremally disconnected sets for any $\kappa$ such that $T$ is determined by its values on $\kappa$-small $S$. Then one has formally that for any topological space $X$, $\Hom(T,\underline{X}) = \Hom(T(\ast)_{\mathrm{top}},X)$. However, by the warning, this does not define an adjunction anymore. Putting some separation hypothesis helps:

\begin{proposition} If $X$ is a topological space all of whose points are closed (i.e.~$X$ is $T1$), then $\underline{X}$ is a condensed set for which all maps from points are quasicompact. Conversely, if $T$ is a condensed set such that all maps from points are quasicompact, then $T(\ast)_{\mathrm{top}}$ is a compactly generated space all of whose points are closed.
\end{proposition}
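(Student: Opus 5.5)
The plan is to prove the two halves separately. In the first half, the issue is whether $\underline{X}$ is a condensed set at all, i.e.~whether it is left Kan extended from $\kappa$-small extremally disconnected sets for some $\kappa$. The whole argument should turn on one observation: in a $T1$ space the fibres of a map $S\to X$ are closed, and disjoint closed subsets of a profinite set can be separated by clopens. I will read ``all maps from points are quasicompact'' as saying that for every point $\ast\to T$ and every extremally disconnected (or profinite) $S\to T$, the fibre product $\ast\times_T S$ is quasicompact, i.e.~covered by a single profinite set.

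\emph{First half.} Let $X$ be $T1$ and fix $\kappa$ with $2^{2^{|X|}}<\kappa$. Let $f:S\to X$ be continuous with $S$ extremally disconnected of arbitrary size. I claim that $f$ factors through a $\kappa$-small profinite quotient $S\to S'$. For every point $x\in X$ and closed subset $C\subset X$ with $x\notin C$, the sets $f^{-1}(x)$ and $f^{-1}(C)$ are disjoint and closed in $S$. I therefore choose a clopen $V_{x,C}\subset S$ containing $f^{-1}(x)$ and disjoint from $f^{-1}(C)$. Let $B$ be the Boolean subalgebra of clopens generated by all the $V_{x,C}$; it has cardinality at most $2^{|X|}$. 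By Stone duality $B$ gives a profinite quotient $S\to S'$ with $|S'|\leq 2^{2^{|X|}}<\kappa$.
\begin{enumerate}
\item[{\rm (i)}] $f$ is constant on fibres of $S\to S'$: if $f(s)=x\neq y=f(t)$, then $\{y\}$ is closed because $X$ is $T1$, and $V_{x,\{y\}}$ separates $s$ from $t$.
\item[{\rm (ii)}] The induced map $S'\to X$ is continuous: for closed $C\subset X$ we have $f^{-1}(C)=\bigcap_{x\notin C}(S\setminus V_{x,C})$, and this is pulled back from a closed subset of $S'$.
\end{enumerate}
Choosing a $\kappa$-small extremally disconnected $S''\to S'$ with a surjection and lifting $S\to S'$ along it (possible since $S$ is extremally disconnected), $f$ factors through a $\kappa$-small extremally disconnected set. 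The same separation argument applied to two such factorizations should show that the colimit in the Kan extension formula is injective, since any two factorizations can be dominated by a common one. Hence $\underline{X}$ is left Kan extended from $\kappa$-small objects. Quasicompactness of maps from points is then immediate: $\ast\times_{\underline X}S$ is represented by the closed subset $f^{-1}(x)\subset S$, which is profinite.

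\emph{Converse.} $T(\ast)_{\mathrm{top}}$ carries by definition the quotient topology from a disjoint union of compact Hausdorff spaces, so it is compactly generated. For a point $t\in T(\ast)$ and a map $S\to T$, the preimage of $t$ in $S$ is the image of the fibre product $\ast\times_T S\to S$. By hypothesis this fibre product is covered by some profinite set, so the preimage is the image of a compact space in $S$, hence closed. Since this holds for every $S\to T$, the set $\{t\}$ is closed in the quotient topology.

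I expect the main obstacle to be the bookkeeping in the first half. One must check that the factorization gives not just surjectivity but a genuine isomorphism of $\underline{X}(S)$ with the Kan-extension colimit. One must also check that the chosen $\kappa$ is independent of $S$; it is, because all bounds depend only on $|X|$.
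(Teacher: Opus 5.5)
Your proof is correct. The quasicompactness of $\ast\to\underline{X}$ and the converse are handled exactly as in the paper. The difference is in how you show that $\underline{X}$ is left Kan extended from $\kappa$-small extremally disconnected sets.

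The paper's route:
\begin{enumerate}
\item[{\rm (a)}] It chooses $\kappa$ of cofinality $\lambda>|X|$ and works in the $\lambda$-filtered category of $\kappa$-small $S$ under the big $\tilde S$ (the filteredness from Proposition~\ref{prop:changekappa}).
\item[{\rm (b)}] For a single pair $x\neq y$, it uses compactness to make the closed preimage $T_{x,y,S}$ of $(x,y)$ in $\tilde S\times_S\tilde S$ empty at some stage.
\item[{\rm (c)}] It then uses $\lambda$-filteredness to achieve this for all pairs simultaneously.
\end{enumerate}

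You instead build one quotient $S\to S'$ of your big extremally disconnected $S$ directly, by Stone duality, from at most $2^{|X|}$ separating clopens. Both arguments rest on the same input: T1 makes the fibres closed, and disjoint closed subsets of a profinite set are separated at a small stage. Your version buys two things:
\begin{enumerate}
\item[{\rm (1)}] the explicit bound $|S'|\leq 2^{2^{|X|}}$;
\item[{\rm (2)}] validity for every uncountable strong limit $\kappa>|X|$, with no cofinality hypothesis, because you only ever combine two-point quotients.
\end{enumerate}
The paper's version is shorter given the filteredness it has already established, and it stays inside the Kan-extension formalism.

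Two simplifications are available to you:
\begin{enumerate}
\item[{\rm (i)}] The clopens $V_{x,\{y\}}$ alone suffice. Continuity of $S'\to X$ is automatic, because $S\to S'$ is a closed surjection of compact Hausdorff spaces and hence a quotient map.
\item[{\rm (ii)}] Injectivity of the colimit map needs no separation argument. If two maps $g_1,g_2\colon S_1\to X$ from a $\kappa$-small $S_1$ agree after pullback along $S\to S_1$, they agree on the closed image of $S$ in $S_1$. Cover that image by a $\kappa$-small extremally disconnected set and lift $S$ into it, which is possible since $S$ is extremally disconnected. This gives a stage where $g_1$ and $g_2$ coincide. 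The paper leaves this step implicit as well.
\end{enumerate}
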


In particular, one gets an adjunction between $T1$ topological spaces and condensed sets for which all maps from points are quasicompact. Note that if $X$ is not $T1$, then $\underline{X}$ is never a condensed set: Indeed, if $X$ is not $T1$ there is a pair of points $s,\eta\in X$ whose subspace topology is not discrete, and then the warning shows that already the subset $\{s,\eta\}\subset X$ does not define a condensed set.

\begin{proof} To see that $\underline{X}$ is a condensed set, we need to see that for some uncountable strong limit cardinal $\kappa$, any map $\tilde{S}\to X$ from any extremally disconnected set $\tilde{S}$ factors over a map $S\to X$ for some $\kappa$-small extremally disconnected $S$. We claim that we can take any $\kappa$ of infinite cofinality $\lambda>|X|$. Then the category $I$ of all factorizations $\tilde{S}\to S\to X$ with $S$ being $\kappa$-small is $\lambda$-filtered. We need to see that there is some $S$ such that the two maps $\tilde{S}\times_S \tilde{S}\to \tilde{S}\to X$ agree. For this, we need to arrange that $\tilde{S}\times_S \tilde{S}$ does not meet the preimage of $(x,y)\in X\times X$ for any pair of distinct points $x,y\in X$. If we can arrange this for any individual $(x,y)$, we can also arrange it for all of them as $I$ is $\lambda$-filtered for $\lambda>|X\times X|$. But note that $(x,y)\in X\times X$ is closed (as $X$ is $T1$), so the preimage of $(x,y)$ is a closed subset $T_{x,y,S}\subset \tilde{S}\times_S \tilde{S}$. The inverse limit of all $T_{x,y,S}$ is empty, so as all $T_{x,y,S}$ are profinite, it follows that one of them is empty.

If $x\in X=\underline{X}(\ast)$ is any point and $S$ is any profinite set with a map to $X$, then $\underline{S}\times_{\underline{X}} \{x\} = \underline{S\times_X \{x\}}$ and $S\times_X \{x\}\subset S$ is a closed subset, which implies that $\{x\}\to \underline{X}$ is quasicompact. Conversely, if $T$ is a condensed set such that for all $x\in T(\ast)$, the map $\{x\}\to T$ is quasicompact, then $\{x\}\to T(\ast)$ is closed: By the definition of the topology on $T(\ast)$, we have to see that for all profinite sets $S$ with a map $S\to T$, the subspace $S\times_T \{x\}\subset S$ is closed. But by assumption it is a quasicompact sub-condensed set $S^\prime\subset S$. Thus, there is some surjection $\tilde{S^\prime}\to S^\prime\subset S$ from some profinite set $\tilde{S}^\prime$, and then $S^\prime$ agrees with the image of $\tilde{S^\prime}\to S$, which is a closed subset of $S$. Thus, $S^\prime\subset S$ is a closed subset, as desired.
\end{proof}

One can translate some properties under this adjunction.

\begin{theorem}\leavevmode
\begin{enumerate}
\item[{\rm (i)}] The functor $X\mapsto \underline{X}$ induces an equivalence between compact Hausdorff spaces $X$ and qcqs condensed sets $T$.
\item[{\rm (ii)}] A compactly generated space $X$ is weak Hausdorff if and only if $\underline{X}$ is quasiseparated. For any quasiseparated condensed set $T$, the topological space $T(\ast)_{\mathrm{top}}$ is compactly generated weak Hausdorff.
\end{enumerate}
\end{theorem}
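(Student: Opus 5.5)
For (i), I first check that $\underline{X}$ is qcqs for compact Hausdorff $X$. Choose a surjection $S\to X$ from a profinite (even extremally disconnected) set, e.g.\ $S=\beta X_{\mathrm{disc}}$. Since surjections of compact Hausdorff spaces are quotient maps, $\underline{S}\to\underline{X}$ is a surjection of condensed sets, so $\underline{X}$ is quasicompact. For quasiseparatedness, given two maps $\underline{S_1},\underline{S_2}\to \underline{X}$ from profinite sets, the fibre product $\underline{S_1}\times_{\underline{X}}\underline{S_2}$ equals $\underline{S_1\times_X S_2}$, because $\underline{\cdot}$ preserves limits. Here $S_1\times_X S_2\subset S_1\times S_2$ is closed since $X$ is Hausdorff, so it is profinite and hence quasicompact.

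Full faithfulness on compact Hausdorff spaces follows from Proposition~\ref{prop:condtopkappa}, as such spaces are compactly generated, indeed $\kappa$-compactly generated for suitable $\kappa$. For essential surjectivity, let $T$ be qcqs and pick a surjection $\underline{S}\to T$ from a profinite $S$. By quasiseparatedness, $R=\underline{S}\times_T\underline{S}$ is a quasicompact subobject of $\underline{S\times S}$, and as in the argument of the previous proposition it is represented by a closed subset $R\subset S\times S$ (the image of a profinite set surjecting onto it). Then $R$ is a closed equivalence relation on $S$, so $X=S/R$ is compact Hausdorff. Both $T$ and $\underline{X}$ are then coequalizers of $\underline{R}\rightrightarrows\underline{S}$. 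For $T$ this holds because it is the quotient by an epimorphism in a topos. For $\underline{X}$ it holds because $\underline{S}\to\underline{X}$ is surjective and $\underline{R}=\underline{S\times_X S}$. Hence $T\cong\underline{X}$.

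For (ii), recall that $X$ is weak Hausdorff if the image of every compact Hausdorff $K\to X$ is closed; equivalently, for compactly generated $X$, every pair $S_1,S_2\to X$ from profinite sets has closed fibre product $S_1\times_X S_2\subset S_1\times S_2$. The forward direction is the same computation as in (i): $\underline{S_1}\times_{\underline X}\underline{S_2}=\underline{S_1\times_X S_2}$, and weak Hausdorffness makes the images compact Hausdorff, so the fibre product is closed in $S_1\times S_2$ and hence quasicompact. For the converse, given $T$ quasiseparated, any $S\to T$ has quasicompact ``diagonal'' $\underline{S}\times_T\underline{S}$, which is closed in $S\times S$. By (i), the image of $\underline{S}\to T$ is $\underline{K}$ for the compact Hausdorff quotient $K=S/R$. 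Every map from a profinite set into $T(\ast)_{\mathrm{top}}$ factors through such compact Hausdorff images, so I need to show that for any $S'\to T$, the preimage of the image of $S$ is closed in $S'$. That preimage is the image of $\underline{S'}\times_T\underline{S}$ in $S'$, which is quasicompact and hence closed. Thus the image of $K$ is closed in $T(\ast)_{\mathrm{top}}$, which gives weak Hausdorffness. Compact generation holds by the definition of the topology.

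Finally, for a compactly generated $X$ with $\underline X$ quasiseparated, I apply the previous statement to $T=\underline X$ and use $\underline X(\ast)_{\mathrm{top}}\cong X$. The main obstacle I expect is the bookkeeping in the converse of (ii): the weak Hausdorff condition concerns images of arbitrary compact Hausdorff spaces, while quasiseparatedness is tested on profinite ones. I would handle this via the surjections $\beta K\to K$ and the fact that surjections of compact Hausdorff spaces are quotient maps. Cardinal bounds are handled by taking $\kappa$ large.
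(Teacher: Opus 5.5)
Part (i) and the forward direction of (ii) are fine, with one omission. Before asking whether $\underline X$ is quasiseparated, you must know it is a condensed set at all. This holds because a weak Hausdorff space is $T1$ (points are images of a point), so the preceding proposition applies; ``taking $\kappa$ large'' does not cover this by itself.

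The genuine gap is in the converse of (ii), at the sentence ``Every map from a profinite set into $T(\ast)_{\mathrm{top}}$ factors through such compact Hausdorff images''. This is false. A continuous map $L\to T(\ast)_{\mathrm{top}}$ is a map $\underline L\to\underline{T(\ast)_{\mathrm{top}}}$, and it need not lift along $T\to\underline{T(\ast)_{\mathrm{top}}}$, even when $T$ is quasiseparated. The paper's footnote gives a counterexample. Take $S=[0,1]$ and let $T\subset\underline S$ be the union of the $\underline{S_i}$, where $S_i$ runs over finite unions of convergent sequences together with their limits. Then $T$ is quasiseparated, being a subobject of $\underline S$, and $T(\ast)_{\mathrm{top}}\cong S$ by Remark~\ref{rem:firstcountable}. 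But every map from a profinite set to $T$ lands in some $S_i$, which is countable. So a profinite set surjecting onto $S\cong T(\ast)_{\mathrm{top}}$ gives a continuous map whose image lies in none of your sets $K$. What you have actually proved is only that images of maps of condensed sets $\underline{S}\to T$ are closed. The obstacle you anticipated, profinite versus compact Hausdorff sources, is harmless. The real obstacle is continuous maps versus maps of condensed sets, and this is exactly what the paper confronts when it notes that a continuous $S\to\varinjlim_i X_i$ need not factor through any $X_i$.

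You can close the gap with what you already have.
\begin{enumerate}
\item[{\rm (1)}] Each $K$, the image of some $\underline{S'}\to T$ with its compact Hausdorff topology as $S'/R'$, maps by a closed embedding into $T(\ast)_{\mathrm{top}}$. Indeed, a closed subset of $K$ is the image of a closed, hence profinite, subset of $S'$, so it is closed in $T(\ast)_{\mathrm{top}}$ by your argument.
\item[{\rm (2)}] Let $v: L\to T(\ast)_{\mathrm{top}}$ be an arbitrary continuous map from a compact Hausdorff $L$. Then $v^{-1}(K)$ is closed in $L$, and $v$ restricts to a continuous map $v^{-1}(K)\to K$ by (1).
\item[{\rm (3)}] Hence $v(L)\cap K=v(v^{-1}(K))$ is compact in the Hausdorff space $K$, so it is closed in $K$. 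Its preimage in every $S'$ mapping to $T$ is therefore closed, and since $T(\ast)_{\mathrm{top}}$ carries the quotient topology from these $S'$, the set $v(L)$ is closed.
\end{enumerate}
This plays the role of the closed preimages $S_i\subset S$ of the $X_i$ and their images $\overline S_i\subset X_i$ in the paper's proof.
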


Recall that $X$ is weak Hausdorff if for any compact Hausdorff space $S$ mapping to $X$, the image is compact Hausdorff (in the subspace topology). Part (ii) implies that compactly generated weak Hausdorff spaces embed fully faithfully in quasiseparated condensed sets (and the inclusion admits a left adjoint). Quasiseparated condensed sets are arguably better behaved: In fact, they are equivalent to the category of ind-compact Hausdorff spaces $\varinjlim_i X_i$ where all transition maps are closed immersions. Any compactly generated weak Hausdorff space is also of this form, but it may happen that taking this filtered colimit in topological spaces produces a compact Hausdorff space again, so the functor $T\mapsto T(\ast)_{\mathrm{top}}$ from quasiseparated condensed sets to compactly generated weak Hausdorff spaces may ``accidentally" identify some objects. For example, there are compact Hausdorff spaces $S$ with a filtered set of proper closed subsets $S_i$ such that the map
\[
\varinjlim_i S_i\to S
\]
of topological spaces is a homeomorphism.\footnote{For an explicit example, use Remark~\ref{rem:firstcountable} to see that for any first-countable compact Hausdorff space $S$, one can take for the $S_i$'s the images of maps from finite disjoint unions of $\mathbb N\cup \{\infty\}$ to $S$, i.e.~finite collections of convergent sequences in $S$.} This can however only happen for rather complicated colimits; in particular, it cannot happen for countable colimits (by the Baire category theorem, effectively). We regard this as a defect of topological spaces: The open subsets are not enough information to remember that full filtered direct system. Passing to (quasiseparated) condensed sets resolves this issue.\footnote{In fact, quasiseparated condensed sets are equivalent to Waelbroeck's compactological spaces, cf.~e.g.~\cite{CompactologicalCondensed} for a detailed discussion.}

\begin{proof} For part (i), note that any compact Hausdorff space $X$ can be written as a quotient $S/R$ of a profinite set $S$ under a closed equivalence relation $R\subset S\times S$. Then $\underline{X} = \underline{S}/\underline{R}$ is qcqs. Conversely, assume that $T$ is a qcqs condensed set. Let $S\to T$ be a surjection from a profinite set; then $R=S\times_T S\subset S\times S$ is a quasicompact sub-condensed set, and thus a closed subset by the argument in the proof of the previous proposition. Thus, $R$ defines a closed equivalence relation on $S$, and the quotient $X=S/R$ is a compact Hausdorff space, such that the map $\underline{X}=\underline{S}/\underline{R}\to T$ is an isomorphism.

For part (ii), note that if $X$ is weak Hausdorff, then in particular all points are closed, so $\underline{X}$ is a condensed set. To see that it is quasiseparated, take any two profinite sets $S_1$, $S_2$ with maps $S_1,S_2\to X$. Then $\underline{S_1}\times_{\underline{X}} \underline{S_2} = \underline{S_1\times_X S_2}$, so it is enough to see that the fibre product $S_1\times_X S_2$ is compact Hausdorff. But for this we may replace $X$ by the image of $S_1\sqcup S_2$, which is compact Hausdorff, and fibre products of compact Hausdorff spaces are again compact Hausdorff.

For the converse, using $X\cong \underline{X}(\ast)_{\mathrm{top}}$, it is enough to see that for any quasiseparated condensed set $T$, the topological space $T(\ast)_{\mathrm{top}}$ is weak Hausdorff. Note that we can write $T$ as a filtered increasing union of quasicompact sub-condensed sets $T_i\subset T$, $i\in I$. Each $T_i$ is of the form $\underline{X_i}$ for a compact Hausdorff space $X_i$, so we have to see that the topological space $X=\varinjlim_i X_i$ is weak Hausdorff.

Thus, let $S$ be any compact Hausdorff space with a map $S\to X=\varinjlim_i X_i$. As remarked above, this does not in general factor over any $X_i$. In any case, for any $i$ the preimage of $X_i\subset X$ is a closed subset $S_i\subset S$, and $S$ is the filtered colimit of the $S_i$. For any $i$, let $\overline{S}_i\subset X_i$ be the image of $S_i\to X_i$, which is a closed subset of $X_i$ and thus compact Hausdorff. The image of $S\to X$ is given by $\overline{S} = \varinjlim_i \overline{S}_i$, equipped with the direct limit topology. We have to see that this is a compact Hausdorff space. Note that $S=\varinjlim_i S_i\to \overline{S} = \varinjlim_i \overline{S}_i$ is a quotient map (as a filtered colimit of quotient maps). Moreover, the induced equivalence relation $R\subset S\times S$ is closed, as it is the filtered colimit of the closed equivalence relations $R_i=S_i\times_{\overline{S}_i} S_i\subset S_i\times S_i$. But $S$ is compact Hausdorff, so that $\overline{S}=S/R$ is then also compact Hausdorff, as desired.
\end{proof}
\newpage

\section{Lecture III: Cohomology}

We have seen that the passage from (nice in a weak sense) topological spaces to condensed sets is lossless on the level of categories. As a next step, we need to see that certain important invariants of topological spaces can be detected on the associated condensed set. One such important invariant are the cohomology groups of a compact Hausdorff space $S$.

There are several ways to define the cohomology of a compact Hausdorff space $S$:

\begin{enumerate}
\item[{\rm (i)}] The singular cohomology groups $H^i_{\mathrm{sing}}(S,\mathbb Z)$. For this, one defines the cochain complex $C^\ast(S)$ with $C^i(S) = \Hom(C_i(S),\mathbb Z)$ where $C_i(S)$ is the free abelian group on the set of maps from the simplex $\Delta^i$ to $S$, where $\Delta^i = \{(x_0,\ldots,x_i)\in [0,1]\mid \sum_{j=0}^i x_j=1\}$.
\item[{\rm (ii)}] The \v{C}ech cohomology groups $H^i_{\mathrm{Cech}}(S,\mathbb Z)$, defined as the filtered colimit of the cohomology groups of the \v{C}ech complexes $C(S,\{U_i\})$ ranging over finite open covers $\{U_i\}$ of $S$.
\item[{\rm (iii)}] The sheaf cohomology groups $H^i_{\mathrm{sheaf}}(S,\mathbb Z)$, defined using the category of sheaves on the topological space $S$.
\end{enumerate}

There is a natural map $H^i_{\mathrm{Cech}}(S,\mathbb Z)\to H^i_{\mathrm{sheaf}}(S,\mathbb Z)$ that is an isomorphism (this holds more generally for paracompact Hausdorff spaces), cf.~\cite[Th\'eor\`eme 5.10.1]{Godement}. If $S$ is a CW complex, they also agree with $H^i_{\mathrm{sing}}(S,\mathbb Z)$, but not in general: If $S$ is a profinite set, then all cohomology groups vanish for $i>0$, and $H^0_{\mathrm{sheaf}}(S,\mathbb Z)$ are the continuous maps $S\to \mathbb Z$, while $H^0_{\mathrm{sing}}(S,\mathbb Z)$ are all maps $S\to \mathbb Z$. In particular, singular cohomology treats $S$ like a disjoint union of points, so one sees that arguably \v{C}ech or sheaf cohomology is better-behaved.

As an example, let $\mathbb T = \mathbb R/\mathbb Z$ be the circle. We will need the following result in the next lecture.

\begin{proposition}\label{prop:cohomcircle} For any set $I$, one has
\[
H^i_{\mathrm{Cech}}(\prod_I \mathbb T,\mathbb Z) = \wedge^i(\bigoplus_I \mathbb Z)\ .
\]
More precisely, in degree $1$ the map
\[
\bigoplus_I \mathbb Z\to H^1_{\mathrm{Cech}}(\prod_I \mathbb T,\mathbb Z)
\]
induced by pullback from all factors is an isomorphism, and the cup product induces isomorphisms
\[
\wedge^i H^1_{\mathrm{Cech}}(\prod_I \mathbb T,\mathbb Z)\cong H^i_{\mathrm{Cech}}(\prod_I \mathbb T,\mathbb Z)\ .
\]
\end{proposition}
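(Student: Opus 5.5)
We will reduce to the case of finite $I$ by a continuity argument for \v{C}ech cohomology, and then handle finite tori with the Künneth formula.

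\emph{Finite case.} For $I$ finite, $\prod_I \mathbb T$ is a compact manifold, hence a finite CW complex. For such spaces \v{C}ech cohomology agrees with sheaf cohomology, by the comparison quoted above, and both agree with singular cohomology. The singular cohomology of $\mathbb T$ is $\mathbb Z$ in degrees $0$ and $1$ and is free. So the Künneth formula, with no Tor terms because everything is free of finite rank, gives that $H^\ast(\prod_I \mathbb T,\mathbb Z)$ is the graded tensor product of $|I|$ copies of $H^\ast(\mathbb T,\mathbb Z)=\mathbb Z\oplus \mathbb Z[-1]$. This is the exterior algebra on the degree-one generators pulled back from the factors. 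In particular the pullback map $\bigoplus_I\mathbb Z\to H^1$ is an isomorphism, and the cup product induces $\wedge^i H^1\cong H^i$. These identifications are natural with respect to the projections $\prod_I\mathbb T\to\prod_{J}\mathbb T$ for $J\subset I$: the map on $H^1$ is the inclusion $\bigoplus_J\mathbb Z\to\bigoplus_I\mathbb Z$, and the maps in higher degrees are induced by exterior powers because pullback is a ring map.

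\emph{Continuity.} For general $I$ we write $\prod_I\mathbb T=\varprojlim_{J\subset I\text{ finite}}\prod_J\mathbb T$, a cofiltered limit of compact Hausdorff spaces with surjective transition maps. The key input is that \v{C}ech cohomology of compact Hausdorff spaces turns cofiltered limits into filtered colimits:
\[
H^i_{\mathrm{Cech}}(\varprojlim_J X_J,\mathbb Z)=\varinjlim_J H^i_{\mathrm{Cech}}(X_J,\mathbb Z).
\]
To prove this we would argue directly with covers. Every finite open cover of $X=\varprojlim X_J$ can be refined by one pulled back from some $X_J$. Indeed, the sets $\pi_J^{-1}(V)$ with $V\subset X_J$ open form a basis of $X$ stable under finite intersections, and $X$ is compact, so we may refine to such basic sets and pass to a common $J$. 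Moreover, a finite cover $\{V_k\}$ of $X_J$ has the same nerve as its pullback once we enlarge $J$. An intersection $V_{k_0}\cap\dots\cap V_{k_n}$ that becomes empty in $X$ is already empty after pulling back to some $X_{J'}$, since the images $\pi_{JJ'}(X_{J'})$ form a decreasing family of compacta with intersection $\pi_J(X)$. A further subtlety is that refinements of pulled-back covers must be compared inside the cofinal system of covers, but \v{C}ech cohomology only depends on the cofinal system up to refinement.

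Combining the two steps, $H^i_{\mathrm{Cech}}(\prod_I\mathbb T,\mathbb Z)=\varinjlim_J\wedge^i(\bigoplus_J\mathbb Z)=\wedge^i(\bigoplus_I\mathbb Z)$, because exterior powers commute with filtered colimits. Compatibility of the colimit with pullbacks and cup products then gives the more precise statements about the degree-one map and the cup product. The continuity step is the main obstacle. Instead of the direct argument, one could also prove it via the sheaf-cohomology statement that $\varinjlim_J H^i(X_J,\pi_J^{-1}\mathcal F)\cong H^i(X,\mathcal F)$ for compact Hausdorff spaces, as in Godement. We would try the direct nerve argument first, since it uses only finite covers and compactness.
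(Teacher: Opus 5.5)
Your proof is correct and takes the same approach as the paper: the classical torus computation for finite $I$, then continuity of \v{C}ech cohomology along the cofiltered limit $\prod_I\mathbb T=\varprojlim_J\prod_J\mathbb T$. The only difference is that the paper cites Eilenberg--Steenrod (Chapter X, Theorem 3.1) for the continuity step, whereas you sketch the standard nerve argument, which works cleanly here because the projections $\prod_I\mathbb T\to\prod_J\mathbb T$ are surjective.
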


\begin{proof} If $I$ is finite, this is the classical computation of the singular cohomology of tori. The case of infinite $I$ follows from the following general fact: If $S_j$, $j\in J$, is a cofiltered system of compact Hausdorff spaces with limit $S=\varprojlim_j S_j$, then the natural map
\[
\varinjlim_j H^i_{\mathrm{Cech}}(S_j,\mathbb Z)\to H^i_{\mathrm{Cech}}(S,\mathbb Z)
\]
is an isomorphism for all $i\geq 0$, cf.~\cite[Chapter X, Theorem 3.1]{EilenbergSteenrod}.
\end{proof}

On the other hand, we can regard $S$ as a condensed set, and can take its cohomology internally to the topos of condensed sets. Equivalently, $S$ is an object of the site of compact Hausdorff spaces equipped with the topology of finite families of jointly surjective maps, and one can take the cohomology groups
\[
H^i_{\mathrm{cond}}(S,\mathbb Z)(\cong \Ext^i_{\Cond(\Ab)}(\mathbb Z[S],\mathbb Z))
\]
on that site. Concretely, these are computed by taking a simplicial hypercover $S_\bullet\to S$ by extremally disconnected sets $S_i$, and forming the cohomology groups of the complex
\[
0\to \Gamma(S_0,\mathbb Z)\to \Gamma(S_1,\mathbb Z)\to \ldots\ .
\]

\begin{theorem}[{\cite[Theorem 3.11]{Dyckhoff}}]\label{thm:cohomcorrect} There are natural functorial isomorphisms
\[
H^i_{\mathrm{sheaf}}(S,\mathbb Z)\cong H^i_{\mathrm{cond}}(S,\mathbb Z)\ .
\]
\end{theorem}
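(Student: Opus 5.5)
We compare both cohomologies through a morphism of sites and a hypercover argument. Write $\mathrm{CHaus}_\kappa$ for the site of $\kappa$-small compact Hausdorff spaces with finite jointly surjective covers. By Proposition~\ref{prop:chausvsprofin}, its sheaves are $\kappa$-condensed sets, and $H^i_{\mathrm{cond}}(S,\mathbb Z)$ is the cohomology of the constant sheaf on this site at the object $S$.

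For a fixed compact Hausdorff space $S$, there is a morphism of topoi
\[
\alpha\colon \mathrm{Shv}(\mathrm{CHaus}_\kappa/S)\to \mathrm{Shv}(S).
\]
The pullback $\alpha^\ast$ sends a sheaf $F$ on the topological space $S$ to the sheaf on $\mathrm{CHaus}_\kappa/S$ given by $(f\colon T\to S)\mapsto \Gamma(T,f^{-1}F)$. Here we must check that $T\mapsto \Gamma(T,f^{-1}F)$ is a sheaf for finite surjective families. This reduces to proper base change: a surjection of compact Hausdorff spaces is proper and surjective, hence of effective descent for sheaves of sets on topological spaces. Also, $\alpha_\ast G(U)=G$ evaluated on the compact closures is not quite right. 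Instead, we define $\alpha_\ast G$ by $U\mapsto \varprojlim_{K\subset U \text{ compact}} G(K)$; this is not needed in detail. What matters is that $\alpha^\ast\mathbb Z=\mathbb Z$, which gives the natural comparison map $H^i_{\mathrm{sheaf}}(S,\mathbb Z)\to H^i_{\mathrm{cond}}(S,\mathbb Z)$.

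To prove that the comparison map is an isomorphism, we compute both sides with the same resolution. We choose a hypercover $S_\bullet\to S$ in which each $S_n$ is extremally disconnected, for instance $S_0=\beta S$ and $S_{n}$ a Stone--\v{C}ech cover of the appropriate matching object. Each step is a surjection of compact Hausdorff spaces. The condensed side is computed by the complex $\Gamma(S_\bullet,\mathbb Z)$, as stated before the theorem, since extremally disconnected sets are projective, so $H^{>0}_{\mathrm{cond}}(S_n,\mathbb Z)=0$. On the sheaf side we need two facts.

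\begin{enumerate}
\item[{\rm (a)}] Profinite sets $T$ have $H^{>0}_{\mathrm{sheaf}}(T,\mathbb Z)=0$. This holds because open covers of $T$ refine to disjoint clopen covers, so \v{C}ech cohomology vanishes, and \v{C}ech cohomology agrees with sheaf cohomology by Godement.
\item[{\rm (b)}] Cohomological descent for proper surjective hypercovers of compact Hausdorff spaces. For a proper hypercover $f_\bullet\colon S_\bullet\to S$, the complex $\mathbb Z\to f_{0\ast}\mathbb Z\to f_{1\ast}\mathbb Z\to\cdots$ of sheaves on $S$ is exact. Exactness can be checked on stalks at $s\in S$. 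By proper base change, the stalk is the corresponding complex for the fibres $S_{\bullet,s}$, which form a hypercover of a point by profinite sets. Since higher direct images vanish by (a) and proper base change, the resulting spectral sequence degenerates. So $H^i_{\mathrm{sheaf}}(S,\mathbb Z)$ is the cohomology of $\Gamma(S_\bullet,\mathbb Z)$.
\end{enumerate}

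The main obstacle is the stalkwise exactness in (b). For a hypercover of a point by profinite sets $T_\bullet\to\ast$, we must show that $C(T_\bullet,\mathbb Z)$ (continuous maps) is exact in positive degrees. I would prove this by writing the hypercover as a cofiltered limit of hypercovers by finite sets. A finite surjective hypercover of a point is contractible as a simplicial set after applying $\mathbb Z[-]$ dually; equivalently, surjective hypercovers of sets admit a splitting up to homotopy, giving exactness. Then we pass to the filtered colimit, using that $C(\varprojlim T_j,\mathbb Z)=\varinjlim C(T_j,\mathbb Z)$. The technical point is arranging the finite approximations to be hypercovers level-wise compatibly. I would handle this by approximating each truncation inductively, using that surjections of profinite sets are cofiltered limits of surjections of finite sets.

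Finally, the maps are natural. The comparison is induced by the morphism of topoi, and it is compatible with the identifications through $\Gamma(S_\bullet,\mathbb Z)$. Independence of $\kappa$ follows from Proposition~\ref{prop:changekappa}.
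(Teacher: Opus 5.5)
Your argument is correct, but it puts the work on the opposite side from the paper.

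\textbf{The paper's route.} It first proves the profinite case on both sides. The condensed vanishing $H^{>0}_{\mathrm{cond}}(S,\mathbb Z)=0$ comes from \v{C}ech complexes of a single surjection $S'\to S$, which is trivially a cofiltered limit of split surjections of finite sets. It then checks $R\alpha_\ast\mathbb Z=\mathbb Z$ stalkwise. It writes $(R\alpha_\ast\mathbb Z)_s=\varinjlim_{V\ni s}R\Gamma_{\mathrm{cond}}(V,\mathbb Z)$ over closed neighbourhoods and computes each term by $\Gamma(S_\bullet\times_S V,\mathbb Z)$. Passing to the colimit gives $\Gamma(S_\bullet\times_S\{s\},\mathbb Z)$, which computes $R\Gamma_{\mathrm{cond}}(\{s\},\mathbb Z)=\mathbb Z$.

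\textbf{Your route.} You take an extremally disconnected hypercover, so the condensed side is immediate from projectivity. Everything else moves to the topological side: Godement for profinite sets, proper base change including higher direct images, and cohomological descent.

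\textbf{Comparison.} Both proofs rest on the same core fact: $C(T_\bullet,\mathbb Z)$ is exact in positive degrees for a profinite hypercover $T_\bullet$ of a point. The paper gets this for free from hypercover descent in the condensed topos plus the easy \v{C}ech approximation. You instead must approximate an entire hypercover by finite hypercovers, which is exactly the step you flag: surjectivity onto matching objects need not hold at finite stages without care. The paper makes the same kind of approximation without detail in the proof of Theorem~\ref{thm:enoughcontfcts}.

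You could sidestep this step. After proper base change, the \v{C}ech nerve of a surjection $T\to\ast$ of profinite sets has a continuous extra degeneracy given by any chosen point. So proper surjections are of universal cohomological descent, and the standard (Deligne) passage from \v{C}ech nerves to hypercovers gives descent for $S_\bullet\to S$.

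\textbf{What each buys.} Your route yields a purely topological statement of independent interest: sheaf cohomology of a compact Hausdorff space is computed by any profinite hypercover. The paper's formulation as $R\alpha_\ast\mathbb Z=\mathbb Z$ makes it automatic that the \emph{natural} comparison map is the isomorphism. In your version, the compatibility of that map with the two identifications through $\Gamma(S_\bullet,\mathbb Z)$ is a routine naturality check that you assert but should spell out.
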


After the proof of the theorem, we will simply write $H^i(S,\mathbb Z)$ to denote any of $H^i_{\mathrm{Cech}}(S,\mathbb Z)$, $H^i_{\mathrm{sheaf}}(S,\mathbb Z)$ and $H^i_{\mathrm{cond}}(S,\mathbb Z)$.

\begin{proof} We check the result first in the case that $S$ is a profinite set. Write $S=\varprojlim_j S_j$ as a cofiltered limit of finite sets $S_j$. Then $H^i_{\mathrm{sheaf}}(S,\mathbb Z)=0$ for $i>0$ and the $H^0$ is given by the continuous maps from $S$ to $\mathbb Z$, i.e.~by $\varinjlim_j \Gamma(S_j,\mathbb Z)$; this also agrees with $H^0_{\mathrm{cond}}(S,\mathbb Z)$ by definition. To check that $H^i_{\mathrm{cond}}(S,\mathbb Z)=0$ for $i>0$, it suffices to check that for any surjection $S^\prime\to S$ of profinite sets, the \v{C}ech complex
\[
0\to \Gamma(S,\mathbb Z)\to \Gamma(S^\prime,\mathbb Z)\to \Gamma(S^\prime\times_S S^\prime,\mathbb Z)\to \ldots
\]
is exact. To verify this, write $S^\prime\to S$ as a cofiltered limit of surjections $S^\prime_j\to S_j$ of finite sets, and pass to the filtered colimit of the exact sequences
\[
0\to \Gamma(S_j,\mathbb Z)\to \Gamma(S^\prime_j,\mathbb Z)\to \Gamma(S^\prime_j\times_{S_j} S^\prime_j,\mathbb Z)\to \ldots
\]
(each of which is exact as $S_j^\prime\to S_j$ is split).

To prove the theorem in general, note that one can define a natural map from the left side to the right side by noting that there is a natural map $\alpha$ of topoi from sheaves on compact Hausdorff spaces over $S$ to sheaves on the topological space $S$. We need to see that $R\alpha_\ast \mathbb Z = \mathbb Z$ in the derived category of abelian sheaves on $S$. We can check this on stalks, so fix $s\in S$. The open neighborhoods $U$ of $s$ are cofinal with the closed neighborhoods $V$ of $s$. This implies
\[
(R\alpha_\ast \mathbb Z)_s = \varinjlim_{U\ni s} R\Gamma(U,R\alpha_\ast \mathbb Z) = \varinjlim_{U\ni s} R\Gamma_{\mathrm{cond}}(U,\mathbb Z) = \varinjlim_{V\ni s} R\Gamma_{\mathrm{cond}}(V,\mathbb Z)\ .
\]
Pick a simplicial hypercover $S_\bullet\to S$ by profinite sets $S_i$. Then $R\Gamma_{\mathrm{cond}}(S,\mathbb Z)$ is computed by the complex
\[
0\to \Gamma(S_0,\mathbb Z)\to \Gamma(S_1,\mathbb Z)\to \ldots
\]
by the vanishing of $H^i_{\mathrm{cond}}(S_j,\mathbb Z)$ for $i>0$. Similarly, for any closed $V\subset S$, one can compute $R\Gamma(V,\mathbb Z)$ by
\[
0\to \Gamma(S_0\times_S V,\mathbb Z)\to \Gamma(S_1\times_S V,\mathbb Z)\to \ldots\ .
\]
Passing to the filtered colimit over all closed neighborhoods $V$ of $s$, one gets the complex
\[
0\to \Gamma(S_0\times_S \{s\},\mathbb Z)\to \Gamma(S_1\times_S \{s\},\mathbb Z)\to \ldots
\]
which computes $R\Gamma(\{s\},\mathbb Z) = \mathbb Z$. In other words,
\[
(R\alpha_\ast \mathbb Z)_s = \varinjlim_{V\ni s} R\Gamma_{\mathrm{cond}}(V,\mathbb Z) = \mathbb Z\ ,
\]
as desired.
\end{proof}

We need another computation, namely the cohomology of the condensed abelian group $\mathbb R$ on compact Hausdorff $S$.

\begin{theorem}\label{thm:enoughcontfcts} For any compact Hausdorff space $S$, one has
\[
H^i_{\mathrm{cond}}(S,\mathbb R)=0
\]
for $i>0$, while $H^0(S,\mathbb R)=C(S,\mathbb R)$ is the space of continuous real-valued functions on $S$.

More precisely, if $S_\bullet\to S$ is any simplicial hypercover of $S$ by profinite sets $S_i$, the complex of Banach spaces
\[
0\to C(S,\mathbb R)\to C(S_0,\mathbb R)\to C(S_1,\mathbb R)\to \ldots
\]
satisfies the following quantitative version of exactness: if $f\in C(S_i,\mathbb R)$ satisfies $df=0$ and $\epsilon>0$, then there exists $g\in C(S_{i-1},\mathbb R)$ with $dg=f$ and such that $||g||\leq (1+\epsilon)||f||$ (where we endow all $C(-,\mathbb R)$ with the supremum norm).
\end{theorem}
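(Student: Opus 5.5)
First I will reduce to the quantitative statement. The real-valued functions on $S_i$ are exactly $\underline{\mathbb R}(S_i)=C(S_i,\mathbb R)$, so $R\Gamma_{\mathrm{cond}}(S,\mathbb R)$ is computed by $C(S_\bullet,\mathbb R)$ provided $H^j_{\mathrm{cond}}(S_i,\mathbb R)=0$ for $j>0$ and profinite $S_i$. I will first establish the case $S$ profinite together with the quantitative bound for one-step \v{C}ech complexes of surjections $S'\to S$ of profinite sets. With that in hand, the standard hypercover spectral sequence argument, as in the proof of Theorem~\ref{thm:cohomcorrect}, identifies $H^i_{\mathrm{cond}}(S,\mathbb R)$ with the cohomology of the displayed complex. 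The vanishing statement therefore follows from the quantitative exactness, and $H^0=C(S,\mathbb R)$ is immediate from the definition of $\underline{\mathbb R}$.

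For profinite $S=\varprojlim S_j$ and a surjection $S'\to S$, I will write $S'\to S$ as a cofiltered limit of surjections of finite sets $S'_j\to S_j$, as in the proof of Theorem~\ref{thm:cohomcorrect}. At a finite level the \v{C}ech complex of $\mathbb R$-valued functions has a contracting homotopy coming from a section $\sigma:S_j\to S'_j$. This homotopy is given by pulling back along maps induced by $\sigma$, so it has norm $\le 1$ for the sup norm. Hence any cocycle that comes from a finite level has a primitive of the same norm.

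A general continuous cocycle $f$ on $S'^{\times_S (i+1)}$ is only uniformly approximable by locally constant functions. So I will use the $\epsilon$: choose a locally constant $f_0$ with $\|f-f_0\|\le\delta\|f\|$. Then $f_0$ comes from some finite level $j$. Since $df=0$, the coboundary $df_0$ is small, and I will correct $f_0$ by projecting it to a cocycle at level $j$. To do this I will use the finite-level homotopy $h$ and replace $f_0$ by $dh f_0$, which is a cocycle with $\|f_0-dhf_0\|=\|hdf_0\|\le\|df_0\|\le (i+2)\delta\|f\|$. Then $hf_0$ is a primitive at level $j$ with norm $\le\|f\|(1+\delta)$, and the remaining error $f-dhf_0$ is a cocycle of norm $O(\delta)\|f\|$.

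Iterating this on the error with $\delta_n=\delta^n$ gives a geometric series of primitives. Its sum converges in the Banach space $C(S'^{\times_S i},\mathbb R)$ to a $g$ with $dg=f$ and $\|g\|\le(1+\epsilon)\|f\|$ for suitable small $\delta$. This proves the quantitative claim for \v{C}ech covers of profinite sets, and hence the vanishing of $H^{>0}_{\mathrm{cond}}$ on profinite sets.

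The main obstacle is passing from \v{C}ech nerves to arbitrary hypercovers $S_\bullet\to S$ with $S$ merely compact Hausdorff, while keeping the norm bound and not just exactness. For the compact Hausdorff case I plan to imitate the stalk argument of Theorem~\ref{thm:cohomcorrect}. Exactness follows once $R\alpha_*\mathbb R$ is identified with the sheaf of continuous real functions, which has vanishing higher cohomology on compact Hausdorff $S$ by partitions of unity (fineness). For the norm bound I would first try the following. Given a cocycle $f$ on $S_i$, use compactness to find a finite closed cover $\{V_k\}$ of $S$ on whose pieces $f$ is, up to $\delta\|f\|$, pulled back from the fibre over a point, where the profinite argument applies. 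Then glue the local primitives with a partition of unity subordinate to slightly larger opens. Convex combinations preserve the sup-norm bound, and the gluing error is again $O(\delta)\|f\|$, so the same geometric-series iteration finishes. I expect making this gluing compatible with the simplicial structure of a general hypercover, rather than a \v{C}ech nerve, to be the delicate point. If it resists, I would fall back on an induction over the skeleta of $S_\bullet$, in which each step is a surjection of profinite sets $S_{i}\to (\mathrm{cosk}_{i-1}S_\bullet)_{i}$.
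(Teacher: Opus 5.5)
Your outline follows the paper's architecture: a split finite case with a norm-one contraction, then approximation plus a geometric series for profinite sets, then local solutions glued by a partition of unity for compact Hausdorff $S$. But as written it does not prove the quantitative claim for \emph{arbitrary} hypercovers, which is what the theorem asserts.

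Your finite-level contracting homotopy comes from a section $\sigma$ of a single surjection, so it exists only for \v{C}ech nerves. This is not merely a loss of generality. In the compact Hausdorff step the local input is the fibre $S_\bullet\times_S\{s\}\to\{s\}$, and for a general $S_\bullet$ this is a general hypercover of a point by profinite sets, not a \v{C}ech nerve. (For the vanishing of $H^i_{\mathrm{cond}}(S,\mathbb R)$ alone, \v{C}ech nerves of one surjection from a profinite set would suffice, since their fibres are again \v{C}ech nerves.)

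The missing idea is that every hypercover of a finite set by finite sets splits. Fibrewise it is a trivial Kan fibration over a point, so it admits an extra degeneracy $s_{-1}$. Then $h=s_{-1}^\ast$ is a contracting homotopy of the augmented cochain complex given by a single pullback, hence of norm $\leq 1$. Combine this with writing a profinite hypercover of a profinite set as a cofiltered limit of hypercovers of finite sets, and your approximation-and-iteration argument goes through verbatim. Your skeleton-induction fallback does not obviously yield a $(1+\epsilon)$ bound.

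You have also misplaced the difficulty in the compact Hausdorff step. Compatibility of the gluing with the simplicial structure is automatic: the $\rho_j$ are functions on $S$ and every face map is a map over $S$, so $d(\rho_j g)=\rho_j\,dg$ for any hypercover. The real local mechanism has three steps:
\begin{enumerate}
\item[(a)] Solve $f|_{S_i\times_S\{s\}}=dg_s$ exactly on the fibre with $\|g_s\|\leq(1+\epsilon)\|f\|$.
\item[(b)] Extend $g_s$ to $\tilde g_s$ on $S_{i-1}$ by Tietze without increasing the norm.
\item[(c)] Use that $S_i\to S$ is closed to find an open $U_s\ni s$ over which $\|d\tilde g_s-f\|\leq\epsilon\|f\|$.
\end{enumerate}
Your phrase that $f$ is ``pulled back from the fibre over a point'' has no meaning, since there is no map $S_i\times_S V\to S_i\times_S\{s\}$.

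Finally, the stalk argument of Theorem~\ref{thm:cohomcorrect} does not transfer to $\mathbb R$-coefficients. Here $\varinjlim_{V\ni s} C(S_n\times_S V,\mathbb R)$ is a space of germs along the fibre, not $C(S_n\times_S\{s\},\mathbb R)$. So identifying $R\alpha_\ast\mathbb R$ with the sheaf of continuous functions is not available this way. Exactness has to come from the analytic argument itself, which gives it directly.
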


\begin{proof} Assume first that $S$ and all $S_i$ are finite. Then the hypercover splits, which explicitly means that the contravariant functor from linearly ordered finite sets to sets (sending the empty set to $S$ and $\{0,\ldots,i\}$ to $S_i$) extends to a contravariant functor from pointed (at the minimal element) linearly ordered finite sets to sets (where we map linearly ordered finite sets to pointed linearly ordered finite sets via adjoining an extra minimal element). By the Dold-Kan correspondence, this induces a contracting chain homotopy of
\[
0\to C(S,\mathbb R)\to C(S_0,\mathbb R)\to C(S_1,\mathbb R)\to \ldots
\]
where the contracting homotopy is induced by pullback along certain maps $S_{i-1}\to S_i$; in particular, the contracting homotopy has norm $\leq 1$. Thus, if $f\in C(S_i,\mathbb R)$ satisfies $df=0$, then $f=d(h_i(f))$ for the map $h_i: C(S_i,\mathbb R)\to C(S_{i-1},\mathbb R)$ induced by the contracting homotopy $h$. As $||h_i(f)||\leq ||f||$, this gives the desired result.

Now if $S$ and all $S_i$ are profinite, one can write the hypercover $S_\bullet\to S$ as a cofiltered limit of hypercovers $S_{\bullet,j}\to S_j$ of finite sets $S_j$ by finite sets $S_{i,j}$, along the cofiltered index category $J\ni j$. Passing to the filtered colimit over $j$, we get an exact sequence of normed $\mathbb R$-vector spaces
\[
0\to \varinjlim_j C(S_j,\mathbb R)\to \varinjlim_j C(S_{0,j},\mathbb R)\to \varinjlim_j C(S_{1,j},\mathbb R)\to \ldots
\]
with the property that if $f\in \varinjlim_j C(S_{i,j},\mathbb R)$ satisfies $df=0$ then there is $g\in \varinjlim_j C(S_{i-1,j},\mathbb R)$ with $dg=f$ and $||g||\leq ||f||$.

We can now pass to the completion. More precisely, note that $\varinjlim_j C(S_{i,j},\mathbb R)\to C(S_i,\mathbb R)$ is injective, dense, and isometric, and the target is the completion of the source.\footnote{The individual maps $C(S_{i,j},\mathbb R)\to C(S_i,\mathbb R)$ may not be isometric embeddings, but they are in the pro-sense, as the system $(S_{i,j})_j$ is pro-isomorphic to a system with surjective transition maps. We will pretend the individual maps are isometric embeddings; this can be corrected by increasing $j$ when necessary.} Take $f\in C(S_i,\mathbb R)$ with $df=0$, $f\neq 0$. We can find some $j$ and $f_j\in C(S_{i,j},\mathbb R)$ such that $||f-f_j||\leq \epsilon ||f||$. In particular, this implies that $d(f_j) = - d(f-f_j)$ has norm $\leq (i+2)\epsilon ||f||$ (as $d=d_i$ is an alternating sum of $i+2$ restriction maps along the face maps $S_{i+1}\to S_i$), so by the above we can replace $f_j$ by another function to ensure $df_j=0$ while retaining $||f-f_j||\leq (i+3)\epsilon ||f||$; up to replacing $\epsilon$, we can thus ensure $||f-f_j||\leq \epsilon ||f||$ and $df_j=0$. On the other hand, we can now find $g_j\in C(S_{i-1,j},\mathbb R)$ with $dg_j= f_j$ and
\[
||g_j||\leq ||f_j||\leq (1+\epsilon)||f||\ .
\]
The condition $dg_j = f_j$ implies
\[
||f-dg_j||\leq \epsilon ||f||\ .
\]
Setting $g^{(0)} = g_j$ and $f^{(1)} = f - dg^{(0)}$ and repeating the process then yields a convergent sequence
\[
f = d(g^{(0)} + g^{(1)} + \ldots) = dg
\]
with $||g^{(m)}||\leq (1+\epsilon) ||f^{(m)}||\leq \epsilon^m (1+\epsilon) ||f||$ and so
\[
||g||\leq \frac{(1+\epsilon)}{1-\epsilon}||f||\ ;
\]
up to redefining $\epsilon$, this is the claim.

Now assume that $S$ is any compact Hausdorff space with a hypercover $S_\bullet\to S$ by profinite sets $S_i$. Let $f\in C(S_i,\mathbb R)$ satisfy $df=0$ and assume $f\neq 0$. For each $s\in S$, the restriction $f_s = f|_{S_i\times_S \{s\}}\in C(S_i\times_S\{s\},\mathbb R)$ is the boundary $f_s=dg_s$ of some $g_s\in C(S_{i-1}\times_S \{s\},\mathbb R)$ with $||g_s||\leq (1+\epsilon)||f_s||$, by the profinite case already handled. We may find an extension $\tilde{g}_s\in C(S_{i-1},\mathbb R)$ of $g_s$ with $||\tilde{g}_s||\leq ||g_s||$ by Tietze's extension theorem. Moreover, we can find an open neighborhood $U_s\subset S$ of $s$ such that
\[
||(d\tilde{g}_s-f)|_{S_i\times_S U}||\leq \epsilon ||f||\ .
\]
By compactness, finitely many of the $U_s$ cover $S$; so choose $U_1,\ldots,U_n$ covering $S$ and $g_1,\ldots,g_n\in C(S_{i-1},\mathbb R)$ with $||g_j||\leq (1+\epsilon)||f||$ for $j=1,\ldots,n$ such that
\[
||(dg_j-f)|_{S_i\times_S U_j}||\leq \epsilon ||f||\ .
\]
Choose a partition of unity $1=\sum_{j=1}^n \rho_j$ for functions $\rho_j\in C(S,\mathbb R)$ with nonnegative values and support in $U_j$, and set $g^{(0)} = \sum_{j=1}^n \rho_j g_j$. Then
\[
||g^{(0)}||\leq (1+\epsilon)||f||
\]
and
\[
||dg^{(0)}-f|| = ||\sum_{j=1}^n \rho_j(dg_j-f)||\leq \epsilon ||f||\ .
\]
Setting $f^{(1)} = f - dg^{(0)}$ we have achieved that $||f^{(1)}||\leq \epsilon ||f||$ while $||g^{(0)}||\leq (1+\epsilon)||f||$. Repeating the process, we can write
\[
f = d(g^{(0)} + g^{(1)} + \ldots) = dg
\]
where $||g^{(m)}||\leq (1+\epsilon)||f^{(m)}||\leq \epsilon^m (1+\epsilon)||f||$; in particular
\[
||g||\leq \frac{1+\epsilon}{1-\epsilon}||f||\ .
\]
Up to redefining $\epsilon$, this is exactly the claim.
\end{proof}
\newpage

\section{Lecture IV: Locally compact abelian groups}

A particularly nice class of topological abelian groups are the locally compact abelian groups $A$. Recall that a topological space $X$ is locally compact if any point $x\in X$ has a basis of compact Hausdorff neighborhoods (that are not necessarily open); a topological abelian group $A$ is then locally compact if it is locally compact as a topological space. In particular, local fields such as $\mathbb R$ and $\mathbb Q_p$ are locally compact.

We recall the following structure result on locally compact abelian groups.\footnote{We refer to \cite{HoffmannSpitzweck} and the references therein for basic results on locally compact abelian groups, and their derived category.}

\begin{theorem}\leavevmode
\begin{enumerate}
\item[{\rm (i)}] Let $A$ be a locally compact abelian group. Then there is an integer $n$ and an isomorphism $A\cong \mathbb R^n\times A^\prime$ where $A^\prime$ admits a compact open subgroup. In particular, $A^\prime$ is an extension of a discrete abelian group by a compact abelian group.
\item[{\rm (ii)}] The Pontrjagin duality functor $A\mapsto \mathbb D(A) = \Hom(A,\mathbb T)$, where $\mathbb T=\mathbb R/\mathbb Z$ is the circle group, takes values in locally compact abelian groups, and induces a contravariant autoduality of the category of locally compact abelian groups. The biduality map $A\to \mathbb D(\mathbb D(A))$ is an isomorphism.
\item[{\rm (iii)}] The Pontrjagin duality functor $A\mapsto \mathbb D(A)$ restricts to a contravariant duality between compact abelian groups and discrete abelian groups.
\end{enumerate}
\end{theorem}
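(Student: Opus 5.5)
This is the classical structure theory of locally compact abelian groups, which the paper recalls rather than proves (with a pointer to \cite{HoffmannSpitzweck}). The plan is the standard route through Haar measure and harmonic analysis: first establish enough structure to get (i), then prove duality for the compact and discrete cases, and finally assemble the general case from these pieces via (i).

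For (i), we first treat a compactly generated locally compact abelian group $A$. We will show, using van Kampen's theorem on compactly generated groups, that $A\cong \mathbb R^n\times \mathbb Z^m\times K$ with $K$ compact; the proof builds a lattice from a compact generating neighborhood and uses the structure of closed subgroups of $\mathbb R^n$. A general $A$ contains an open compactly generated subgroup $A_0$, generated by a compact neighborhood of $0$, and this subgroup is of the above form. We then split off the $\mathbb R^n$ factor from all of $A$. The factor $\mathbb R^n$ is divisible, hence injective as an abstract group, and it is also a topological direct summand: we take the projection $A_0\to\mathbb R^n$ and extend it continuously to $A$. This extension is possible because $A/A_0$ is discrete, so only an algebraic extension problem remains, and that is solved by divisibility. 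The complement $A'$ contains the compact open subgroup $\mathbb Z^m\times K$ modulo the $\mathbb Z^m$ part. More carefully, we replace $A_0$ by the preimage of $K$ together with the part coming from $\mathbb Z^m$, which is open with discrete quotient. This gives the extension of a discrete group by a compact group.

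For (ii) and (iii), we start from the general facts that $\mathbb D(A)$ with the compact-open topology is locally compact, and that $\mathbb D$ exchanges compact and discrete groups. The latter is immediate: characters of a discrete group form a closed subset of $\mathbb T^A$, which is compact, and characters of a compact group form a discrete set since a small neighbourhood of $0$ in $\mathbb T$ contains no nontrivial subgroup. The key analytic input is that characters separate points. For compact $A$ we obtain this from Peter--Weyl, or from Haar measure together with spectral theory of the compact self-adjoint convolution operators on $L^2(A)$, whose eigenspaces give characters. Separation of points, combined with injectivity of $\mathbb T$ as a divisible group (so characters extend from subgroups), gives biduality for discrete groups and for compact groups, which proves (iii). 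We check $\mathbb R$ and $\mathbb Z$ by hand. The general case then follows via (i) and the five lemma applied to $0\to K\to A'\to D\to 0$. For this we need $\mathbb D$ to be exact on such sequences, which reduces to extending characters from open or closed subgroups and to the open mapping theorem for $\sigma$-compact locally compact groups.

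The main obstacle is the separation of points by characters for compact groups, since it is the only genuinely analytic step, requiring Haar measure and the spectral theorem. The topological bookkeeping that the biduality map is a homeomorphism and not just a bijection is the second delicate point. I would handle it by reducing to the compact and discrete cases, where the map is a continuous bijection between compact Hausdorff spaces in the compact case and is trivially a homeomorphism in the discrete case.
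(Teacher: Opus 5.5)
The paper gives no proof of this theorem; it only recalls the classical results, with a pointer to \cite{HoffmannSpitzweck}. Your architecture is the standard one: duality for compact and discrete groups, then the general case via (i) and the five lemma on $0\to K\to A^\prime\to D\to 0$ with $K$ compact open. However, two steps do not go through as you describe them.

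\textbf{The compactly generated case of (i).} The ``lattice'' part (finitely many translates of a compact generating neighbourhood, plus Weil's lemma) only produces a discrete $\Lambda\cong\mathbb Z^k$ with $A/\Lambda$ compact. Nothing in it produces the factor $\mathbb R^n$ or controls the compact part. The missing idea is that the compact group $A/\Lambda$ has arbitrarily small closed subgroups whose quotients are closed subgroups of tori. Equivalently, there are finitely many characters whose common kernel lies inside any given neighbourhood of $0$, and this is precisely Peter--Weyl. Lifting such a subgroup to a compact subgroup $K_0\subset A$ (possible since $\Lambda$ is discrete) makes $A/K_0$ an abelian Lie group. Only then does the structure of closed subgroups of $\mathbb R^n$ identify it, and you still have to split off the vector part over the compact kernel, which is not formal.

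So the analytic input is needed already for (i). It is not ``the only genuinely analytic step'', and the order must be: Peter--Weyl and (iii) first, then van Kampen, (i), (ii). If you simply cite van Kampen's theorem instead, make sure the cited proof does not itself go through general Pontrjagin duality, as several textbook proofs do. A smaller point: $\mathbb Z^m\times K$ is not compact. The compact open subgroup of $A^\prime$ is $K$ itself, which is open because $A^\prime\cap A_0=\mathbb Z^m\times K$ is open in $A^\prime$.

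\textbf{Surjectivity of biduality in (iii).} Separation of points together with injectivity of $\mathbb T$ gives only injectivity of the biduality maps. Surjectivity for compact $C$ can be attempted by applying Peter--Weyl to the quotient of $\mathbb D(\mathbb D(C))$ by the image of $C$. But to get a contradiction you need biduality for the discrete group $\mathbb D(C)$. The symmetric argument for discrete groups needs biduality for compact ones, so the reasoning is circular and you need an independent argument for one side.

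For discrete $D$ this works as follows. By continuity, a character $\psi$ of $\mathbb D(D)$ maps the subgroup $F^\perp$ of characters vanishing on some finite $F\subset D$ into a neighbourhood of $0$ containing no nontrivial subgroup, and hence kills $F^\perp$. So $\psi$ factors through $\mathbb D(D)/F^\perp\cong\mathbb D(\langle F\rangle)\cong\mathbb T^r\times(\text{finite})$, where it is evaluation at an element of $\langle F\rangle$, which you check by hand. Alternatively, use orthogonality of characters for Haar measure together with Stone--Weierstrass.

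With this fixed, your five-lemma step works. It only needs extension of characters from the open subgroup $K$, and no open mapping theorem: $K$, $K^\perp=\mathbb D(D)$ and the image of $\mathbb D(\mathbb D(K))$ are open subgroups, and all the topological claims follow from that.
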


Here, one endows $\Hom(A,B)$ with the compact-open topology. This matches the passage to condensed abelian groups:

\begin{proposition} Let $A$ and $B$ be Hausdorff topological abelian groups and assume that $A$ is compactly generated. Then there is a natural isomorphism of condensed abelian groups
\[
\intHom(\underline{A},\underline{B})\cong \underline{\Hom(A,B)}\ .
\]
\end{proposition}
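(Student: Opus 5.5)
We compare values on an arbitrary extremally disconnected (or just profinite) set $S$, and show both sides are naturally the same subset of maps $S\times A\to B$. By definition of the internal Hom,
\[
\intHom(\underline{A},\underline{B})(S)=\Hom(\mathbb Z[S]\otimes \underline{A},\underline{B}),
\]
which is the set of maps of condensed sets $\underline{S}\times\underline{A}\to\underline{B}$ that are additive in the second variable. Here additivity means the two maps $\underline{S}\times\underline{A}\times\underline{A}\to\underline{B}$ agree. On the other side, $\underline{\Hom(A,B)}(S)$ is the set of continuous maps $S\to\Hom(A,B)$, where $\Hom(A,B)$ carries the compact-open topology. So the proof reduces to two identifications.

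First, $\underline{S}\times\underline{A}=\underline{S\times A}$, since $\underline{(-)}$ is a right adjoint by Proposition~\ref{prop:condtopkappa} and so preserves products. Next, $S\times A$ is again $\kappa$-compactly generated: the product of a compact Hausdorff space with a compactly generated space is compactly generated, which is the classical fact for $k$-spaces, with a routine check of the cardinal bounds. The fully faithful part of Proposition~\ref{prop:condtopkappa} then identifies maps $\underline{S}\times\underline{A}\to\underline{B}$ with continuous maps $S\times A\to B$. Additivity in the $A$-variable can be checked on underlying points, because maps into $\underline{B}$ are determined by the underlying map of sets (faithfulness). Hence the left side equals the set of continuous maps $S\times A\to B$ that are additive in $A$.

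Second, we need the exponential law: continuous maps $S\times A\to B$ correspond to continuous maps $S\to C(A,B)$ with the compact-open topology. Additivity then cuts this down to $S\to\Hom(A,B)$ with the subspace topology. The adjunction holds in one direction whenever $S$ is compact Hausdorff (locally compact Hausdorff suffices): the evaluation map $C(S,B)\times S\to B$ is continuous, so the correspondence works with $S$ as the exponent variable. Here I would instead use that $S$ is the parameter space. A map $S\to C(A,B)$ is continuous if and only if the adjoint $S\times A\to B$ is continuous on $S\times K$ for every compact $K\subset A$. This is the standard exponential law for compact $K$, applied to $C(K,B)$ with $B$ Hausdorff, since the compact-open topology on $C(A,B)$ is the initial topology for the restrictions to compacts. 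Because $S\times A$ is compactly generated and every compact subset of $S\times A$ lies in some $S\times K$, continuity on all $S\times K$ gives continuity on $S\times A$.

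The main obstacle is exactly this last point. We must make sure that compact generation of $A$, together with the Hausdorff hypotheses, gives both that $S\times A$ is compactly generated and that the compact-open topology restricted from compacts detects continuity. Hausdorffness of $B$ is needed so that the compact-open topology has a subbasis from compact Hausdorff sets. Hausdorffness of $A$ is needed so that images of compact Hausdorff spaces in $A$ are compact Hausdorff, which lets "compact subsets" replace "maps from compact Hausdorff spaces" in the definition of compact generation.

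Finally, naturality in $S$ is clear, so the two sheaves agree. The group structures agree too: on both sides they come from pointwise addition in $B$.
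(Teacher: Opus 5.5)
Your argument is correct and is essentially the paper's proof: both identify $\intHom(\underline{A},\underline{B})(S)$ with continuous maps $S\times A\to B$ that are homomorphisms in the $A$-variable, and then apply the exponential law for the compact-open topology. The paper encodes additivity through the partial resolution $\mathbb Z[\underline{A}\times\underline{A}]\to\mathbb Z[\underline{A}]\to\underline{A}\to 0$ and argues injectivity and surjectivity separately, while you justify the compact generation of $S\times A$ and the exponential law, which the paper simply asserts. One small correction: those topological steps use only that $A$ is Hausdorff and compactly generated, since the compacta in the compact-open topology lie in $A$ and evaluation $C(K,B)\times K\to B$ is continuous for compact Hausdorff $K$ and arbitrary $B$. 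The Hausdorffness of $B$ is instead what makes $\underline{B}$ and $\underline{\Hom(A,B)}$ genuine condensed sets, cf.~Warning~\ref{war:septopcardinal}.
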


\begin{proof} First, we construct a map
\[
\intHom(\underline{A},\underline{B})\to \underline{\Hom(A,B)}\ .
\]
For any profinite set $S$, the $S$-valued points of the left are $\Hom(\underline{A}\otimes \mathbb Z[S],\underline{B})$. Such a map induces a map $S\to \Hom(A,B)$ by evaluating at points. We claim that this map is continuous for the compact open topology on $\Hom(A,B)$. But $\mathbb Z[\underline{A}\times S] = \mathbb Z[\underline{A}]\otimes \mathbb Z[S]$ surjects onto $\underline{A}\otimes \mathbb Z[S]$, so any map $\underline{A}\otimes \mathbb Z[S]\to \underline{B}$ determines and is determined by a map of condensed sets $\underline{A\times S}=\underline{A}\times S\to \underline{B}$, i.e.~continuous maps $A\times S\to B$. The latter are precisely continuous maps from $S$ to the topological space of continuous maps from $A$ to $B$ equipped with the compact-open topology, as desired.

The preceding paragraph also shows that the map
\[
\intHom(\underline{A},\underline{B})\to \underline{\Hom(A,B)}
\]
is injective. For surjectivity, we use the partial resolution
\[
\mathbb Z[\underline{A}\times \underline{A}]\to \mathbb Z[\underline{A}]\to \underline{A}\to 0\ ,
\]
where the first map sends a generator $[(a_1,a_2)]$ to $[a_1+a_2]-[a_1]-[a_2]$. Given a profinite set $S$ and a map $S\to \Hom(A,B)$ that is continuous for the compact-open topology, we get a map $\mathbb Z[\underline{A}]\otimes \mathbb Z[S]\to \underline{B}$ by reversing the steps above. We need to that this factors over $\underline{A}\otimes \mathbb Z[S]$. Equivalently, we need to see that the induced map
\[
\mathbb Z[\underline{A}\times \underline{A}]\otimes \mathbb Z[S]\to \mathbb Z[\underline{A}]\otimes \mathbb Z[S]\to \underline{B}
\]
vanishes. This is equivalent to a map $\mathbb Z[\underline{A}\times \underline{A}\times S]\to \underline{B}$, i.e.~a map of condensed sets $\underline{A\times A\times S} = \underline{A}\times \underline{A}\times S\to \underline{B}$, i.e.~a continuous map $A\times A\times S\to B$. This map is trivial as for all $s\in S$, the map $A\to B$ parametrized by $s$ is a group homomorphism.
\end{proof}

One can compute all $R\intHom$'s between the condensed abelian groups associated to locally compact abelian groups. The key computation is the following.

\begin{theorem}\label{thm:RHomoutofcompact} Let $A=\prod_I \mathbb T$ be the compact condensed abelian group, where $I$ is any (possibly infinite) set.
\begin{enumerate}
\item[{\rm (i)}] For any discrete abelian group $M$,
\[
R\intHom(A,M) = \bigoplus_I M[-1]\ ,
\]
where the map $\bigoplus_I M[-1]\to R\intHom(A,M)$ is induced by the maps
\[
M[-1] = R\intHom(\mathbb Z[1],M)\to R\intHom(\mathbb R/\mathbb Z,M)\xrightarrow{p_i^\ast} R\intHom(\prod_I \mathbb R/\mathbb Z,M) = R\intHom(A,M)\ ,
\]
using pullback under the projection $p_i: \prod_I \mathbb R/\mathbb Z\to \mathbb R/\mathbb Z$ to the $i$-th factor, $i\in I$.
\item[{\rm (ii)}]
\[
R\intHom(A,\mathbb R)=0\ .
\]
\end{enumerate}
\end{theorem}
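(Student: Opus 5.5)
The plan is to resolve $A$ functorially by free condensed abelian groups and reduce everything to the cohomology of compact Hausdorff spaces, which the previous lecture computes. I would use a functorial resolution of Breen--Deligne type,
\[
\ldots\to \bigoplus_{j=1}^{n_i}\mathbb Z[A^{r_{i,j}}]\to\ldots\to \mathbb Z[A^2]\to \mathbb Z[A]\to A\to 0,
\]
whose terms are finite sums of $\mathbb Z[A^r]$ and whose differentials are universal formulas built from sums and projections. Its existence I would take as an input, since it is not established earlier in the paper. Because the resolution is functorial in $A$, every endomorphism of $A$, and also the addition map $A\times A\to A$, acts compatibly on it.

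Fix an extremally disconnected $S$. Each $\mathbb Z[A^r]\otimes\mathbb Z[S]=\mathbb Z[A^r\times S]$ satisfies
\[
R\Hom(\mathbb Z[A^r\times S],M)=R\Gamma_{\mathrm{cond}}(A^r\times S,M).
\]
The filtration by degree in the resolution then gives a spectral sequence
\[
E_1^{i,j}=\bigoplus H^j_{\mathrm{cond}}(A^{r_{i,\ast}}\times S,M)\Rightarrow \Ext^{i+j}(A\otimes\mathbb Z[S],M).
\]
I evaluate the $E_1$ terms using the earlier results:
\begin{enumerate}
\item[{\rm (a)}] For discrete $M$, Theorem~\ref{thm:cohomcorrect} together with Proposition~\ref{prop:cohomcircle} and the universal coefficient theorem gives $H^j(A^r\times S,M)=C(S,\wedge^j(\bigoplus_{I\times r}\mathbb Z)\otimes M)$. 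The torsion Tor terms vanish because $\wedge^\ast$ of a free module is free; $A^r$ is again a product of circles and $S$ is profinite.
\item[{\rm (b)}] For $M=\mathbb R$, Theorem~\ref{thm:enoughcontfcts} kills all $j>0$. Only the row $E_1^{i,0}=C(A^{r}\times S,\mathbb R)$ survives.
\end{enumerate}

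For (i), I then analyse the $E_1$ page row by row as a functor of $A$. The $j$-th row is the resolution complex with $\mathbb Z[A^r]$ replaced by $H^j(A^r)$, which is a functor of degree $j$ in $H^1(A)=\bigoplus_I\mathbb Z$. In row $1$ this functor is additive, since $H^1(A^r)=H^1(A)^{\oplus r}$. Exactness of the resolution on additive inputs should then make this row's $E_2$ equal to $C(S,\bigoplus_I M)$ in position $(0,1)$ and zero elsewhere. For rows $j\ge 2$ I would show $E_2=0$, using the functoriality under $[n]\colon A\to A$ for $n\ne \pm 1$:
\begin{enumerate}
\item[{\rm (a)}] On the row-$j$ part of $E_1$, $[n]$ acts through $\wedge^j$ of multiplication by $n$. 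I would split $\wedge^j$ of a direct sum into multi-homogeneous pieces, via the diagonal and sum maps, to isolate pieces on which $[n]$ acts by $n^j$.
\item[{\rm (b)}] On the abutment $\Ext^\ast(A\otimes\mathbb Z[S],M)$, $[n]$ acts by $n$, because Ext is additive in $A$.
\end{enumerate}
Comparing the two actions for two coprime choices of $n$, for instance $n=2$ and $n=3$, should force the higher rows to die. It should also show that the edge map identifies the answer with $\bigoplus_I M[-1]$, given by pullback along the projections $p_i$. One still has to check that this edge map is the map described in the statement; this holds because both come from $H^1(\mathbb T,M)=M$ via $p_i^\ast$.

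For (ii), only row $0$ remains: the complex $C(S\times A^{r_{i,\ast}},\mathbb R)$ assembled by the resolution differentials. I would show it is acyclic by a contracting homotopy obtained from averaging against Haar measure on $A$, which exists since $A$ is compact. Integration in one $A$-variable is a continuous $\mathbb R$-linear operation compatible with the universal sum/projection formulas. An alternative route is to run the same $[n]$-weight argument: continuous $\mathbb R$-valued functions on compact $A$ have no nonzero additive part, since $\Hom(\mathbb T,\mathbb R)=0$, and the degree $\ge 2$ parts vanish as before. As a check, (ii) is consistent with (i) through $0\to\mathbb Z\to\mathbb R\to\mathbb T\to 0$.

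The main obstacle is the $E_2$ computation: making rigorous that the Breen--Deligne complex kills the non-additive parts $\wedge^{j}$ ($j\ge2$), and all of the $\mathbb R$-valued row. Here the precise structure of the resolution matters, not just its exactness. I would first try the $[n]$-eigenvalue argument, decomposing by multidegree. If that fails to be clean, I would fall back on the Haar averaging homotopy.
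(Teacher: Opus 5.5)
Your setup is fine: the Breen--Deligne spectral sequence of Corollary~\ref{cor:delignespecseq}, the identification of its $E_1$ terms, and the reduction of (ii) to the row of continuous functions via Theorem~\ref{thm:enoughcontfcts}. \textbf{There is a genuine gap in part (i): the $E_2$ computation it rests on is false.}

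Take $I$ a point, $M=\mathbb Z$, $S=\ast$. The identification $H^1(\mathbb T^r,\mathbb Z)=\mathbb Z^r=\Hom_{\Fun(\Latt,\Ab)}(\mathbb Z[P^r],\mathrm{Id})$ is compatible with the matrix differentials. So row $1$ is $\Hom(F_\bullet,\mathrm{Id})$, and $E_2^{\ast,1}=\Ext^\ast_{\Fun(\Latt,\Ab)}(\mathrm{Id},\mathrm{Id})$. This is MacLane cohomology of $\mathbb Z$, which is $\mathbb Z/2$ in degree $4$. Exactness of $F(A)$ for every $A$ says nothing about exactness of its additive part. Row $0$ vanishes at $E_2$ (it computes $\Ext$ from the reduced functor $\mathrm{Id}$ into a constant functor), and the true abutment vanishes in total degree $5$. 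So this class must be hit by a higher differential from some row $j\geq 2$, which is therefore nonzero at $E_2$ as well. The answer only appears through higher differentials, which an inexplicit resolution gives you no handle on.

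The $[n]$-comparison does not repair this. At best it shows that $E_2^{i,j}$ is killed by $n^j-n$ for all $n$. But $\gcd_n(n^j-n)$ is always even, so you only get vanishing up to bounded torsion; for $M=\mathbb Z/2$ you have $n^j\equiv n \pmod 2$ and learn nothing. It also says nothing about row $1$.

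The missing idea is never to compute $E_2$. Instead, compare two spectral sequences using functoriality in $A$: an isomorphism on all $E_1$ terms forces an isomorphism of abutments.
\begin{enumerate}
\item[{\rm (1)}] For finite $I$, reduce to one factor. There $M[-1]\to R\intHom(\mathbb R/\mathbb Z,M)$ is an isomorphism if and only if $R\intHom(\mathbb R,M)=0$.
\item[{\rm (2)}] Comparing $A=\mathbb R$ with $A=0$ reduces this to $H^q(\mathbb R^r\times S,M)\cong H^q(S,M)$. This follows by writing $\mathbb R^r$ as the union of the cubes $[-n,n]^r$, using Theorem~\ref{thm:cohomcorrect}, and homotopy invariance of sheaf cohomology.
\item[{\rm (3)}] For infinite $I$, compare $\varinjlim_J R\intHom(\prod_J\mathbb T,M)$, over finite $J\subset I$, with $R\intHom(\prod_I\mathbb T,M)$. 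The $E_1$ comparison is continuity of \v{C}ech cohomology.
\end{enumerate}

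For (ii), your Haar-averaging homotopy is not justified. The differentials are pullbacks along integer-matrix maps such as the diagonal $A\to A^2$, and integrating out variables does not commute with these: already $\int_A f(a,a)\,da\neq\int_{A^2}f$ in general. Nothing makes averaging compatible with an inexplicit resolution. Your alternative also has nothing to act on, since $C(A^r\times S,\mathbb R)$ does not decompose into $[n]$-weight pieces.

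Your heuristic is the right one: a bounded image stable under multiplication by $2$ must vanish. But it has to be made quantitative. By Proposition~\ref{prop:homotopymultiply} there is a functorial homotopy $h$ between $2$ and $[2]$ on $F(A)$. Hence a cocycle $f$ in the Banach complex of Theorem~\ref{thm:enoughcontfcts} satisfies $f=2^{-n}[2^n]^\ast f+d\bigl(\sum_{k=0}^{n-1}2^{-k-1}h^\ast[2^k]^\ast f\bigr)$. Since $\|[2^n]^\ast f\|\leq\|f\|$ and $h^\ast$ is a bounded operator, letting $n\to\infty$ writes $f$ as a coboundary.
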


\begin{remark} A sample consequence of the theorem is that
\[
R\intHom(\mathbb R,\mathbb R)=\mathbb R\ ,
\]
(where homomorphisms are over $\mathbb Z$!), as the difference between $R\intHom(\mathbb R,\mathbb R)$ and $R\intHom(\mathbb Z,\mathbb R)=\mathbb R$ is given by $R\intHom(\mathbb R/\mathbb Z,\mathbb R)=0$.
\end{remark}

For the proof, we need the following general resolution, extending the partial resolution used in the proof of the proposition.

\begin{theorem}[Eilenberg--MacLane, Breen, Deligne]\label{thm:deligneresolution} There is a functorial resolution of an abelian group $A$ of the form
\[
\ldots \to \bigoplus_{j=1}^{n_i} \mathbb Z[A^{r_{i,j}}]\to \ldots \to \mathbb Z[A^3]\oplus \mathbb Z[A^2]\to \mathbb Z[A^2]\to \mathbb Z[A]\to A\to 0
\]
where all $n_i$ and $r_{i,j}$ are nonnegative integers.
\end{theorem}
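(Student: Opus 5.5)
The plan is to build the resolution in two stages. First, write down an explicit functorial complex whose terms are finite sums of $\mathbb Z[A^r]$ and which has $A$ as its $H_0$. Then correct its higher homology by induction on the homological degree, using only operations that keep the terms of this shape: direct sums, shifts, and mapping cones of chain maps built from maps between the $\mathbb Z[A^r]$ (additions $A^2\to A$, diagonals, projections, and integer multiples).

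For the first stage I would use Eilenberg--MacLane's cubical construction $Q(A)$. Its degree-$k$ term is the free abelian group on $A^{2^k}$, i.e. on functions from $\{0,1\}^k$ to $A$, modulo degenerate cubes (or with a suitable normalization). The differential is built from the maps $(a,b)\mapsto a+b$ and the coordinate projections, exactly as in the partial resolution $\mathbb Z[A^2]\to\mathbb Z[A]\to A\to 0$ used in the proof of the preceding proposition. The classical theorem of Eilenberg--MacLane says that $H_\ast(Q(A))$ computes the stable homology of the Eilenberg--MacLane spaces $K(A,n)$, i.e. $H_\ast(H\mathbb Z\otimes_{\mathbb S} H A)$. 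In particular $H_0=A$. The higher homology is functorially an extension of terms of the form $A\otimes\pi$ and $\mathrm{Tor}(A,\pi)$, where $\pi$ runs over the finite groups $H_i(H\mathbb Z;\mathbb Z)$, $i>0$, and in each degree only finitely many such terms occur. I would take this input as known from the homotopy theory (or from MacLane's explicit computations), rather than reproving it.

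For the second stage, suppose a complex $C$ of the required shape has been constructed which is exact in degrees $1,\ldots,i-1$. Then $H_i(C)$ is a functor of the restricted kind above, and I would kill it by coning off a map from a complex of the same shape into $C[i]$. The terms $A\otimes\mathbb Z/m$ and $\mathrm{Tor}(A,\mathbb Z/m)$ are precisely $H_0$ and $H_1$ of $\mathrm{cone}(m\colon C'\to C')$, where $C'$ is any partial resolution of $A$ that is already exact in low degrees. This cone again has terms that are sums of $\mathbb Z[A^r]$, and shifting it and mapping it into $C$ produces the required correction. Because each step only affects degrees $\geq i$, and each degree receives only finitely many correction terms, the process stabilizes degreewise. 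The colimit is therefore a resolution whose $i$-th term is a finite sum $\bigoplus_j\mathbb Z[A^{r_{i,j}}]$, as claimed.

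The main obstacle is the torsion terms $\mathrm{Tor}(A,\mathbb Z/m)=A[m]$. There are no nonzero natural maps $\mathbb Z[A^r]\to A[m]$, so these terms cannot be killed naively by mapping a free term onto them. This is why I would use the cone-of-multiplication-by-$m$ trick. The delicate point is to lift the classes in $H_i(C)$ to genuine chain maps (not just maps on homology) from shifted cones into $C$, functorially in $A$. For this I would exploit the fact that the relevant extensions are natural in $A$ and induced from spectrum-level maps. If that fails, I would fall back on Deligne's approach of resolving via simplicial methods: take the Dold--Kan normalization of $\mathbb Z[\mathbb Z[\cdots[A]]]$-type bar constructions, and truncate using the known stable-range vanishing.
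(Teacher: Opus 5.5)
Your first stage is acceptable as cited input. The gap is in the second stage, which is where all the finiteness has to come from. At each step you need two things. First, (a): a chain map $\mathrm{cone}(m\colon C'\to C')[i]\to C$ inducing a prescribed map on $H_i$. Second, (b): the cone of this map must again have homology built from $A\otimes\pi$ and $\mathrm{Tor}(A,\pi)$, so that the induction can continue. Neither is justified.

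For (a), $D=\mathrm{cone}(m)$ is a complex of projectives, so the map can be chosen in degrees $0$ and $1$. Extending it to degree $j+1\geq 2$, however, meets an obstruction in (roughly) $\Ext^{j+1}(P/m,H_{i+j}(C))$, computed in the functor category. These are Mac Lane cohomology groups, and they are not zero in general; already $\Ext^4(\mathrm{id},\mathrm{id})\cong\mathbb Z/2$ for the identity functor in $\Fun(\Latt,\Ab)$. Saying the extensions are ``induced from spectrum-level maps'' does not show these obstructions vanish, and the fallback via iterated $\mathbb Z[\mathbb Z[\cdots]]$ constructions is not an argument.

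If instead you map in only a truncation of $D$, where no obstruction arises, the cone picks up homology from the syzygies of $D$. That homology is not of your restricted form, so (b) fails. As written, the inductive step that produces finitely many terms in each degree is therefore missing.

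The idea that closes this, and that the paper uses, is to work in $\Fun(\Latt,\Ab)$ and replace explicit homology-killing by pseudocoherence.
\begin{itemize}
\item On lattices the $\mathrm{Tor}$ terms vanish identically, so the torsion problem you single out never arises. The passage to arbitrary $A$ is made only at the end, via the universal formulas and a simplicial free resolution. In fact, if a complex of your shape with $H_0=A$ is exact in degrees $1,\ldots,i-1$, then $H_i$ at arbitrary $A$ is just the right exact extension of its values on free groups; the $\mathrm{Tor}$ terms appear only one degree higher.
\item Lemma~\ref{lem:approximateresolution} only requires the homology of an approximate resolution to be sufficiently pseudocoherent. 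This holds by induction, because that homology has the form $M\otimes P$ with $M$ finitely generated.
\item Proposition~\ref{prop:pseudocoh} then produces the resolution by finite sums of $\mathbb Z[P^r]$, simply by choosing finitely many generators of successive kernels. You never have to realize particular homology classes by chain maps, or identify the homology of intermediate complexes.
\item The same proposition lets one prolong the explicit $\mathbb Z[P^3]\oplus\mathbb Z[P^2]\to\mathbb Z[P^2]\to\mathbb Z[P]\to P\to 0$. Its shape is part of the statement, and your construction does not produce it.
\end{itemize}

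Your $Q$-construction would in fact slot into this argument in place of $\mathbb Z[B^nP][-n]$, since on lattices its homology is $M_j\otimes P$ in every degree. You would need to check that its normalized terms are compact projective in $\Fun(\Latt,\Ab)$. They are not literally finite sums of $\mathbb Z[P^r]$, since they vanish at $P=0$.
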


\begin{remark} The theorem does not seem to have a published proof in the literature. Eilenberg--MacLane, \cite[Theorem 6.2]{EilenbergMacLaneMultiplicative}, prove that there is such a resolution but allowing possibly infinite direct sums. That argument is rather formal and does not use much of their work. Breen, \cite{BreenAdditive}, relying on their other work on Eilenberg--MacLane spaces, establishes a similar resolution up to bounded torsion in each degree, and a variant involving $\mathbb Z[A^{r_1}\times \mathbb Z^{r_2}]$ instead. In \cite{BerthelotBreenMessing}, Berthelot--Breen--Messing mention that Deligne has an unpublished proof of the result.

In the appendix to this lecture, we give a proof of this theorem, which (as Breen's previous work on the subject) makes use of some (stable) homotopy theory, in the form of the homology of Eilenberg--MacLane spaces.
\end{remark}

\begin{remark} The functoriality of the construction ensures that it works automatically for abelian group objects in any topos: Apply the resolution to the values $A(U)$ of the abelian sheaf $A$ on all opens $U$ of a defining site, and sheafify. This way, we see that we can also apply it to condensed abelian groups.
\end{remark}

\begin{corollary}\label{cor:delignespecseq} For condensed abelian groups $A$, $M$ and an extremally disconnected set $S$, there is a spectral sequence
\[
E_1^{i_1i_2} = \prod_{j=1}^{n_{i_1}} H^{i_2}(A^{r_{i_1,j}}\times S,M)\Rightarrow \underline{\Ext}^{i_1+i_2}(A,M) (S)
\]
that is functorial in $A$, $M$ and $S$.
\end{corollary}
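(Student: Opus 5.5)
Apply the functorial resolution of Theorem~\ref{thm:deligneresolution} to the condensed abelian group $A$; by the remark after it, this is done sectionwise and sheafified. This gives a resolution
\[
\ldots\to C_i\to\ldots\to C_1\to C_0\to A\to 0,\qquad C_i=\bigoplus_{j=1}^{n_i}\mathbb Z[A^{r_{i,j}}],
\]
which is exact in $\Cond(\Ab)$ and functorial in $A$. Exactness holds because it is exact on every value $A(S)$ before sheafification, and sheafification is exact. Then tensor with $\mathbb Z[S]$. Each term $\mathbb Z[A^{r}]$ is flat by property (i) of Lecture II, so $C_\bullet\otimes\mathbb Z[S]$ is a flat resolution of $A\otimes\mathbb Z[S]$. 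Moreover, $\mathbb Z[A^r]\otimes\mathbb Z[S]=\mathbb Z[A^r\times S]$ since $T\mapsto\mathbb Z[T]$ is symmetric monoidal. Since $\mathbb Z[S]$ is projective, hence flat, the resolved object $A\otimes\mathbb Z[S]$ equals $A\otimes^L\mathbb Z[S]$.

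Next, identify the target. Since $\mathbb Z[S]$ is projective, $R\Gamma(S,-)=R\Hom(\mathbb Z[S],-)$ is just evaluation at $S$. The derived tensor--Hom adjunction then gives
\[
R\intHom(A,M)(S)=R\Hom(\mathbb Z[S],R\intHom(A,M))=R\Hom(A\otimes^L\mathbb Z[S],M).
\]
Taking $H^n$, we get $\underline{\Ext}^n(A,M)(S)=\Ext^n(A\otimes\mathbb Z[S],M)$. One should check that the cohomology sheaf evaluated at the projective $S$ is the cohomology of the sections; this holds because evaluation at extremally disconnected $S$ is exact.

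Now compute $R\Hom(A\otimes\mathbb Z[S],M)$ using the resolution $C_\bullet\otimes\mathbb Z[S]$. Write the complex $C_\bullet\otimes\mathbb Z[S]$ as the (derived) totalization of its stupid filtration; for a bounded-below resolution, the filtration by degree gives a convergent spectral sequence. Its graded pieces are $C_{i_1}\otimes\mathbb Z[S][i_1]$. Hence
\[
E_1^{i_1 i_2}=\Ext^{i_2}\Bigl(\bigoplus_{j=1}^{n_{i_1}}\mathbb Z[A^{r_{i_1,j}}\times S],M\Bigr)=\prod_{j=1}^{n_{i_1}}\Ext^{i_2}(\mathbb Z[A^{r_{i_1,j}}\times S],M),
\]
and $\Ext^{i_2}(\mathbb Z[T],M)=H^{i_2}(T,M)$ is cohomology of the condensed set $T$. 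The spectral sequence lives in the first quadrant ($i_1,i_2\ge 0$), with finitely many terms on each antidiagonal, so it converges to $\Ext^{i_1+i_2}(A\otimes\mathbb Z[S],M)$. Concretely, one can take an injective resolution $M\to I^\bullet$ and form the double complex $\Hom(C_{i_1}\otimes\mathbb Z[S],I^{i_2})$, filtered by $i_1$. Functoriality in $A$, $M$ and $S$ follows from the functoriality of the resolution, of injective resolutions up to homotopy, and of the construction.

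The main point needing care is the flatness and identification step: that $C_\bullet\otimes\mathbb Z[S]$ is still exact and computes the derived tensor product. The flatness of $\mathbb Z[T]$ handles this. Everything else is formal double-complex bookkeeping.
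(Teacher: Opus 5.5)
Your proposal is correct and is essentially the paper's proof: apply the sheafified Breen--Deligne resolution, tensor with the flat object $\mathbb Z[S]$ to get a resolution of $A\otimes\mathbb Z[S]$ by the $\bigoplus_j\mathbb Z[A^{r_{i,j}}\times S]$, use $R\intHom(A,M)(S)=R\Hom(A\otimes\mathbb Z[S],M)$, and take the spectral sequence obtained by applying $R\Hom(-,M)$ to this resolution. One small caveat: the large category $\Cond(\Ab)$ has no set of generators, so the standard guarantee of enough injectives does not apply; your aside using an injective resolution of $M$ should therefore be carried out in $\kappa$-condensed abelian groups for $\kappa$ large, which changes no $\Ext$ groups because the transition functors are exact, fully faithful and preserve the projectives $\mathbb Z[S]$, while your filtration argument needs no injectives at all.
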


\begin{proof} Use the resolution
\[
\ldots \to \bigoplus_{j=1}^{n_i} \mathbb Z[A^{r_{i,j}}\times S]\to \ldots \to \mathbb Z[A^3\times S]\oplus \mathbb Z[A^2\times S]\to \mathbb Z[A^2\times S]\to \mathbb Z[A\times S]\to A\otimes \mathbb Z[S]\to 0
\]
and apply $R\Hom(-,M)$, noting that
\[
R\intHom(A,M)(S) = R\Hom(A\otimes \mathbb Z[S],M)\ .\qedhere
\]
\end{proof}

One might think that the Breen--Deligne resolution is not suitable for explicit computations as it is not an explicit resolution. The following proof shows that it does not matter that the Breen--Deligne resolution is inexplicit.

\begin{proof}[Proof of Theorem~\ref{thm:RHomoutofcompact}] We start with part (i). Assume first that $I$ is finite; decomposing into a finite direct sum, we can then assume that $I$ has one element, so we need to compute $R\intHom(\mathbb R/\mathbb Z,M)$. The claim is that the map
\[
M[-1] = R\intHom(\mathbb Z[1],M)\to R\intHom(\mathbb R/\mathbb Z,M)
\]
is an isomorphism; equivalently, we need to see that
\[
R\intHom(\mathbb R,M)=0
\]
for any discrete abelian group $M$. For this, it is enough to prove that the map $0\to \mathbb R$ induces an isomorphism upon applying $R\intHom(-,M)$; and for this, we use the spectral sequence from Corollary~\ref{cor:delignespecseq}, and observe that it suffices to see that the map
\[
H^i(\mathbb R^r\times S,M)\to H^i(S,M)\ ,
\]
induced via pullback along $0\times \mathrm{id}: S\to \mathbb R^r\times S$, is an isomorphism for all profinite sets $S$ and integers $r\geq 0$. The left-hand side $R\Gamma(\mathbb R^r\times S,M)$ is the derived limit of $R\Gamma([-n,n]^r\times S,M)$ over all $n\geq 1$, so it is enough to prove that $H^i([-n,n]^r\times S,M)\to H^i(S,M)$ is an isomorphism. Now both sides identify with \v{C}ech/sheaf cohomology by Theorem~\ref{thm:cohomcorrect}, and it is known that the latter is homotopy-invariant.

For general $I$, it is now enough to show that the map
\[
\varinjlim_{J\subset I} R\intHom(\prod_J \mathbb T,M)\to R\intHom(\prod_I \mathbb T,M)
\]
is an isomorphism, where $J$ runs over finite subsets of $I$. Again, Corollary~\ref{cor:delignespecseq} reduces this to
\[
\varinjlim_{J\subset I} H^i(\prod_J\mathbb T^r\times S,M)\to H^i(\prod_I\mathbb T^r\times S,M)
\]
being an isomorphism, which follows from the comparison with \v{C}ech/sheaf cohomology, and the known result there.

It remains to prove part (ii). For this, we use again the resolution $F(A)_\bullet$ of $A$ all of whose terms are of the form $F(A)_i = \bigoplus_{j=1}^{n_i} \mathbb Z[A^{r_{i,j}}]$. By Theorem~\ref{thm:enoughcontfcts}, we see that $R\intHom(A,\mathbb R)(S)$ is computed by the complex
\[
0\to \bigoplus_{j=1}^{n_0} C(A^{r_{0,j}}\times S,\mathbb R)\to \bigoplus_{j=1}^{n_1} C(A^{r_{1,j}}\times S,\mathbb R)\to \ldots\ ,
\]
which is a complex of Banach spaces.

Our idea is now that multiplication by $2$ on $A$ is bounded, while it is unbounded in $\mathbb R$: This implies for example that any map $A\to \mathbb R$ must be zero, as the image is both bounded and stable under multiplication by $2$. Extending this to higher cohomology, we use that the maps $2: F(A)\to F(A)$ of multiplication by $2$ and $[2]: F(A)\to F(A)$ induced by multiplication by $2$ of $A$ (using that the construction of $F(A)$ is functorial in $A$) are homotopic, via some homotopy $h_\bullet: F(A)_\bullet\to F(A)_{\bullet+1}$, cf.~Proposition~\ref{prop:homotopymultiply}.

Assume that $f\in \bigoplus_{j=1}^{n_i} C(A^{r_{i,j}}\times S,\mathbb R)$ satisfies $df=0$. Then $2f=[2]^\ast(f) + d(h_{i-1}^\ast(f))$, or equivalently
\[
f = \tfrac 12 [2]^\ast(f) + d(\tfrac 12 h_{i-1}^\ast(f))\ .
\]
Continuing, we find
\[
f = \tfrac 1{2^n} [2^n]^\ast(f) + d(\tfrac 12 h_{i-1}^\ast(f) + \tfrac 14 h_{i-1}^\ast([2]^\ast(f)) + \ldots + \tfrac 1{2^n} h_{i-1}^\ast([2^{n-1}]^\ast(f)))\ .
\]
Note that $[2^n]^\ast(f)\in \bigoplus_{j=1}^{n_i} C(A^{r_{i,j}}\times S,\mathbb R)$ stays bounded (in fact $||[2^n]^\ast(f)||\leq ||f||$) and
\[
h_{i-1}^\ast: \bigoplus_{j=1}^{n_i} C(A^{r_{i,j}}\times S,\mathbb R)\to \bigoplus_{j=1}^{n_{i-1}} C(A^{r_{i-1,j}}\times S,\mathbb R)
\]
is a map of Banach spaces and thus has bounded norm. It follows that we can take the limit $n\to \infty$ to get
\[
f=d(\tfrac 12 h_{i-1}^\ast(f) + \tfrac 14 h_{i-1}^\ast([2]^\ast(f)) + \ldots)\ ,
\]
as desired.
\end{proof}

In \cite{HoffmannSpitzweck}, Hoffmann--Spitzweck define a bounded derived category $D^b(\LCA)$ of the quasiabelian (but not abelian!) category $\LCA$ of locally compact abelian groups. This can be defined by starting with the category $\Ch^b(\LCA)$ of bounded complexes of locally compact abelian groups, and inverting maps whose cones are strictly exact. One gets a functor
\[
D^b(\LCA)\to D(\Cond(\Ab))
\]
as the functor $\Ch^b(\LCA)\to \Ch(\Cond(\Ab))$ sends strictly exact complexes of locally compact abelian groups to acyclic complexes. To see the latter, the key point is that a strict surjection $A\to B$ of locally compact abelian groups induces a surjection $\underline{A}\to \underline{B}$, which follows from such surjections being open, so the image of any compact neighborhood of the identity in $A$ is a compact neighborhood of the identity in $B$.

Comparing our formalism and computations with the ones from \cite{HoffmannSpitzweck}, we get the following corollary.

\begin{corollary} The functor $D^b(\LCA)\to D(\Cond(\Ab))$ is fully faithful.
\end{corollary}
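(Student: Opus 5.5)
We will reduce full faithfulness to a comparison of $\Ext$ groups between locally compact abelian groups. Since $D^b(\LCA)$ is generated under shifts and cones by objects $A[0]$ with $A$ locally compact abelian, a standard dévissage (five lemma on long exact sequences of $\Hom$'s in the triangulated categories) reduces the claim to showing that for all locally compact abelian $A$, $B$ and all $i$, the map
\[
\Hom_{D^b(\LCA)}(A,B[i])\to \Ext^i_{\Cond(\Ab)}(\underline{A},\underline{B})
\]
is an isomorphism. On the left we use the computations of Hoffmann--Spitzweck: $\Hom$ is the group of continuous homomorphisms, $\Ext^1$ is the group of (topologically split-locally) extensions in $\LCA$, and $\Ext^i=0$ for $i\geq 2$. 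So we must show that $R\Hom(\underline{A},\underline{B})$ lives in degrees $0,1$, with $H^0$ equal to continuous homomorphisms (this is the preceding proposition on $\intHom$, evaluated at a point), and $H^1$ equal to $\LCA$-extensions.

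Next we use the structure theorem to reduce to basic building blocks. Every locally compact abelian group is, via strict short exact sequences (which give triangles on both sides, as observed before the corollary), built from $\mathbb R$, compact groups and discrete groups; compact groups are Pontrjagin dual to discrete ones, and by resolving a discrete group $D$ by free groups $0\to \bigoplus\mathbb Z\to\bigoplus\mathbb Z\to D\to 0$ and dualizing, every compact group fits in a strict exact sequence $0\to C\to \prod_I\mathbb T\to \prod_J\mathbb T\to 0$. Thus, by dévissage in both variables, it suffices to treat $A\in\{\mathbb Z\text{ (free discrete)}, \bigoplus\mathbb Z, \mathbb R, \prod_I\mathbb T\}$ and $B\in\{\text{discrete } M, \mathbb R, \prod_J\mathbb T\}$. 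For $A$ free discrete, $R\Hom(\mathbb Z[I],\underline{B})=\prod_I B$ on both sides. For $A=\prod_I\mathbb T$ and $B=M$ discrete, Theorem~\ref{thm:RHomoutofcompact}(i) gives $\bigoplus_I M[-1]$, matching $\Ext^1_{\LCA}(\prod_I\mathbb T,M)=\Hom(\mathbb D(M),\ldots)$ as computed by Hoffmann--Spitzweck (the map being compatible since both come from the extensions $0\to\mathbb Z\to\mathbb R\to\mathbb T\to 0$ pulled back along projections). For $B=\mathbb R$, Theorem~\ref{thm:RHomoutofcompact}(ii) gives $0$, again matching. Using $0\to\mathbb Z\to\mathbb R\to\mathbb T\to 0$ we get $R\intHom(\mathbb R,M)=0$ and $R\intHom(\mathbb R,\mathbb R)=\mathbb R$, and the target $\prod_J\mathbb T$ is handled via $0\to\prod_J\mathbb Z\to\prod_J\mathbb R\to\prod_J\mathbb T\to0$ together with the fact that $R\intHom(A,-)$ commutes with products.

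The main obstacle I expect is the bookkeeping: checking that the comparison maps agree with the Hoffmann--Spitzweck identifications (not just abstract isomorphism) and that the dévissage is legitimate, i.e.\ that the strict exact sequences used in $\LCA$ really give triangles in $D^b(\LCA)$ mapping to triangles in $D(\Cond(\Ab))$. One also needs $\prod_J\mathbb Z$ (non-discrete, non-LCA if $J$ infinite) to be avoided: I would instead resolve $\prod_J\mathbb T$ only after reducing $A$ to compact/discrete cases and use Pontrjagin duality on the $\LCA$ side, checking $\Ext$'s out of $\mathbb R$ and discrete groups into $\prod_J\mathbb T$ directly ($\mathbb T$ being injective-like for these in both theories).
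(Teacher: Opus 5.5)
Your proposal follows the paper's proof essentially step for step. You reduce via the structure theorem and dual two-term free resolutions to $A=\prod_I\mathbb T$, handling $A$ discrete and $A=\mathbb R$ via $\mathbb Z$ and $0\to\mathbb Z\to\mathbb R\to\mathbb T\to 0$, and then use Theorem~\ref{thm:RHomoutofcompact} against Hoffmann--Spitzweck for $B$ discrete or $B=\mathbb R$.

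For the target $\prod_J\mathbb T$, the clean fix to the issue you flag is the paper's: pull the product out on both sides. On the $\LCA$ side, $\prod_J\mathbb T$ is the categorical product and is injective (dually to $\bigoplus_J\mathbb Z$ being projective), so $R\Hom_{\LCA}(A,-)$ commutes with it. This reduces you to $B=\mathbb T$, which follows from $B=\mathbb Z$ and $B=\mathbb R$ via the genuine $\LCA$ sequence $0\to\mathbb Z\to\mathbb R\to\mathbb T\to 0$, rather than the non-$\LCA$ sequence $0\to\prod_J\mathbb Z\to\prod_J\mathbb R\to\prod_J\mathbb T\to 0$.
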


\begin{proof} Let $A$ and $B$ be locally compact abelian groups. We need to see that the map
\[
R\Hom_{\LCA}(A,B)\to R\Hom_{\Cond(\Ab)}(\underline{A},\underline{B})
\]
is an isomorphism. By the structure result, we can reduce to the case that $A$ is either $\mathbb R$, discrete, or compact. In fact, the discrete case reduces to $A=\mathbb Z$ (by filtered colimits and resolutions), which is clear, and the case of $\mathbb R$ reduces to $\mathbb T=\mathbb R/\mathbb Z$, so we can assume that $A$ is compact. Dually to a $2$-term free resolution of an abelian group, we can then assume that $A=\prod_I \mathbb T$ for some set $I$. Similarly, applying the structure result for $B$, we can assume that $B$ is either $\mathbb R$, discrete, or compact. The compact case again reduces to $B=\prod_J \mathbb T$, and then to $B=\mathbb T$ by pulling out the product, which follows from the case of discrete $B$ and $B=\mathbb R$. Finally, we have reduced to $A=\prod_I \mathbb T$ and $B$ being either discrete or $\mathbb R$. In these cases, Theorem~\ref{thm:RHomoutofcompact} computes the right-hand side, while \cite[Example 4.11]{HoffmannSpitzweck} gives the same answer for the left-hand side in the discrete case, and \cite[Proposition 4.15]{HoffmannSpitzweck} also gives the same answer when $B=\mathbb R$.
\end{proof}

We remark that more important than this equivalence with the (rather abstract) $D^b(\LCA)$ is the fact that all $R\intHom_{D(\Cond(\Ab))}(-,-)$ between locally compact abelian groups can be computed (and give the expected answer). In particular, the proof shows that
\[
\underline{\Ext}^i_{\Cond(\Ab)}(\underline{A},\underline{B})=0
\]
for all $i\geq 2$ for all locally compact abelian groups $A$, $B$.

We remark that we used critically (AB4*) in $\Cond(\Ab)$ when mapping into an infinite product of copies of $\mathbb T$. This is one situation where the existence of compact projective generators, and thus of extremally disconnected sets, is critical.

\newpage

\section*{Appendix to Lecture IV: Resolutions of abelian groups}

The goal of this appendix is to provide a proof of the following unpublished theorem due to Deligne.

\begin{theorem}\label{thm:deligneresolutionapp} There is a functorial resolution of an abelian group $A$ of the form
\[
\ldots \to \bigoplus_{j=1}^{n_i} \mathbb Z[A^{r_{i,j}}]\to \ldots \to \mathbb Z[A^3]\oplus \mathbb Z[A^2]\to \mathbb Z[A^2]\to \mathbb Z[A]\to A\to 0
\]
where all $n_i$ and $r_{i,j}$ are nonnegative integers.
\end{theorem}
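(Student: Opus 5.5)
We construct the resolution degree by degree, using the homology of Eilenberg--MacLane spaces to control finiteness. The key observation is that a functorial complex of the form $\bigoplus_j \mathbb Z[A^{r_j}]$ is determined by formal data. A natural transformation $\mathbb Z[A^r]\to \mathbb Z[A^s]$ is the same as an element of $\mathbb Z[A^s]$ for $A=\mathbb Z^r$ free on $r$ generators (Yoneda), i.e.\ a finite formal combination of $s$-tuples of integer linear forms in $r$ variables. So we must build a complex in the additive category $\mathcal C$ generated by the objects $\mathbb Z[A^r]$ with these morphisms.

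The construction is inductive. Suppose we have built a functorial complex $F_n\to\cdots\to F_0=\mathbb Z[A]\to A\to 0$ that is exact in degrees $<n$ for all $A$. Let $K_n(A)=\ker(F_n\to F_{n-1})$. We need finitely many natural elements of $K_n(A^{r})$, for suitable $r$, whose images generate $K_n(A)$ for every abelian group $A$. Then we set $F_{n+1}=\bigoplus_j\mathbb Z[A^{r_j}]$, mapping the generator $[(a_1,\dots,a_{r_j})]$ to the pushforward of the chosen universal element along $\mathbb Z^{r_j}\to A$.

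By naturality and filtered colimits, it suffices to generate $K_n$ uniformly, and this reduces to a finite-generation statement about a functor. Define $H_n(A)=K_n(A)/\mathrm{im}(F_{n+1}')$, where $F_{n+1}'$ is the part already built. Equivalently, we need to show that the functor $A\mapsto H_n(F_\bullet(A))$ is ``finitely generated'' in the category of functors from free finitely generated abelian groups to abelian groups that commute with filtered colimits. Here $\mathbb Z[A^r]$ are exactly the finitely generated projectives, since $\Hom(\mathbb Z[A^r],G)=G(\mathbb Z^r)$. So the core claim is a Noetherianity or coherence statement. It suffices that each functor $A\mapsto H_i(\mathbb Z[A^{r}])$-type homology built from these pieces is finitely generated; more precisely, that the category of such functors is locally Noetherian enough that kernels of maps between finitely generated projectives are finitely generated.

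We do not expect coherence of the whole functor category in general. So the plan is to pass through simplicial methods. Form the simplicial resolution $\mathbb Z[K(A,n)]$: the homology of $\mathbb Z[A]$ iterated via bar constructions gives $H_*(K(A,n),\mathbb Z)$. Stabilizing, $\varinjlim_n H_{*+n}(K(A,n),\mathbb Z)$ equals $\pi_*(H\mathbb Z\wedge HA)$, and this is $A\otimes H\mathbb Z_*H\mathbb Z$ plus Tor terms. The dual Steenrod algebra computation shows that in each degree this is a finite sum of functors of the form $A/p^k$ and $A[p^k]$. The bar construction of $K(A,n)$ is made from finitely many $\mathbb Z[A^r]$ in each degree. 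Killing the stable homology then yields, after shifting, a functorial complex of the required shape whose homology in degree $0$ is $A$ and which is acyclic elsewhere. The finiteness of the stable homology groups as functors of $A$, each with finitely many generators in the above sense, is what allows finitely many $\mathbb Z[A^r]$ in each degree. At each stage one kills homology generated by finitely many universal classes; each kill introduces finitely many new summands and new homology that is again finitely generated, because everything is a finite polynomial-type functor in each degree.

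The main obstacle is this finiteness step: showing that the homology functors arising at each stage are finitely generated by universal classes on some $\mathbb Z^r$. My first attempt is to prove it via the Eilenberg--MacLane/Cartan computation, $H_*(K(A,n))$ for $n$ large relative to the degree, which is in the stable range. I would then argue that each such homology functor is a quotient of a finite sum of functors $A\mapsto A\otimes\mathbb Z/m$ or $\mathrm{Tor}(A,\mathbb Z/m)$. Each of these is generated by a class on $\mathbb Z^1$ or $\mathbb Z^2$: $\mathrm{Tor}$ is realized through the cone of multiplication by $m$ on $\mathbb Z[A]$-terms.
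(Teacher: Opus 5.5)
There is a genuine gap. Your ingredients are the right ones: you work in the functor category on lattices, where the $\mathbb Z[A^r]$ are the compact projective generators, and you use that $\mathbb Z[B^nA]$ has finitely many such terms in each degree, with homology in the stable range given by $A$ tensored with the integral dual Steenrod algebra. But the step that carries all the content is asserted rather than proved, and as stated it does not work.

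You claim that after killing finitely generated homology with finitely many summands, the new homology is again finitely generated ``because everything is a finite polynomial-type functor in each degree''. That is false. $\mathbb Z[A^r]$ has nonvanishing cross-effects of every order, so neither it nor the kernels of maps between such terms are polynomial. And, as you note yourself, you have no coherence statement for $\Fun(\Latt,\Ab)$.

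More fundamentally, finite generation of the homology functors is only $0$-pseudocoherence, and that is not enough. Killing a homology functor that is finitely generated but not finitely presented produces a syzygy that need not be finitely generated, and the induction stalls. What you need is the approximate-resolution criterion, Lemma~\ref{lem:approximateresolution}: if a complex of finite sums of $\mathbb Z[P^r]$ surjects onto $P$ and its $i$-th homology is $(n-i-1)$-pseudocoherent, then $P\mapsto P$ is $n$-pseudocoherent.

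The missing idea is the bootstrap that supplies this higher finiteness. The paper proves by induction on $n$ that $P\mapsto P$ is $n$-pseudocoherent, applying the criterion to $\mathbb Z[B^nP][-n]\to P$. The homology of $\mathbb Z[B^nP][-n]$ in degrees $0\le j<n$ is $M_j\otimes P$ with $M_j$ finitely generated. On lattices, the exact sequence $0\to P^a\to P^b\to M_j\otimes P\to 0$ shows these functors are $(n-1)$-pseudocoherent \emph{because $P\mapsto P$ is, by the induction hypothesis}. Without this self-referential induction, your plan of ``killing the stable homology'' is circular: resolving $M_j\otimes P$ by finitely many $\mathbb Z[P^r]$ in each degree is the same problem as resolving $P$.

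Separately, your claim that $\mathrm{Tor}(A,\mathbb Z/m)$ is generated by a universal class on $\mathbb Z^1$ or $\mathbb Z^2$ is wrong. This functor vanishes on every $\mathbb Z^r$, so by Yoneda it receives no nonzero map from any $\mathbb Z[A^r]$. The fix is to work only on lattices, where such terms never occur, and then pass to arbitrary $A$ separately. That passage uses universal formulas for the differentials, filtered colimits for free $A$, and in general a simplicial free resolution plus a double complex argument. Your proposal does not address it.
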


We start by providing appropriate categorical framework to address this question. Let $\Latt$ be the category of finite free $\mathbb Z$-modules, and consider the category $\Fun(\Latt,\Ab)$ of (arbitrary) functors $\Latt\to \Ab$. This is an abelian category where all limits and colimits are formed termwise. As a functor category, it again has all limits and colimits and satisfies (AB3-6) and (AB3*-4*). Moreover, it has compact projective generators. A set of such compact projective generators is given by the functors taking $P\in \Latt$ to $\mathbb Z[P^n]$ for some $n\geq 0$. Indeed, $P^n=\Hom_{\Latt}(\mathbb Z^n,P)$, so for any $F\in \Fun(\Latt,\Ab)$, one has
\[
\Hom_{\Fun(\Latt,\Ab)}(\mathbb Z[\Hom_{\Latt}(\mathbb Z^n,-)],F) = \Hom_{\Fun(\Latt,\Sets)}(\Hom_{\Latt}(\mathbb Z^n,-),F) = F(\mathbb Z^n)\ ,
\]
which commutes with all limits and colimits, and as $F$ is determined by the values $F(\mathbb Z^n)$, one also sees that they form a generating family.

Consider for the moment any abelian category $\mathcal A$ with all colimits admitting compact projective generators, and fix a set $\mathcal A_0\subset \mathcal A$ of such compact projective generators. (This implies in particular that filtered colimits are exact.) In particular, we may form the derived category $D(\mathcal A)$. Recall the following definition.

\begin{definition} An object $X\in \mathcal A$ is $n$-pseudocoherent for some integer $n\geq 1$ if the functors $\Ext^i_{\mathcal A}(X,-): \mathcal A\to \Ab$ commute with all filtered colimits for all $i=0,\ldots,n-1$. The object $X$ is pseudocoherent if it is $n$-pseudocoherent for all $n\geq 0$.
\end{definition}

\begin{proposition}\label{prop:pseudocoh} An object $X\in \mathcal A$ is $n$-pseudocoherent if and only if there is a partial resolution
\[
\bigoplus_{j=1}^{k_n} P_{n,j}\to \ldots\to \bigoplus_{j=1}^{k_i} P_{i,j}\to \ldots\to \bigoplus_{j=0}^{k_0} P_{0,j}\to X\to 0
\]
where all $P_{i,j}\in \mathcal A_0$ are in the fixed set of compact projective generators, and the $k_i$ are nonnegative integers. More precisely, given any shorter partial resolution of $X$ of this form, it can be prolonged to a partial resolution of length $n$.

In particular, $X\in \mathcal A$ is pseudocoherent if and only if there is a resolution
\[
\ldots\to \bigoplus_{j=1}^{k_i} P_{i,j}\to \ldots\to \bigoplus_{j=0}^{k_0} P_{0,j}\to X\to 0
\]
where all $P_{i,j}\in \mathcal A_0$ are in the fixed set of compact projective generators, and the $k_i$ are nonnegative integers.
\end{proposition}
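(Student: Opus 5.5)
The plan is to induct on $n$, proving both directions together with the prolongation statement. First the easy direction: given a partial resolution of length $n$ by finite sums of objects of $\mathcal A_0$, let $P_\bullet$ denote the truncated complex. Compute $\Ext^i(X,-)$ for $i\le n-1$ using $P_\bullet$. Since $\Ext^i$ only involves the terms $P_0,\dots,P_i,P_{i+1}$, and $P_{n}\to P_{n-1}$ surjects onto the relevant boundary, the groups $\Ext^i(X,M)$ are the cohomology of $\Hom(P_\bullet,M)$ in degrees $i\le n-1$. Each $\Hom(P_i,-)$ commutes with filtered colimits, because finite sums of compact projectives are compact projective, and filtered colimits in $\Ab$ are exact. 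Hence $\Ext^i(X,-)$ commutes with filtered colimits for $i\le n-1$.

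For the converse, together with prolongation, suppose we are given a partial resolution of length $m<n$,
\[
P_m\to\cdots\to P_0\to X\to 0 ,
\]
and let $K=\ker(P_m\to P_{m-1})$, or $K=\ker(P_0\to X)$ if $m=0$, with the convention that $P_{-1}=X$ and $K=X$ for the empty resolution. We must find a finite sum $Q$ of generators with a surjection $Q\to K$. Dimension shifting gives $\Ext^i(K,-)\cong\Ext^{i+m+1}(X,-)$ for $i\ge1$, and exact sequences relating $\Hom(K,-)$ and $\Ext^1(K,-)$ to $\Ext^{m+1}(X,-)$ together with the $\Hom(P_j,-)$. Using these sequences, the five lemma and the fact that each $\Hom(P_j,-)$ commutes with filtered colimits, we deduce that $\Hom(K,-)$ commutes with filtered colimits, i.e.\ $K$ is compact, provided $\Ext^{i}(X,-)$ commutes with filtered colimits for $i\le m+1\le n-1$. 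This index check is the point needing care. In the first step, $m=0$ or the empty case, we only need $X$ itself compact; in general we need $m+1\le n-1$ for the $\Ext^1(K,-)$ term, and the degree $\Ext^{m}$ comparison uses injectivity as well.

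With $K$ compact, the remaining step is the key one. Since $\mathcal A_0$ generates, there is a surjection $\bigoplus_{\alpha} Q_\alpha\to K$ from a possibly infinite sum of generators. Then $K$ is the filtered colimit of the images $K_F$ of the finite subsums. Compactness of $K$ means $\mathrm{id}_K$ factors through some $K_F\subset K$, so $K=K_F$, and a finite subsum already surjects. Appending $Q=\bigoplus_{\alpha\in F}Q_\alpha\to K\to P_m$ prolongs the resolution. Iterating from the empty resolution yields length $n$, and the pseudocoherent case follows by taking all $n$, since prolongation never changes earlier terms.

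I expect the main obstacle to be the index bookkeeping in the dimension-shifting step: making sure that $n$-pseudocoherence of $X$ gives exactly enough to conclude compactness of the kernel at each stage up to length $n$, including the end cases. The approach I would take is to handle these cases by treating the degree-$0$ kernel separately from the higher ones.
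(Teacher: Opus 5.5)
Your easy direction is fine, and so is your prolongation mechanism: a compact kernel is finitely generated, via the filtered union of images of finite subsums. In particular, your argument does prove the final ``pseudocoherent'' statement. For finite $n$, however, there is a genuine off-by-one gap, and it cannot be fixed by index bookkeeping.

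Your own condition $m+1\le n-1$ shows the problem. Compactness of $K=\ker(P_m\to P_{m-1})$ needs $\Ext^{m+1}(X,-)$ to commute with filtered colimits, so $n$-pseudocoherence only lets you prolong up to length $n-1$. At the last step $m=n-1$ the kernel is in general \emph{not} compact. For $n=1$, take $\mathcal A$ to be modules over a non-coherent ring $R$, with $\mathcal A_0=\{R\}$, and $X=R/I$ with $I$ finitely generated but not finitely presented. Then $X$ is compact, yet $\ker(R\to X)=I$ is not. Your remark that for $m=0$ ``we only need $X$ itself compact'' is exactly where this fails. So for $n=1$ your argument yields only $P_0\to X\to 0$, not the required finite presentation $P_1\to P_0\to X\to 0$.

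The missing ingredient is that a compact object is finitely presented, which is precisely the paper's base case $n=1$. Choose any presentation $\bigoplus_{J_1}P_{1,j}\to\bigoplus_{J_0}P_{0,j}\to X\to 0$ and exhaust $J_0,J_1$ by finite subsets; this writes $X$ as a filtered colimit of finitely presented $X_i$. Compactness then makes $X$ a retract of some $X_i$, hence finitely presented. The paper applies this to $\ker(P_{n-2}\to P_{n-3})$, which is compact by a diagram chase; your five-lemma argument shows exactly this for $m\le n-2$. It then appends a finite presentation of that kernel, adding the last two terms at once.

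If you instead want to prolong a \emph{given} resolution of length $n-1$, you need a different fact. The kernel of a surjection from a finite sum of generators onto a finitely presented (here: compact) object is finitely generated, for instance by Schanuel's lemma. That kernel is finitely generated but not compact, so it cannot come out of your compactness argument.
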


\begin{remark} Extending the above definition to $n=0$, we say that an object $X$ is $0$-pseudocoherent if it is finitely generated; this way, the characterization of the proposition in terms of resolutions extends to $n=0$.
\end{remark}

\begin{proof} It is clear that if $X$ admits such a resolution, then it is $n$-pseudocoherent. For the converse, we argue by induction on $n$: Assume first that $n=1$. In that case, $X$ is $1$-pseudocoherent if and only if $\Hom_{\mathcal A}(X,-)$ commutes with all filtered colimits, i.e.~$X$ is compact in $\mathcal A$. Choose some presentation of $X$ of the form
\[
\bigoplus_{j\in J_1} P_{1,j}\to \bigoplus_{j\in J_0} P_{0,j}\to X\to 0\ .
\]
Exhausting $J_0$ and $J_1$ by finite subsets writes $X$ as a filtered colimit of finitely presented $X_i$. As $X$ is compact, the isomorphism $X\to \varinjlim_i X_i$ factors over some $X_i$, so that $X$ is a retract of some $X_i$, and thus $X$ itself is finitely presented.

Now assume that $X$ is $n$-pseudocoherent and a resolution
\[
\bigoplus_{j=1}^{k_{n-2}} P_{n-2,j}\to \ldots\to \bigoplus_{j=0}^{k_0} P_{0,j}\to X\to 0
\]
has been constructed. Let $X_{n-1}$ be the kernel of the first map. It follows from a direct diagram chase that $X_{n-1}$ is $1$-pseudocoherent (argue by descending induction that the image of the $i$-th map is $i$-pseudocoherent). By the case $n=1$, we can find a two-step resolution of $X_n$, as desired.
\end{proof}

We can now reformulate Theorem~\ref{thm:deligneresolutionapp}.

\begin{theorem}\label{thm:deligneresolutionpseudocoh} The functor $\Latt\to \Ab: P\mapsto P$ is pseudocoherent in $\Fun(\Latt,\Ab)$.
\end{theorem}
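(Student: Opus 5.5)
The plan is to realize the identity functor $P\mapsto P$, up to a shift, as the bottom homology of an explicit complex built out of the compact projective generators $\mathbb Z[P^r]$. I will then control the remaining homology using known facts about the homology of Eilenberg--MacLane spaces, and finally run an induction on the degree of pseudocoherence using Proposition~\ref{prop:pseudocoh}.

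\emph{Step 1: the complex.} Fix $n\geq 1$ and let $K(P,n)$ be the simplicial abelian group obtained from $P[n]$ by Dold--Kan. It depends functorially on $P\in\Latt$, and its $k$-simplices are $P^{\binom{k}{n}}$. Applying $\mathbb Z[-]$ levelwise and taking the normalized (reduced) chain complex gives a complex $C^{(n)}_\bullet$ in $\Fun(\Latt,\Ab)$. Each term of $C^{(n)}_\bullet$ is a finite direct sum of functors $\mathbb Z[P^r]$, i.e.\ a finite sum of objects of $\mathcal A_0$. Its homology in degree $i$ is the functor $P\mapsto \tilde H_i(K(P,n),\mathbb Z)$.

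\emph{Step 2: the homology in the stable range.} By Hurewicz, $\tilde H_i(K(P,n),\mathbb Z)=0$ for $i<n$ and $\tilde H_n=P$. In the stable range $n<i<2n$, one has $\tilde H_i(K(P,n),\mathbb Z)\cong H_{i-n}(H\mathbb Z;\mathbb Z)\otimes P$ naturally in $P$. This uses that $P$ is free, so no Tor terms appear, and that the functor is additive stably. The groups $H_j(H\mathbb Z;\mathbb Z)$ for $j>0$ are finite (Serre class theory, or Cartan's computation). Hence in degrees $n<i<2n$ the homology is a finite direct sum of functors $P\mapsto P/m$, and each $P/m$ is the cokernel of multiplication by $m$ on the identity functor. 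This step is the main obstacle: one must make sure that the stable identification is natural in $P\in\Latt$ (and not merely objectwise), and that it is honestly additive in $P$. I would first try to deduce this from the Freudenthal suspension isomorphism, which is functorial, combined with the fact that the stable homology of $H\mathbb Z\wedge -$ is a homology theory and thus additive in $P$.

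\emph{Step 3: the induction.} I prove by induction on $k$ that the identity functor $\mathrm{Id}$ is $k$-pseudocoherent; from this it also follows that each $\mathrm{Id}/m$ is $(k-1)$-pseudocoherent, using the short exact sequence $0\to\mathrm{Id}\xrightarrow{m}\mathrm{Id}\to\mathrm{Id}/m\to 0$. Given $k$, choose $n>k+1$ and set $D=C^{(n)}_\bullet[-n]$. Then $D$ is a bounded below complex of finite sums of compact projectives, so $\Ext^j(D,-)$ commutes with filtered colimits for all $j$ (pseudocoherence in the derived sense). There is a triangle $\tau_{\geq 1}D\to D\to \mathrm{Id}[0]$. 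In degrees below $n$, the homology of $\tau_{\geq 1}D$ consists of the shifted functors $\mathrm{Id}/m[j]$ with $j\geq 1$. By the inductive hypothesis and a spectral sequence (or filtration) argument, $\Ext^j(\tau_{\geq 1}D,-)$ commutes with filtered colimits for $j<k$; the shift by at least $1$ gains one degree here. Taking the long exact sequence of $\Ext$ groups and applying the five lemma, $\Ext^j(\mathrm{Id},-)$ commutes with filtered colimits for $j<k+1$, i.e.\ $\mathrm{Id}$ is $(k+1)$-pseudocoherent. The base case is Step 2 with $i=n$, which shows $\mathrm{Id}$ is a quotient of a finite sum of generators, so finitely generated. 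Proposition~\ref{prop:pseudocoh} then yields the resolution.
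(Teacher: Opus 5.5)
Your proposal is essentially the paper's proof: the paper likewise takes chains on the Eilenberg--MacLane objects (via the iterated bar construction $\mathbb Z[B^nP]$ instead of your Dold--Kan model of $K(P,n)$, an inessential difference), uses the stable-range identification $H_i\cong M_{i-n}\otimes P$ with $M_j$ finitely generated, and runs the same induction, packaged as Lemma~\ref{lem:approximateresolution} rather than as your triangle-and-five-lemma argument. One bookkeeping repair is needed: at $j=k$ your five lemma also requires injectivity of $\varinjlim_\alpha \Ext^k(\tau_{\geq 1}D,M_\alpha)\to \Ext^k(\tau_{\geq 1}D,\varinjlim_\alpha M_\alpha)$, which your bound ``$j<k$'' does not give; it follows once you observe that $\mathrm{Id}/m$ is in fact $k$-pseudocoherent (it is the cokernel of the injection $m:\mathrm{Id}\to\mathrm{Id}$ between $k$-pseudocoherent objects), since the five and four lemmas applied to the filtration of $\tau_{\geq 1}D$ by the $H_l[l]$, $l\geq 1$, and by $\tau_{\geq n}D$ (whose $\Ext^j$ vanish for $j<n$) then give bijectivity for $j<k$ and injectivity for $j=k$.
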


Let us first explain how this proves Theorem~\ref{thm:deligneresolutionapp}.

\begin{proof}[Theorem~\ref{thm:deligneresolutionpseudocoh}$\Rightarrow$ Theorem~\ref{thm:deligneresolutionapp}] Applying Proposition~\ref{prop:pseudocoh} to $\mathcal A=\Fun(\Latt,\Ab)$, the functor $\Latt\to \Ab: P\mapsto P$, and the class of compact projective generators $P\mapsto \mathbb Z[P^r]$, we get a functorial resolution
\[
\ldots \to \bigoplus_{j=1}^{k_i} \mathbb Z[P^{r_{i,j}}]\to \ldots\to \bigoplus_{j=1}^{k_0} \mathbb Z[P^{r_{0,j}}]\to P\to 0
\]
in $P\in \Latt$. In fact, one can construct by hand a partial resolution
\[
\mathbb Z[P^3]\oplus \mathbb Z[P^2]\to \mathbb Z[P^2]\to \mathbb Z[P]\to P\to 0\ ,
\]
cf.~\cite{BerthelotBreenMessing}, and Proposition~\ref{prop:pseudocoh} ensures that the above resolution can be chosen to prolong this one.

It remains to extend this from finite free $\mathbb Z$-modules $P$ to all abelian groups $A$. For this, we observe that the differentials in the above resolution are given by universal formulas. Indeed, maps
\[
(P\mapsto \mathbb Z[P^{r_1}])\to (P\mapsto \mathbb Z[P^{r_2}])
\]
in $\Fun(\Latt,\Ab)$ are given by $\mathbb Z[(\mathbb Z^{r_1})^{r_2}]$, i.e.~finite sums of matrices $M\in M_{r_1\times r_2}(\mathbb Z)$. The map corresponding to a matrix $M$ is given by sending generators $[(p_1,\ldots,p_{r_1})]$, $p_i\in P$, of $\mathbb Z[P^{r_1}]$ to $[M(p_1,\ldots,p_{r_1})]$, where $M(p_1,\ldots,p_{r_1})\in P^{r_2}$ is a generator of $\mathbb Z[P^{r_2}]$ obtained by applying the matrix $M$.

In particular, these universal formulas extend to all abelian groups, leading to a functorial sequence
\[
\ldots \to \bigoplus_{j=1}^{k_i} \mathbb Z[A^{r_{i,j}}]\to \ldots\to \bigoplus_{j=1}^{k_0} \mathbb Z[A^{r_{0,j}}]\to A\to 0
\]
in the abelian group $A$. We claim that this is indeed a complex, and exact: if $A$ is free, this follows via passage to filtered colimits from the case of finite free $A$. In general, $A$ admits a surjection from a free abelian group, which is enough to ensure that it is indeed a complex. To see that it is a resolution, take a simplicial resolution $A_\bullet$ of $A$ by free abelian groups $A_k$, and write down the corresponding double complex. The resulting double complex is equivalent to the complex for $A$, as $A$ and $\mathbb Z[A^r]$ for all $r\geq 0$ are resolved by $A_\bullet$ and $\mathbb Z[A_\bullet^r]$. On the other hand, for all $k\geq 0$, the row of the corresponding double complex is exact by the free case already established, giving the result in general.
\end{proof}

Before embarking on the proof of Theorem~\ref{thm:deligneresolutionpseudocoh}, we establish the following general lemma that allows us to use approximate resolutions in an inductive proof of pseudocoherence.

\begin{lemma}\label{lem:approximateresolution} Let $\mathcal A$ be an abelian category as above. Let $X\in \mathcal A$ and assume given a complex
\[
C: \ldots\to P_i\to \ldots \to P_0\to X\to 0
\]
in $\mathcal A$ such that all $P_i\in \mathcal A$ are compact projective and $P_0\to X$ is surjective. If $H_i(C)$ is $n-i-1$-pseudocoherent for all $i=0,\ldots,n-1$, then $X$ is $n$-pseudocoherent.
\end{lemma}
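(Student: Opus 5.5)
I would argue by induction on $n$, in the same spirit as the proof of Proposition~\ref{prop:pseudocoh}, using the long exact sequences for $\Ext^\ast(-,Y)$ together with exactness of filtered colimits in $\mathcal A$ and in $\Ab$. The basic observation is that $k$-pseudocoherence satisfies two-out-of-three properties in short exact sequences $0\to X'\to X\to X''\to 0$.

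\textbf{Two-out-of-three.} I would first record the following, by comparing the long exact $\Ext$ sequences before and after passing to a filtered colimit $\varinjlim Y_\alpha$ and applying the five lemma.
\begin{enumerate}
\item[{\rm (a)}] If $X'$ is $(k-1)$-pseudocoherent and $X''$ is $k$-pseudocoherent, then $X$ is $k$-pseudocoherent.
\item[{\rm (b)}] If $X$ is $k$-pseudocoherent and $X''$ is $(k+1)$-pseudocoherent, then $X'$ is $k$-pseudocoherent.
\end{enumerate}
Both require care at the boundary degree: one uses only the surjectivity or injectivity half of the five lemma, which is available because the colimit comparison maps are at least injective or surjective in the relevant degrees. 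For instance, in (a) with $i=k-1$ the map on $\Ext^{k-1}(X',-)$ is only known to be an isomorphism up to degree $k-2$. The relevant statement is then that $\varinjlim\Ext^{k-1}(X',Y_\alpha)\to\Ext^{k-1}(X',\varinjlim Y_\alpha)$ is injective. This holds because $Y\mapsto \Ext$ with filtered colimits is exact, and one can embed $Y_\alpha$ into filtered systems of injectives. Alternatively, and more simply, one works with the precise statement ``$\Ext^i$ commutes for $i<k$ and is injective for $i=k$'', which is what Proposition~\ref{prop:pseudocoh}'s resolutions actually give. The cleanest way to get these boundary statements is to use the resolution characterization: $X'$ $(k-1)$-pseudocoherent means $X'$ admits a length-$(k-1)$ finite resolution by objects of $\mathcal A_0$. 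In that case $\Ext^i(X',-)$ for $i\le k-1$ is computed by a complex whose terms in degrees $\le k-1$ are compact, and the degree-$(k-1)$ comparison is injective.

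\textbf{Main argument.} Let $Z_i=\ker(P_i\to P_{i-1})$ (with $P_{-1}=X$) and $B_i=\operatorname{im}(P_{i+1}\to P_i)$, so that there are short exact sequences
\[
0\to B_i\to Z_i\to H_i(C)\to 0,\qquad 0\to Z_{i+1}\to P_{i+1}\to B_i\to 0.
\]
Here $Z_{-1}=X=B_{-1}$, since $P_0\to X$ is surjective. I would prove by descending induction on $i$, from $i=n-1$ down to $i=-1$, that $B_i$ is $(n-i-1)$-pseudocoherent. For $i=-1$ this is exactly the claim that $X$ is $n$-pseudocoherent. The base case $i=n-1$ asks for $0$-pseudocoherence (finite generation) of $B_{n-1}$, which is clear since $B_{n-1}$ is a quotient of the compact projective $P_n$.

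For the inductive step, the sequence $0\to B_i\to Z_i\to H_i(C)\to 0$ together with (a) gives that $Z_i$ is $(n-i-1)$-pseudocoherent, since $B_i$ is by induction and $H_i(C)$ is by hypothesis. Then the sequence $0\to Z_i\to P_i\to B_{i-1}\to 0$, with $P_i$ compact projective (hence pseudocoherent), shows via the analogue of (a) in the form ``kernel $(k-1)$-pc and middle $\infty$-pc imply quotient $k$-pc'' that $B_{i-1}$ is $(n-i)$-pseudocoherent. This last step is immediate from dimension shifting: $\Ext^{j}(B_{i-1},-)\cong\Ext^{j-1}(Z_i,-)$ for $j\ge2$, and a four-term sequence handles $j=0,1$.

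\textbf{Expected obstacle.} I expect the main difficulty to be the boundary degree in (a), specifically the low-degree case including $0$-pseudocoherence, meaning finite generation. The first thing I would try is to avoid $\Ext$ entirely and use the resolution form of Proposition~\ref{prop:pseudocoh}, applying a horseshoe-type construction. Given finite partial resolutions of $B_i$ of length $k-1$ and of $H_i(C)$ of length $k$, lift and combine them to get a finite partial resolution of $Z_i$ of length $k$. This is possible because the terms are projective, so the maps lift. Then use the mapping cone with $P_i$ to build a resolution of $B_{i-1}$.
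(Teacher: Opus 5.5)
Your descending induction is essentially the diagram chase the paper has in mind: $B_{n-1}$ is finitely generated as a quotient of $P_n$, then $Z_i$ comes from $0\to B_i\to Z_i\to H_i(C)\to 0$, then $B_{i-1}=P_i/Z_i$. It uses only $P_0,\ldots,P_n$, which matches the paper's remark about the stupid truncation, and your quotient step by dimension shifting is fine.

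However, your two-out-of-three statement (a) is false as written. Take $X''=0$: it would say every $(k-1)$-pseudocoherent object is $k$-pseudocoherent, e.g.\ every finitely generated object is compact. This fails for modules over $R=\mathbb Z[x_1,x_2,\ldots]$ with $X=R/(x_1,x_2,\ldots)$. The failure is exactly the boundary degree you worried about. At $j=k-1$, surjectivity of the comparison map for $\Ext^{k-1}(X,-)$ requires surjectivity for $\Ext^{k-1}(X',-)$; injectivity there only yields injectivity of the middle map. Your horseshoe variant has the same defect, since resolutions of lengths $k-1$ and $k$ only combine to one of length $k-1$.

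This does not break the main argument, because you only apply (a) with $X'=B_i$. By the inductive hypothesis $B_i$ is already $(n-i-1)$-pseudocoherent, i.e.\ $k$-pseudocoherent for $k=n-i-1$. So what you should state and use is: if $X'$ and $X''$ are both $k$-pseudocoherent, so is $X$. For $j\le k-1$ the five lemma now applies. At $j=k-1$ the maps for $\Ext^{k-2}(X',-)$, $\Ext^{k-1}(X'',-)$ and $\Ext^{k-1}(X',-)$ are isomorphisms, and the map for $\Ext^{k}(X'',-)$ is injective. That last point is the boundary fact you mention, and it follows from a resolution $F_k\to\cdots\to F_0\to X''\to 0$. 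By dimension shifting, $\Ext^k(X'',Y)=\mathrm{coker}(\Hom(F_{k-1},Y)\to\Hom(K,Y))$ with $K=\ker(F_{k-1}\to F_{k-2})$ (where $F_{-1}=X''$). Here $K$ is a quotient of $F_k$, hence finitely generated, and for finitely generated $K$ the map $\varinjlim\Hom(K,Y_\alpha)\to\Hom(K,\varinjlim Y_\alpha)$ is injective. For $k=0$ the statement is just that extensions of finitely generated objects are finitely generated.

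Likewise, the horseshoe version works if you combine two resolutions of the same length $n-i-1$. With this correction the proof is complete.
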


\begin{proof} This follows from a direct diagram chase, noting that one can replace $C$ by its stupid truncation in degrees $\leq n$.
\end{proof}

\begin{proof}[Proof of Theorem~\ref{thm:deligneresolutionpseudocoh}] We argue by induction on $n$ that the functor $P\mapsto P$ is $n$-pseudo\-coherent. The partial resolution $\mathbb Z[P^2]\to \mathbb Z[P]\to P\to 0$ shows that it is $1$-pseudocoherent.

Recall the bar construction $BP$, which is the simplicial set with $i$-simplices $(BP)_i = P^i$ and where the simplicial structure maps encode the group structure on $P$. As $P$ is abelian, this is in fact a simplicial abelian group, so one can inductively form the $n$-fold simplicial abelian group $B^nP$. It is clear from the construction that each term $(B^nP)_{i_1,\ldots,i_n}$ is a finite power $P^{r_{i_1,\ldots,i_n,n}}$ of $P$, and $r_{i_1,\ldots,i_n,n}=0$ if one of the $i_j=0$. In particular, forming the associated chain complex
\[
\mathbb Z[B^nP]
\]
whose degree $i$ term is $\bigoplus_{i_1+\ldots+i_n=i} \mathbb Z[(B^nP)_{i_1,\ldots,i_n}]$, it is concentrated in homological degrees $\geq n$. Now we need the following result from algebraic topology about the homology of Eilenberg--MacLane spaces.

\begin{theorem}[\cite{EilenbergMacLane}] For a finite free abelian group $P$, the homology groups
\[
H_i(\mathbb Z[B^nP]) = H_i(K(P,n),\mathbb Z)
\]
vanish for $i<n$ and are given by $M_{i-n}\otimes_{\mathbb Z} P$ for $n\leq i<2n$, where $M_j$, $j\geq 0$, is a certain sequence of finitely generated abelian groups, with $M_0=\mathbb Z$.\footnote{In fact, $M_j = \pi_j(H\mathbb Z\otimes_{\mathbb S} H\mathbb Z)$ form the integral dual Steenrod algebra, and are finite for $j>0$.}
\end{theorem}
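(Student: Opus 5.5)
The plan is to pass from the combinatorial complex $\mathbb Z[B^nP]$ to the topological Eilenberg--MacLane space, and then to use the standard stable-range results. First, I will identify the homology of the chain complex $\mathbb Z[B^nP]$ with the homology of the geometric realization of the $n$-fold simplicial set $B^nP$. By the Eilenberg--Zilber theorem, the total complex of the multisimplicial chain complex computes the homology of the diagonal simplicial set. The realization of the diagonal is a $K(P,n)$: iterating the bar construction deloops, since $|B(B^{k}P)|\simeq B|B^kP|$ for the simplicial abelian group $B^kP$, and $B K(P,k)\simeq K(P,k+1)$. Each identification is natural in $P\in\Latt$. This gives $H_i(\mathbb Z[B^nP])=H_i(K(P,n),\mathbb Z)$, functorially in $P$.

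The vanishing for $i<n$ then follows from the Hurewicz theorem, since $K(P,n)$ is $(n-1)$-connected. It is also visible directly from the observation that $\mathbb Z[B^nP]$ is concentrated in degrees $\geq n$, and $H_n=P$ by Hurewicz. For the range $n\leq i<2n$, I will use the suspension map $\Sigma K(P,n)\to K(P,n+1)$, the adjoint of the equivalence $K(P,n)\simeq\Omega K(P,n+1)$. I claim this map is $(2n+1)$-connected, so that the composite
\[
\tilde H_i(K(P,n))\cong \tilde H_{i+1}(\Sigma K(P,n))\to \tilde H_{i+1}(K(P,n+1))
\]
is an isomorphism for $i<2n$. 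To see this, I would run the Serre spectral sequence of the path fibration $K(P,n)\to \ast\to K(P,n+1)$. Both base and fibre have vanishing reduced homology below degrees $n+1$ and $n$ respectively, so the $E^2$-terms with both $p,q>0$ lie in total degree $\geq 2n+1$. Hence the transgression $H_{i+1}(K(P,n+1))\to H_i(K(P,n))$ is an isomorphism for $i<2n$, and it agrees with the inverse of the suspension map. I expect this step, the precise stable-range bookkeeping and its identification with the suspension, to be the main obstacle; the alternative is to cite Freudenthal directly.

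Consequently, for $n\leq i<2n$ the group $H_i(K(P,n))$ equals the stable value $\varinjlim_m H_{i-n+m}(K(P,m))=\pi_{i-n}(H\mathbb Z\otimes_{\mathbb S} HP)$. Since $P$ is finite free, $HP=H\mathbb Z\otimes_{\mathbb Z}P$ naturally in $P\in\Latt$, so this stable value is $M_{i-n}\otimes_{\mathbb Z}P$ with $M_j=\pi_j(H\mathbb Z\otimes_{\mathbb S}H\mathbb Z)$, equivalently $M_j=H_{j+m}(K(\mathbb Z,m))$ for $m>j$. Then $M_0=H_m(K(\mathbb Z,m))=\mathbb Z$ by Hurewicz. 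Finite generation of each $M_j$ follows from Serre's mod-$\mathcal C$ theory, with $\mathcal C$ the class of finitely generated abelian groups: inductively via the Serre spectral sequence of the path fibrations, starting from $K(\mathbb Z,1)=S^1$, all $H_i(K(\mathbb Z,m))$ are finitely generated.
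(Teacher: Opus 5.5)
Your argument is correct. The paper gives no proof to compare against: it quotes the result from \cite{EilenbergMacLane}, whose methods are algebraic (working directly with iterated bar constructions), and uses it only as a black box in the proof of Theorem~\ref{thm:deligneresolutionpseudocoh}.

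Your route is topological instead. You use Eilenberg--Zilber together with $|B(B^kP)|\simeq B|B^kP|$ to identify the complex with chains on a $K(P,n)$. You then get the stable range from the Serre spectral sequence of the path fibration, or equivalently from Freudenthal. Given standard homotopy theory this is short, and it yields the footnote's identification $M_j=\pi_j(H\mathbb Z\otimes_{\mathbb S}H\mathbb Z)$ directly. The algebraic route has the advantage of staying inside chain complexes of functors on $\Latt$, where naturality in $P$ is automatic.

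That naturality is what the application actually needs. For $n\leq i<2n$, the functor $P\mapsto H_i(\mathbb Z[B^nP])$ on $\Latt$ must be isomorphic to $P\mapsto M\otimes_{\mathbb Z}P$ with $M$ finitely generated, and you do track this. If you want to avoid spectra at that step, there is a simpler argument. For $i<2n$ the functor $P\mapsto \tilde H_i(K(P,n))$ preserves finite direct sums, since $K(P\oplus Q,n)\simeq K(P,n)\times K(Q,n)$ and $K(P,n)\wedge K(Q,n)$ is $(2n-1)$-connected. Any such functor on $\Latt$ is additive, hence naturally isomorphic to $F(\mathbb Z)\otimes_{\mathbb Z}-$.

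One bookkeeping point: $H_0(K(P,n),\mathbb Z)=\mathbb Z$. So ``vanish for $i<n$'' and ``concentrated in degrees $\geq n$'' hold only for the reduced complex, in which the copies of $\mathbb Z[\ast]$ in multidegrees with some $i_j=0$ are discarded. Read every $H_i$ as $\tilde H_i$ and use the pointed form of Eilenberg--Zilber; the rest of your argument then goes through unchanged.
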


In particular, $H_n(\mathbb Z[B^nP]) = P$, and we get a complex
\[
\mathbb Z[B^nP][-n]\to P\to 0
\]
of the form in Lemma~\ref{lem:approximateresolution}. The homology groups considered in Lemma~\ref{lem:approximateresolution} are all of the form $P\mapsto M_i\otimes_{\mathbb Z} P$ for some $i=0,\ldots,n-1$. As all $M_i$ are finitely generated and $P\mapsto P$ is $n-1$-pseudocoherent by induction, all $P\mapsto M_i\otimes_{\mathbb Z} P$ are $n-1$-pseudocoherent. Thus Lemma~\ref{lem:approximateresolution} shows that $P\mapsto P$ is $n$-pseudocoherent, as desired.
\end{proof}

We also used the following result in the lecture that is an easy consequence of the categorical setup established here.

\begin{proposition}\label{prop:homotopymultiply} Consider a functorial resolution $F(A)_\bullet$ of abelian groups $A$ where all $F(A)_i\cong \bigoplus_{j=1}^{n_i} \mathbb Z[A^{r_{i,j}}]$. For any integer $n$, the maps $n: F(A)\to F(A)$ of multiplication by $n$ and $[n]: F(A)\to F(A)$ induced by multiplication by $n$ on $A$ are homotopic via a functorial homotopy $h_\bullet: F(A)_\bullet\to F(A)_{\bullet+1}$.
\end{proposition}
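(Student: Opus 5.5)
We reduce to a statement in the functor category $\Fun(\Latt,\Ab)$ and then extend to all abelian groups by universal formulas, exactly as in the proof that Theorem~\ref{thm:deligneresolutionpseudocoh} implies Theorem~\ref{thm:deligneresolutionapp}. Concretely, restricting to $P\in\Latt$, the complex $F(P)_\bullet$ is a complex in $\Fun(\Latt,\Ab)$ whose terms $\bigoplus_j \mathbb Z[P^{r_{i,j}}]$ are projective, and which resolves the functor $P\mapsto P$. Both $n$ and $[n]$ are chain maps $F(-)_\bullet\to F(-)_\bullet$ in $\Fun(\Latt,\Ab)$ lifting the same endomorphism of $P\mapsto P$, namely multiplication by $n$. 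Indeed, on the augmentation the map $[n]$ induces $P\xrightarrow{n}P$ by functoriality, and the map $n$ induces it trivially. By the comparison theorem for projective resolutions, two chain maps between projective resolutions covering the same map are chain homotopic. This gives a homotopy $h_\bullet$ with $h_i: F(-)_i\to F(-)_{i+1}$ a morphism in $\Fun(\Latt,\Ab)$.

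The homotopy is built inductively in the usual way. In degree $0$, the difference $n-[n]:F_0\to F_0$ lands in the kernel of $F_0\to P$, which is the image of $F_1$; projectivity of $F_0$ lifts it to $h_0:F_0\to F_1$. In higher degrees, $n-[n]-h_{i-1}d$ lands in cycles, hence in boundaries, and projectivity of $F_i$ gives $h_i$.

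Next we make $h$ functorial on all abelian groups. Each $h_i$ is a morphism $\bigoplus_j\mathbb Z[P^{r_{i,j}}]\to\bigoplus_k\mathbb Z[P^{r_{i+1,k}}]$ in $\Fun(\Latt,\Ab)$. By the Yoneda computation in the appendix, such a morphism is a finite $\mathbb Z$-linear combination of integer matrices acting on tuples. It is therefore given by a universal formula that makes sense for any abelian group $A$ (indeed for any abelian group object in a topos), and this defines $h_i(A)$ functorially in $A$.

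It remains to check the identity $n-[n]=dh+hd$ on $F(A)$ for arbitrary $A$. The identity is between maps $\mathbb Z[A^{r}]\to \bigoplus\mathbb Z[A^{r'}]$ given by universal matrix formulas. It therefore suffices to check it on generators $[(a_1,\dots,a_r)]$. Any such tuple is the image of the canonical tuple under a map $\mathbb Z^r\to A$, so by naturality the identity reduces to the case $A=\mathbb Z^r\in\Latt$, where it holds by construction. The same argument shows that $[n]$, $d$, and the augmentation are compatible with the universal formulas.

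The main subtlety is bookkeeping: making sure $n$ and $[n]$ both induce the same map on the augmentation target $P$, and that the universal-formula extension respects composition. Both points are formal given the Yoneda description.
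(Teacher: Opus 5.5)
Your proposal is correct and follows the paper's proof. You restrict to lattices to get a projective resolution of $P\mapsto P$ in $\Fun(\Latt,\Ab)$, observe that $n$ and $[n]$ both cover multiplication by $n$, apply the comparison theorem for projective resolutions, and extend the homotopy to all abelian groups via the Yoneda (universal-formula) description. Your reduction of the identity $n-[n]=dh+hd$ to $A=\mathbb Z^r$, by checking it on generators and using naturality, is a clean way of making precise the paper's brief remark that everything extends to general $A$ because it is given by universal formulas.
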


\begin{proof} The data of such a functorial resolution is equivalent to the datum of a certain projective resolution of $P\mapsto P$ in $\Fun(\Latt,\Ab)$. Both maps $n$ and $[n]$ lift the same map $(P\mapsto P)\to (P\mapsto P)$ (multiplication by $n$) to the resolution, so general nonsense about projective resolutions in abelian categories gives the result (a priori for finite free $A$, but again this extends to general $A$ as everything is given in terms of universal formulas).
\end{proof}
\newpage

\section{Lecture V: Solid abelian groups}

We have seen that condensed abelian groups have excellent categorical properties, and that one can compute $\Hom$ and $\Ext$-groups, getting good answers. However, this is not the case when forming a tensor product: Even when $A$ and $B$ are nice topological (or condensed) abelian groups, their tensor product $A\otimes B$ is usually pathological. For example, take $A=\mathbb Z_p$ and $B=\mathbb R$ or $\mathbb Z_\ell$: The tensor products $\mathbb Z_p\otimes \mathbb R$ or $\mathbb Z_p\otimes \mathbb Z_\ell$ are pathological (note that the underlying abelian group is the usual algebraic tensor product, in both the topological and the condensed setting). The standard response to this is to complete the tensor product. This is usually where the theory gets tricky, as there are usually many sensible ways of forming a completed tensor product.

Somewhat surprisingly, there is a very general theory of completion for condensed abelian groups with extremely favourable properties.\footnote{The only caveat is that, for the notion of completion we are about to develop, the completion of $\mathbb R$ is $0$: The current notion of completion is very much a ``nonarchimedean" notion. In \cite{Analytic}, \cite{AnalyticStacks} we develop variants in more general situations, notably over $\mathbb R$.} However, as the current notion does not exactly match the classical notion of completeness, and will come in various flavours once we enter the relative theory, we have chosen to use the different term ``solid''.

\begin{definition}\label{def:solid}\leavevmode
\begin{enumerate}
\item[{\rm (i)}] For a profinite set $S=\varprojlim_i S_i$, define the condensed abelian group
\[
\mathbb Z[S]^\solid := \varprojlim_i \mathbb Z[S_i]\ .
\]
There is a natural map $S=\varprojlim_i S_i\to \mathbb Z[S]^\solid = \varprojlim_i \mathbb Z[S_i]$, inducing a map $\mathbb Z[S]\to \mathbb Z[S]^\solid$.
\item[{\rm (ii)}] A solid abelian group is a condensed abelian group $A$ such that for all profinite sets $S$ and all maps $f: S\to A$, there is a unique map $\tilde{f}: \mathbb Z[S]^\solid\to A$ extending $f$.
\item[{\rm (iii)}] A complex $C\in D(\Cond(\Ab))$ of condensed abelian groups is solid if for all profinite sets $S$ the natural map
\[
R\Hom(\mathbb Z[S]^\solid,C)\to R\Gamma(S,C)=R\Hom(\mathbb Z[S],C)
\]
is an isomorphism.
\end{enumerate}
\end{definition}

\begin{remark} It follows formally that if $A$ is condensed abelian group such that $A[0]\in D(\Cond(\Ab))$ is solid, then $A$ is solid. However, it is not clear that conversely, if $A$ is a solid abelian group, then $A[0]\in D(\Cond(\Ab))$ is solid, nor that if $C\in D(\Cond(\Ab))$ is solid then a cohomology group $H^i(C)$ is solid. However, Theorem~\ref{thm:solid} will show that these implications are both true.
\end{remark}

\begin{remark} One might ask in condition (iii) that the internal $R\intHom$'s agree, which is a priori stronger. By Corollary~\ref{cor:solidproperties}~(iv), this is actually automatic.
\end{remark}

We would like to call $\mathbb Z[S]^\solid$ the free solid abelian group on $S$. This terminology is in fact justified by the definition of solid abelian groups once we have proved that $\mathbb Z[S]^\solid$ is indeed a solid abelian group. Let us analyze its structure. We have
\[
\mathbb Z[S]^\solid = \varprojlim_i \mathbb Z[S_i] = \varprojlim_i \intHom(C(S_i,\mathbb Z),\mathbb Z)=\intHom(C(S,\mathbb Z),\mathbb Z)\ ,
\]
where the abelian group $C(S,\mathbb Z)$ of continuous maps $S\to \mathbb Z$ is a discrete abelian group. In particular, the underlying abelian group of $\mathbb Z[S]^\solid$ is the group
\[
\mathcal M(S,\mathbb Z) = \Hom(C(S,\mathbb Z),\mathbb Z)
\]
of $\mathbb Z$-valued measures on $S$. This means that if $A$ is a solid abelian group, $f: S\to A$ is a map from a profinite set, and $\mu\in \mathcal M(S,\mathbb Z)$ is a measure on $S$, then one can define
\[
\int f\mu\in A
\]
via evaluating the extension $\tilde{f}: \mathbb Z[S]^\solid\to A$ on $\mu$.

To go on, we need the following surprising structure result on $C(S,\mathbb Z)$, due to N\"obeling, \cite{Noebeling}, generalizing previous work of Specker, \cite{Specker}.

\begin{theorem}\label{thm:specker} For any profinite set $S$, the abelian group $C(S,\mathbb Z)$ of continuous maps from $S$ to $\mathbb Z$ is a free abelian group.
\end{theorem}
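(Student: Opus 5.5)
We prove Nöbeling's theorem by the classical well-ordering argument. First, embed $S$ as a closed subset of $\{0,1\}^I$ for some set $I$; this is possible since $S$ is profinite and the clopen subsets separate points. Fix a well-ordering of $I$. For $i\in I$ let $e_i\in C(S,\mathbb Z)$ be the restriction of the $i$-th coordinate function, so $e_i$ is the indicator function of a clopen set. Finite products $e_{i_1}\cdots e_{i_k}$ with $i_1<\ldots<i_k$ (the empty product being $1$) are the \emph{monomials}. Monomials span $C(S,\mathbb Z)$: any locally constant function factors through a finite projection $S\to\{0,1\}^J$, and there every function is a polynomial in the coordinates, hence a $\mathbb Z$-combination of monomials. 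Order monomials lexicographically by their decreasing sequence of indices. Call a monomial \emph{good} if it is not a $\mathbb Z$-linear combination of smaller monomials restricted to $S$. A standard triangularity argument shows that the good monomials form a basis of $C(S,\mathbb Z)$. Indeed, they span by transfinite descent on the well-ordered set of monomials. They are linearly independent since in a nontrivial relation the largest monomial would be expressible through smaller ones.

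So the real content is the linear independence claim: a linear relation among good monomials must be trivial. We organize this as transfinite induction on the order type $\mu$ of the well-ordered index set. Precisely, we prove the statement ``for every closed $S\subset\{0,1\}^{I}$ with $I$ of order type $\mu$, the good monomials are linearly independent'' for all ordinals $\mu$, uniformly in $S$. The cases are as follows.

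\emph{$\mu=0$.} Here $S$ is a point or empty, and the claim is trivial.

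\emph{$\mu$ a limit ordinal.} Any finite relation involves only indices below some $\lambda<\mu$. Goodness is stable under passing to the projection $\pi_\lambda(S)$: a monomial in indices $<\lambda$ that is good for $S$ is good for $\pi_\lambda(S)$, since functions pulled back from the projection compare the same way, and the lexicographic order only involves smaller monomials, which use indices $<\lambda$. So the relation already lives on $\pi_\lambda(S)$, and induction applies.

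\emph{$\mu=\lambda+1$, with top index $\lambda$.} This is the main obstacle. Let $S_0=\pi_\lambda(S)$, and let $S_1=\pi_\lambda(S\cap\{x_\lambda=1\})$, so that $S_1\subset S_0$. Consider the exact sequence
\[
0\to C(S_0,\mathbb Z)\to C(S,\mathbb Z)\to C(S_1,\mathbb Z)\to 0,
\]
where the first map is pullback and the second sends $f$ to $x\mapsto f(x,1)-f(x,0)$ for $x\in S_1$. Here $f(x,0)$ is interpreted appropriately, by extending via the clopen decomposition. Exactness needs checking: surjectivity holds via $f\mapsto f\cdot e_\lambda$, and exactness in the middle is a small computation.

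Good monomials of $S$ split into those not involving $e_\lambda$ and those of the form $m\,e_\lambda$. The first kind are exactly the good monomials of $S_0$. For the second kind, we aim to show that $m\,e_\lambda$ is good for $S$ iff $m$ is good for $S_1$. The key point is that the second map sends $m\,e_\lambda$ to $m|_{S_1}$ and kills the first kind. The delicate verification is that if $m$ is a combination of smaller monomials on $S_1$, then $m\,e_\lambda$ is a combination of smaller monomials on $S$. This uses that monomials containing $e_\lambda$ exceed all those without it.

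Given this, independence follows from the exact sequence together with the induction hypothesis applied to $S_0$ and $S_1$, both of which sit in $\{0,1\}^\lambda$. Therefore $C(S,\mathbb Z)$ is free.
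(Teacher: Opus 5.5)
Your framework is the same as the paper's: lexicographically ordered monomials in the coordinate idempotents, ``good'' monomials, and transfinite induction with a short exact sequence at successor stages. Your limit step and your identification of good monomials without $e_\lambda$ with those of $S_0$ are fine. The successor step, however, uses the wrong auxiliary set, and as written it fails.

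Put $T_i=\pi_\lambda(S\cap\{x_\lambda=i\})$ for $i=0,1$. Then $S\cong T_0\sqcup T_1$, and $S_0=T_0\cup T_1$ is a closed cover. Gluing and extending locally constant functions shows that the cokernel of $C(S_0,\mathbb Z)\to C(S,\mathbb Z)$ is $C(T_0\cap T_1,\mathbb Z)$, via $f\mapsto f(\cdot,1)-f(\cdot,0)$, and this map is only defined on $T_0\cap T_1$.

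Your choice $S_1=T_1$ cannot work under any interpretation of $f(x,0)$ for $(x,0)\notin S$. Take $S=\{1\}\subset\{0,1\}^{\{0\}}$. You get $S_0=S_1=\ast$ and a would-be exact sequence $0\to\mathbb Z\to\mathbb Z\to\mathbb Z\to 0$, which is impossible. Your claimed equivalence also fails there: $e_0=1$ on $S$, so $e_0$ is not good, while $1$ is good on $S_1$. Tellingly, the step you flag as delicate is trivial for $S_1=T_1$: if $m=\sum b_jm_j$ on $T_1$, then $me_\lambda=\sum b_jm_je_\lambda$ on $S$. With the correct set, that step is exactly where the content lies.

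The fix is the paper's choice $S'=T_0\cap T_1$. Suppose $m_j<m$ and $h=m-\sum b_jm_j$ vanishes on $T_0\cap T_1$. Glue $0$ on $T_0$ with $h$ on $T_1$ to get $g\in C(S_0,\mathbb Z)$ with $he_\lambda=g$ on $S$. Since $g$ is a combination of monomials not involving $e_\lambda$, all of which are smaller than $me_\lambda$, this shows that if $me_\lambda$ is good on $S$ then $m$ is good on $S'$.

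Independence then follows. A relation $\sum a_kn_k+\sum b_jm_je_\lambda=0$ among good monomials maps under $f\mapsto f(\cdot,1)-f(\cdot,0)$ to $\sum b_jm_j=0$ on $S'$. So $b_j=0$ by induction, and then $a_k=0$ by induction on $S_0$ together with injectivity of pullback.

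Separately, your sentence that independence follows from triangularity is wrong over $\mathbb Z$. A relation $c\,m=\sum(\text{smaller monomials})$ with $|c|>1$ does not contradict goodness of $m$, since you cannot divide by $c$. This is precisely why the inductive argument is needed, as you go on to acknowledge.
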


\begin{proof} For the convenience of the reader, we give a translation of the argument (attributed to Bergman) in \cite[Theorem 97.2]{FuchsInfinite} to our setup. Pick an injection $S\hookrightarrow \prod_I \{0,1\}$ for some set $I$, and choose a well-ordering on $I$, so $I$ is some ordinal $\lambda$, and elements of $I$ identify with ordinals $\mu<\lambda$. For each $\mu<\lambda$, we get the idempotent $e_\mu\in C(S,\mathbb Z)$ given by the corresponding projection $S\to \{0,1\}\subset \mathbb Z$. Order the products $e_{\mu_1}\cdots e_{\mu_r}$ with $\mu_1>\ldots>\mu_r$  (including the empty product corresponding to $r=0$) lexicographically, and let $E$ be the set of such products that cannot be written as a linear combination of smaller such products. We claim that $E$ is a basis of $C(S,\mathbb Z)$.

We argue by induction on $\lambda$, the case $\lambda=0$ being trivial. For any $\mu<\lambda$, let $S_\mu$ be the image of $S$ in $\prod_{\mu^\prime<\mu} \{0,1\}$. If $\lambda$ is a limit ordinal, then the result for $S$ follows formally from the result for all $S_\mu$ with $\mu<\lambda$ by passage to the filtered colimit, noting that the basis for $E$ is the union of the bases $E_\mu$ for all $S_\mu$. Thus assume that $\lambda=\rho+1$ is a successor ordinal and write $\overline{S} = S_\rho$. We have a closed immersion $S\hookrightarrow \overline{S}\times \{0,1\}$. Let $S_i = S\cap (\overline{S}\times \{i\})$ for $i=0,1$; these are closed subsets of $S$ that project to closed subsets of $\overline{S}$ that cover $\overline{S}$. Let $\overline{S}^\prime$ be the intersection of $S_0$ and $S_1$ in $\overline{S}$. Then we have a short exact sequence
\[
0\to C(\overline{S},\mathbb Z)\to C(S,\mathbb Z)\to C(\overline{S}^\prime,\mathbb Z)\to 0\ ,
\]
where the second map sends $f\in C(S,\mathbb Z)$ to the difference of its two restrictions to $\overline{S}^\prime\times\{0\}$ and $\overline{S}^\prime\times \{1\}$.

By induction, the part of the basis vectors of $E$ that do not start with $e_\rho$ form a basis of $C(\overline{S},\mathbb Z)$. On the other hand, the basis vectors of $E$ that start with $e_\rho$ project to a basis of $C(\overline{S}^\prime,\mathbb Z)$ (by applying the induction hypothesis to $\overline{S}^\prime$ with its closed immersion into $\prod_{\mu<\rho} \{0,1\}$). Thus, $E$ defines a basis of $C(S,\mathbb Z)$, as desired.
\end{proof}

\begin{corollary}\label{cor:speckerfree} For any profinite set $S$, there is some set $I$, $|I|\leq 2^{|S|}$, and an isomorphism of condensed abelian groups
\[
\mathbb Z[S]^\solid\cong \prod_I \mathbb Z\ .
\]
\end{corollary}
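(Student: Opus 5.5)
The plan is to combine the identification $\mathbb Z[S]^\solid = \intHom(C(S,\mathbb Z),\mathbb Z)$ with Theorem~\ref{thm:specker}. The cardinality bound comes for free: $C(S,\mathbb Z)$ is free on some set $I$, and $|I|\le |C(S,\mathbb Z)|$. A continuous map $S\to\mathbb Z$ takes only finitely many values, each on a clopen subset, so $|C(S,\mathbb Z)|$ is at most the number of functions $S\to\mathbb Z$. That count is $\aleph_0^{|S|}=2^{|S|}$ for infinite $S$, and the finite case is trivial.

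First I would check the displayed identification carefully. Write $S=\varprojlim_i S_i$ with $S_i$ finite. For finite $S_i$, the group $C(S_i,\mathbb Z)=\mathbb Z^{S_i}$ is finite free, so $\intHom(C(S_i,\mathbb Z),\mathbb Z)=\mathbb Z[S_i]$, both regarded as discrete condensed groups. Since $C(S,\mathbb Z)=\varinjlim_i C(S_i,\mathbb Z)$ as discrete abelian groups, and hence as condensed abelian groups (filtered colimits of discrete objects stay discrete), $\intHom(-,\mathbb Z)$ converts this colimit into the limit. This gives $\mathbb Z[S]^\solid\cong\intHom(C(S,\mathbb Z),\mathbb Z)$.

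Next, by Theorem~\ref{thm:specker} choose an isomorphism $C(S,\mathbb Z)\cong\bigoplus_I\mathbb Z$ of abelian groups. This is then an isomorphism of discrete condensed abelian groups, and $\bigoplus_I\mathbb Z$ is a coproduct in $\Cond(\Ab)$. Hence
\[
\intHom(C(S,\mathbb Z),\mathbb Z)\cong\intHom\Bigl(\bigoplus_I\mathbb Z,\mathbb Z\Bigr)\cong\prod_I\intHom(\mathbb Z,\mathbb Z)=\prod_I\mathbb Z,
\]
which is the claim.

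The only delicate point is being sure that the discrete condensed group on $\bigoplus_I\mathbb Z$ really is the condensed coproduct. I would argue this by noting that the discrete-object functor is a left adjoint: it is $T\mapsto T$ applied to the constant sheaf, left adjoint to evaluation at a point. It therefore preserves colimits, so the coproduct is computed correctly and the rest is formal.
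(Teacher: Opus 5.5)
Your proof is correct and is essentially the paper's argument: the paper likewise takes an isomorphism $C(S,\mathbb Z)\cong\bigoplus_I\mathbb Z$ from Theorem~\ref{thm:specker} and applies $\intHom(-,\mathbb Z)$ to the identification $\mathbb Z[S]^\solid=\intHom(C(S,\mathbb Z),\mathbb Z)$, which it states just before that theorem. Your explicit cardinality count $|I|\le|C(S,\mathbb Z)|\le\aleph_0^{|S|}=2^{|S|}$, and your check that the discrete group $\bigoplus_I\mathbb Z$ is the coproduct in condensed abelian groups, fill in details that the paper leaves implicit.
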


\begin{proof} Take an isomorphism $C(S,\mathbb Z)\cong \bigoplus_I \mathbb Z$, and use
\[
\mathbb Z[S]^\solid = \intHom(C(S,\mathbb Z),\mathbb Z)\cong \intHom(\bigoplus_I \mathbb Z,\mathbb Z)\cong \prod_I\mathbb Z\ .\qedhere
\]
\end{proof}

As another application, we note that in Definition~\ref{def:solid} we could replace profinite sets with extremally disconnected sets without changing the notions. This follows formally from the following proposition.

\begin{proposition}\label{prop:freecondensedsheaf} Let $S_\bullet\to S$ be a hypercover of a profinite set $S$ by profinite sets $S_j$. Then the corresponding complex
\[
\ldots\to \mathbb Z[S_1]^\solid\to \mathbb Z[S_0]^\solid\to \mathbb Z[S]^\solid\to 0
\]
of condensed abelian groups is exact.
\end{proposition}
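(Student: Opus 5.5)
The plan is to reduce to finite sets by writing everything as a cofiltered limit and controlling the $\varprojlim^1$ terms. As in the proof of Theorem~\ref{thm:enoughcontfcts}, we can write the hypercover $S_\bullet\to S$ as a cofiltered limit over $j\in J$ of hypercovers $S_{\bullet,j}\to S_j$ of finite sets by finite sets. By definition, $\mathbb Z[S_k]^\solid=\varprojlim_j \mathbb Z[S_{k,j}]$, and likewise for $S$. For each fixed $j$, the complex
\[
\ldots\to \mathbb Z[S_{1,j}]\to \mathbb Z[S_{0,j}]\to \mathbb Z[S_j]\to 0
\]
is split exact, since a hypercover of finite sets splits. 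Equivalently, it is the $\mathbb Z$-dual of the contractible cochain complex $0\to C(S_j,\mathbb Z)\to C(S_{0,j},\mathbb Z)\to\ldots$ used in the proof of Theorem~\ref{thm:enoughcontfcts}.

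Rather than analysing $\varprojlim$ directly, I would dualize. Using $\mathbb Z[T]^\solid=\intHom(C(T,\mathbb Z),\mathbb Z)$ for profinite $T$, the complex in question is $\intHom(-,\mathbb Z)$ applied to the complex of discrete abelian groups
\[
0\to C(S,\mathbb Z)\to C(S_0,\mathbb Z)\to C(S_1,\mathbb Z)\to\ldots .
\]
This cochain complex is a filtered colimit of the split exact complexes for the $S_{\bullet,j}$, so it is exact. By Theorem~\ref{thm:specker}, every term is a free abelian group. An exact complex of free abelian groups that is bounded below has free cycle groups (subgroups of free groups are free), so it decomposes as a sequence of split short exact sequences $0\to Z^k\to C^k\to Z^{k+1}\to 0$ with $Z^k$ free. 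Hence the complex is split exact as a complex of abelian groups, i.e.\ contractible. Applying the additive functor $\intHom(-,\mathbb Z)$ to a contractible complex gives a contractible, hence exact, complex of condensed abelian groups, which is the claim.

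The main point to verify is the identification of the maps. The transition maps $\mathbb Z[S_k]^\solid\to\mathbb Z[S_{k-1}]^\solid$ (alternating sums of face maps) must agree with the duals of the pullback maps on $C(-,\mathbb Z)$ under $\mathbb Z[T]^\solid\cong\intHom(C(T,\mathbb Z),\mathbb Z)$. This is a naturality check at finite level, which then passes to the limit.

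The step I expect to need the most care is the splitting argument, because the complex is unbounded in one direction. Concretely, one chooses inductively splittings $C^k\cong Z^k\oplus Z^{k+1}$ and assembles them into a contracting homotopy. Freeness of each $Z^{k+1}$, as a subgroup of the free group $C^{k+1}$, is exactly what makes every step of the induction possible. This is where N\"obeling's Theorem~\ref{thm:specker} is used essentially.
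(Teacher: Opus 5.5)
Your proof is correct and follows the paper's route. The paper also dualizes the exact complex $0\to C(S,\mathbb Z)\to C(S_0,\mathbb Z)\to\ldots$, whose exactness it attributes to $H^i(S,\mathbb Z)=0$ for profinite $S$ (proved by the same finite-approximation argument you give), and it uses Theorem~\ref{thm:specker} to apply $\intHom(-,\mathbb Z)$. Your observation that an exact, bounded below complex of free abelian groups is contractible, because subgroups of free abelian groups are free, simply makes explicit the paper's terse ``take $R\intHom(-,\mathbb Z)$'' step for this unbounded complex.
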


\begin{proof} We know that
\[
0\to C(S,\mathbb Z)\to C(S_0,\mathbb Z)\to C(S_1,\mathbb Z)\to \ldots
\]
is exact (as $H^i(S,\mathbb Z)=0$ for $i>0$). Taking $R\intHom(-,\mathbb Z)$ and using Theorem~\ref{thm:specker} gives the result.
\end{proof}

Now we can prove that indeed, all $\mathbb Z[S]^\solid$ are solid; in fact $\mathbb Z[S]^\solid[0]\in D(\Cond(\Ab))$ are solid.

\begin{proposition}\label{prop:examplesolid} For any profinite set $S$, $\mathbb Z[S]^\solid$ is solid both as a module and as a complex.
\end{proposition}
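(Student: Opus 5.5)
The plan is to prove the complex statement and deduce the module statement from it, since a condensed abelian group $A$ with $A[0]$ solid is solid. By Corollary~\ref{cor:speckerfree}, $\mathbb Z[S]^\solid\cong\prod_I\mathbb Z$. Both sides of the map in Definition~\ref{def:solid}~(iii) commute with products in the second variable (using (AB4*)). So it suffices to show, for every profinite $T$, that
\[
R\Hom(\mathbb Z[T]^\solid,\mathbb Z)\to R\Gamma(T,\mathbb Z)=C(T,\mathbb Z)[0]
\]
is an isomorphism; the right-hand side uses Theorem~\ref{thm:cohomcorrect}.

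Applying Corollary~\ref{cor:speckerfree} to $T$ as well, the key input is the computation
\[
R\intHom\Big(\prod_J\mathbb Z,\mathbb Z\Big)\cong\bigoplus_J\mathbb Z .
\]
I will obtain it from the short exact sequence $0\to\prod_J\mathbb Z\to\prod_J\mathbb R\to\prod_J\mathbb T\to0$. Theorem~\ref{thm:RHomoutofcompact}~(i) gives $R\intHom(\prod_J\mathbb T,\mathbb Z)=\bigoplus_J\mathbb Z[-1]$. It then suffices to show $R\intHom(\prod_J\mathbb R,\mathbb Z)=0$, and for this I mimic the proof of Theorem~\ref{thm:RHomoutofcompact}~(i). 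Using the spectral sequence of Corollary~\ref{cor:delignespecseq} to compare $\prod_J\mathbb R$ with $0$, it is enough to show that pullback along zero,
\[
H^i\Big(\big(\textstyle\prod_J\mathbb R\big)^r\times S,\mathbb Z\Big)\to H^i(S,\mathbb Z),
\]
is an isomorphism.

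This is the main obstacle, because $\prod_J\mathbb R$ is not a countable union of compacts. My approach is to write the condensed set $\prod_J\mathbb R^r$ as the filtered colimit, over functions $n\colon J\to\mathbb N$, of the compact boxes $\prod_J[-n_j,n_j]^r$. This is legitimate because any map from a profinite set lands in such a box. Then $R\Gamma(-\times S,\mathbb Z)$ becomes an $R\varprojlim$ over the filtered poset $\mathbb N^J$. Each term is $R\Gamma(S,\mathbb Z)$, by homotopy invariance of \v{C}ech cohomology (via Theorem~\ref{thm:cohomcorrect}) together with Proposition~\ref{prop:cohomcircle}-style continuity for infinite products of intervals (Eilenberg--Steenrod continuity). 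The diagram is therefore constant. The derived limit of a constant diagram over a filtered category is the cohomology of its contractible nerve, which is just $R\Gamma(S,\mathbb Z)$ in degree $0$. If this $R\varprojlim$ argument turns out to be delicate, I would instead apply the Breen--Deligne complex directly and run a contraction argument via the homotopy $t\mapsto tx$.

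Finally, I need to check that the resulting isomorphism $R\Hom(\mathbb Z[T]^\solid,\mathbb Z)\cong\bigoplus_J\mathbb Z\cong C(T,\mathbb Z)$ is the natural map. This amounts to identifying $\bigoplus_J\mathbb Z$ with the evaluation pairing against $C(T,\mathbb Z)$ via $\mathbb Z[T]^\solid=\intHom(C(T,\mathbb Z),\mathbb Z)$. The check reduces to $H^0$ and to the compatibility of the construction with the projections to individual factors $\mathbb Z$, which is a direct verification.
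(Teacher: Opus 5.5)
Your proof is correct, and its skeleton is the paper's. You reduce to the target $\mathbb Z$ via Corollary~\ref{cor:speckerfree} and (AB4*), then compute $R\Hom(\prod_J\mathbb Z,\mathbb Z)$ from $0\to\prod_J\mathbb Z\to\prod_J\mathbb R\to\prod_J\mathbb R/\mathbb Z\to 0$, with Theorem~\ref{thm:RHomoutofcompact}~(i) handling the compact term.

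The one genuine difference is how you show $R\intHom(\prod_J\mathbb R,\mathbb Z)=0$. The paper never meets your ``main obstacle''. Since $\prod_J\mathbb R$ is a module over the condensed ring $\mathbb R$, adjunction gives
\[
R\Hom\bigl(\textstyle\prod_J\mathbb R,\mathbb Z\bigr)=R\Hom_{\mathbb R}\bigl(\textstyle\prod_J\mathbb R,R\intHom(\mathbb R,\mathbb Z)\bigr)=0,
\]
because $R\intHom(\mathbb R,\mathbb Z)=0$ is exactly the one-factor case already proved inside Theorem~\ref{thm:RHomoutofcompact}~(i).

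Your direct route also works:
\begin{enumerate}
\item The exhaustion of $(\prod_J\mathbb R)^r$ by boxes indexed by $n\in\mathbb N^J$ is a valid filtered colimit of condensed sets. Maps from compact sets are bounded coordinatewise, and filtered colimits are computed pointwise on extremally disconnected sets.
\item By (AB5) this colimit is a homotopy colimit, so $R\Gamma(-\times S,\mathbb Z)$ becomes a homotopy limit over $\mathbb N^J$.
\item Each box is contractible by the straight-line homotopy, so restriction to $\{0\}\times S$ is termwise an equivalence. Eilenberg--Steenrod continuity is not even needed here.
\item As you say, the limit of a diagram equivalent to a constant one over a directed poset is its value, since the nerve is contractible.
\end{enumerate}

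The trade-off is as follows. Your argument uses no module structure, so it would adapt to other condensed abelian groups exhausted by compact contractible subsets. The paper's $\mathbb R$-linearity trick is much shorter, and it is reused in Lecture VI to deduce $R\intHom(\mathcal M(S,\mathbb R),C)=0$ from $R\intHom(\mathbb R,C)=0$.

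Your final check, that the abstract isomorphism is the natural map, is also fine. Projections $p_j$ go to the basis elements $e_j\in C(T,\mathbb Z)$. The paper leaves this step implicit.
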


\begin{proof} We need to prove that for all profinite sets $T$, we have
\[
R\Hom(\mathbb Z[T],\mathbb Z[S]^\solid) = R\Hom(\mathbb Z[T]^\solid,\mathbb Z[S]^\solid)\ .
\]
By Corollary~\ref{cor:speckerfree}, it suffices to handle the case $\mathbb Z[S]^\solid = \mathbb Z$. In that case $\Ext^i(\mathbb Z[T],\mathbb Z) = H^i(T,\mathbb Z)=0$ for $i>0$, and is given by $C(T,\mathbb Z)\cong \bigoplus_J \mathbb Z$ for $i=0$, applying Theorem~\ref{thm:specker} again. We have to see that the derived biduality map
\[
\bigoplus_J \mathbb Z\to R\Hom(\prod_J \mathbb Z,\mathbb Z)
\]
for $\bigoplus_J \mathbb Z$ is an isomorphism, noting that $\mathbb Z[T]^\solid\cong \prod_J \mathbb Z=R\intHom(\bigoplus_J \mathbb Z,\mathbb Z)$. We use the short exact sequence
\[
0\to \prod_J \mathbb Z\to \prod_J \mathbb R\to \prod_J \mathbb R/\mathbb Z\to 0\ .
\]
Here $\prod_J \mathbb R$ is a module over the condensed ring $\mathbb R$, and thus
\[
R\Hom(\prod_J \mathbb R,\mathbb Z) = R\Hom_{\mathbb R}(\prod_J \mathbb R,R\intHom(\mathbb R,\mathbb Z)) = 0
\]
by Theorem~\ref{thm:RHomoutofcompact}. On the other hand,
\[
R\Hom(\prod_J \mathbb R/\mathbb Z,\mathbb Z)=\bigoplus_J \mathbb Z[-1]
\]
by the same result. Together, we see that
\[
R\Hom(\prod_J \mathbb Z,\mathbb Z)=\bigoplus_J \mathbb Z\ ,
\]
as desired.
\end{proof}

The goal of the next lecture is to prove the following theorem.

\begin{theorem}\label{thm:solid}\leavevmode
\begin{enumerate}
\item[{\rm (i)}] The category $\Solid\subset \Cond(\Ab)$ of solid abelian groups is an abelian subcategory stable under all limits, colimits and extensions. The objects $\prod_I \mathbb Z\in \Solid$, where $I$ is any set, form a family of compact projective generators. The inclusion $\Solid\subset \Cond(\Ab)$ admits a left adjoint
\[
M\mapsto M^\solid: \Cond(\Ab)\to \Solid
\]
that is the unique colimit-preserving extension of $\mathbb Z[S]\mapsto \mathbb Z[S]^\solid$.
\item[{\rm (ii)}] The functor $D(\Solid)\to D(\Cond(\Ab))$ is fully faithful and its essential image are precisely the solid objects of $D(\Cond(\Ab))$. An object $C\in D(\Cond(\Ab))$ is solid if and only if all $H^i(C)\in \Cond(\Ab)$ are solid. The inclusion $D(\Solid)\to D(\Cond(\Ab))$ admits a left adjoint
\[
C\mapsto C^{L\solid}: D(\Cond(\Ab))\to D(\Solid)
\]
which is the left derived functor of $M\mapsto M^\solid$. 
\end{enumerate}
\end{theorem}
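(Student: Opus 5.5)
The strategy is to reduce everything to a single computation: the derived solidification of the compact projective generators $\mathbb Z[S]$ of $\Cond(\Ab)$, for $S$ extremally disconnected. The general machinery is then formal: localization of a derived category at a set of maps, together with compact projective generators.

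\textbf{Step 1: formal framework.} Call $C\in D(\Cond(\Ab))$ solid as in Definition~\ref{def:solid}~(iii). Equivalently, $C$ is right orthogonal to the cones $Q_S=\mathrm{cone}(\mathbb Z[S]\to\mathbb Z[S]^\solid)$ for all profinite $S$. By Proposition~\ref{prop:freecondensedsheaf} we may restrict to extremally disconnected $S$. The right orthogonal of a set of objects is stable under all limits and extensions. It is also stable under all colimits once we know each $Q_S$ is compact, i.e.\ that $\mathbb Z[S]^\solid\cong\prod_I\mathbb Z$ is compact in $D(\Cond(\Ab))$.

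\textbf{Step 2: the key computation.} The main obstacle is to show
\[
R\Hom(\prod_I\mathbb Z,\prod_J\mathbb Z)=\prod_J\bigoplus_I\mathbb Z
\]
in degree $0$, with vanishing higher $\Ext$. Concretely, $R\Hom(\prod_I\mathbb Z,\mathbb Z)=\bigoplus_I\mathbb Z$, which is exactly what the proof of Proposition~\ref{prop:examplesolid} gave, using Theorem~\ref{thm:RHomoutofcompact} and the sequence $0\to\prod\mathbb Z\to\prod\mathbb R\to\prod\mathbb R/\mathbb Z\to0$. The same argument applied to the internal $R\intHom$, evaluated on extremally disconnected $T$, gives $R\intHom(\prod_I\mathbb Z,\mathbb Z)=\bigoplus_I\mathbb Z$. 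From this we get compactness of $\prod_I\mathbb Z$ in the following form: $R\Hom(\prod_I\mathbb Z,-)$ commutes with direct sums of solid objects. This form is enough for the localization.

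\textbf{Step 3: the left adjoint on generators.} Define $C^{L\solid}$ first on generators by $\mathbb Z[S]\mapsto\mathbb Z[S]^\solid$ and extend by colimits (left Kan extension along projective resolutions). This gives a functor $D(\Cond(\Ab))\to D(\Cond(\Ab))$ preserving colimits. One must check that it lands in solid objects and is left adjoint to the inclusion. By colimit-preservation both reduce to checking on generators: $\mathbb Z[S]^\solid$ is solid (Proposition~\ref{prop:examplesolid}), and $R\Hom(\mathbb Z[S]^\solid,C)=R\Hom(\mathbb Z[S],C)$ for solid $C$ (the definition). The subtlety is that colimits of solid objects are solid; this uses the compactness from Step 2.

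\textbf{Step 4: deducing the t-structure statements.} The solidification of a generator is concentrated in degree $0$ and equals $\prod_I\mathbb Z$. Hence $C\mapsto C^{L\solid}$ is right t-exact, and the solid subcategory of $D(\Cond(\Ab))$ inherits a t-structure from the standard one. To see that $H^i(C)$ is solid when $C$ is, we use that the solid objects form a stable subcategory closed under limits and colimits, generated by the objects $\prod_I\mathbb Z$, which have no higher self-$\Ext$ by Step 2. Then:
\begin{enumerate}
\item[{\rm (a)}] Truncations of solid complexes remain solid, since $\tau_{\geq 0}$ can be built by colimits of shifts of generators via resolutions.
\item[{\rm (b)}] The heart is the abelian category $\Solid$, with compact projective generators $\prod_I\mathbb Z$ and stable under limits, colimits and extensions inside $\Cond(\Ab)$. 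Here we check that the heart consists exactly of the solid abelian groups in sense (ii), which follows because $\Hom$ and $\Ext^1$ conditions match on the generators.
\item[{\rm (c)}] The identification $D(\Solid)\simeq$ solid objects of $D(\Cond(\Ab))$ follows because both categories are generated by compact projective objects $\prod_I\mathbb Z$ with identical derived endomorphisms.
\item[{\rm (d)}] Finally, $M\mapsto M^\solid=H^0(M^{L\solid})$, and its derived functor is $C^{L\solid}$.
\end{enumerate}
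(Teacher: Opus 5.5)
There is a genuine gap, and it sits exactly at the point you call ``the subtlety''. In Step~1 you assert that $\mathbb Z[S]^\solid\cong\prod_I\mathbb Z$ is compact in $D(\Cond(\Ab))$. This is false for infinite $I$. Indeed, $\prod_{\mathbb N}\mathbb Z$ (a direct factor of any such $\prod_I\mathbb Z$) is not even finitely generated in $\Cond(\Ab)$. The image of any map $\bigoplus_{k=1}^n\mathbb Z[S_k]\to\prod_{\mathbb N}\mathbb Z$ on underlying groups is the subgroup generated by a compact subset, hence $\sigma$-compact, whereas $\prod_{\mathbb N}\mathbb Z$ (Baire space) is not $\sigma$-compact. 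An object of the heart that is compact in $D(\Cond(\Ab))$ would be compact in $\Cond(\Ab)$, hence finitely generated. So closure of solid objects under colimits cannot come from compactness of the cones $Q_S$.

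The weakened form in Step~2 does not rescue this. The computation $R\Hom(\prod_I\mathbb Z,\prod_J\mathbb Z)=\prod_J\bigoplus_I\mathbb Z$ says nothing about targets such as $\bigoplus_k\prod_{J_k}\mathbb Z$, let alone infinite complexes of such. For arbitrary solid targets the argument is circular: the only handle you have on solid objects is that they are built from $\prod\mathbb Z$'s under colimits, which is what Step~3 is supposed to establish using this very closure. A stable subcategory closed under direct sums is closed under all colimits, so this closure statement carries essentially all the content of the theorem. The remaining formal steps are what Lemmas~\ref{lem:everythingnice} and~\ref{lem:showeverythingnice} package.

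What is needed, and what the paper proves, is that $R\Hom(\mathbb Z[S]^\solid,C)=R\Gamma(S,C)$ for every complex $\ldots\to C_1\to C_0\to 0$ whose terms are direct sums of products of copies of $\mathbb Z$. This goes through the claim $R\intHom(\mathcal M(S,\mathbb R/\mathbb Z),C)(S')=R\Gamma(S\times S',C)[-1]$.
\begin{enumerate}
\item[{\rm (a)}] For $C$ in degree $0$, one uses that $\prod\mathbb R/\mathbb Z$ is pseudocoherent (via the Breen--Deligne resolution, Theorem~\ref{thm:deligneresolution}). So $R\intHom$ out of it commutes with the direct sum and then with the products, after which Theorem~\ref{thm:RHomoutofcompact} applies.
\item[{\rm (b)}] The case $S=\ast$ then gives $R\intHom(\mathbb R,C)=0$, which disposes of $\mathcal M(S,\mathbb R)$.
\item[{\rm (c)}] Passing to unbounded $C$ is the hardest part. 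Pseudocoherence only controls colimits of uniformly bounded-below objects, so one needs the uniform bound that $R\intHom(\mathcal M(S,\mathbb R/\mathbb Z),C)$ and $R\Gamma(S,C)$ lie in cohomological degrees $\leq 1$. The paper proves this by extending the differentials to complexes $C_{\mathbb R}$ and $C_{\mathbb R/\mathbb Z}$, then analyzing the kernels of their differentials: compact groups in one case, $\prod_L\mathbb R$ (by linear duality) in the other.
\end{enumerate}
None of these ingredients appears in your proposal.

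A smaller point concerns Step~4(b). A solid abelian group in the sense of Definition~\ref{def:solid}~(ii) only satisfies a $\Hom$-condition. Saying that ``$\Hom$ and $\Ext^1$ conditions match on the generators'' does not show that $A[0]$ is solid. The paper gets this by writing $A$ as a cokernel of a map between direct sums of $\prod\mathbb Z$'s and applying the statement above to the kernel.
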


The following lemma summarizes the abstract setup, and isolates the key property that we need to prove in our situation.

\begin{lemma}\label{lem:everythingnice} Let $\mathcal A$ be an abelian category with all colimits that admits a subcategory $\mathcal A_0\subset \mathcal A$ of compact projective objects generating $\mathcal A$. Assume that $F: \mathcal A_0\to \mathcal A$ is a functor equipped with a natural transformation $X\to F(X)$, with the following property:\\

For any $X\in \mathcal A_0$ and any $Y,Z\in \mathcal A$ that can be written as (possibly infinite) direct sums of objects in the image of $F$, and any map $f: Y\to Z$ with kernel $K\in \mathcal A$, the map
\[
R\Hom(F(X),K)\to R\Hom(X,K)
\]
is an isomorphism.\\

Let $\mathcal A_F\subset \mathcal A$ be the full subcategory of all $Y\in \mathcal A$ such that for all $X\in \mathcal A_0$, the map
\[
\Hom(F(X),Y)\to \Hom(X,Y)
\]
is an isomorphism, and let $D_F(\mathcal A)\subset D(\mathcal A)$ be the full subcategory of all $C\in D(\mathcal A)$ such that for all $X\in \mathcal A_0$, the map
\[
R\Hom(F(X),C)\to R\Hom(X,C)
\]
is an isomorphism. Then the following properties hold true.

\begin{enumerate}
\item[{\rm (i)}] The category $\mathcal A_F\subset \mathcal A$ is an abelian subcategory stable under all limits, colimits and extensions, and the objects $F(X)$ for $X\in \mathcal A_0$ are compact projective generators. The inclusion $\mathcal A_F\subset \mathcal A$ admits a left adjoint $L: \mathcal A\to \mathcal A_F$ that is the unique colimit-preserving extension of $F: \mathcal A_0\to \mathcal A_F$.
\item[{\rm (ii)}] The functor $D(\mathcal A_F)\to D(\mathcal A)$ is fully faithful and identifies $D(\mathcal A_F)$ with $D_F(\mathcal A)$. A complex $C\in D(\mathcal A)$ lies in $D_F(\mathcal A)$ if and only if all $H^i(C)\in \mathcal A$ lie in $\mathcal A_F$. The inclusion $D(\mathcal A_F)=D_F(\mathcal A)\to D(\mathcal A)$ admits a left adjoint which is the left derived functor of $L$.
\end{enumerate}
\end{lemma}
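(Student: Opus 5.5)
The basic tool is that every $Y\in\mathcal A$ has a resolution by direct sums of objects of $\mathcal A_0$, and that $\Hom(X,-)$ for $X\in\mathcal A_0$ is exact and commutes with all colimits and limits.

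\emph{Step 1: $F(X)\in\mathcal A_F$ and its Ext-vanishing.} Apply the hypothesis with $Y=F(X')$, $Z=0$ and $f=0$. Then $K=F(X')$, so $R\Hom(F(X),F(X'))\to R\Hom(X,F(X'))$ is an isomorphism. In particular $F(X')\in\mathcal A_F$, and $\Ext^i(F(X),F(X'))=0$ for $i>0$ because $X$ is projective. The same argument applies to arbitrary direct sums $\bigoplus F(X'_j)$.

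\emph{Step 2: closure properties of $\mathcal A_F$.} Stability under limits is clear, since $\Hom(F(X),-)$ and $\Hom(X,-)$ both commute with limits. Next I show $\mathcal A_F$ is closed under cokernels of maps between direct sums of $F$'s, and then under all colimits. Let $f:Y\to Z$ be a map between direct sums of objects $F(X_j)$, with kernel $K$, image $I$ and cokernel $Q$.
\begin{itemize}
\item The hypothesis gives that $K$ is $F$-local in the derived sense.
\item From $0\to K\to Y\to I\to 0$, with $Y$ and $K$ derived-local, the five lemma makes $I$ derived-local.
\item From $0\to I\to Z\to Q\to 0$, the same argument makes $Q$ derived-local.
\end{itemize}
Now take an arbitrary $M\in\mathcal A_F$. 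Choose a surjection $\bigoplus X_j\to M$. It factors through $\bigoplus F(X_j)$ by the defining property of $\mathcal A_F$. Iterating on the kernel presents $M$ as the cokernel of a map between direct sums of $F$'s. Hence every object of $\mathcal A_F$ is derived-local, i.e.\ $\mathcal A_F$ equals the heart-level objects of $D_F(\mathcal A)$.
\begin{itemize}
\item Kernels and cokernels: both are computed in $\mathcal A$. Cokernels of maps between such presentations are again of this form, and kernels use the hypothesis applied to the induced map of presentations. This is where the hypothesis for general kernels is needed.
\item Extensions: handled by the five lemma using the Ext-vanishing.
\item Direct sums: $\Hom(F(X),-)$ commutes with direct sums, because $F(X)$ is the cokernel-free image, and derived-locality shows $F(X)$ is compact projective in $\mathcal A_F$. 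More precisely, $\Hom(F(X),M)=\Hom(X,M)$ for $M\in\mathcal A_F$ commutes with colimits computed in $\mathcal A$. So colimits in $\mathcal A$ of objects of $\mathcal A_F$ stay in $\mathcal A_F$, and $F(X)$ is compact projective there.
\item Generation: the factorization above shows that the $F(X)$ generate $\mathcal A_F$.
\end{itemize}
Finally, define $L$ by left Kan extension: present $M=\mathrm{coker}(\bigoplus X'\to\bigoplus X)$ and set $L(M)=\mathrm{coker}(\bigoplus F(X')\to\bigoplus F(X))$. The adjunction then follows from $\Hom(F(X),N)=\Hom(X,N)$ for $N\in\mathcal A_F$.

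\emph{Step 3: the derived statements in (ii).} Since the $F(X)$ are compact projective generators of $\mathcal A_F$ with $R\Hom_{\mathcal A}(F(X),F(X'))$ concentrated in degree $0$ (Step 1), the functor $D(\mathcal A_F)\to D(\mathcal A)$ is fully faithful. I would prove this first on complexes of direct sums of $F$'s and then pass to homotopy colimits, using compactness of $F(X)$ in $D(\mathcal A)$. The essential image lies in $D_F(\mathcal A)$ because $D_F(\mathcal A)$ is closed under cones and direct sums.

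For the cohomology criterion, if each $H^i(C)$ lies in $\mathcal A_F$ then $C\in D_F(\mathcal A)$. For bounded $C$ this is a dévissage using Step 2. For unbounded $C$, use Postnikov limits; $R\Hom(X,-)$ and $R\Hom(F(X),-)$ commute with them since both objects are compact projective, so have cohomological dimension $0$.

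For the converse I expect the main obstacle. Suppose $C\in D_F(\mathcal A)$ and we want $H^i(C)\in\mathcal A_F$. My first attempt: the left adjoint $LL$, computed by applying $F$ termwise to a projective resolution, yields a map $C\to LL(C)$. For $C\in D_F$ this map is an isomorphism, since for every $X$ it induces an isomorphism on $R\Hom(X,-)$ by the adjunction. Therefore $C$ lies in the image of $D(\mathcal A_F)$, and its cohomology, computed in $\mathcal A_F$ (which is closed under kernels and cokernels in $\mathcal A$), lies in $\mathcal A_F$. The left adjoint is then the left derived functor of $L$.
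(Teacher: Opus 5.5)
Your outline follows the paper's. You show derived-locality of cokernels of maps between sums of $F(X)$'s, present objects of $\mathcal A_F$ as such cokernels, and get $D_F'\subset D_F$ by d\'evissage and Postnikov limits. But several steps rely on compactness of $F(X)$ in $\mathcal A$ or in $D(\mathcal A)$. You do not have this, and it fails in the intended application. There $F(X)=\mathbb Z[S]^\solid\cong\prod_I\mathbb Z$ is not even finitely generated in $\Cond(\Ab)$: the subgroup generated by the image of a profinite set consists of families $(a_i)_i$ with $|a_i|\le kN_i$, for fixed bounds $N_i$ and some $k$.

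The consequences are as follows. First, your argument that $\mathcal A_F$ is closed under direct sums is circular. The identity $\Hom(F(X),-)=\Hom(X,-)$ is only known on $\mathcal A_F$, so for $\bigoplus M_i$ you only get surjectivity of $\Hom(F(X),\bigoplus M_i)\to\Hom(X,\bigoplus M_i)$. Second, ``$D_F(\mathcal A)$ is closed under direct sums'' is not known a priori. $D_F(\mathcal A)$ is the right orthogonal of the non-compact cones of $X\to F(X)$, so it is only evidently closed under cones and limits.

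The repairs are as in the paper:
\begin{itemize}
\item A direct sum of cokernels of maps between sums of $F$'s is again such a cokernel, hence derived-local.
\item The image of $D(\mathcal A_F)$ lands in $D_F(\mathcal A)$ because $D_F'\subset D_F$.
\item Full faithfulness is proved by fixing the source $F(X)$ and reducing the target, via $\tau^{\le i}$ and Postnikov limits, to $Y[0]$ with $Y\in\mathcal A_F$. There both Ext's vanish in positive degrees because $Y$ is derived-local.
\end{itemize}

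The more serious gap is the last step, which you rightly flag. Write $\iota\colon D(\mathcal A_F)\to D(\mathcal A)$ and $\mathbb{L}L$ for its derived left adjoint. Your claim is that $\eta_C\colon C\to\iota\,\mathbb{L}L(C)$ induces an isomorphism on $R\Hom(X,-)$ ``by the adjunction''. Unwinding this with $C,\iota\,\mathbb{L}L(C)\in D_F$ and full faithfulness, you are left needing $R\Hom(F(X),\eta_C)$ to be an isomorphism, which is the original claim. The adjunction only controls maps from $\iota\,\mathbb{L}L(C)$ into objects $\iota E$ with $E\in D(\mathcal A_F)$. You cannot take $\iota E=C$ before knowing that $C$ is in the image.

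What is missing is to test contravariantly against all of $D_F$:
\begin{itemize}
\item Consider the class of $C\in D(\mathcal A)$ for which $\Hom(\iota\,\mathbb{L}L(C),E)\to\Hom(C,E)$ is bijective for every $E\in D_F(\mathcal A)$.
\item This class is closed under shifts, cones and direct sums, since $\iota\,\mathbb{L}L$ preserves sums.
\item It contains every $X[0]$, because $\eta_X$ is the map $X\to F(X)$ and this is literally the definition of $D_F$.
\item Since the $X[0]$ compactly generate $D(\mathcal A)$, the class is all of $D(\mathcal A)$.
\end{itemize}
Now take $C\in D_F$. Both $C$ and $\iota\,\mathbb{L}L(C)$ lie in $D_F$, the latter by $D_F'\subset D_F$. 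So Yoneda within $D_F$ makes $\eta_C$ an isomorphism. This is the content of the paper's remark that $D_F(\mathcal A)$ is generated by the $F(X)[0]$, and it gives the left-adjoint statement at the same time.
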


\begin{proof} It is clear that $\mathcal A_F\subset \mathcal A$ is stable under passage to kernels and all limits. Let $f: Y\to Z$ be any map in $\mathcal A_F$; we want to see that the cokernel of $f$, taken in $\mathcal A$, still lies in $\mathcal A_F$. We may find a surjection $\bigoplus_{i\in I} P_i\to Z$ where all $P_i\in \mathcal A_0$, which extends to a surjection $\bigoplus_i F(P_i)\to Z$. Replacing $Y\to Z$ by the pullback, we may assume that $Z=\bigoplus_i F(P_i)$. Picking a similar surjection for $Y$, we may assume that both $Y$ and $Z$ are sums of objects in the image of $F$. We get an exact sequence
\[
0\to K\to Y\to Z\to Q\to 0
\]
in $\mathcal A$. Applying the hypothesis, we see that
\[
R\Hom(F(X),K)=R\Hom(X,K)
\]
and also $R\Hom(F(X),Y)=R\Hom(X,Y)$, $R\Hom(F(X),Z)=R\Hom(X,Z)$ (by applying the hypothesis also to $Y\to 0$ and $Z\to 0$). This implies that also $R\Hom(F(X),Q) = R\Hom(X,Q)$, so in particular $Q\in \mathcal A_F$. 

The preceding argument actually shows that $Q[0]\in D_F(\mathcal A)$. Moreover, one sees that the objects of $\mathcal A_F$ are precisely the cokernels of maps $f: Y\to Z$ between objects $Y,Z\in \mathcal A$ that are direct sums of objects in the image of $F$. This description shows that $\mathcal A_F$ is stable under all direct sums and under extensions. Together with stability under passage to cokernels, this implies stability under all colimits. It is also clear that the $F(X)\in \mathcal A_F$ for $X\in\mathcal A_0$ are compact projective generators, and that the left adjoint $L$ exists and agrees with $F$ on $\mathcal A_0$ (and is automatically colimit-preserving).

To see that $D(\mathcal A_F)\to D(\mathcal A)$ is fully faithful, it suffices to see that for any $X\in \mathcal A_0$ with associated compact projective generator $F(X)\in \mathcal A_F$, and any $C\in D(\mathcal A_F)$, the map
\[
R\Hom_{D(\mathcal A_F)}(F(X),C)\to R\Hom_{D(\mathcal A)}(F(X),C) = R\Hom_{D(\mathcal A)}(X,C)
\]
is an isomorphism. We may replace $C$ by $\tau^{\leq i} C$ as this does not change either side in any given degree if $i$ is large enough; and by passing to a Postnikov limit, we can also assume that $C$ is bounded, and then that $C$ is concentrated in one degree, so $C=Y[0]$ for some $Y\in \mathcal A_F$. It remains to see that
\[
\Ext^i_{\mathcal A_F}(F(X),Y)\to \Ext^i_{\mathcal A}(X,Y)
\]
is an isomorphism for all $i\geq 0$. But both sides vanish for $i>0$ as $X\in \mathcal A$ and $F(X)\in \mathcal A_F$ are projective; and for $i=0$ they agree as $Y\in \mathcal A_F$.

Consider the full subcategory $D_F^\prime(\mathcal A)\subset D(\mathcal A)$ of all $C\in D(\mathcal A)$ such that all $H^i(C)\in \mathcal A_F$. By the results on $\mathcal A_F$ already established, this is a triangulated subcategory stable under arbitrary direct sums and arbitrary products. Moreover, $D_F^\prime(\mathcal A)\subset D_F(\mathcal A)$. Indeed, if $C\in D_F^\prime(\mathcal A)$ is bounded, this reduces in finitely many steps to the case that $C = X[0]$ for $X\in \mathcal A_F$, where we have already established $X[0]\in D_F(\mathcal A)$. If $C$ is unbounded to the left, it follows via passage to a Postnikov limit. In the general case, any $\Ext^i(F(X),C)$ resp.~$\Ext^i(X,C)$ only depends on a truncation of $C$ on the right.

On the other hand, $D_F(\mathcal A)$ is generated by $F(X)[0]$ for $X\in \mathcal A_0$, by definition. As all $F(X)[0]\in D_F^\prime(\mathcal A)$, it follows that $D_F^\prime(\mathcal A)=D_F(\mathcal A)$.

The functor $D(\mathcal A_F)\to D(\mathcal A)$ clearly factors over $D(\mathcal A_F)\to D_F^\prime(\mathcal A)=D_F(\mathcal A)$ and induces an equivalence on hearts. As it is fully faithful and commutes with all products and direct sums, it is easy to see that it is an equivalence. It is also formal that the inclusion $D_F(\mathcal A)\subset D(\mathcal A)$ admits a left adjoint, which necessarily commutes with arbitrary direct sums and by definition of $D_F(\mathcal A)$ takes $X\in \mathcal A_0$ to $F(X)\in \mathcal A_0$, so is the left derived functor of $L: \mathcal A\to \mathcal A_F$.
\end{proof}

In some situations, the property isolated in the lemma can be proved directly. In our case, we will use the following criterion.

\begin{lemma}\label{lem:showeverythingnice} Let $\mathcal A$ be an abelian category with all colimits that admits a subcategory $\mathcal A_0\subset \mathcal A$ of compact projective objects generating $\mathcal A$. Assume that $F: \mathcal A_0\to \mathcal A$ is a functor equipped with a natural transformation $X\to F(X)$, satisfying the following property:\\

For all $X\in\mathcal A_0$ and any complex
\[
C: \ldots \to C_i\to \ldots \to C_1\to C_0\to 0\ ,
\]
where all $C_i$ are direct sums of objects in the image of $F$, the map
\[
R\Hom(F(X),C)\to R\Hom(X,C)
\]
is an isomorphism.\\

Then for any $Y$ and $Z$ that are direct sums of objects in the image of $F$ and any map $f: Y\to Z$ with kernel $K$, the map
\[
R\Hom(F(X),K)\to R\Hom(X,K)
\]
is an isomorphism, and so Lemma~\ref{lem:everythingnice} applies.
\end{lemma}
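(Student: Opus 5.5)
The plan is to resolve the kernel $K$ by a complex whose terms are direct sums of objects in the image of $F$. Then the kernel is expressed as a limit, or totalization, of complexes of the type covered by the hypothesis.

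Concretely, given $f: Y\to Z$ with $Y,Z$ direct sums of objects $F(X_i)$, we form the exact sequence $0\to K\to Y\to Z\to Q\to 0$. The hypothesis applied to the one-term complexes $Y$ and $Z$, placed in degree $0$, gives $R\Hom(F(X),Y)=R\Hom(X,Y)$ and similarly for $Z$. Applied to the two-term complex $Y\to Z$, placed in degrees $1,0$, it gives the same for the cone $\mathrm{cone}(f)$. This complex has homology $K$ in degree $1$ and $Q$ in degree $0$. From the triangle $K[1]\to \mathrm{cone}(f)\to Q$, the comparison for $K$ is therefore equivalent to the comparison for $Q$. So it suffices to prove the comparison for $Q=\mathrm{coker}(f)$, or alternatively to build a resolution of $K$ directly.

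For $Q$, the idea is to extend $Y\to Z$ to the left. We choose a surjection $Y_2\to K$ from a direct sum of objects $F(X')$. This is possible because $K$ receives a surjection from a sum of $X'\in\mathcal A_0$, and each $X'\to K$ factors through $X'\to F(X')$ whenever $K$ satisfies the degree-$0$ comparison. However, that degree-$0$ comparison is exactly part of what we are trying to prove, so there is a circularity to break.

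To break it, we avoid factoring and instead construct, inductively and without any assumption on $K$, a complex $C: \ldots\to C_2\to C_1=Y\to C_0=Z\to 0$ whose terms are sums of $F$-images and whose homology in degrees $\ge 1$ is controlled. We prove the claim for all $\Ext^i$ by induction on the degree $i$.

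Suppose that for every such kernel $K$ the maps $\Ext^j(F(X),K)\to\Ext^j(X,K)$ are isomorphisms for $j<i$; the base case is vacuous. Here is the key trick. Take the full complex $C$ obtained by iterating: $C_2$ surjects onto a sum of the $X'$ mapping onto $\ker(C_1\to C_0)$, composed with the natural maps. More precisely, let $C_{k+1}=\bigoplus F(X')$ over all $X'\in\mathcal A_0$ and all maps $F(X')\to \ker(C_k\to C_{k-1})$. Then each $H_k(C)$ for $k\ge1$ is the cokernel of $\bigoplus F(X')\to\ker$, which kills the image of every map from an $F$-object. The hypothesis gives $R\Hom(F(X),C)\simeq R\Hom(X,C)$. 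We then do a descending/ascending diagram chase through the Postnikov filtration of $C$ as in the proof of Lemma~\ref{lem:everythingnice}, using the spectral sequence
\[
E_2^{p,q}=\Ext^p(-,H_q(C))\Rightarrow \Ext^{p-q}(-,C),
\]
together with the inductive hypothesis in lower degrees, to isolate $\Ext^i(-,K)$.

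The main obstacle is this bookkeeping: showing that the higher homologies $H_k(C)$ contribute compatibly on both sides, so that the comparison passes from the whole complex to its degree-$1$ piece. I expect this step to require the induction on $i$ to be set up for all such kernels simultaneously. It also requires the observation that each $H_k(C)$, being itself a cokernel between sums of $F$-images, can be handled by the same inductive statement shifted in degree.
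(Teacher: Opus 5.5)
There is a genuine gap, and it sits exactly where you locate the ``circularity''. The degree-$0$ comparison for $K$ is not part of what remains to be proved; it follows directly from the hypothesis. Apply the left exact functors $\Hom(F(X'),-)$ and $\Hom(X',-)$ to $0\to K\to Y\to Z$. The maps $\Hom(F(X'),Y)\to\Hom(X',Y)$ and $\Hom(F(X'),Z)\to\Hom(X',Z)$ are bijections, by the hypothesis applied to $Y[0]$ and $Z[0]$. A diagram chase then gives $\Hom(F(X'),K)\cong\Hom(X',K)$.

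Because you miss this, your actual argument never gets off the ground. You leave the control of $H_k(C)$ as ``the main obstacle'', and you sketch an induction on $i$ through a hypercohomology spectral sequence without saying how $\Ext^i(-,K)$ is isolated. Your claim that each $H_k(C)$ is ``a cokernel between sums of $F$-images'' is also unjustified. $H_k(C)$ is the cokernel of $C_{k+1}\to\ker(C_k\to C_{k-1})$, whose target is a kernel rather than a sum of $F$-images, while your inductive statement is formulated only for kernels. As written, nothing in the proposal proves the lemma.

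With the observation above, your own construction finishes at once, and neither the spectral sequence nor the induction on $i$ is needed. Every map $X'\to K$ extends to $F(X')\to K$, and $\mathcal A_0$ generates, so $C_2=\bigoplus F(X')\to K$ is surjective. Its kernel equals $\ker(C_2\to Y)$, which is again a kernel of a map between sums of $F$-images, so the same reasoning applies at every stage. Hence $H_k(C)=0$ for all $k\geq 1$. In other words, $\ldots\to C_3\to C_2\to K\to 0$ is a resolution of $K$ by sums of $F$-images, and the hypothesis applied to this complex gives $R\Hom(F(X),K)\cong R\Hom(X,K)$ directly. Your reduction to $Q$ is then unnecessary as well.

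The paper argues differently. It resolves $K$ by a complex $B$ of sums of objects of $\mathcal A_0$ and applies $F$ termwise to get $C$. It then uses the hypothesis for $Y$ and $Z$ to extend the quasi-isomorphism $B\to K$ to a map $C\to K$, so that $K$ is a retract of $C$ in $D(\mathcal A)$. The repaired version of your route is slightly more direct, and it shows in addition that such kernels genuinely admit resolutions by sums of $F$-images.
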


\begin{remark} Both in Lemma~\ref{lem:everythingnice} and in Lemma~\ref{lem:showeverythingnice}, the hypothesis imposed is clearly a consequence of the conclusion of Lemma~\ref{lem:everythingnice}, and so both are equivalent to it. We note that in Lemma~\ref{lem:everythingnice}, one could also impose the similar condition on the cokernel of $f$, which would again be equivalent.
\end{remark}

\begin{proof} Take any $f: Y\to Z$ as in the statement, and choose a resolution
\[
\ldots \to B_1\to B_0\to K\to 0
\]
of $K$ in $\mathcal A$, where all $B_i = \bigoplus_{j\in J_i} X_{i,j}$ are direct sums of elements $X_{i,j}\in \mathcal A_0$. Let $B$ be the complex $\ldots\to B_1\to B_0\to 0$. Let $C_i = \bigoplus_{j\in J_i} F(X_{i,j})$; these form a complex $C$, and there is a natural map $B\to C$. Note that as
\[
R\Hom(X_{i,j},Y) = R\Hom(F(X_{i,j}),Y)
\]
for all $i,j$ (by applying the hypothesis to the complex $Y[0]$), one has $R\Hom(B,Y) = R\Hom(C,Y)$, and similarly $R\Hom(B,Z) = R\Hom(C,Z)$. It follows that the map $B\to K\subset Y$ extends uniquely to a map $C\to Y$ whose composite to $Z$ vanishes, i.e.~the map $B\to K$ extends to a map $C\to K$. But the map $B\to K$ is a quasi-isomorphism, so $B$ is a retract of $C$ in $D(\mathcal A)$.

In particular, the condition
\[
R\Hom(F(X),C)\cong R\Hom(X,C)
\]
passes to $B$, which is a resolution of $K$, i.e.
\[
R\Hom(F(X),K)\cong R\Hom(X,K)\ ,
\]
as desired.
\end{proof}
\newpage

\section{Lecture VI: Solid abelian groups, II}

The goal of this lecture is to finish the proof of Theorem~\ref{thm:solid}.

\begin{proof}[Proof of Theorem~\ref{thm:solid}] We want to apply Lemma~\ref{lem:everythingnice} to $\mathcal A=\Cond(\Ab)$, the compact projective generators $\mathcal A_0=\mathbb Z[S]$ for $S$ extremally disconnected, and the functor $F: \mathcal A_0\to \mathcal A$ taking $\mathbb Z[S]$ to $\mathbb Z[S]^\solid$. Note that
\[
\mathbb Z[S]^\solid = \intHom(C(S,\mathbb Z),\mathbb Z) = \intHom(\intHom(\mathbb Z[S],\mathbb Z),\mathbb Z)
\]
is indeed functorial in $\mathbb Z[S]$. To verify the hypothesis of Lemma~\ref{lem:everythingnice}, we will use Lemma~\ref{lem:showeverythingnice}. In other words, we need to see that for any complex
\[
C: \ldots\to C_i\to \ldots \to C_1\to C_0\to 0
\]
such that each $C_i$ is a direct sum of condensed abelian groups of the form $\mathbb Z[T]^\solid$ for varying extremally disconnected sets $T$, then
\[
R\Hom(\mathbb Z[S]^\solid,C) = R\Hom(\mathbb Z[S],C)\ ,
\]
for all profinite sets $S$, or, rewriting the right-hand side,
\[
R\Hom(\mathbb Z[S]^\solid,C) = R\Gamma(S,C)\ .
\]

Note that $\mathbb Z[T]^\solid\cong \prod_I \mathbb Z$ for some set $I$; we will then in fact more generally allow complexes $C$ such that each $C_i$ is a direct sum of condensed abelian groups of the form $\prod_I \mathbb Z$ for varying sets $I$.

To prepare for the proof, we recall that for any profinite set $S$ the condensed abelian group
\[
\mathbb Z[S]^\solid = \intHom(C(S,\mathbb Z),\mathbb Z)=\mathcal M(S,\mathbb Z)
\]
is the space of integer-valued measures on $S$. We will also consider
\[
\mathcal M(S,\mathbb R) = \intHom(C(S,\mathbb Z),\mathbb R)
\]
and
\[
\mathcal M(S,\mathbb R/\mathbb Z)=\intHom(C(S,\mathbb Z),\mathbb R/\mathbb Z)\ ,
\]
the spaces of $\mathbb R$ and $\mathbb R/\mathbb Z$-valued measures. Note that by Theorem~\ref{thm:specker}, one has $\mathcal M(S,\mathbb R)\cong \prod_I \mathbb R$ and $\mathcal M(S,\mathbb R/\mathbb Z)\cong \prod_I \mathbb R/\mathbb Z$, and the sequence
\[
0\to \mathcal M(S,\mathbb Z)\to \mathcal M(S,\mathbb R)\to \mathcal M(S,\mathbb R/\mathbb Z)\to 0
\]
is exact.

We will prove the following claim.\\

{\bf Claim.} Let
\[
C: \ldots\to C_i\to \ldots \to C_1\to C_0\to 0
\]
be any complex such that each $C_i$ is a direct sum of condensed abelian groups of the form $\prod_I \mathbb Z$ for varying sets $I$. Then
\[
R\intHom(\mathcal M(S,\mathbb R/\mathbb Z),C)(S^\prime) = R\Gamma(S\times S^\prime,C)[-1]
\]
for all profinite sets $S$, $S^\prime$.\\

Let us first explain how this implies the desired result
\[
R\Hom(\mathbb Z[S]^\solid,C) = R\Gamma(S,C)\ ;
\]
in fact, the slightly finer result that for all profinite sets $S^\prime$, one has
\begin{equation}\label{eq:inthomright}
R\intHom(\mathbb Z[S]^\solid,C)(S^\prime) = R\Gamma(S\times S^\prime,C)\ .
\end{equation}

First, applying the claim for $S=\{\ast\}$ being a point, we find that
\[
R\intHom(\mathbb R/\mathbb Z,C)(S^\prime) = R\Gamma(S^\prime,C)[-1] = R\intHom(\mathbb Z[1],C)(S^\prime)
\]
for all $S^\prime$, so that
\begin{equation}\label{eq:completionR}
R\intHom(\mathbb R,C) = 0\ .
\end{equation}
In particular, as $\mathcal M(S,\mathbb R)$ is a module over the condensed ring $\mathbb R$, this implies
\[
R\intHom(\mathcal M(S,\mathbb R),C) = R\intHom_{\mathbb R}(\mathcal M(S,\mathbb R),R\intHom(\mathbb R,C)) = 0\ .
\]
In other words,
\[
R\intHom(\mathcal M(S,\mathbb Z),C) = R\intHom(\mathcal M(S,\mathbb R/\mathbb Z),C)[1]\ ,
\]
so the claim gives the desired
\[
R\intHom(\mathbb Z[S]^\solid,C)(S^\prime) = R\Gamma(S\times S^\prime,C)\ .
\]

We prove the claim first when $C$ is concentrated in degree $0$, so $C=M[0]$ where $M = \bigoplus_{i\in I} \prod_{j\in J_i} \mathbb Z$ for certain sets $I$, $J_i$, $i\in I$. For this, we use Theorem~\ref{thm:deligneresolution} (and the pseudocoherence of $\mathbb Z[X]$ for $X$ compact Hausdorff, via simplicially resolving $X$ by extremally disconnected sets) to see that the condensed abelian group $\mathcal M(S,\mathbb R/\mathbb Z)\cong \prod \mathbb R/\mathbb Z$ is pseudocoherent, as then is $\mathcal M(S,\mathbb R/\mathbb Z)\otimes \mathbb Z[S^\prime]$. This implies that
\[\begin{aligned}
R\intHom(\mathcal M(S,\mathbb R/\mathbb Z),\bigoplus_{i\in I} \prod_{j\in J_i}\mathbb Z)(S^\prime) &= \bigoplus_{i\in I} R\intHom(\mathcal M(S,\mathbb R/\mathbb Z),\prod_{j\in J_i} \mathbb Z)(S^\prime)\\
&= \bigoplus_{i\in I} \prod_{j\in J_i} R\intHom(\mathcal M(S,\mathbb R/\mathbb Z), \mathbb Z)(S^\prime)\\
&= \bigoplus_{i\in I} \prod_{j\in J_i} C(S\times S^\prime,\mathbb Z)[-1]\ ,
\end{aligned}\]
using Theorem~\ref{thm:RHomoutofcompact} in the last equation. But we also have for any profinite set $S$
\[
R\Gamma(S,\bigoplus_{i\in I} \prod_{j\in J_i} \mathbb Z) = \bigoplus_{i\in I} R\Gamma(S,\prod_{j\in J_i} \mathbb Z) = \bigoplus_{i\in I} \prod_{j\in J_i} C(S,\mathbb Z)[0]\ ,
\]
so we get the desired
\[
R\intHom(\mathcal M(S,\mathbb R/\mathbb Z),M)(S^\prime)\cong R\Gamma(S\times S^\prime,M)[-1]\ .
\]

The case where $C$ is bounded follows formally. In general, we see that it suffices to prove that for any such complex $C$, 
\[
R\intHom(\mathcal M(S,\mathbb R/\mathbb Z),C)\ ,\ R\Gamma(S,C)
\]
are concentrated in cohomological degrees $\leq 1$ for all profinite sets $S$. Indeed, if $C_{\leq i}$ is the stupid truncation of $C$, we know the result for $C_{\leq i}$, and for the cone $C_{\geq i+1}$ of $C_{\leq i}\to C$ both sides will be concentrated in homological degrees $\geq i$; so in any given degree, we get the desired result by taking $i$ large enough.\footnote{We may assume that $S^\prime$ is extremally disconnected in proving the claim, in which case the bound on $R\intHom(\mathcal M(S,\mathbb R/\mathbb Z),C)$ implies the same bound on $S^\prime$-valued points.}

Let $C_i = \bigoplus_{j\in J_i} \prod_{k\in K_{i,j}} \mathbb Z$ for certain sets $J_i$, $K_{i,j}$. Define a new complex $C_{\mathbb R}$ whose degree $i$ term is
\[
C_{\mathbb R,i} = \bigoplus_{j\in J_i} \prod_{k\in K_{i,j}} \mathbb R\ .
\]
We claim that the differential $d_i: C_{i+1}\to C_i\subset C_{i,\mathbb R}$ extends uniquely to a map $d_{\mathbb R,i}: C_{\mathbb R,i+1}\to C_{\mathbb R,i}$, i.e.
\[
\Hom(C_{i+1},C_{\mathbb R,i}) = \Hom(C_{\mathbb R,i+1},C_{\mathbb R,i})\ .
\]
To prove this claim, we may assume $C_{i+1} = \prod_{K_{i+1}} \mathbb Z$ for some set $K_{i+1}$ (as the case of infinite direct sums follows formally), and then it is enough to see that
\[
R\Hom(\prod_{K_{i+1}} \mathbb R/\mathbb Z,C_{\mathbb R,i}) = 0\ .
\]
As $\prod_{K_{i+1}} \mathbb R/\mathbb Z$ is pseudocoherent, we can now assume that $C_{\mathbb R,i} = \prod_{K_i}\mathbb R$, in which case it follows from Theorem~\ref{thm:RHomoutofcompact}~(ii).

We also define the complex $C_{\mathbb R/\mathbb Z}$ with terms
\[
C_{\mathbb R/\mathbb Z,i} = C_{\mathbb R,i}/C_i = \bigoplus_{j\in J_i} \prod_{k\in K_{i,j}} \mathbb R/\mathbb Z\ ,
\]
so there is a short exact sequence
\[
0\to C\to C_{\mathbb R}\to C_{\mathbb R/\mathbb Z}\to 0
\]
of complexes. Thus, it is enough to prove that
\[
R\intHom(\mathcal M(S,\mathbb R/\mathbb Z),C_{\mathbb R})\ ,\ R\Gamma(S,C_{\mathbb R})\ ,\ R\intHom(\mathcal M(S,\mathbb R/\mathbb Z),C_{\mathbb R/\mathbb Z})\ ,\ R\Gamma(S,C_{\mathbb R/\mathbb Z})
\]
are concentrated in cohomological degrees $\leq 0$.

By writing $C_{\mathbb R}$ and $C_{\mathbb R/\mathbb Z}$ as the limits of their canonical truncations $\tau_{\leq i} C_{\mathbb R}$, $\tau_{\leq i} C_{\mathbb R/\mathbb Z}$, it suffices to prove the same result for these canonical truncations. These are finite complexes, and so it suffices to prove the result for all of their terms. The hardest part is the kernel of the $i$-th differential, where it suffices to see that
\[
R\intHom(\mathcal M(S,\mathbb R/\mathbb Z),\ker d_{\mathbb R,i})\ ,\ R\Gamma(S,\ker d_{\mathbb R,i})\ ,\ R\intHom(\mathcal M(S,\mathbb R/\mathbb Z),\ker d_{\mathbb R/\mathbb Z,i})\ ,\ R\Gamma(S,\ker d_{\mathbb R/\mathbb Z,i})
\]
are all concentrated in cohomological degrees $\leq 1$. By passing to filtered colimits over finite subsets of $J_{i+1}$ and using pseudocoherence, we can for this assume that $J_{i+1}$ is finite, in which case $C_{i+1} = \prod_{K_{i+1}} \mathbb Z$ is a product of copies of $\mathbb Z$. Then the map
\[
d_i: C_{i+1}=\prod_{K_{i+1}} \mathbb Z\to C_i=\bigoplus_{j\in J_i} \prod_{k\in K_{i,j}} \mathbb Z
\]
factors over a finite direct sum, so we can also assume that $J_i$ is finite, so that $C_i=\prod_{K_i} \mathbb Z$ is a product of copies of $\mathbb Z$. Then
\[
d_{\mathbb R/\mathbb Z,i}: \prod_{K_{i+1}} \mathbb R/\mathbb Z\to \prod_{K_i} \mathbb R/\mathbb Z
\]
is a map of compact abelian groups, and so $A=\ker d_{\mathbb R/\mathbb Z,i}$ is a compact abelian group. Using Theorem~\ref{thm:RHomoutofcompact}, this implies that $R\intHom(\mathcal M(S,\mathbb R/\mathbb Z),A)$ is concentrated in degrees $\leq 1$, and also $R\Gamma(S,A)$ is concentrated in degrees $\leq 1$, by using a two-step resolution of $A$ in terms of products of $\mathbb R/\mathbb Z$'s.

In order to understand $\ker d_{\mathbb R,i}$, we use that $d_i: \prod_{K_{i+1}} \mathbb Z\to \prod_{K_i} \mathbb Z$ is dual to a map $f: \bigoplus_{K_i} \mathbb Z\to \bigoplus_{K_{i+1}} \mathbb Z$. The corresponding map $f_{\mathbb R}: \bigoplus_{K_i} \mathbb R\to \bigoplus_{K_{i+1}} \mathbb R$ is a map of $\mathbb R$-vector spaces, and as such one can factor it as a split surjection followed by a split injection, and the cokernel $Q$ of $f_{\mathbb R}$ is also of the form $\bigoplus_L \mathbb R$ for some set $L$. Passing to the duals again, one sees that $\ker d_{\mathbb R,i} = \Hom_{\mathbb R}(Q,\mathbb R)\cong \prod_L \mathbb R$. This implies that
\[
R\intHom(\mathcal M(S,\mathbb R/\mathbb Z),\ker d_{\mathbb R,i}) = 0
\]
by Theorem~\ref{thm:RHomoutofcompact}~(ii), while $R\Gamma(S,\ker d_{\mathbb R,i})$ is concentrated in degree $0$ by Theorem~\ref{thm:enoughcontfcts}, finally finishing the proof.
\end{proof}

Let us collect some of the information obtained.

\begin{corollary}\label{cor:solidproperties}\leavevmode
\begin{enumerate}
\item[{\rm (i)}] The compact projective objects of $\Solid$ are exactly the condensed abelian groups $\prod_I \mathbb Z$ for varying sets $I$.
\item[{\rm (ii)}] The derived category $D(\Solid)$ is compactly generated, and the full subcategory
\[
D(\Solid)^\omega
\]
of compact objects consists of the bounded complexes all of whose terms are of the form $\prod_I \mathbb Z$. The category $D(\Solid)^\omega$ is contravariantly equivalent to the category $D^b(\mathbb Z)$ via the functor
\[
D^b(\mathbb Z)^{\mathrm{op}}\to D(\Solid)\ :\ C\mapsto R\intHom(C,\mathbb Z)\ .
\]
\item[{\rm (iii)}] The derived solidification $\mathbb R^{L\solid}$ of $\mathbb R$ is equal to $0$.
\item[{\rm (iv)}] Let $C\in D(\Solid)$. Then for all profinite sets $S$, the map
\[
R\intHom(\mathbb Z[S]^\solid,C)\to R\intHom(\mathbb Z[S],C)
\]
is an isomorphism.
\end{enumerate}
\end{corollary}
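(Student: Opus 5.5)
Each part is essentially a consequence of the proof of Theorem~\ref{thm:solid} via Lemma~\ref{lem:everythingnice}, which makes the objects $\mathbb Z[S]^\solid\cong\prod_I\mathbb Z$ (Corollary~\ref{cor:speckerfree}) a family of compact projective generators of $\Solid$.

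For (i), I first show every $\prod_I\mathbb Z$ is compact projective in $\Solid$. Choose $S$ extremally disconnected with $\mathbb Z[S]^\solid\cong\prod_J\mathbb Z$ and $|J|\geq|I|$, for instance $S=\beta J'$ for a large discrete $J'$, noting that $C(\beta J',\mathbb Z)$ is free of rank $2^{|J'|}$ by Theorem~\ref{thm:specker}. Then $\prod_I\mathbb Z$ is a direct summand of $\prod_J\mathbb Z$, being dual to a summand $\bigoplus_I\mathbb Z\subset\bigoplus_J\mathbb Z$. Conversely, let $P$ be compact projective in $\Solid$. It is a quotient of a direct sum of generators, so by compactness of a finite sum $\prod_{I_1}\mathbb Z\oplus\dots=\prod_I\mathbb Z$, and by projectivity a retract of it. It remains to see that retracts of $\prod_I\mathbb Z$ are again of this form. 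For this I use the duality in (ii): $\intHom(-,\mathbb Z)$ is fully faithful from free abelian groups to $\Solid$, by the biduality computed in Proposition~\ref{prop:examplesolid}. An idempotent on $\prod_I\mathbb Z$ is therefore dual to an idempotent on $\bigoplus_I\mathbb Z$, whose image is projective and hence free, say $\bigoplus_{I'}\mathbb Z$. The retract is then $\prod_{I'}\mathbb Z$.

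For (ii), compact generation is formal from Lemma~\ref{lem:everythingnice}, and compact objects are retracts of finite complexes of compact projectives, i.e.\ bounded complexes of $\prod_I\mathbb Z$'s up to retracts. For the duality, I define $C\mapsto R\intHom(C,\mathbb Z)$ on $D^b(\mathbb Z)$. It suffices to treat bounded complexes of free abelian groups $\bigoplus_I\mathbb Z$, which are sent to termwise $\prod_I\mathbb Z$. Full faithfulness reduces, by dévissage, to $R\Hom(\prod_J\mathbb Z,\prod_I\mathbb Z)=\prod_I\bigoplus_J\mathbb Z$. By compactness of $\prod_J\mathbb Z$ in $D(\Solid)$ and since products commute with $R\Hom$, this follows from Proposition~\ref{prop:examplesolid}, i.e.\ $R\Hom(\prod_J\mathbb Z,\mathbb Z)=\bigoplus_J\mathbb Z$, matching $\Hom(\bigoplus_I\mathbb Z,\bigoplus_J\mathbb Z)$. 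Essential surjectivity holds because the image is a thick subcategory: it is closed under retracts, since $D^b(\mathbb Z)$ is idempotent complete and the functor is fully faithful. Hence retracts of bounded complexes of $\prod$'s are themselves of that form, which also yields the description of $D(\Solid)^\omega$. I expect this retract and idempotent bookkeeping to be the main subtle step.

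For (iii), the left adjoint $(-)^{L\solid}$ satisfies $R\Hom(\mathbb R^{L\solid},C)=R\Hom(\mathbb R,C)$ for $C\in D(\Solid)$. Equation~\eqref{eq:completionR} in the proof of Theorem~\ref{thm:solid} gives $R\intHom(\mathbb R,C)=0$ for $C$ a complex of sums of products of $\mathbb Z$. Every $C\in D(\Solid)$ is represented by such a complex (a projective resolution in $\Solid$), so the right side vanishes. Taking $C=\mathbb R^{L\solid}$ then gives $\mathbb R^{L\solid}=0$.

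For (iv), again represent $C$ by a complex of direct sums of $\prod_I\mathbb Z$ concentrated in homological degrees $\geq0$ after shifting. Unbounded complexes are handled as in the proof, by truncating, since each degree depends only on a bounded piece. Equation~\eqref{eq:inthomright}, $R\intHom(\mathbb Z[S]^\solid,C)(S')=R\Gamma(S\times S',C)=R\intHom(\mathbb Z[S],C)(S')$, holds naturally in $S'$, and this is the claim.
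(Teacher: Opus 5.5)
Your proposal is correct and follows essentially the paper's route. In (i) you use retracts of large $\mathbb Z[S]^\solid\cong\prod_J\mathbb Z$ together with the dual fact for free abelian groups. In (ii) you use the duality with $D^b(\mathbb Z)$ via the biduality of Proposition~\ref{prop:examplesolid}; you check full faithfulness and closure under retracts, where the paper instead writes down the quasi-inverse $C\mapsto R\Hom(C,\mathbb Z)$. In (iii) and (iv) you use Equations~\eqref{eq:completionR} and~\eqref{eq:inthomright}.

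The only imprecision is about bounded versus unbounded complexes. Those two equations were established only for complexes of the form $\ldots\to C_1\to C_0\to 0$, so your claim in (iii) for ``every $C\in D(\Solid)$'' is overstated. As in the paper, you only need $C$ bounded to the right, which is enough because $\mathbb R^{L\solid}$ is connective; your final choice $C=\mathbb R^{L\solid}$ is of that form. Likewise in (iv), each cohomological degree $k$ depends only on the right truncation $\tau^{\leq k}C$, not on a bounded piece, since $\mathbb Z[S]^\solid\otimes\mathbb Z[S^\prime]$ and $\mathbb Z[S\times S^\prime]$ sit in degree $0$.
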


\begin{proof} For (i), note that by Lemma~\ref{lem:everythingnice} compact projective generators of $\Solid$ are given by $\mathbb Z[S]^\solid$ for extremally disconnected $S$. By Corollary~\ref{cor:speckerfree}, these are isomorphic to products of copies of $\mathbb Z$ (and the cardinality of the index set can be arbitrarily large). It follows that any product of copies of $\mathbb Z$ is compact projective, by writing it as a retract of a larger product of the form $\mathbb Z[S]^\solid$. If $P\in \Solid$ is any compact projective object, one can write it as retract of $\prod_I \mathbb Z$ for some set $I$; but any such retract is itself a product of copies of $\mathbb Z$ (as dually any retract of a free abelian group is a free abelian group).

For (ii), the first sentence follows formally from (i). The functor $D^b(\mathbb Z)^{\mathrm{op}}\to D(\Solid)$ lands in the compact objects as one can represent any bounded complex of abelian groups by a bounded complex of free abelian groups. Similarly, one has a functor $D(\Solid)^\omega\to D^b(\mathbb Z)^{\mathrm{op}}$ defined by $C\mapsto R\Hom(C,\mathbb Z)$, and one immediately verifies that these functors are quasi-inverse.

For (iii), we need to see that $R\Hom(\mathbb R,C)=0$ whenever $C$ is solid. We may assume that $C$ is bounded to the right, and then by shifting that $C$ sits in homological degrees $\geq 0$. In that case, picking a projective resolution, we may represent $C$ by a complex all of whose terms are direct sums of products of copies of $\mathbb Z$. But then Equation~\eqref{eq:completionR} gives the claim.

For (iv), we may similarly assume that $C$ is a complex all of whose terms are direct sums of products of copies of $\mathbb Z$. We need to see that for all profinite sets $S^\prime$, we have
\[
R\intHom(\mathbb Z[S]^\solid,C)(S^\prime) = R\Gamma(S\times S^\prime,C)\ ,
\]
which is Equation \eqref{eq:inthomright} from the previous proof.
\end{proof}

The corollary implies that the completed tensor product behaves well. More precisely, we have the following theorem.

\begin{theorem}\label{thm:solidtensor}\leavevmode
\begin{enumerate}
\item[{\rm (i)}] There is a unique way to endow $\Solid$ with a symmetric monoidal tensor product $\otimes^\solid$ such that the solidification functor
\[
M\mapsto M^\solid: \Cond(\Ab)\to \Solid
\]
is symmetric monoidal.
\item[{\rm (ii)}] There is a unique way to endow $D(\Solid)$ with a symmetric monoidal tensor product $\otimes^{L\solid}$ such that the solidification functor
\[
C\mapsto C^{L\solid}: D(\Cond(\Ab))\to D(\Solid)
\]
is symmetric monoidal. The functor $\otimes^{L\solid}$ is the left derived functor of $\otimes^\solid$.
\end{enumerate}
\end{theorem}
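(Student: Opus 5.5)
The plan is to use the standard criterion for when a localization of a symmetric monoidal category inherits a symmetric monoidal structure: it suffices that the kernel of the localization is a tensor ideal. Concretely, for (i) I would set
\[
M\otimes^\solid N := (M\otimes N)^\solid
\]
for solid $M,N$. The symmetric monoidal structure on the left adjoint $M\mapsto M^\solid$ then reduces to checking that for all condensed abelian groups $M,N$ the natural map $(M\otimes N)^\solid\to (M^\solid\otimes N)^\solid$ is an isomorphism, and symmetrically in $N$. Uniqueness is automatic: since $M\mapsto M^\solid$ is essentially surjective (it is a localization with $M^\solid\cong M$ for solid $M$), a symmetric monoidal structure making it symmetric monoidal must be given by the displayed formula.

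To check that $(M\otimes N)^\solid\to (M^\solid\otimes N)^\solid$ is an isomorphism, I would reduce to generators. Both sides commute with colimits in $M$ and in $N$, since solidification is a left adjoint and $\otimes$ commutes with colimits in each variable, and all the relevant functors are right exact. So it suffices to take $M=\mathbb Z[S]$ and $N=\mathbb Z[T]$ with $S,T$ extremally disconnected. Then $\mathbb Z[S]\otimes\mathbb Z[T]=\mathbb Z[S\times T]$, and by the Yoneda lemma in $\Solid$ I need to see that for every solid $W$ the map
\[
\Hom(\mathbb Z[S]^\solid\otimes\mathbb Z[T],W)\to \Hom(\mathbb Z[S]\otimes \mathbb Z[T],W)
\]
is bijective. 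This map identifies with $\Hom(\mathbb Z[S]^\solid,\intHom(\mathbb Z[T],W))\to \Hom(\mathbb Z[S],\intHom(\mathbb Z[T],W))$, so it suffices that $\intHom(\mathbb Z[T],W)$ is solid whenever $W$ is solid. Its $S'$-points are $W(T\times S')$. By Corollary~\ref{cor:solidproperties}~(iv), $\intHom(\mathbb Z[T],W)\cong\intHom(\mathbb Z[T]^\solid,W)$, and an internal Hom into a solid object is solid, because $\Hom(\mathbb Z[S]^\solid,\intHom(P,W))=\Hom(P,\intHom(\mathbb Z[S]^\solid,W))=\Hom(P,\intHom(\mathbb Z[S],W))$, again using Corollary~\ref{cor:solidproperties}~(iv). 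This gives (i). As a byproduct, $\mathbb Z[S]^\solid\otimes^\solid\mathbb Z[T]^\solid=\mathbb Z[S\times T]^\solid$, so the tensor product of two compact projective generators is again a compact projective generator.

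For (ii), I would argue the same way in $D(\Cond(\Ab))$ with $\otimes^L$, and define $C\otimes^{L\solid}D:=(C\otimes^L D)^{L\solid}$. I need $(C\otimes^L D)^{L\solid}\cong(C^{L\solid}\otimes^L D)^{L\solid}$. Equivalently, if $C^{L\solid}=0$ then $(C\otimes^L D)^{L\solid}=0$. By adjunction this amounts to showing that $R\intHom(D,E)$ is solid for every $E\in D(\Solid)$. Since $D$ is built from the $\mathbb Z[T]$ by colimits and shifts, and solid objects are closed under limits, it suffices to treat $D=\mathbb Z[T]$. There, Corollary~\ref{cor:solidproperties}~(iv) together with Theorem~\ref{thm:solid}~(ii) gives the derived version of the argument above: I compute $R\Hom(\mathbb Z[S]^\solid,R\intHom(\mathbb Z[T],E))=R\Hom(\mathbb Z[T],R\intHom(\mathbb Z[S]^\solid,E))$ and then use (iv). Uniqueness is again formal. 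Finally, $\otimes^{L\solid}$ is the left derived functor of $\otimes^\solid$: on the compact projectives it takes $\mathbb Z[S]^\solid,\mathbb Z[T]^\solid$ to $\mathbb Z[S\times T]^\solid$, which is concentrated in degree $0$ because $\mathbb Z[S]$ is flat and $\mathbb Z[S\times T]^\solid$ is projective. Hence computing with projective resolutions yields the derived tensor product.

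The main obstacle I expect is making sure the internal Hom statement $R\intHom(\mathbb Z[T],E)\in D(\Solid)$ holds genuinely internally, and not just on global sections. This is exactly why the paper proves the stronger internal form, Corollary~\ref{cor:solidproperties}~(iv) and Equation~\eqref{eq:inthomright}; the remainder of the argument is formal.
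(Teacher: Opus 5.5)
Your argument is essentially the paper's. You reduce to $M=\mathbb Z[S]$, $N=\mathbb Z[T]$, test against solid $W$, and conclude from Corollary~\ref{cor:solidproperties}~(iv). The paper applies the tensor--Hom adjunction in the other variable: it writes $\Hom(\mathbb Z[S]^\solid\otimes\mathbb Z[T],W)=\intHom(\mathbb Z[S]^\solid,W)(T)$ and compares this directly with $\intHom(\mathbb Z[S],W)(T)=W(S\times T)$. Your lemma that $\intHom(P,W)$ (resp.\ $R\intHom(D,E)$) is solid is exactly this computation, packaged as a tensor-ideal statement. Since your proof of it works for arbitrary $P$ (resp.\ $D$), two of your steps are superfluous: the preliminary identification $\intHom(\mathbb Z[T],W)\cong\intHom(\mathbb Z[T]^\solid,W)$, and the reduction to $D=\mathbb Z[T]$ in (ii).

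The one misjustified step is the last. By symmetric monoidality and flatness of $\mathbb Z[S]$ you have $\mathbb Z[S]^\solid\otimes^{L\solid}\mathbb Z[T]^\solid\cong\mathbb Z[S\times T]^{L\solid}$, and the question is whether this \emph{derived} solidification sits in degree $0$. Projectivity of the underived object $\mathbb Z[S\times T]^\solid$ in $\Solid$ says nothing about that.

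\begin{itemize}
\item $S\times T$ is profinite but, for infinite $S,T$, not extremally disconnected, so $\mathbb Z[S\times T]$ is not projective in $\Cond(\Ab)$.
\item Derived solidification can have higher homology even when $H_0$ is projective. For a sphere $X$, $\mathbb Z[X]^{L\solid}=H_\bullet(X)$ by the last example of Lecture VI.
\end{itemize}

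The correct input is that $S\times T$ is profinite. Resolve it by a hypercover of extremally disconnected sets; Proposition~\ref{prop:freecondensedsheaf} then gives $\mathbb Z[S\times T]^{L\solid}\simeq\mathbb Z[S\times T]^\solid[0]$. Equivalently, both objects lie in $D(\Solid)$ (Proposition~\ref{prop:examplesolid}) and both corepresent $R\Gamma(S\times T,-)$ there, by Definition~\ref{def:solid}~(iii) and Theorem~\ref{thm:solid}~(ii). With that substitution your proof is complete. The paper uses the same fact, tacitly, at the same point.
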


Concretely, one can in both cases define $M\otimes^\solid M^\prime$ resp.~$C\otimes^{L\solid} C^\prime$ as $(M\otimes M^\prime)^\solid$ resp.~$(C\otimes^L C^\prime)^{L\solid}$. In fact, this is a formal consequence of the assertion that completion is symmetric monoidal, and proves the uniqueness assertion.

\begin{proof} As observed, uniqueness is clear. For existence in (i), we define a symmetric monoidal tensor product on $\Solid$ via $M\otimes^\solid N = (M\otimes N)^\solid$. We need to see that the functor $M\mapsto M^\solid$ is symmetric monoidal, i.e.~the map
\[
(M\otimes N)^\solid\to (M^\solid\otimes N^\solid)^\solid
\]
is an isomorphism. This can be written as the composite
\[
(M\otimes N)^\solid\to (M^\solid\otimes N)^\solid\to (M^\solid\otimes N^\solid)^\solid
\]
and it is enough to prove that the first map is an isomorphism (as then applying the same for $M$ replaced by $N$ and $N$ replaced by $M^\solid$ shows that the second map is an isomorphism). As all functors in question commute with all colimits, we can assume that $M=\mathbb Z[S]$, $N=\mathbb Z[T]$ are free condensed abelian groups on profinite sets $S$, $T$. We need to see that
\[
\mathbb Z[S\times T]^\solid\to (\mathbb Z[S]^\solid\otimes \mathbb Z[T])^\solid
\]
is an isomorphism. This amounts to showing that for all solid $A$, the map
\[
\Hom(\mathbb Z[S]^\solid\otimes \mathbb Z[T],A) = \intHom(\mathbb Z[S]^\solid,A)(T)\to \intHom(\mathbb Z[S],A)(T) = A(S\times T)
\]
is an isomorphism, which follows from Corollary~\ref{cor:solidproperties}~(iv).

The existence proof in (ii) proceeds in exactly the same way. To see that $\otimes^{L\solid}$ is the left derived functor of $\otimes^\solid$, it remains to see that for compact projective $M,N\in \Solid$, one has
\[
M\otimes^{L\solid} N = M\otimes^\solid N\ ,
\]
i.e.~$M\otimes^{L\solid} N$ is concentrated in degree $0$. But we can assume $M=\mathbb Z[S]^\solid$, $N=\mathbb Z[T]^\solid$, with $S$ and $T$ extremally disconnected, in which case $M\otimes^{L\solid} N = \mathbb Z[S\times T]^\solid$ (as all functors are symmetric monoidal), which is still concentrated in degree $0$.
\end{proof}

The completed tensor product behaves nicely:

\begin{proposition}\label{prop:tensorinfproducts} Let $M=\prod_I \mathbb Z$ and $N=\prod_J \mathbb Z$ be products of copies of $\mathbb Z$. Then
\[
M\otimes^{L\solid} N = \prod_{I\times J} \mathbb Z\ .
\]
\end{proposition}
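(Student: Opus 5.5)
The plan is to reduce to the case of free solid abelian groups on profinite sets, where symmetric monoidality of solidification gives the answer, and then pass to retracts. By Corollary~\ref{cor:speckerfree} and the proof of Corollary~\ref{cor:solidproperties}~(i), any $\prod_I \mathbb Z$ is a retract of some $\mathbb Z[S]^\solid$ with $S$ extremally disconnected, and likewise $\prod_J\mathbb Z$ is a retract of some $\mathbb Z[T]^\solid$. Theorem~\ref{thm:solidtensor}~(ii) says that $C\mapsto C^{L\solid}$ is symmetric monoidal. Hence
\[
\mathbb Z[S]^\solid\otimes^{L\solid}\mathbb Z[T]^\solid = (\mathbb Z[S]\otimes\mathbb Z[T])^{L\solid} = \mathbb Z[S\times T]^\solid .
\]
This object is concentrated in degree $0$, and by Definition~\ref{def:solid}~(i) together with $C(S\times T,\mathbb Z)=C(S,\mathbb Z)\otimes C(T,\mathbb Z)$ it equals $\intHom(C(S,\mathbb Z)\otimes C(T,\mathbb Z),\mathbb Z)$.

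The retraction argument does not immediately produce the index set $I\times J$, so I would instead use the duality of Corollary~\ref{cor:solidproperties}~(ii). Write $M=R\intHom(\bigoplus_I\mathbb Z,\mathbb Z)$ and $N=R\intHom(\bigoplus_J\mathbb Z,\mathbb Z)$. The claim is then that for free abelian groups $A,B$ there is a natural isomorphism
\[
R\intHom(A,\mathbb Z)\otimes^{L\solid}R\intHom(B,\mathbb Z)\to R\intHom(A\otimes B,\mathbb Z).
\]
I would construct this map via the adjunction, starting from the evaluation pairing to $\mathbb Z$. The target is solid, being a product of copies of $\mathbb Z$, so the map factors through solidification.

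Both sides are functorial in $A$ and $B$, and the class of pairs $(A,B)$ of free abelian groups for which the map is an isomorphism is closed under retracts. By Theorem~\ref{thm:specker}, the groups $C(S,\mathbb Z)$ are free. The computation above shows that the map is an isomorphism for $A=C(S,\mathbb Z)$ and $B=C(T,\mathbb Z)$, provided one checks that it agrees with the natural identification of $\mathbb Z[S\times T]^\solid$ with $\intHom(C(S\times T,\mathbb Z),\mathbb Z)$. This compatibility is checked on the generators $S\times T\to \mathbb Z[S\times T]^\solid$: both maps send $(s,t)$ to the Dirac measure. Since Corollary~\ref{cor:speckerfree} allows arbitrarily large index sets, every free $A$ is a direct summand of some $C(S,\mathbb Z)$ (for instance the free group on a large set, inside a larger free group), and similarly for $B$. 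Retracts of isomorphisms are isomorphisms, so the claim follows for all free $A,B$, and in particular gives $\prod_{I\times J}\mathbb Z$.

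The main obstacle I expect is the compatibility check. One must confirm that the map of the general construction, specialized to $A=C(S,\mathbb Z)$ and $B=C(T,\mathbb Z)$, really is the isomorphism coming from symmetric monoidality. I would handle this by testing maps out of both sides into arbitrary solid $C$, using Corollary~\ref{cor:solidproperties}~(iv) to reduce to $R\Gamma(S\times T,C)$. The remaining steps, namely the retract argument and the fact that summands of free groups are free, are routine.
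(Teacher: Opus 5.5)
Your proposal is correct and follows the paper's proof. You rewrite the claim as the natural map $\intHom(A,\mathbb Z)\otimes^{L\solid}\intHom(B,\mathbb Z)\to\intHom(A\otimes B,\mathbb Z)$ for free $A,B$, reduce by retracts to $A=C(S,\mathbb Z)$ and $B=C(T,\mathbb Z)$, and conclude from $\mathbb Z[S]^\solid\otimes^{L\solid}\mathbb Z[T]^\solid\cong\mathbb Z[S\times T]^\solid$ via symmetric monoidality of solidification, just as the paper does; your check on $S\times T$ that the two maps agree (enough since the target is solid) makes explicit a compatibility the paper leaves implicit.
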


\begin{proof} We need to see that for any free abelian groups $M^\prime$, $N^\prime$, the natural map
\[
\intHom(M^\prime,\mathbb Z)\otimes^{L\solid}\intHom(N^\prime,\mathbb Z)\to \intHom(M^\prime\otimes N^\prime,\mathbb Z)
\]
is an isomorphism. To prove this, write $M^\prime$ as a retract of $C(S,\mathbb Z)$ for some profinite set $S$, and similarly $N^\prime$ as a retract of $C(T,\mathbb Z)$ for some profinite set $T$; we can then reduce to $M^\prime=C(S,\mathbb Z)$ and $N^\prime=C(T,\mathbb Z)$. Then $M^\prime\otimes N^\prime = C(S\times T,\mathbb Z)$, and the claim is equivalent to
\[
\mathbb Z[S]^\solid\otimes^{L\solid} \mathbb Z[T]^\solid\to \mathbb Z[S\times T]^\solid
\]
being an isomorphism, which is a consequence of all functors being symmetric monoidal.
\end{proof}

\begin{example} Let $p$ and $\ell$ be distinct primes. One has the following identities:
\[\begin{aligned}
\mathbb Z_p\otimes^{L\solid} \mathbb R&=0\ ,\\
\mathbb Z_p\otimes^{L\solid} \mathbb Z_\ell&=0\ ,\\
\mathbb Z_p\otimes^{L\solid} \mathbb Z_p&=\mathbb Z_p\ ,\\
\mathbb Z_p\otimes^{L\solid} \mathbb Z[[T]]&=\mathbb Z_p[[T]]\ ,\\
\mathbb Z[[U]]\otimes^{L\solid} \mathbb Z[[T]]&=\mathbb Z[[U,T]]\ .
\end{aligned}\]
We see that in these situations the completed tensor product seems to ask both factors for what $I$-adic topology they have, and then combines the adic topologies of both factors.

To verify the examples, the first one follows from $\otimes^{L\solid}$ being symmetric monoidal and $\mathbb R^{L\solid}=0$. Now we continue with the last one, which follows from Proposition~\ref{prop:tensorinfproducts}. This implies the second-to-last by taking the quotient by multiplication with $U-p$. This in turn implies the second and third examples by passing to the quotient by multiplication with $T-\ell$ resp.~$T-p$.
\end{example}

\begin{example} Let $X$ be a CW complex, and let $H_\bullet(X)$ denote the singular homology complex. Then there is a canonical isomorphism
\[
\mathbb Z[X]^{L\solid}\cong H_\bullet(X)\ .
\]
In particular, this shows that the derived solidification of a condensed abelian group can sit in all nonnegative homological degrees.

To prove this, it suffices to handle the case that $X$ is a finite CW complex, as both sides commute with filtered colimits. Then $X$ is compact Hausdorff, and thus $\mathbb Z[X]$ is pseudocoherent. In particular, $\mathbb Z[X]^{L\solid}$ is a pseudocoherent complex of solid abelian groups. On such, the functor $R\intHom(-,\mathbb Z)$ takes discrete values in $D^{\geq 0}(\mathbb Z)$, and the resulting functor is fully faithful. Note that as $X$ is a finite CW complex, $H_\bullet(X)$ is also pseudocoherent. In other words, it is enough to show that there is a canonical isomorphism
\[
R\Hom(\mathbb Z[X]^{L\solid},\mathbb Z)\cong R\Hom(H_\bullet(X),\mathbb Z)\in D(\mathbb Z)\ .
\]
But the left-hand side is given by $R\Hom(\mathbb Z[X],\mathbb Z) = R\Gamma(X,\mathbb Z)$, as is the right-hand side, as desired.
\end{example}
\newpage

\section{Lecture VII: Analytic rings}

As our next topic, we want to extend the results obtained for abelian groups to $A$-modules for any discrete ring $A$. Although our main interest is certainly in commutative rings, we allow general unital associative rings $A$ in the following.

From now on, we will repeatedly run into a setup of a condensed ring $A$ together with a specification of ``free complete'' $A$-modules $A[S]^\wedge$, such as $\mathbb Z$ and $\mathbb Z[S]^\solid$. It will be convenient to have a general language for this situation, which we will establish in this lecture. The theory developed in this lecture is a first approximation to the theory of analytic rings later developed in \cite{Analytic}, \cite{AnalyticStacks} --- in \cite{Analytic}, it is generalized to the animated setting where general constructions have a better behaviour, while in \cite{AnalyticStacks} its analogue in the setting of light condensed mathematics is studied.

In the following, for any condensed ring $A$, we denote by $A\Mod$ the category of $A$-modules in condensed abelian groups. If $A$ is discrete, this is ambiguous; however, we will for the moment not need to consider the category of abstract $A$-modules.

\begin{definition}\label{def:preanalytic} A theory of measures on a condensed ring $A$ is given by a functor
\[
\mathcal M: \{\mathrm{extremally\ disconnected\ sets}\}\to A\Mod: S\mapsto \mathcal M[S]
\]
to the category $A\Mod$ of $A$-modules in condensed abelian groups, taking finite disjoint unions to products, and a natural transformation $S\to \mathcal M[S]$ of ``Dirac measures''.
\end{definition}

\begin{remark} If the underlying condensed ring $A$ is discrete, the functor $S\mapsto \mathcal M[S]$ factors uniquely over an additive functor $A[S]\mapsto \mathcal M[S]$ from the full subcategory of $A$-modules of the form $A[S]$ for $S$ extremally disconnected. To see this, we need to see that for any extremally disconnected sets $S$, $T$, we can get functoriality in $\Hom(A[S],A[T]) = A[T](S)$, which is the sheafification of $S\mapsto A[T(S)]$. We certainly have functoriality in $A[T(S)]$ as the functor takes values in $A$-modules, and we can pass to the sheafification by the condition that the functor takes finite disjoint unions to products.

The functor $A[S]\mapsto \mathcal M[S]$ is then moreover equipped with a natural transformation $A[S]\to \mathcal M[S]$. We will see that if $(A,\mathcal M)$ is analytic, then these properties hold also when $A$ is general.
\end{remark}

Let us give some examples. We include here already a number of examples where the condensed ring $A$ is non-discrete. However, in these lectures we will primarily consider the case where $A$ is discrete.

\begin{examples}\leavevmode
\begin{enumerate}
\item[{\rm (i)}] The theory $\mathbb Z_\solid$-measures is given by the discrete condensed ring $\mathbb Z$ together with the functor
\[
S\mapsto \mathbb Z_\solid[S]  := \mathbb Z[S]^\solid = \mathcal M(S,\mathbb Z)\ .
\]
\item[{\rm (ii)}] Let $p$ be a prime. The theory of $\mathbb Z_{p,\solid}$-measures is given by the condensed ring $\mathbb Z_p$ of $p$-adic integers, together with the functor
\[
S\mapsto \mathbb Z_{p,\solid}[S] := \mathbb Z_p[S]^\solid = \varprojlim_i \mathbb Z_p[S_i]\ .
\]
This is the space $\mathcal M(S,\mathbb Z_p)$ of $\mathbb Z_p$-valued measures on $S$.
\item[{\rm (iii)}] For a discrete ring $A$, the theory of $(A,\mathbb Z)_\solid$-measures is given by the discrete condensed ring $A$ together with the functor
\[
S\mapsto \mathbb Z_\solid[S]\otimes_{\mathbb Z} A\ .
\]
\item[{\rm (iv)}] For any finitely generated $\mathbb Z$-algebra $A$, the theory of $A_\solid$-measures is given by the discrete condensed ring $A$ together with the functor taking an extremally disconnected $S=\varprojlim_i S_i$ to the condensed $A$-module
\[
A_\solid[S]:= \varprojlim_i A[S_i]\ .
\]
The underlying abelian group is the space $\mathcal M(S,A)$ of $A$-valued measures on $S$.\footnote{Obviously, this definition extends immediately to the case of a general discrete ring $A$. We restrict it to the case of finitely generated $\mathbb Z$-algebras as only then $A_\solid$ fits into a general framework (e.g., taking a general Huber pair $(A,A^+)$ to a pre-analytic ring $(A,A^+)_\solid$). In fact, for general discrete $A$, the theory of measures $A_\solid$ does not define an analytic ring.}
\end{enumerate}
\end{examples}

For any theory of measures $(A,\mathcal M)$, one can mimic Definition~\ref{def:solid}. We would like this to behave well. This amounts to the following definition.

\begin{definition}\label{def:analytic} An analytic ring is a condensed ring $A$ equipped with a theory of measures $\mathcal M$ such that for any complex
\[
C: \ldots \to C_i\to \ldots\to C_1\to C_0\to 0
\]
of $A$-modules in condensed abelian groups, such that all $C_i$ are direct sums of objects of the form $\mathcal M[T]$ for varying extremally disconnected $T$, the map
\[
R\intHom_A(\mathcal M[S],C)\to R\intHom_A(A[S],C)
\]
in the derived category of condensed abelian groups is an isomorphism for all extremally disconnected sets $S$.
\end{definition}

Note that for any (associative) condensed ring $A$ and any two $A$-modules $M$, $N$ in condensed abelian groups, one can define a complex of condensed abelian groups $R\intHom_A(M,N)$ whose $S$-valued points are $R\Hom_A(M\otimes_{\mathbb Z} \mathbb Z[S],N)$.

Then Lemma~\ref{lem:showeverythingnice} implies the following result.

\begin{proposition}\label{prop:analyticnice} Let $(A,\mathcal M)$ be an analytic ring.
\begin{enumerate}
\item[{\rm (i)}] The full subcategory
\[
(A,\mathcal M)\Mod\subset A\Mod
\]
of all $A$-modules $M$ in condensed abelian groups such that for all extremally disconnected sets $S$, the map
\[
\Hom_A(\mathcal M[S],M)\to M(S)
\]
is an isomorphism, is an abelian category stable under all limits, colimits, and extensions. The objects $\mathcal M[S]$ for $S$ extremally disconnected form a family of compact projective generators. It admits a left adjoint
\[
A\Mod\to (A,\mathcal M)\Mod: M\mapsto M\otimes_A (A,\mathcal M)
\]
that is the unique colimit-preserving extension of $A[S]\mapsto \mathcal M[S]$. If $A$ is commutative, there is a unique symmetric monoidal tensor product $\otimes_{(A,\mathcal M)}$ on $(A,\mathcal M)\Mod$ making the functor
\[
A\Mod\to (A,\mathcal M)\Mod: M\mapsto M\otimes_A (A,\mathcal M)
\]
symmetric monoidal.
\item[{\rm (ii)}] The functor
\[
D((A,\mathcal M)\Mod)\to D(A\Mod)
\]
is fully faithful, and its essential image is stable under all limits and colimits and given by those $C\in D(A\Mod)$ such that for all extremally disconnected $S$, the map
\[
R\Hom_A(\mathcal M[S],C)\to R\Hom_A(A[S],C)
\]
is an isomorphism; in that case, also the internal $R\intHom$'s agree. An object $C\in D(A\Mod)$ lies in $D((A,\mathcal M)\Mod)$ if and only if all $H^i(C)$ are in $(A,\mathcal M)\Mod$. The inclusion $D((A,\mathcal M)\Mod)\subset D(A\Mod)$ admits a left adjoint
\[
D(A\Mod)\to D((A,\mathcal M)\Mod): C\mapsto C\otimes^L_A (A,\mathcal M)
\]
that is the left derived functor of $M\mapsto M\otimes_A (A,\mathcal M)$. If $A$ is commutative, there is a unique symmetric monoidal tensor product $\otimes^L_{(A,\mathcal M)}$ on $D((A,\mathcal M)\Mod)$ making the functor
\[
D(A\Mod)\to D((A,\mathcal M)\Mod): C\mapsto C\otimes^L_A (A,\mathcal M)
\]
symmetric monoidal.
\end{enumerate}
\end{proposition}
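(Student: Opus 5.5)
The plan is to apply Lemma~\ref{lem:everythingnice} via Lemma~\ref{lem:showeverythingnice}, with $\mathcal A = A\Mod$, $\mathcal A_0$ the modules $A[S]$ for $S$ extremally disconnected, and $F(A[S]) = \mathcal M[S]$. These $A[S]=A\otimes\mathbb Z[S]$ are compact projective generators of $A\Mod$: indeed $\Hom_A(A[S],M)=M(S)$ commutes with all limits and colimits, and $M=0$ iff all $M(S)=0$.

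The first point is to make $F$ a functor on $\mathcal A_0$ with a natural transformation $A[S]\to\mathcal M[S]$. Since $A$ may be nondiscrete, a map $A[S]\to A[T]$ is an element of $A[T](S)$, not just a map of sets $S\to T$, so the preceding Remark does not apply directly. I would instead use analyticity itself. Take the complex $C=\mathcal M[T][0]$. Then
\[
\Hom_A(\mathcal M[S],\mathcal M[T]) = \Hom_A(A[S],\mathcal M[T]),
\]
and we can compose with $A[T]\to\mathcal M[T]$ (the map induced by the Dirac measures $T\to\mathcal M[T]$). So every $\phi:A[S]\to A[T]$ has a unique extension $\mathcal M[\phi]:\mathcal M[S]\to\mathcal M[T]$ compatible with the Dirac maps. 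The uniqueness gives functoriality, and it also gives naturality of $A[S]\to \mathcal M[S]$.

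With this in place, the hypothesis of Lemma~\ref{lem:showeverythingnice} is exactly Definition~\ref{def:analytic}, after taking global sections of the internal statement. Hence Lemma~\ref{lem:everythingnice} yields all of (i) and (ii) apart from two things: the internal $R\intHom$ agreement and the tensor products. For the internal $R\intHom$'s, I would represent any $C$ in the essential image by a complex of direct sums of $\mathcal M[T]$'s. This is possible because $D((A,\mathcal M)\Mod)$ has these objects as projective generators; the left-boundedness needed comes from the truncation/Postnikov argument already used in Lemma~\ref{lem:everythingnice}. Definition~\ref{def:analytic} then directly gives the internal isomorphism. Stability of the essential image under limits and colimits is included in that lemma's conclusion, since the image is characterized by all $H^i$ lying in an abelian subcategory closed under limits and colimits, and products and sums in $D$ are exact.

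For the symmetric monoidal structure when $A$ is commutative, I would follow the proof of Theorem~\ref{thm:solidtensor}. Define $M\otimes_{(A,\mathcal M)}N=(M\otimes_A N)\otimes_A(A,\mathcal M)$; uniqueness is then formal. It suffices to show that $(M\otimes_A N)\otimes_A(A,\mathcal M)\to (M'\otimes_A N)\otimes_A(A,\mathcal M)$ is an isomorphism, where $M'=M\otimes_A(A,\mathcal M)$. Reducing by colimits to $M=A[S]$ and $N=A[T]$, this becomes the claim that for every complete $C$,
\[
R\intHom_A(\mathcal M[S],C)(T)\to R\intHom_A(A[S],C)(T)
\]
is an isomorphism. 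This is precisely the internal $R\intHom$ statement from the previous paragraph, and it works both underived and derived. The derived tensor product is then the left derived functor of the underived one, provided $\mathcal M[S]\otimes^L_{(A,\mathcal M)}\mathcal M[T]$ is concentrated in degree $0$. Monoidality identifies it with the completion of $A[S\times T]$, and since $S\times T$ is only profinite, I would resolve it by extremally disconnected sets and argue via the derived left adjoint's values on projectives. I expect this last degree-$0$ verification, together with functoriality of $\mathcal M$ for nondiscrete $A$, to be the main obstacle.
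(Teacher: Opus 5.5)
Your argument is correct for everything the proposition asserts, and it is essentially the paper's proof. The paper applies Lemma~\ref{lem:showeverythingnice}, whose hypothesis is Definition~\ref{def:analytic} after taking global sections. It reduces the internal $R\intHom$ claim to connective $C$, where it is literally the definition. It then obtains the tensor products from $R\Hom_A(\mathcal M[S]\otimes_A A[T],C)\cong R\Hom_A(A[S\times T],C)$ for $C$ in the essential image, which it phrases as the kernel of the localization being a $\otimes$-ideal. Your functoriality of $A[S]\mapsto\mathcal M[S]$ for non-discrete $A$, obtained by applying analyticity to $C=\mathcal M[T][0]$, correctly fills in a point the paper leaves implicit.

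One correction concerns your last step. The claim that $\otimes^L_{(A,\mathcal M)}$ is the left derived functor of $\otimes_{(A,\mathcal M)}$ is not part of the proposition, so you should drop it rather than treat it as the main obstacle. The Warning following the proposition says this claim is equivalent to $\mathcal M[S\times T]$ being concentrated in degree $0$, where $\mathcal M[S\times T]$ is computed by simplicially resolving $S\times T$ by extremally disconnected sets. That is not known for a general analytic ring, and resolving $S\times T$ gives no mechanism that forces it.

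In the solid case (Theorem~\ref{thm:solidtensor}) it held only because of a special input: $\mathbb Z[S]^{L\solid}=\mathbb Z[S]^\solid$ for all profinite $S$, which comes from Proposition~\ref{prop:freecondensedsheaf} and N\"obeling's theorem. The symmetric monoidal structure on $D((A,\mathcal M)\Mod)$ needs only the $\otimes$-ideal property you have already established.
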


In the following, for an analytic ring $(A,\mathcal M)$, we abbreviate
\[
D(A,\mathcal M) = D((A,\mathcal M)\Mod)
\]
and refer to it as the derived category of $(A,\mathcal M)$-modules.

\begin{warning} In the most general case, it could happen that $\otimes^L_{(A,\mathcal M)}$ is not the left derived functor of $\otimes_{(A,\mathcal M)}$. In every instance known to us, this holds true, however. It is equivalent to the question whether for all extremally disconnected sets $S$, $T$, the $A$-module in condensed abelian groups $\mathcal M[S\times T]$ (computed by simplicially resolving $S\times T$ through extremally disconnected sets) is concentrated in degree $0$. In practice, this holds true for any profinite set in place of $S\times T$.
\end{warning}

\begin{proof} Most of this follows directly from Lemma~\ref{lem:showeverythingnice}. What remains is to prove that if $C\in D((A,\mathcal M)\Mod)$, then for all extremally disconnected sets $S$, the map
\[
R\intHom_A(\mathcal M[S],C)\to R\intHom_A(A[S],C)
\]
is an isomorphism, and the existence of the symmetric monoidal tensor product if $A$ is commutative. For the question about $R\intHom$, we may assume that $C$ is connective, in which case it is exactly the condition that $(A,\mathcal M)$ is analytic. If $A$ is commutative, this implies that for any extremally disconnected set $T$ and any $C\in D((A,\mathcal M)\Mod)$, the map
\[
R\Hom_A(\mathcal M[S]\otimes_A A[T],C)\to R\Hom_A(A[S\times T],C)
\]
is an isomorphism. This means that the kernel of the Bousfield localization
\[
D(A\Mod)\to D((A,\mathcal M)\Mod)
\]
is a $\otimes$-ideal, which is exactly what we need to prove.
\end{proof}

In particular, Theorem~\ref{thm:solid} says that $\mathbb Z_\solid$ is an analytic ring. With this notation, we have in particular
\[
\Solid = \mathbb Z_\solid\Mod
\]
and the solidification functor
\[
\Cond(\Ab)\to \Solid: M\mapsto M^\solid
\]
has been renamed to
\[
\mathbb Z\Mod\to \mathbb Z_\solid\Mod: M\mapsto M\otimes_{\mathbb Z} \mathbb Z_\solid
\]
and the symmetric monoidal tensor product $\otimes^\solid$ to $\otimes_{\mathbb Z_\solid}$. On the derived level, the functor
\[
D(\Cond(\Ab))\to D(\Solid): C\mapsto C^{L\solid}
\]
is now denoted
\[
D(\mathbb Z\Mod)\to D(\mathbb Z_\solid): C\mapsto C\otimes^L_{\mathbb Z} \mathbb Z_\solid\ .\footnote{We might even go as far as denoting the source by $D(\mathbb Z)$.}
\]
The symmetric monoidal tensor product $\otimes^{L\solid}$ on $D(\mathbb Z_\solid)$ is now denoted $\otimes^L_{\mathbb Z_\solid}$.

There is an obvious notion of a map between condensed rings equipped with theories of measures: A map $(A,\mathcal M)\to (B,\mathcal N)$ is a map $A\to B$ of condensed rings together with $A$-linear maps
\[
\mathcal M[S]\to \mathcal N[S]
\]
natural in the extremally disconnected set $S$ and commuting with the map from $S$.

We will actually define maps of analytic rings slightly differently, as follows. A map of analytic rings $f: (A,\mathcal M)\to (B,\mathcal N)$ is a map of underlying condensed rings $f: A\to B$ with the property that for all extremally disconnected $S$, the $A$-module $\mathcal N[S]$ lies in $(A,\mathcal M)\Mod$. Note that this implies that the map $S\to \mathcal B[S]$ extends uniquely to a map $\mathcal M[S]\to \mathcal N[S]$, functorial in $S$, so this recovers a map of pre-analytic rings in the sense above. In the appendix, we will check that conversely, a map of pre-analytic rings between analytic rings actually defines a map of analytic rings under a very mild assumption.

The next proposition ensures that the formation of $(A,\mathcal M)\Mod$ is functorial in the analytic ring $(A,\mathcal M)$.

\begin{proposition}\label{prop:functoriality} Let $f: (A,\mathcal M)\to (B,\mathcal N)$ be a map of analytic rings.
\begin{enumerate}
\item[{\rm (i)}] The composite functor
\[
A\Mod\xrightarrow{-\otimes_A B} B\Mod\xrightarrow{-\otimes_B (B,\mathcal N)} (B,\mathcal N)\Mod
\]
factors over $(A,\mathcal M)\Mod$, via a functor denoted
\[
(A,\mathcal M)\Mod\to (B,\mathcal N)\Mod: M\mapsto M\otimes_{(A,\mathcal M)} (B,\mathcal N)\ .
\]
\item[{\rm (ii)}] The composite functor
\[
D(A\Mod)\xrightarrow{-\otimes^L_A B} D(B\Mod)\xrightarrow{-\otimes^L_B (B,\mathcal N)} D(B,\mathcal N)
\]
factors over $D(A,\mathcal M)$, via the left derived functor
\[
D(A,\mathcal M)\to D(B,\mathcal N): C\mapsto C\otimes^L_{(A,\mathcal M)} (B,\mathcal N)
\]
of $M\mapsto M\otimes_{(A,\mathcal M)} (B,\mathcal N)$.
\end{enumerate}
\end{proposition}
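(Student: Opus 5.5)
The statement reduces to a single claim. The composite $G: A\Mod\to (B,\mathcal N)\Mod$, $M\mapsto (M\otimes_A B)\otimes_B(B,\mathcal N)$, sends every morphism $M\to M'$ in $A\Mod$ that becomes an isomorphism after applying $-\otimes_A(A,\mathcal M)$ to an isomorphism. The same holds on derived categories. Once this is known, $G$ factors uniquely over the localization $A\Mod\to(A,\mathcal M)\Mod$, which is a reflective localization by Proposition~\ref{prop:analyticnice}. The induced functor is simply the restriction of $G$ to $(A,\mathcal M)\Mod$.

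I would prove this by adjunction. $G$ is left adjoint to the forgetful functor $(B,\mathcal N)\Mod\to A\Mod$, since it is a composite of the left adjoints to $B\Mod\to A\Mod$ and $(B,\mathcal N)\Mod\subset B\Mod$. Therefore it suffices to show that the forgetful functor lands in $(A,\mathcal M)\Mod$, and derivedly that $D(B,\mathcal N)\to D(A\Mod)$ lands in $D(A,\mathcal M)$. Then $G$ and $G\circ(-\otimes_A(A,\mathcal M))$ are both left adjoint to the same functor, hence isomorphic.

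To show the forgetful functor lands correctly, I would use Proposition~\ref{prop:analyticnice}(ii): $D(A,\mathcal M)\subset D(A\Mod)$ is stable under all limits and colimits and is detected on cohomology. Every object of $(B,\mathcal N)\Mod$ has a resolution by direct sums of the $\mathcal N[T]$ with $T$ extremally disconnected; by the definition of a map of analytic rings, each such term lies in $(A,\mathcal M)\Mod$. Since $(A,\mathcal M)\Mod$ is an abelian subcategory of $A\Mod$ stable under colimits, kernels and cokernels, every object of $(B,\mathcal N)\Mod$, being a cokernel of a map between such sums, lies in $(A,\mathcal M)\Mod$. For $D(B,\mathcal N)$ I would use the cohomological criterion: an object lies in $D(B,\mathcal N)$ iff its cohomology lies in $(B,\mathcal N)\Mod$, hence in $(A,\mathcal M)\Mod$, hence the object lies in $D(A,\mathcal M)$. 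This uses that cohomology is computed compatibly in $A\Mod$ and $B\Mod$, which holds because restriction of scalars is exact.

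The remaining point, and the step needing most care, is identifying the derived factorization with the left derived functor of the abelian one. By the above, the derived functor $D(A,\mathcal M)\to D(B,\mathcal N)$ commutes with direct sums. It therefore suffices to check it on the compact projective generators $\mathcal M[S]$, i.e.\ that $\mathcal M[S]\otimes^L_{(A,\mathcal M)}(B,\mathcal N)$ is concentrated in degree $0$. By construction this is $A[S]\otimes^L_A B\otimes^L_B(B,\mathcal N)=B[S]\otimes^L_B(B,\mathcal N)=\mathcal N[S]$: here $A[S]\otimes_A B=B[S]$ because $A[S]$ is flat, and $B[S]$ is projective, so $B[S]\otimes^L_B(B,\mathcal N)=\mathcal N[S]$ by Proposition~\ref{prop:analyticnice}. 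Hence projectives go to objects in degree $0$. The derived functor is then computed by applying the abelian functor termwise to projective resolutions, which gives (ii).
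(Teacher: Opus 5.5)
Your proposal is correct and follows essentially the same route as the paper. You reduce the factorization to showing that the forgetful functor $(B,\mathcal N)\Mod\to A\Mod$ lands in $(A,\mathcal M)\Mod$, which holds on the generators $\mathcal N[S]$ by the definition of a map of analytic rings and hence everywhere; you then pass to the derived statement via the cohomological criterion, and identify the induced functor as the left derived functor because it preserves colimits and sends $\mathcal M[S]$ to $\mathcal N[S]$. Your extra details, namely the cokernel presentation and the computation $A[S]\otimes^L_A B=B[S]$, simply spell out what the paper leaves implicit.
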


\begin{proof} For (i), it is equivalent to show the factorization for right adjoints, i.e.~that the forgetful functor
\[
(B,\mathcal N)\Mod\to A\Mod
\]
has image in $(A,\mathcal M)\Mod$. By our definition of maps of analytic rings, this is automatic for all $\mathcal N[S]$, and thus all objects of $(B,\mathcal N)\Mod$, as these generate.

For part (ii), we similarly have to see that if $C\in D(B,\mathcal N)$, then $C$ regarded as an object of $D(A\Mod)$ lies in $D(A,\mathcal M)$. As this can be tested on cohomology groups, this reduces to part (i). The induced functor must be the left derived functor as it commutes with all colimits and takes the compact projective generators $\mathcal M[S]$ to $\mathcal N[S]$.
\end{proof}

Moreover, we have the following examples of analytic rings.

\begin{proposition}\label{prop:exanalytic} The following are analytic rings.
\begin{enumerate}
\item[{\rm (i)}] The theory of $p$-adic solid measures $\mathbb Z_{p,\solid}$.
\item[{\rm (ii)}] For any discrete ring $A$, the theory of $(A,\mathbb Z)_\solid$-measures.
\end{enumerate}
\end{proposition}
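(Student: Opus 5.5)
Both parts reduce to Theorem~\ref{thm:solid}, which says that $\mathbb Z_\solid$ is analytic, by expressing the relevant measure modules through solid abelian groups. For (ii), write $\mathcal M[S]=\mathbb Z[S]^\solid\otimes_{\mathbb Z}A$. The first step is to rewrite $R\intHom_A(\mathcal M[S],C)$ as $R\intHom_{\mathbb Z}(\mathbb Z[S]^\solid,C)$, using the adjunction between extension of scalars $-\otimes_{\mathbb Z}A$ and restriction. Likewise $R\intHom_A(A[S],C)=R\intHom_{\mathbb Z}(\mathbb Z[S],C)$. The underived tensor product is harmless here: $\mathbb Z[S]^\solid\cong\prod_I\mathbb Z$ is torsionfree, hence flat over $\mathbb Z$ as a condensed abelian group, since flatness can be checked on $S'$-points after sheafification and by filtered colimits. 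Thus $(\prod_I\mathbb Z)\otimes A=(\prod_I\mathbb Z)\otimes^L A$, and the derived adjunction applies.

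It then remains to check that a complex $C$ whose terms are direct sums of $\mathcal M[T]=\mathbb Z[T]^\solid\otimes A$ is solid as a complex of condensed abelian groups. By Theorem~\ref{thm:solid}(ii) this can be tested on cohomology, or equivalently termwise, since the terms lie in $\Solid$ and $D(\Solid)$ is stable under colimits. So I need $\prod_I\mathbb Z\otimes A$ to be solid. Write $A$ as a filtered colimit of finitely presented abelian groups, each of which is a cokernel of a map $\mathbb Z^m\to\mathbb Z^n$. Then $\prod_I\mathbb Z\otimes A$ is a filtered colimit of cokernels of maps between finite sums of $\prod_I\mathbb Z$, and $\Solid$ is stable under colimits, so it is solid. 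Corollary~\ref{cor:solidproperties}(iv) then gives $R\intHom(\mathbb Z[S]^\solid,C)\cong R\intHom(\mathbb Z[S],C)$, which is the analyticity condition.

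For (i), $A=\mathbb Z_p$ and $\mathcal M[S]=\varprojlim_i\mathbb Z_p[S_i]$. The same adjunction trick is not available, since $\mathbb Z_p$ is not discrete. My plan is to show $\mathbb Z_{p,\solid}[S]\cong(\mathbb Z[S]^\solid\otimes^L_{\mathbb Z}\mathbb Z_p)^{L\solid}=\mathbb Z[S]^\solid\otimes^{L}_{\mathbb Z_\solid}\mathbb Z_p$. Using $\mathbb Z[S]^\solid\cong\prod_I\mathbb Z$, $\mathbb Z_p=\mathbb Z[[T]]/(T-p)$, and Proposition~\ref{prop:tensorinfproducts} (extended by colimits to $\mathbb Z[[T]]=\prod_{\mathbb N}\mathbb Z$), the right side is $\prod_{I\times\mathbb N}\mathbb Z/(T-p)=\prod_I\mathbb Z_p$. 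This agrees with $\mathcal M(S,\mathbb Z_p)=\intHom(C(S,\mathbb Z),\mathbb Z_p)$.

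Let $D=D(\mathbb Z_\solid)$ and $\mathbb Z_p\Mod_\solid$ the $\mathbb Z_p$-modules in $D$. Then $\mathcal M[S]$ is the free object of $\mathbb Z_p\Mod_\solid$ on $\mathbb Z[S]$, so $R\intHom_{\mathbb Z_p}(\mathcal M[S],C)=R\intHom_{\mathbb Z}(\mathbb Z[S]^\solid,C)$ for $C$ solid. The terms of $C$ are sums of $\prod\mathbb Z_p$, which are solid as limits of $\prod\mathbb Z/p^n$. Hence Corollary~\ref{cor:solidproperties}(iv) again identifies this with $R\intHom_{\mathbb Z}(\mathbb Z[S],C)=R\intHom_{\mathbb Z_p}(\mathbb Z_p[S],C)$. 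The step I expect to be the main obstacle is the careful verification that solidification of $\mathbb Z_p$-modules is compatible with the $\mathbb Z_p$-action, which requires $\mathbb Z_p$ to be a solid ring with $\mathbb Z_p\otimes^{L\solid}\mathbb Z_p=\mathbb Z_p$. The latter is taken from the example in Lecture~VI. Given this, $D(\mathbb Z_p\Mod)$ localizes onto $\mathbb Z_p$-modules in $D(\Solid)$ via $-\otimes^{L\solid}$.
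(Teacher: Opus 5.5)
Your proposal is correct and is essentially the paper's argument. In both, one identifies $\mathcal M[S]$ with $\mathbb Z_\solid[S]\otimes^L_{\mathbb Z_\solid}A$, uses the extension-of-scalars adjunction to rewrite $R\intHom_A(\mathcal M[S],C)$ as $R\intHom(\mathbb Z[S]^\solid,C)$, and concludes by Corollary~\ref{cor:solidproperties}~(iv) since $C$ is solid. You spell out details the paper calls formal: the solidity of the terms of $C$, the computation $\prod_I\mathbb Z\otimes^{L\solid}\mathbb Z_p=\prod_I\mathbb Z_p$, and, in (ii), the flatness of $\prod_I\mathbb Z$, which lets you run the adjunction already in $D(\Cond(\Ab))$ and so avoid checking that solidification is compatible with the module structure.
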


\begin{proof} We can prove both parts simultaenously; let $A$ denote $\mathbb Z_p$ resp.~$A$. Let
\[
C: \ldots \to C_i\to\ldots\to C_1\to C_0\to 0
\]
be a complex of $A$-modules in condensed abelian groups as in Definition~\ref{def:analytic}. We need to see that for all extremally disconnected sets $S$, the map
\[
R\intHom_{A}(A[S]^{L\solid},C)\to R\intHom_{A}(A[S],C)=R\intHom(\mathbb Z[S],C)
\]
is an isomorphism.

We note that $C$ is in particular in $D(\mathbb Z_\solid)$. Moreover, in $D(\mathbb Z_\solid)$, we have in both cases
\[
\mathcal A[S] = \mathbb Z_\solid[S]\otimes^L_{\mathbb Z_\solid} A\ .
\]
But as $C$ is a module over $A$ in $D(\mathbb Z_\solid)$, we have formally that
\[
R\intHom(\mathbb Z_\solid[S],C) = R\intHom_A(\mathbb Z_\solid[S]\otimes^L_{\mathbb Z_\solid} A,C)\ .
\]
The left-hand side is $R\intHom(\mathbb Z[S],C)$ as $C$ is solid, while the right-hand side is $R\intHom_A(\mathcal A[S],C)$, giving the desired result.
\end{proof}

\begin{remark} In fact, for any Huber pair $(A,A^+)$, one can define an analytic ring $(A,A^+)_\solid$. The underlying condensed ring is the condensed ring $\underline{A}$ associated with the topological ring $A$, and the subring $A^+\subset A$ is needed for the specification of the spaces $(A,A^+)_\solid[S]$, cf.~\cite[Section 3.3]{Andreychev}. Some notable specializations are $(A,\mathbb Z)_\solid$ as defined above, and $A_\solid = (A,A)_\solid$ for finitely generated $\mathbb Z$-algebras $A$.
\end{remark}

In particular, in the case of $\mathbb Q_p$, the free modules
\[
(\mathbb Q_p,\mathbb Z_p)_\solid[S] := \mathcal M^b(S,\mathbb Q_p) = \mathcal M(S,\mathbb Z_p)[\tfrac 1p]
\]
are the spaces of bounded $\mathbb Q_p$-valued measures on $S$. It is a natural question whether a similar prescription might work over the real numbers.

\begin{example} One can define a theory of measures over $\mathbb R$ by using the classical theory of signed Radon measures. In other words,
\[
\mathcal M_1[S] := \mathcal M^b(S,\mathbb R)
\]
is the space of bounded $\mathbb R$-valued measures on $S$, with the topology dual to the Banach space $C(S,\mathbb R)$; this makes $\mathcal M^b(S,\mathbb R)$ a Smith space. Analogously to the formula
\[
\mathbb Q_{p,\solid}[S] = \bigcup_n (\varprojlim_i p^{-n} \mathbb Z_p[S_i])
\]
when $S=\varprojlim_i S_i$, one can write
\[
\mathcal M_1[S] = \bigcup_{r>0} \varprojlim_i \mathbb R[S_i]_{\ell^1\leq r}\ ,
\]
where $\mathbb R[S_i]_{\ell^1\leq r}\subset \mathbb R[S_i]$ is the subspace of $\ell^1$-norm at most $r$. Note that the latter formula also gives a description of $\mathcal M_1[S]$ as a condensed set, where all the $\varprojlim_i \mathbb R[S_i]_{\ell^1\leq r}$ are compact Hausdorff.
\end{example}

Unfortunately, $(\mathbb R,\mathcal M_1)$ is not an analytic ring. The resulting theory would be closely related to complete locally convex topological vector spaces. The basic issue is the existence of non-locally convex extensions of complete locally convex topological vector spaces, as found by Ribe, \cite{Ribe}; we refer to \cite{Analytic} for details of the construction. Still Kalton, \cite{Kalton}, showed that such extensions are always $p$-convex for all $p<1$.

To remedy this, we note that for any real number $0<p\leq 1$, one can define a theory of measures $\mathcal M_p$ over $\mathbb R$, where for any extremally disconnected set $S$ we set
\[
\mathcal M_p[S] := \bigcup_{r>0} \varprojlim_i \mathbb R[S_i]_{\ell^p\leq r}
\]
where the $\ell^p$-norm is given by $||(x_i)||_{\ell^p} = (\sum_i |x_i|^p)^{1/p}$. (One needs to assume $p\leq 1$ in order for the transition maps in the projective system to respect the condition $\ell^p\leq r$). The Ribe extension persists for any given $p$; thus, $(\mathbb R,\mathcal M_p)$ is not an analytic ring either. However, in \cite{Analytic}, we prove the following theorem:

\begin{theorem} For any $p\leq 1$, the theory of measures
\[
\mathcal M_{<p}[S] := \varinjlim_{q<p} \mathcal M_q[S]
\]
defines an analytic ring $(\mathbb R,\mathcal M_{<p})$.
\end{theorem}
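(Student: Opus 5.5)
The plan is to verify the criterion of Definition~\ref{def:analytic} directly, by imitating the proof of Theorem~\ref{thm:solid} in Lecture VI, with the completeness of Banach-type pieces replaced by $q$-Banach completeness for $q<p$. Fix a complex $C$ whose terms $C_i$ are direct sums of objects $\mathcal M_{<p}[T]$, and fix extremally disconnected $S$. We must show that
\[
R\intHom_{\mathbb R}(\mathcal M_{<p}[S],C)\to R\intHom_{\mathbb R}(\mathbb R[S],C)
\]
is an isomorphism. As in Lecture VI, I would first treat $C$ concentrated in one degree, then bounded $C$, and then general $C$. The last step uses stupid truncations together with a uniform bound on the cohomological amplitude of both sides.

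The basic objects are the $\mathcal M_q[T]$. Each is a union $\bigcup_r K_r$ of compact Hausdorff absolutely $q$-convex sets, namely $\varprojlim_i \mathbb R[T_i]_{\ell^q\le r}$. It is therefore dual, in a suitable sense, to the $q$-Banach space $\ell^q$-type space of continuous functions, so these are ``$q$-Smith spaces''. The key computation, analogous to Proposition~\ref{prop:examplesolid} and Theorem~\ref{thm:enoughcontfcts}, is the following: for $V$ a $q'$-Banach space with $q'<q$, viewed as a condensed $\mathbb R$-vector space,
\[
R\Hom_{\mathbb R}(\mathcal M_q[S],V)\cong C(S,V),
\]
with no higher Ext. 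Here one uses the Breen--Deligne resolution (Theorem~\ref{thm:deligneresolution}) over $\mathbb R$. The ``multiplication by $2$ versus $[2]$'' trick from the proof of Theorem~\ref{thm:RHomoutofcompact}(ii) is replaced by quantitative exactness with norm control, in the style of Theorem~\ref{thm:enoughcontfcts}, applied to the compact pieces $K_r$. The strict inequality $q'<q$ supplies the summable slack needed to make the iterative approximation converge. Passing to $\varinjlim_{q<p}$ then handles the terms $\mathcal M_{<p}[T]$, using that each $\mathcal M_{<p}[T]$ is a colimit of quasi-Banach pieces receiving maps from $\mathcal M_q$ for larger $q<p$.

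For a general complex, I would show that kernels of differentials between such objects are again controlled. The idea mirrors the step in Lecture VI where one reduces to finitely many factors and takes kernels: approximate $C$ degreewise by complexes of $q$-Banach spaces, and check that the cone of the comparison map has both sides concentrated in a bounded range.

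The main obstacle is this last step, namely controlling extensions, which is exactly where Ribe's counterexample kills $\mathcal M_1$ and each individual $\mathcal M_q$. The input I would try first is Kalton's theorem: an extension of $q$-Banach spaces is $q'$-Banach for every $q'<q$. With that, the kernels and cohomology arising in $C$ remain in the class ``$q'$-Banach for all $q'<p$''. The vanishing of higher Ext from $\mathcal M_{<p}[S]$ into that class then closes the induction. I expect making this uniform, with explicit norm constants in the iterative lifting, to be the genuinely hard analytic part; the rest is formal, following Lemmas~\ref{lem:everythingnice} and~\ref{lem:showeverythingnice}.
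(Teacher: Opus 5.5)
Your proposal has two genuine gaps. (The paper gives no proof here; it quotes the theorem from \cite{Analytic}.)

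First, the input you isolate is the vanishing of higher $\Ext$'s from $\mathcal M_q[S]$ into a $q'$-Banach space $V$ for $q'<q$. This is essentially the main technical theorem of the liquid theory in \cite{Analytic}, and its proof occupies several lectures there: an arithmetic analogue over $\mathbb Z((T))_{>r}$, in which the compact balls become profinite, followed by an intricate induction along the Breen--Deligne resolution with explicit norm bounds. Your sketch does not replace this.
\begin{itemize}
\item The $[2]$-trick of Theorem~\ref{thm:RHomoutofcompact}~(ii) works because $A$ is compact and $[2]$ maps $A$ into itself, so $\|[2^n]^\ast f\|\leq \|f\|$. For $A=\mathcal M_q[S]$, which is only a countable union of balls, $[2^n]$ carries the ball of radius $r$ onto the ball of radius $2^n r$.
\item A Breen--Deligne cocycle is a continuous function on all of $\mathcal M_q[S]^k\times S'$. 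Nothing bounds a priori how its sup over the $r$-ball grows in $r$, so $2^{-n}[2^n]^\ast f$ need not tend to $0$. Controlling this growth is exactly the problem.
\item ``Summable slack from $q'<q$'' is the heuristic, not an argument. As the paper notes, Ribe's extension already obstructs $(\mathbb R,\mathcal M_q)$, so the strict inequality has to enter quantitatively.
\item Theorem~\ref{thm:enoughcontfcts} does not supply this. Its engine is the splitting of hypercovers of finite sets, which has no analogue for Breen--Deligne complexes over $\mathcal M_q[S]$.
\item Its partition-of-unity step also uses convexity of the norm, which fails for $q'<1$. And the balls $\mathcal M_q[S]_{\ell^q\leq r}$ are connected compact Hausdorff spaces, not profinite sets.
\end{itemize}

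Second, even granting that vanishing, nothing in your argument reaches the targets required by Definition~\ref{def:analytic}, namely complexes of direct sums of $\mathcal M_{<p}[T]$'s.
\begin{itemize}
\item $\mathcal M_{<p}[T]$ is not a filtered colimit of quasi-Banach spaces, nor a quotient of a direct sum of them. On extremally disconnected sets such colimits and surjections are computed pointwise. So the ball $\mathcal M_q[T]_{\ell^q\leq 1}\supset T$ would be a continuous image of a compact subset of a quasi-Banach space, hence metrizable, which fails for infinite extremally disconnected $T$.
\item For $q<1$ it is also not dual to a $q$-Banach function space. Its continuous dual is $C(T,\mathbb R)$, whose dual is $\mathcal M_1[T]$.
\item $\mathcal M_{<p}[S]$ is not compact in $\Cond(\Ab)$, being a strictly increasing union of the $\mathcal M_q[S]$. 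So commuting $R\intHom(\mathcal M_{<p}[S],-)$ with the direct sums in the $C_i$ is not formal; in Lecture VI this was handled by the pseudocoherence of $\prod\mathbb R/\mathbb Z$.
\item Kalton's theorem concerns topological extensions of quasi-Banach spaces. The kernels and cohomology of differentials between sums of $\mathcal M_{<p}[T]$'s are not quasi-Banach spaces at all. So the class ``$q'$-Banach for all $q'<p$'' is not what your induction produces, and neither the induction nor the uniform amplitude bound you need follows.
\end{itemize}
What is missing is a liquid replacement for the explicit analysis of Lecture VI: the sequence $0\to\prod\mathbb Z\to\prod\mathbb R\to\prod\mathbb R/\mathbb Z\to 0$, with kernels $\prod_L\mathbb R$ and compact groups. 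You need something that relates complexes of $\mathcal M_{<p}[T]$'s to objects where the $\Ext$ computation applies. Your proposal asserts this step rather than proving it.
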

\newpage

\section*{Appendix to Lecture VII: Constructing maps of analytic rings}

To define a map of analytic rings $f:(A,\mathcal M)\to (B,\mathcal N)$, one has to ensure that $\mathcal N[S]$ lies in $(A,\mathcal M)\Mod$ for all extremally disconnected $S$. This might a priori seem subtle. In practice, one can however often directly define a map of theories of measures in the sense of a map $f:A\to B$ of condensed rings together with a natural transformation $\mathcal M[S]\to \mathcal N[S]$ linear over $f: A\to B$ and commuting with the structure map from $S$.

\begin{proposition} The map $f$ is a map of analytic rings if and only if the following square commutes for all extremally disconnected $S$ and any map $g: S\to A$.

The map $g$ yields a map $S\to \mathcal M[S]: s\mapsto g(s)[s]$, which induces a unique map $\mathcal M[S]\to \mathcal M[S]$ in $(A,\mathcal M)\Mod$. Similarly the composite $S\xrightarrow{g} A\to B$ gives rise to a map $\mathcal N[S]\to \mathcal N[S]$ in $(B,\mathcal N)\Mod$. Then the diagram
\[\xymatrix{
\mathcal M[S]\ar[r]\ar[d] & \mathcal M[S]\ar[d] \\
\mathcal N[S]\ar[r] & \mathcal N[S]
}\]
commutes.
\end{proposition}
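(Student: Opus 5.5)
The direction ``map of analytic rings $\Rightarrow$ square commutes'' is formal; the converse reduces to a single identity between two maps out of $\mathcal M[T]$, where the square condition is used.

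\emph{Easy direction.} Suppose $f$ is a map of analytic rings, so every $\mathcal N[S]$ lies in $(A,\mathcal M)\Mod$. Then any map of condensed sets $S\to \mathcal N[S]$ has a unique $A$-linear extension $\mathcal M[S]\to \mathcal N[S]$. Both composites in the square are $A$-linear maps $\mathcal M[S]\to \mathcal N[S]$. Here I use that the given natural transformation $\mathcal M[S]\to\mathcal N[S]$ is linear over $f$, and that the bottom map is $B$-linear, hence $A$-linear. Restricted to $S$, both composites send $s\mapsto f(g(s))[s]$. So they agree, and the square commutes.

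\emph{Converse: existence of extensions.} Assume the squares commute. Fix extremally disconnected $S,T$; we must show $\Hom_A(\mathcal M[T],\mathcal N[S])\to \mathcal N[S](T)$ is bijective. Since $\mathcal N[S]\in(B,\mathcal N)\Mod$, we have $\mathcal N[S](T)=\Hom_B(\mathcal N[T],\mathcal N[S])$. Given $h\in\mathcal N[S](T)$ with associated $B$-linear map $\tilde h\colon \mathcal N[T]\to\mathcal N[S]$, the composite
\[
\mathcal M[T]\xrightarrow{\ \alpha_T\ }\mathcal N[T]\xrightarrow{\ \tilde h\ }\mathcal N[S]
\]
is $A$-linear and restricts to $h$ on $T$. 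This gives surjectivity.

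\emph{Converse: uniqueness (the main obstacle).} I would rewrite
\[
\Hom_A(\mathcal M[T],\mathcal N[S])=\Hom_B(B\otimes_A\mathcal M[T],\mathcal N[S])=\Hom_{(B,\mathcal N)}\big((B\otimes_A\mathcal M[T])\otimes_B(B,\mathcal N),\,\mathcal N[S]\big),
\]
using Proposition~\ref{prop:analyticnice} for $(B,\mathcal N)$. It then suffices to show that $\alpha_T$ induces an isomorphism
\[
Q:=(B\otimes_A\mathcal M[T])\otimes_B(B,\mathcal N)\xrightarrow{\ \sim\ }\mathcal N[T].
\]
There is a map $\beta\colon\mathcal N[T]\to Q$ in $(B,\mathcal N)\Mod$, extending $T\to\mathcal M[T]\to Q$. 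The composite $\mathcal N[T]\to Q\to\mathcal N[T]$ is the identity, since both are maps in $(B,\mathcal N)\Mod$ agreeing on $T$. It remains to show $\beta\circ\bar\alpha_T=\mathrm{id}_Q$. As $Q$ is generated as a $(B,\mathcal N)$-module by the image of $\mathcal M[T]$, this amounts to showing that the two $A$-linear maps
\[
\iota\colon\mathcal M[T]\to Q\quad\text{(the canonical map)},\qquad \beta\circ\alpha_T\colon\mathcal M[T]\to Q
\]
coincide, knowing only that they agree on $T$.

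This is where I would use the square. The difference $\delta=\iota-\beta\alpha_T$ is $A$-linear and kills $T$; I want to show $\delta=0$. Consider the subfunctor of $\mathcal M[T]$ on which $\delta$ vanishes. It contains $T$ and is an $A$-submodule. By the commuting squares (applied to $T$ and to $T\times T'$ for varying $T'$, to handle $T'$-valued points), it is stable under the endomorphisms $m_g\colon\mathcal M[T]\to\mathcal M[T]$ induced by all $g\colon T\to A$, since $\alpha_T$ intertwines $m_g$ with the corresponding endomorphism of $\mathcal N[T]$. I would then try to show that $\mathcal M[T]$ is generated, as a condensed abelian group, by the images of the maps $m_g$ applied to $\mathbb Z[T]$. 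This is the step I am least sure of. I expect to realize it by writing $A[T]\to\mathcal M[T]$ and using that $\mathcal M[T]$ is the $(A,\mathcal M)$-completion of $A[T]$, i.e.\ $\mathcal M[T]=A[T]\otimes_A(A,\mathcal M)$, together with a presentation of $\Hom_A(\mathcal M[T],\mathcal M[T])=\mathcal M[T](T)$ in terms of the operators $m_g$ and maps induced by maps of extremally disconnected sets. If this generation statement is awkward, the fallback is to verify the defining property of $(A,\mathcal M)\Mod$ for $\mathcal N[S]$ directly. I would resolve $\mathcal N[S]$ as a $B$-module by sums of $B[T]$'s and use that the $A$-module structure on $B[T]$ through $f$ is controlled exactly by the maps $g\colon T\to A$ appearing in the square.
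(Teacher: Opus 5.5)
Your easy direction and the existence half of the converse (extending $h\in\mathcal N[S](T)$ by $\tilde h\circ\alpha_T$) are correct, but the uniqueness half has a genuine gap.

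\textbf{The reformulation via $Q$ is circular.} Showing $\iota=\beta\circ\alpha_T$ is again the assertion that two $A$-linear maps from $\mathcal M[T]$ into a $(B,\mathcal N)$-module, agreeing on $T$, coincide. That is the original problem with $\mathcal N[S]$ replaced by $Q$.

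\textbf{The generation statement you then need is false.} For $(A,\mathcal M)=\mathbb Z_\solid$, every $g\colon T\to\mathbb Z$ is locally constant. So the images of $\mathbb Z[T]$ under the $m_g$ all lie in the image of $\mathbb Z[T]\to\mathbb Z[T]^\solid$. On underlying groups this image consists of finite combinations of Dirac measures, a proper subgroup of $\mathcal M(T,\mathbb Z)$ for infinite $T$.

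More fundamentally, the properties of $\ker\delta$ you use can never force $\ker\delta=\mathcal M[T]$. Consider the projection $\mathcal M[T]\to\mathcal M[T]/\mathrm{im}(A[T]\to\mathcal M[T])$. It is $A$-linear, kills $T$, intertwines each $m_g$ with an induced endomorphism, and is nonzero in the solid example. So any successful argument must use a property of the target. The only one available, membership in $(B,\mathcal N)\Mod$, is exactly what makes the $Q$-reformulation circular. (Also, $T\times T'$ is not extremally disconnected, so your hypothesis does not apply to it. You do not need it, since stability under $m_g$ holds on all $T'$-points anyway.) Your fallback of resolving by $B[T]$'s does not help either, since the $B[T]$ are not in $(A,\mathcal M)\Mod$.

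\textbf{The missing idea, which is how the paper argues, is to bypass uniqueness.} Let $a\colon A[N]\to N$ be the action map, and let $X\mapsto\mathcal M[X]$ denote the colimit-preserving extension to all condensed sets. Show that every $N\in(B,\mathcal N)\Mod$ is, in $A\Mod$, a retract of
$Q'=\mathrm{coeq}\bigl(\mathcal M[A[N]]\rightrightarrows\mathcal M[N]\bigr)$.
The two arrows are $\mathcal M[a]$ and the $(A,\mathcal M)$-linear extension of $A[N]\to\mathcal M[N]$.

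\begin{enumerate}
\item $Q'\in(A,\mathcal M)\Mod$, because that category is closed under colimits.
\item $N\in(A,\mathcal M)\Mod$ then follows, because the category is closed under kernels and hence under retracts.
\item The retraction $Q'\to N$ is induced by $\mathcal M[N]\to\mathcal N[N]\to N$.
\item The section $N\to Q'$ is induced by $A[N]\to\mathcal M[N]\to Q'$, using $N=\mathrm{coeq}(A[A[N]]\rightrightarrows A[N])$.
\end{enumerate}

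The square hypothesis enters precisely in checking that $\mathcal M[N]\to\mathcal N[N]\to N$ coequalizes the two arrows. This needs the square with rows $\mathcal M[A[N]]\to\mathcal M[N]$ and $\mathcal N[A[N]]\to\mathcal N[N]$ to commute. That can be tested on maps $T\to A[N]$, which locally have the form $g[h]$ with $g\in A(T)$ and $h\colon T\to N$. Functoriality in $h$ is automatic, and functoriality in $g$ is exactly your square.
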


\begin{remark} This condition is automatic in the following cases: If $A$ is discrete (as then any map $S\to A$ is locally constant); or if the underlying condensed set of $\mathcal N[S]$ is quasiseparated and $A[S]\to \mathcal M[S]$ has dense image in the sense that there are no maps from the quotient to a nontrivial quasiseparated condensed set (as the diagram commutes after restriction to $A[S]$). This handles all practical cases.
\end{remark}

\begin{proof} We prove that any $N\in (B,\mathcal N)\Mod$ lies in $(A,\mathcal M)\Mod\subset A\Mod$. To check this, we use that for any theory of measures $(A,\mathcal M)$, we get a functor $X\mapsto \mathcal M[X]$ from condensed sets to $A$-modules, the unique colimit-preserving extension of $S\mapsto \mathcal M[S]$. If $(A,\mathcal M)$ is analytic, then this is the left adjoint to the forgetful functor from $(A,\mathcal M)\Mod$ to condensed sets, and in particular, this functor has a natural monad structure. In that case, for any $(A,\mathcal M)$-module $M$, the diagram
\[
\mathcal M[\mathcal M[M]]\rightrightarrows \mathcal M[M]\to M
\]
is a coequalizer diagram. (To check this, note that this property is stable under colimits in $M$, so one can assume that $M=\mathcal M[S]$ for some extremally disconnected set $S$. In that case, the diagram is split.)

Coming back to the general $N\in (B,\mathcal N)\Mod$, we know that $N$ is the coequalizer of the diagram
\[
A[A[N]]\rightrightarrows A[N]
\]
and of the diagram
\[
\mathcal N[\mathcal N[N]]\rightrightarrows \mathcal N[N]\ .
\]
This implies, once we have checked that some diagrams commute, that $N$ is a direct summand (in $A\Mod$) of the coequalizer of the diagram
\[
\mathcal M[A[N]]\rightrightarrows \mathcal M[N].
\]
Here, note that the natural transformation $\mathcal M[S]\to \mathcal N[S]$ induces a natural transformation $\mathcal M[X]\to \mathcal N[X]$ which we can apply to $X=N$ and $X=A[N]$ to get a map from the last displayed diagram to the second-to-last displayed diagram. This will show that $N$ is a retract of an object of $(A,\mathcal M)\Mod$, and thus itself in $(A,\mathcal M)\Mod$.

The nontrivial part is to see that
\[\xymatrix{
\mathcal M[A[N]]\ar[r]\ar[d] & \mathcal M[N]\ar[d] \\
\mathcal N[A[N]]\ar[r] & \mathcal N[N]
}\]
commutes, where the the horizontal maps are the unique maps in $(A,\mathcal M)\Mod$ resp.~$(B,\mathcal N)\Mod$ induced by the map of condensed sets $A[N]\to \mathcal M[N]$ resp.~$A[N]\to \mathcal N[N]$. It is enough to see that for any map $T\to A[N]$ from an extremally disconnected $T$, the associated diagram
\[\xymatrix{
\mathcal M[T]\ar[r]\ar[d] & \mathcal M[N]\ar[d] \\
\mathcal N[T]\ar[r] & \mathcal N[N]
}\]
commutes. Now locally on $T$, any such map $T\to A[N]$ lies in $A(T)[\Hom(T,N)]$, so can be written as a finite sum of terms of the form $g [h]$ where $g\in A(T)$ and $h\in \Hom(T,N)$; we can thus assume that it is of this form $g[h]$. Functoriality in $h: T\to N$ is clear, and functoriality in $g$ is precisely the condition.
\end{proof}

\newpage

\section{Lecture VIII: Solid modules}

The goal of this lecture is to prove the following theorem.

\begin{theorem}\label{thm:solidA} For any finitely generated $\mathbb Z$-algebra $A$, the theory of measures $A_\solid$ on $A$ defines an analytic ring. The forgetful functor
\[
j_\ast: D(A_\solid)\hookrightarrow D((A,\mathbb Z)_\solid)
\]
admits the left adjoint $j^\ast = -\otimes^L_{(A,\mathbb Z)_\solid} A_\solid$, which in turn admits a left adjoint
\[
j_!: D(A_\solid)\hookrightarrow D((A,\mathbb Z)_\solid)\ .
\]
The left adjoint $j_!$ satisfies the projection formula
\[
j_! j^\ast M = M\otimes_{(A,\mathbb Z)_\solid}^L j_! A\ .
\]
\end{theorem}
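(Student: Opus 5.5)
The plan is to reduce to the polynomial case $A=\mathbb Z[T]$, where everything can be computed explicitly, and then pass to general finitely generated $A$ by base change and quotients. The basic computation for $A=\mathbb Z[T]$ is as follows. Since $\mathbb Z[T]_\solid[S]=\varprojlim_i \mathbb Z[T][S_i]$ and $\mathbb Z[T]=\bigoplus_n \mathbb Z T^n$, we get $\mathbb Z[T]_\solid[S]\cong \prod_I \mathbb Z[T]$ when $\mathbb Z_\solid[S]\cong\prod_I\mathbb Z$, by Corollary~\ref{cor:speckerfree}. This should be compared with $(\mathbb Z[T],\mathbb Z)_\solid[S]=(\prod_I\mathbb Z)\otimes\mathbb Z[T]=\bigoplus_n\prod_I\mathbb Z\,T^n$.

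The key claim is that an object $C\in D((\mathbb Z[T],\mathbb Z)_\solid)$ lies in $D(\mathbb Z[T]_\solid)$ if and only if
\[
R\intHom_{\mathbb Z[T]}((\mathbb Z((T^{-1}))/\mathbb Z[T])\otimes^L_{\mathbb Z_\solid}?,C)=0,
\]
or more simply if and only if $R\intHom_{\mathbb Z[T]}(\mathbb Z((T^{-1})),C)=0$. Here $\mathbb Z((T^{-1}))=\mathbb Z[[T^{-1}]][T]$. I would prove this via the short exact sequence
\[
0\to \bigoplus_n \prod_I \mathbb Z T^n\to \prod_I\mathbb Z[T]\to Q\to 0
\]
together with an explicit identification of $\prod\mathbb Z[T]/\bigoplus\prod$. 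The point is that the cokernel is built from $\mathbb Z[[U]]$-type objects with $U=T^{-1}$. Concretely, $\mathbb Z[T]_\solid[\mathbb N\cup\{\infty\}]$ versus the free object is governed by the map $\mathbb Z[T]\otimes\mathbb Z[[U]]\to \mathbb Z((U))$ after identifying the relevant measures. So I would set
\[
j_!\mathbb Z[T]:=\bigl(\mathbb Z((T^{-1}))/\mathbb Z[T]\bigr)[-1],
\]
or dually use the fiber of $\mathbb Z[T]\to\mathbb Z((T^{-1}))$. The central computation, which I expect to be the main obstacle, is to show that
\[
\mathbb Z[T]_\solid[S]=R\intHom_{\mathbb Z[T]}(\mathbb Z((T^{-1}))/\mathbb Z[T][-1],(\mathbb Z[T],\mathbb Z)_\solid[S]),
\]
that is, $\prod_I \mathbb Z[T]=R\intHom_{\mathbb Z[T]}(\mathbb Z((T^{-1}))/\mathbb Z[T],\bigoplus_n\prod_I\mathbb Z T^n)[1]$. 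To prove it, I would use the resolution $0\to \mathbb Z[T]\otimes \mathbb Z[[U]]\xrightarrow{1-TU}\mathbb Z[T]\otimes\mathbb Z[[U]]\to\mathbb Z((U))\to 0$ in solid modules, together with Proposition~\ref{prop:tensorinfproducts} and Corollary~\ref{cor:solidproperties}(ii). This reduces the computation to $R\intHom(\mathbb Z[[U]],\prod_I\mathbb Z)=\bigoplus_n\prod_I\mathbb Z$, which follows from the contravariant duality between compact solid objects and $D^b(\mathbb Z)$.

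Granting this, $D(\mathbb Z[T]_\solid)$ is identified with the right orthogonal to the idempotent-like object $\mathbb Z((T^{-1}))$, which is an idempotent $(\mathbb Z[T],\mathbb Z)_\solid$-algebra, since $\mathbb Z((U))\otimes^L_{\mathbb Z[T]_{\mathbb Z_\solid}}\mathbb Z((U))=\mathbb Z((U))$ by the tensor computations in the Example after Proposition~\ref{prop:tensorinfproducts}. Thus $D(\mathbb Z[T]_\solid)$ is the localization killing an idempotent algebra $E$. In this situation it is formal that the localization has $j^\ast=-\otimes \mathrm{fib}(1\to E)$-type descriptions, that the inclusion is stable under all colimits and limits, and that a further left adjoint $j_!=-\otimes^L\mathrm{fib}(\mathbb Z[T]\to E)$ exists, with the projection formula holding automatically because $j_!$ is given by tensoring with $j_!A$. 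Analyticity then follows: complexes built from $\prod_I\mathbb Z[T]$ lie in the orthogonal of $E$, because $E$ is pseudocoherent-like and $R\intHom(E,\prod\mathbb Z[T])=0$. Hence Lemma~\ref{lem:showeverythingnice} applies, exactly as in the proof of Proposition~\ref{prop:exanalytic}.

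For $A=\mathbb Z[T_1,\dots,T_n]$, I would iterate over the variables, taking the tensor product of the idempotents $E_{T_i}$, and check that $A_\solid[S]=\prod_I A$ matches. For a quotient $A=\mathbb Z[T_1,\dots,T_n]/J$, I would observe that $A_\solid[S]=\prod_I A=A\otimes^L_{\mathbb Z[\underline T]_\solid}\mathbb Z[\underline T]_\solid[S]$. This uses that $A$ is finitely presented, hence pseudocoherent over the Noetherian ring $\mathbb Z[\underline T]$, so that tensoring commutes with products. Analyticity then transfers as in Proposition~\ref{prop:exanalytic}, and $j_!A=A\otimes^L j_!\mathbb Z[\underline T]$ gives the adjoints and the projection formula formally.
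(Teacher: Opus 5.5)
\textbf{There is a genuine gap at the one step with real content.} Your architecture is the paper's: the idempotent algebra $E=\mathbb Z((T^{-1}))$ with its resolution by $\mathbb Z[[U]]\otimes\mathbb Z[T]$, the vanishing of $R\intHom_{\mathbb Z[T]}(E,\prod_I\mathbb Z[T])$, the formula $j_!=-\otimes(E/\mathbb Z[T])[-1]$, and the reduction of general $A$ to polynomial rings and their quotients. The vanishing does follow from the resolution, because $U$ acts locally nilpotently on $R\intHom(\mathbb Z[[U]],\mathbb Z[T])$.

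The gap is the step you yourself flag. With $A=\mathbb Z[T]$, you must show that $(A,\mathbb Z)_\solid[S]=\bigoplus_n\prod_I\mathbb Z T^n\to\prod_I\mathbb Z[T]=A_\solid[S]$ is the localization away from $E$. Equivalently, the cokernel $Q$ of this map is an $E$-module. You reduce this to $R\intHom(\mathbb Z[[U]],\prod_I\mathbb Z)=\bigoplus_n\prod_I\mathbb Z$, which is false for infinite $I$. Since $R\intHom(\prod_{\mathbb N}\mathbb Z,\mathbb Z)=\bigoplus_{\mathbb N}\mathbb Z$, the correct value is $\prod_I\bigoplus_n\mathbb Z$, which is strictly larger.

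This error is not cosmetic, because the difference between the two is exactly what separates $(A,\mathbb Z)_\solid[S]$ from $A_\solid[S]$. With your formula, $U$ would act locally nilpotently on $R\intHom(\mathbb Z[[U]],\bigoplus_n\prod_I\mathbb Z T^n)$. Then $1-TU$ would be invertible and $R\intHom_{A}(E,\prod_I\mathbb Z\otimes A)$ would vanish. Your central computation would therefore return $\prod_I\mathbb Z\otimes A$, not $\prod_I A$.

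With the correct value $P=\prod_I\bigoplus_{m\geq 0}\mathbb Z$, the $U$-action (a shift) is not locally nilpotent. On $\bigoplus_n PT^n$ the map $1-TU$ is injective, and its cokernel is the localization of $P$ at the $U$-action. You would still have to show that the resulting map from this localization, shifted by $[-1]$, into $\bigoplus_n\prod_I\mathbb Z T^n$ classifies the extension $0\to\bigoplus_n\prod_I\mathbb Z T^n\to\prod_I\mathbb Z[T]\to Q\to 0$. Phrases like ``built from $\mathbb Z[[U]]$-type objects'' and ``after identifying the relevant measures'' do not supply this.

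\textbf{The missing idea} is the paper's Observation~\ref{obs:cokernelAinftymodule}. There is a natural map from $\prod_I\mathbb Z\otimes\mathbb Z[T]\to\prod_I\mathbb Z[T]$ to $\prod_I\mathbb Z[[T^{-1}]]\otimes_{\mathbb Z[[T^{-1}]]}\mathbb Z((T^{-1}))\to\prod_I\mathbb Z((T^{-1}))$, and it is an isomorphism on cokernels. So $Q$ is manifestly an $E$-module. Then $R\intHom_{\mathbb Z[T]}(Q,C)=R\intHom_E(Q,R\intHom_{\mathbb Z[T]}(E,C))=0$ for every $C$ orthogonal to $E$. This gives analyticity directly, with no need to compute $j^\ast$ of free objects.

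\textbf{Two smaller slips.}
\begin{enumerate}
\item For $\mathbb Z[T_1,\dots,T_n]$, $D(A_\solid)$ is the intersection of the orthogonals of the $E_{T_i}$. It is not the orthogonal of $E_{T_1}\otimes\cdots\otimes E_{T_n}$, which is larger: it contains every $E_{T_2}$-module orthogonal to $E_{T_1}$. The objects that multiply are the $\mathrm{fib}(A\to E_{T_i})$. The paper sidesteps this by inducting, running the one-variable argument over $R=\mathbb Z[X_1,\dots,X_{n-1}]$ with $R((X_n^{-1}))$.
\item The localization $j_\ast j^\ast$ is $R\intHom(\mathrm{fib}(A\to E),-)$. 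The functor $-\otimes\mathrm{fib}(A\to E)$ is $j_!j^\ast$, not $j^\ast$.
\end{enumerate}
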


A consequence of the theorem is that one can define ``coherent cohomology with compact support'' (for now in the affine case):

\begin{theorem}\label{thm:Rfshriek} Let $A$ be a finitely generated $\mathbb Z$-algebra and let $f: \Spec A\to \Spec \mathbb Z$ denote the projection. Consider the functor
\[
f_!: D(A_\solid)\to D(\mathbb Z_\solid)
\]
defined as the composite
\[
D(A_\solid)\xrightarrow{j_!} D((A,\mathbb Z)_\solid)\to D(\mathbb Z_\solid)\ .
\]
Then $f_!$ commutes with all direct sums, preserves compact objects, and satisfies the projection formula
\[
f_!((M\otimes_{\mathbb Z_\solid}^L A_\solid)\otimes^L_{A_\solid} N)\cong M\otimes^L_{\mathbb Z_\solid} f_! N
\]
for $M\in D(\mathbb Z_\solid)$ and $N\in D(A_\solid)$. The functor $f_!$ admits a right adjoint
\[
f^!: D(\mathbb Z_\solid)\to D(A_\solid)
\]
that commutes with all direct sums, and is given by
\[
f^! M = (M\otimes_{\mathbb Z_\solid}^L A_\solid)\otimes^L_{A_\solid} f^! \mathbb Z\ .
\]
The object $f^!\mathbb Z\in D(A_\solid)$ is a bounded complex of finitely generated discrete $A$-modules. If $f$ is a complete intersection, then $f^!\mathbb Z\in D(A)$ is an invertible object, i.e.~locally a line bundle concentrated in some degree.
\end{theorem}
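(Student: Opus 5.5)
The plan is to get everything from Theorem~\ref{thm:solidA} by formal arguments, plus one explicit computation of $f_!A$ for polynomial rings.

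\emph{Formal properties.} The functor $j_!$ is a left adjoint, so it commutes with all colimits. The forgetful functor $D((A,\mathbb Z)_\solid)\to D(\mathbb Z_\solid)$ commutes with direct sums, since both categories sit inside $D(\Cond(\Ab))$ stably under colimits. Hence $f_!$ commutes with all direct sums, and it has a right adjoint $f^!$ by the adjoint functor theorem for compactly generated triangulated categories.

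For the projection formula, write $M\otimes^L_{\mathbb Z_\solid}A_\solid = j^\ast(M\otimes^L_{\mathbb Z_\solid}A)$, where $M\otimes^L_{\mathbb Z_\solid}A\in D((A,\mathbb Z)_\solid)$; this is Proposition~\ref{prop:functoriality} applied to the composite $\mathbb Z_\solid\to(A,\mathbb Z)_\solid\to A_\solid$. The projection formula of Theorem~\ref{thm:solidA} then gives
\[
j_!\bigl(j^\ast(M\otimes A)\otimes_{A_\solid}N\bigr)=(M\otimes A)\otimes_{(A,\mathbb Z)_\solid}j_!N .
\]
I will use it in this form, for general $N$. I would deduce this version from the stated one: both sides commute with colimits in $N$, and $D(A_\solid)$ is generated by objects $j^\ast(P)$. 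After forgetting to $\mathbb Z_\solid$, the right-hand side is $M\otimes_{\mathbb Z_\solid}j_!N$, because $(A,\mathbb Z)_\solid$ carries the induced analytic structure from $\mathbb Z_\solid$.

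In particular, for the compact generators $A_\solid[S]=j^\ast(\mathbb Z_\solid[S]\otimes A)$ we get
\[
f_!A_\solid[S]=\mathbb Z_\solid[S]\otimes^L_{\mathbb Z_\solid}f_!A .
\]
So $f_!$ preserves compact objects as soon as $f_!A$ is compact in $D(\mathbb Z_\solid)$. By Corollary~\ref{cor:solidproperties}~(ii), that means a bounded complex of products of copies of $\mathbb Z$; such a complex stays compact after tensoring with $\mathbb Z_\solid[S]$ by Proposition~\ref{prop:tensorinfproducts}.

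\emph{Computing $f_!A$.} For $A=\mathbb Z[T]$, I expect the proof of Theorem~\ref{thm:solidA} to identify $j_!A$ with $\bigl(\mathbb Z((T^{-1}))/\mathbb Z[T]\bigr)[-1]\cong T^{-1}\mathbb Z[[T^{-1}]][-1]$, which is a product of copies of $\mathbb Z$. For $A=\mathbb Z[T_1,\ldots,T_n]$, $j_!A$ should be the solid tensor product of the one-variable pieces. For a quotient $B=A/I$, I would use $B_\solid=A_\solid\otimes_A B$ and $f^B_!=f^A_!\circ(\text{forget})$, which I would verify on generators via the projection formula. Since $A$ is regular noetherian, $B$ has a finite resolution by finite free $A$-modules, so $f^B_!B$ is a finite complex built from copies of $f^A_!A$, hence compact.

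\emph{The object $f^!\mathbb Z$ and the formula for $f^!$.} Again by adjunction and the projection formula,
\[
R\Hom(A_\solid[S],f^!\mathbb Z)=R\Hom(\mathbb Z_\solid[S]\otimes f_!A,\mathbb Z).
\]
Using the duality of Corollary~\ref{cor:solidproperties}~(ii), in the polynomial case this gives $f^!\mathbb Z=A[n]$, which is discrete. For a quotient $B$ it gives $f^!_B\mathbb Z=R\Hom_A(B,A[n])$, a bounded complex of finitely generated $B$-modules because $A$ is regular. When $I$ is generated by a regular sequence, the Koszul resolution shows that this is invertible.

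Next, there is a natural map $(M\otimes A_\solid)\otimes f^!\mathbb Z\to f^!M$: by adjunction it corresponds to $f_!$ of the left side, which the projection formula identifies with $M\otimes f_!f^!\mathbb Z\to M$. Both sides commute with direct sums in $M$. For $f^!$ this holds because $f_!$ preserves compacts. So it suffices to check the map on the compact generators $M=\prod_I\mathbb Z$. There $f^!\prod_I\mathbb Z=\prod_I f^!\mathbb Z$, because $f^!$ is a right adjoint. On the other side, $(\prod_I\mathbb Z\otimes A_\solid)\otimes f^!\mathbb Z=\prod_I f^!\mathbb Z$: resolve $f^!\mathbb Z$ by a finite complex of finite free $A$-modules and use $\prod_I\mathbb Z\otimes A_\solid=\prod_I A$, which I would check via Proposition~\ref{prop:tensorinfproducts} and regularity.

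\emph{Main obstacle.} I expect the hard step to be the explicit identification of $j_!A$ for $A=\mathbb Z[T]$, together with the multivariable and quotient bookkeeping, in particular the compatibility $B_\solid=A_\solid\otimes_A B$ and the claim that forgetting from $B_\solid$ to $A_\solid$ intertwines the two $j_!$'s. I would first try to extract $j_!$ directly from the construction in the proof of Theorem~\ref{thm:solidA}, presumably as a fibre of $(A,\mathbb Z)_\solid$-solidification versus $A_\solid$-solidification.
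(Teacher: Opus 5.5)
Your architecture matches the paper's. Both arguments run as follows: $f_!$ commutes with sums and has a right adjoint for formal reasons; the projection formula gives $f_!(A_\solid[S])\simeq \mathbb Z_\solid[S]\otimes^L_{\mathbb Z_\solid} f_!A$, so everything hinges on $f_!A$; one reduces to polynomial rings and surjections; $f^!\mathbb Z$ is the dual of $f_!A$; and the formula for $f^!$ is checked on the generators $\prod_I\mathbb Z$. The value $j_!\mathbb Z[T]\simeq (\mathbb Z((T^{-1}))/\mathbb Z[T])[-1]$ is not actually an obstacle, since this is how $j_!$ is defined in Observation~\ref{obs:jshriekexists}.

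The genuine gap is in the two reduction steps you flag, and the method you propose for the surjective one is circular. Take $P$ a polynomial ring and $B=P/I$. Applying the projection formulas for $B$ and for $P$ to $B_\solid[S]=\prod_I B$ turns the two sides into $\mathbb Z_\solid[S]\otimes f^B_!B$ and $\mathbb Z_\solid[S]\otimes f^P_!B$. You are then left with exactly the comparison $j^B_!B\simeq j^P_!B$ you wanted to prove.

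What is needed is the structural fact the paper summarizes as ``$j_!$ simply carries along an additional module structure.'' One way to see it is as follows.
Since $B_\solid=(B,P)_\solid$ and $(B,\mathbb Z)_\solid$ are induced from $P_\solid$ and $(P,\mathbb Z)_\solid$, forgetting along $P\to B$ intertwines the two functors $j^\ast$ (check on generators). Hence $\ker j^{B\ast}$ is the preimage of $\ker j^{P\ast}$.
Moreover, the right adjoint of forgetting is $R\intHom_P(B,-)\simeq R\intHom_P(B,P)\otimes^L_{(P,\mathbb Z)_\solid}(-)$, because $B$ is perfect over $P$. This functor therefore preserves the $\otimes$-ideal $\ker j^{P\ast}$.
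It follows that the underlying object of $j^B_!N$ maps to $N$ with cone killed by $j^{P\ast}$, and is left orthogonal to $\ker j^{P\ast}$. These two properties characterize $j^P_!N$.

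Similarly, your plan gives no argument that $j_!P$, for $P=\mathbb Z[T_1,\ldots,T_n]$, is the tensor product of the one-variable pieces, and the statement of Theorem~\ref{thm:solidA} does not supply it. The paper runs the Lecture VIII argument over an arbitrary finitely generated base $R$, i.e.\ for $(R[X],R)_\solid\to R[X]_\solid$ with $R((X^{-1}))$ in place of $\mathbb Z((T^{-1}))$. It then factors $(P,\mathbb Z)_\solid\to (P,\mathbb Z[T_1])_\solid\to\cdots\to P_\solid$ and composes the left adjoints. This is the relative Theorem~\ref{thm:solidAR} together with $(g\circ f)_!\simeq g_!\circ f_!$, which is proved by the same module-structure argument. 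With these inputs your plan goes through and gives $f_!P\simeq\prod_{\mathbb N^n}\mathbb Z[-n]$ and $f^!\mathbb Z\simeq P[n]$.

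A smaller point concerns your last step. If $A$ there is the ring of the theorem, $f^!\mathbb Z$ is in general not perfect over $A$ (e.g.\ when $A$ is not Gorenstein), so you cannot resolve it by a finite complex of finite free $A$-modules. You can either compute over $P$, or use only that $f^!\mathbb Z$ is pseudocoherent, as the paper does. Pseudocoherence suffices because each cohomological degree of $\prod_I A\otimes^L_{A_\solid}(-)$ sees only finitely many terms of a resolution by finite free modules.
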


In fact, $f^!\mathbb Z\in D(A)\subset D(A_\solid)$ is exactly the dualizing complex, as defined in coherent duality! Proving this identification with the dualizing complex will be the goal of the forthcoming lectures.\footnote{As explained in the appendix to this lecture, the formalism extends in the same way to the relative case, except that some assertions require the hypothesis that $f$ is of finite Tor-dimension; notably, that $f_!$ preserves compact objects, or equivalently that $f^!$ commutes with all direct sums.} Note that the functor $f^!$ preserves discrete objects, inducing a functor $D(\mathbb Z)\to D(A)$ of usual derived categories; however, our definition of this functor makes critical use of condensed mathematics! Indeed, $f_!$ does not preserve discrete objects, and does not exist (naively) in classical approaches to coherent duality.

\begin{remark}[Local finiteness of coherent cohomology] The assertion that $f_!$ preserves compact objects can be seen as a local version of finiteness of coherent cohomology. Note that $f_!$ manifestly does not preserve discrete objects (unless $f$ is finite); but if it would, then it would send compact discrete objects to compact discrete objects. Now it is easy to see that the compact discrete objects are exactly the usual perfect complexes of $A$-modules, i.e.~complexes that can be represented by finite complexes of finite projective $A$-modules. Thus, this would give preservation of perfect complexes. This argument does indeed apply once we globalize and prove that for proper $f$, the functors $f_!$ and $Rf_\ast$ agree, using that $Rf_\ast$ always preserves discrete objects, thus giving a new proof of finiteness of coherent cohomology for proper maps.
\end{remark}

\begin{remark}[Local Serre duality] The definition of $f^!$ as a right adjoint implies formally that for any $M\in D(A_\solid)$, one has
\[
R\intHom_{\mathbb Z}(f_! M,\mathbb Z)= R\intHom_A(M,f^! \mathbb Z)\ .
\]
If $f$ is a complete intersection, $\omega_A:= f^!\mathbb Z$ is invertible. In this case, if $P$ is a finite projective $A$-module or more generally a perfect complex of $A$-modules, one can set $M=R\Hom_A(P,\omega_A)=P^\vee\otimes_A \omega_A$ and the equation becomes
\[
R\intHom_{\mathbb Z}(f_!(P^\vee\otimes_A \omega_A),\mathbb Z)=P\ .
\]
In other words, the coherent cohomology of $P$, which in this affine case is just $P$ itself, considered as a $\mathbb Z$-module, is written as the dual of the compactly supported cohomology of the usual twist $P^\vee\otimes_A \omega_A$.

In fact, by biduality in $D(\mathbb Z_\solid)$ for compact objects and the compactness of $f_!(P^\vee\otimes_A \omega_A)$, we can also rewrite this as
\[
f_!(P^\vee\otimes_A \omega_A) = R\intHom_{\mathbb Z}(P,A)
\]
or as a perfect pairing
\[
f_!(P^\vee\otimes_A \omega_A)\otimes_{\mathbb Z} P\to \mathbb Z\ .
\]
\end{remark}

\begin{remark}[A new formula for the dualizing complex] The adjointness of $f^!$ implies that
\[
f^! \mathbb Z = R\intHom_{\mathbb Z}(f_! A,\mathbb Z)\ .
\]
By Theorem~\ref{thm:solidA}, $f_! A$ is compact as an object of $D(\mathbb Z_\solid)$ so that its dual is discrete, and still carries an $A$-module structure. As we will see below, $f_! A$ can be computed in practice in terms of ``functions near the boundary of $\Spec A$"; this yields a new formula for the dualizing complex. For example, if $A=\mathbb Z[X,Y]/XY$, then the functions near the boundary are $A_\infty = \mathbb Z((X^{-1}))\times \mathbb Z((Y^{-1}))$, and one has
\[
f_! A [1] = A_\infty/A = (\mathbb Z((X^{-1}))\times \mathbb Z((Y^{-1})))/(\mathbb Z[X,Y]/XY))
\]
and so the dualizing complex is given by
\[
f^! \mathbb Z = R\intHom_{\mathbb Z}((\mathbb Z((X^{-1}))\times \mathbb Z((Y^{-1})))/(\mathbb Z[X,Y]/XY),\mathbb Z)[1]\ .
\]
\end{remark}

In the rest of this lecture, we will prove Theorem~\ref{thm:solidA} in the special case $A=\mathbb Z[T]$. In the appendix of this lecture, we will explain the modifications required to handle the general case.

Thus, we fix $A=\mathbb Z[T]$. Let $A_\infty = \mathbb Z((T^{-1}))$ with its natural condensed structure. Then $A_\infty$ is naturally a condensed $A$-algebra, and its underlying condensed abelian group is solid. In particular, it defines an object $A_\infty\in D((A,\mathbb Z)_\solid)$. We like to think of $\mathbb Z((T^{-1}))$ as ``functions near the boundary of $\Spec \mathbb Z[T]$".

\begin{observation}\label{obs:compact} The object $A_\infty\in D((A,\mathbb Z)_\solid)$ is compact (in the general sense of triangulated categories: $\Hom(A_\infty,-)$ commutes with all direct sums). Indeed, we have the short exact sequence
\[
0\to \mathbb Z[[U]]\otimes_{\mathbb Z} A\xrightarrow{UT-1} \mathbb Z[[U]]\otimes_{\mathbb Z} A\to A_\infty\to 0
\]
where the first two terms are compact projective in $(A,\mathbb Z)_\solid\Mod$.
\end{observation}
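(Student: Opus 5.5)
The statement to prove is Observation~\ref{obs:compact}: $A_\infty=\mathbb Z((T^{-1}))$ is compact in $D((A,\mathbb Z)_\solid)$, via the displayed short exact sequence. The plan has two parts. First, establish that the sequence
\[
0\to \mathbb Z[[U]]\otimes_{\mathbb Z} A\xrightarrow{UT-1} \mathbb Z[[U]]\otimes_{\mathbb Z} A\to A_\infty\to 0
\]
is exact as condensed abelian groups. Second, check that $\mathbb Z[[U]]\otimes_{\mathbb Z} A$ is compact projective in $(A,\mathbb Z)_\solid\Mod$. Compactness of $A_\infty$ then follows formally: in any triangulated category, the cone of a map between compact objects is compact, and $A_\infty$ is quasi-isomorphic to the two-term complex.

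For the second part, note that $\mathbb Z[[U]]=\prod_{\mathbb N}\mathbb Z$. By Corollary~\ref{cor:solidproperties}~(i) it is compact projective in $\Solid$, and in fact a retract of some $\mathbb Z_\solid[S]$ with $S$ extremally disconnected. Applying $-\otimes_{\mathbb Z}A$ shows that $\mathbb Z[[U]]\otimes A$ is a retract of $\mathbb Z_\solid[S]\otimes A=(A,\mathbb Z)_\solid[S]$. The latter objects are compact projective generators of $(A,\mathbb Z)_\solid\Mod$ by Proposition~\ref{prop:analyticnice} together with Proposition~\ref{prop:exanalytic}~(ii), so $\mathbb Z[[U]]\otimes A$ is compact projective as well.

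The first part is the main work. Since $A=\mathbb Z[T]$ is free over $\mathbb Z$ with basis $T^n$, we have $\mathbb Z[[U]]\otimes_{\mathbb Z}A\cong\bigoplus_{n\ge0}\mathbb Z[[U]]T^n$, i.e.\ polynomials in $T$ with coefficients in $\mathbb Z[[U]]$, a countable direct sum of compact groups. The map to $A_\infty$ sends $U\mapsto T^{-1}$, so $\sum_n f_n(U)T^n\mapsto\sum_n f_n(T^{-1})T^n$; this is well defined and continuous on each summand. We then verify the three exactness statements.

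\emph{Surjectivity.} Write $A_\infty=\bigcup_m T^m\mathbb Z[[T^{-1}]]$. Each piece $T^m\mathbb Z[[T^{-1}]]$ is the image of the summand $\mathbb Z[[U]]T^m$ under an isomorphism of compact groups. Hence the map is surjective on $S$-valued points for profinite $S$, since any map $S\to A_\infty$ lands in one such compact piece.

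\emph{Injectivity and exactness in the middle.} This is a statement about $\mathbb Z[[U]][T]$ and can be checked on $S$-points, where it becomes the same algebra with $\mathbb Z[[U]]$ replaced by $C(S,\mathbb Z)[[U]]$. For injectivity, $UT-1$ has invertible constant term in $T$, so multiplication by it is injective on polynomials in $T$. For the middle, suppose $\sum f_nT^n$ maps to zero. We solve for $g$ with $(UT-1)g=\sum f_nT^n$ by descending induction on the $T$-degree, i.e.\ a division algorithm in $T$. Here the top coefficient of $UT-1$ is $U$, which is not a unit, so instead we divide by $1-UT$ from the bottom.

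The main obstacle is making the middle exactness clean, including the bounded-degree control needed for $S$-points. Concretely, it comes down to showing that the kernel of $R[[U]][T]\to R((T^{-1}))$, for $R=C(S,\mathbb Z)$, is the ideal $(UT-1)$. I would prove this by identifying $R[[U]][T]/(UT-1)$ with $R[[U]][U^{-1}]$ and noting that the map $R[[U]][U^{-1}]\to R((T^{-1}))$, $U\mapsto T^{-1}$, is a bijection.
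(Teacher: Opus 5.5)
Your proposal is correct and follows the paper's argument. The paper just asserts the displayed short exact sequence with compact projective terms. You supply the omitted checks: compact projectivity via a retract of $\mathbb Z_\solid[S]\otimes_{\mathbb Z} A=(A,\mathbb Z)_\solid[S]$, exactness on $S$-points, and compactness of the resulting cone.

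One simplification: on $S$-points, direct sums are computed termwise for extremally disconnected $S$, and in fact for any profinite $S$. So there is no bounded-degree issue. The identification $R[[U]][T]/(UT-1)\cong R[[U]][U^{-1}]\cong R((T^{-1}))$ for $R=C(S,\mathbb Z)$ gives injectivity, exactness in the middle and surjectivity all at once. The ``divide by $1-UT$ from the bottom'' detour is therefore unnecessary.
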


\begin{observation}\label{obs:Ainftyidempotent} The ring $A_\infty\in D((A,\mathbb Z)_\solid)$ is idempotent in the sense that the multiplication map
\[
A_\infty\otimes^L_{(A,\mathbb Z)_\solid} A_\infty\to A_\infty
\]
is an isomorphism. Indeed, this is a simple computation from the presentation of $A_\infty$ in the previous observation.

In particular, the $A_\infty$-modules form a full subcategory of $D((A,\mathbb Z)_\solid)$: Any $M\in D((A,\mathbb Z)_\solid)$ admits at most one $A_\infty$-module structure, and such a module structure exists precisely when the map
\[
M\to M\otimes_{(A,\mathbb Z)_\solid}^L A_\infty
\]
is an isomorphism.
\end{observation}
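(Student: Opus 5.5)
We must show that $A_\infty\otimes^L_{(A,\mathbb Z)_\solid} A_\infty\to A_\infty$ is an isomorphism, with $A=\mathbb Z[T]$ and $A_\infty=\mathbb Z((T^{-1}))$. The plan is to apply $-\otimes^L_{(A,\mathbb Z)_\solid} A_\infty$ to the two-term resolution of Observation~\ref{obs:compact}. This gives
\[
A_\infty\otimes^L_{(A,\mathbb Z)_\solid} A_\infty \simeq \bigl[(\mathbb Z[[U]]\otimes_{\mathbb Z} A)\otimes^L_{(A,\mathbb Z)_\solid} A_\infty \xrightarrow{UT-1} (\mathbb Z[[U]]\otimes_{\mathbb Z} A)\otimes^L_{(A,\mathbb Z)_\solid} A_\infty\bigr].
\]
Since $\mathbb Z[[U]]\otimes_{\mathbb Z}A$ is the base change of $\mathbb Z[[U]]\in D(\mathbb Z_\solid)$ along $\mathbb Z_\solid\to(A,\mathbb Z)_\solid$, each term is $\mathbb Z[[U]]\otimes^L_{\mathbb Z_\solid}A_\infty$.

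Next I would compute $\mathbb Z[[U]]\otimes^L_{\mathbb Z_\solid}\mathbb Z((T^{-1}))$. Write $\mathbb Z((T^{-1}))=\varinjlim_n T^n\mathbb Z[[T^{-1}]]$, a filtered colimit of copies of $\mathbb Z[[T^{-1}]]$, and use that $\otimes^L_{\mathbb Z_\solid}$ commutes with colimits. Proposition~\ref{prop:tensorinfproducts} (equivalently, the example $\mathbb Z[[U]]\otimes^{L\solid}\mathbb Z[[T]]=\mathbb Z[[U,T]]$) gives $\mathbb Z[[U]]\otimes^L_{\mathbb Z_\solid}\mathbb Z[[T^{-1}]]=\mathbb Z[[U,T^{-1}]]$. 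Hence the tensor product is $\mathbb Z[[U]]((T^{-1})):=\varinjlim_n T^n\mathbb Z[[U,T^{-1}]]$, concentrated in degree $0$. The complex therefore becomes multiplication by $UT-1$ on $\mathbb Z[[U]]((T^{-1}))$.

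It remains to show that $UT-1$ is injective there with cokernel $\mathbb Z((T^{-1}))$ via $U\mapsto T^{-1}$; this is the main computational point. I would factor $UT-1=T(U-T^{-1})$. The element $T$ is a unit in $\mathbb Z[[U]]((T^{-1}))$, so it suffices to treat $U-T^{-1}$. On each $T^n\mathbb Z[[U,T^{-1}]]$, the element $U-T^{-1}$ is a nonzerodivisor, and division with remainder in $U$ (Weierstrass-style, as $U-T^{-1}$ is monic in $U$) identifies the quotient with $T^n\mathbb Z[[T^{-1}]]$ via $U\mapsto T^{-1}$. This must be checked as condensed groups, not just on underlying groups; it follows because all objects are products of copies of $\mathbb Z$ and the maps are explicit. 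Passing to the filtered colimit over $n$, which is exact, gives the short exact sequence
\[
0\to\mathbb Z[[U]]((T^{-1}))\xrightarrow{UT-1}\mathbb Z[[U]]((T^{-1}))\to\mathbb Z((T^{-1}))\to0.
\]
Thus $A_\infty\otimes^L A_\infty\simeq A_\infty$ in degree $0$.

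Finally I would check that this identification is the multiplication map. The comparison map is induced from the quotient $\mathbb Z[[U]]\otimes A\to A_\infty$ tensored with $A_\infty$, and on the second factor it sends $U\mapsto T^{-1}$, $T\mapsto T$, which is multiplication. The statements about modules are then formal consequences of idempotence. If $M\simeq M\otimes^L A_\infty$, then $M$ inherits an $A_\infty$-module structure. Conversely, any $A_\infty$-module $M$ satisfies $M\otimes^L_{(A,\mathbb Z)_\solid}A_\infty=M\otimes^L_{A_\infty}(A_\infty\otimes^L A_\infty)=M$. Uniqueness of the module structure follows from $\Hom$-computations: the forgetful functor from $A_\infty$-modules is fully faithful, since it is the right adjoint of a localization whose unit on modules is an isomorphism.
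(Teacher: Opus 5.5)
Your proposal is correct and takes the same route as the paper. The paper only asserts that idempotence is a simple computation from the two-term resolution $0\to\mathbb Z[[U]]\otimes_{\mathbb Z}A\xrightarrow{UT-1}\mathbb Z[[U]]\otimes_{\mathbb Z}A\to A_\infty\to 0$ of Observation~\ref{obs:compact}; you carry out exactly that computation, using Proposition~\ref{prop:tensorinfproducts} to reduce to the exactness of $UT-1$ on $\mathbb Z[[U]]((T^{-1}))$, and then derive the module statements by the standard formal argument for idempotent algebras.
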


\begin{observation}\label{obs:nomaps} Let $C\in D(A\Mod)$ be any complex of condensed $A$-modules all of whose terms are direct sums of products of copies of $A$. Then
\[
R\intHom_A(A_\infty,C)=0\ .\footnote{This is an analogue of the assertion $R\intHom_{\mathbb Z}(\mathbb R,C)=0$ when $C$ is a complex of solid abelian groups.}
\]
To see this, we first observe that $C\in D((A,\mathbb Z)_\solid)$ as $(A,\mathbb Z)_\solid$ is an analytic ring and so $D((A,\mathbb Z)_\solid)\subset D(A\Mod)$ is stable under all limits and colimits. We may assume that $C$ is connective by writing it as the limit of its stupid truncations. Now $A_\infty\in D((A,\mathbb Z)_\solid)$ is compact by Observation~\ref{obs:compact}, so it is enough to consider the case that $C=\prod_I A$ for some set $I$. This reduces to $C=A$, so it remains to prove that
\[
R\intHom_A(A_\infty,A)=0\ .
\]
For this, we use the resolution from Observation~\ref{obs:compact} to see that $R\intHom_A(A_\infty,A)$ can be computed by the two term complex
\[
R\intHom_{\mathbb Z}(\mathbb Z[[U]],A)\xrightarrow{UT-1} R\intHom_{\mathbb Z}(\mathbb Z[[U]],A)
\]
which is given by
\[
A[U^{-1}]/A\xrightarrow{UT-1} A[U^{-1}]/A
\]
which is acyclic.
\end{observation}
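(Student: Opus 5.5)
The plan is the one indicated in the statement: reduce to $C=A$, then compute with the two-term resolution of Observation~\ref{obs:compact}. Throughout, we work in $D((A,\mathbb Z)_\solid)$.

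\textbf{Step 1: $C$ lies in $D((A,\mathbb Z)_\solid)$.} Each term $\bigoplus\prod A$ of $C$ is an $(A,\mathbb Z)_\solid$-module. This holds because $\prod_I A=\prod_I \mathbb Z\otimes_{\mathbb Z}A$ for $A$ free over $\mathbb Z$; more invariantly, products of the compact projectives $\mathcal M[S]$ remain in the module category. By Proposition~\ref{prop:exanalytic}~(ii) and Proposition~\ref{prop:analyticnice}, this subcategory is stable under all limits and colimits. Hence $C$ lies in it.

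It suffices to prove the vanishing on $S$-valued points for $S$ extremally disconnected. We have
\[
R\intHom_A(A_\infty,C)(S)=R\Hom_A(A_\infty\otimes\mathbb Z[S],C)=R\Hom_{(A,\mathbb Z)_\solid}(A_\infty\otimes^L_{(A,\mathbb Z)_\solid}\mathcal M[S],C).
\]
Tensoring the resolution of Observation~\ref{obs:compact} with $\mathcal M[S]$ gives a two-term complex. Its terms are $(\mathbb Z[[U]]\otimes^{L\solid}\mathbb Z[S]^\solid)\otimes A$, which by Proposition~\ref{prop:tensorinfproducts} and Corollary~\ref{cor:speckerfree} is $\prod_J\mathbb Z\otimes A=\prod_J A$ for some set $J$, and these are compact projective. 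So $A_\infty\otimes^L\mathcal M[S]$ is a compact object.

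\textbf{Step 2: reduce to $C=A$.} First write $C=\varprojlim_n C_{\le n}$ via stupid truncations. Both sides commute with this limit, and in a fixed cohomological degree only a bounded piece of $C$ matters. This is because the source is represented by a two-term complex of projectives, so $R\Hom$ out of it into a complex sitting in homological degrees $\ge n$ lands in degrees $\ge n-1$. This reduces us to bounded $C$, and then to a single term $C=\bigoplus_i\prod_{J_i}A$. Compactness pulls out the direct sum, and $R\Hom$ commutes with products, so we reduce to $C=A$.

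\textbf{Step 3: compute with $C=A$.} The resolution gives that $R\intHom_A(A_\infty,A)$ is the cone of
\[
UT-1\colon R\intHom_{\mathbb Z}(\mathbb Z[[U]],A)\to R\intHom_{\mathbb Z}(\mathbb Z[[U]],A).
\]
We evaluate the terms as follows. Since $\mathbb Z[[U]]=\prod_{\mathbb N}\mathbb Z$ is pseudocoherent (indeed compact projective in $\Solid$), and $A$ is a direct sum of copies of $\mathbb Z$, we may pull out the sum. The proof of Proposition~\ref{prop:examplesolid} gives $R\intHom(\prod_{\mathbb N}\mathbb Z,\mathbb Z)=\bigoplus_{\mathbb N}\mathbb Z$, and the internal version holds as well (use $\prod\mathbb R$ and Theorem~\ref{thm:RHomoutofcompact} with $S$-points). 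Hence
\[
R\intHom_{\mathbb Z}(\mathbb Z[[U]],A)=\bigoplus_{n\ge0}A\,U^{-n},
\]
discrete and in degree $0$; we write it as $A[U^{-1}]$ modulo $U^{-1}A[U^{-1}]$ conventions, i.e.\ $A[U^{-1}]/A$ after reindexing. Under this duality, multiplication by $U$ becomes the truncated shift $\sigma\colon U^{-n}\mapsto U^{-n+1}$, with $U^{0}\mapsto0$. This shift is locally nilpotent.

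\textbf{Conclusion and main difficulty.} The map $UT-1=T\sigma-1$ is then bijective, with inverse $-\sum_{k\ge0}(T\sigma)^k$, which is a finite sum on each element. So the cone is acyclic and $R\intHom_A(A_\infty,C)=0$. The main point needing care is Step~2: commuting $R\Hom$ with the unbounded limit and the infinite sums. Compactness of $A_\infty\otimes\mathcal M[S]$ together with the two-term resolution (which gives amplitude bounds) is exactly what controls this.
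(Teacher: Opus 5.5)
Your argument is correct and follows the paper's route. You put $C$ in $D((A,\mathbb Z)_\solid)$, truncate, and use compactness of $A_\infty\otimes^L_{(A,\mathbb Z)_\solid}\mathcal M[S]$ to reduce to $C=\prod_I A$ and then to $C=A$. You then dualize the two-term resolution of Observation~\ref{obs:compact}, where $UT-1$ becomes $T\sigma-1$ with $\sigma$ locally nilpotent, hence invertible. Your amplitude argument for the truncations and your $S$-pointwise compactness check spell out details the paper leaves implicit.

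\textbf{A false identity.} The identity $\prod_I A=\prod_I\mathbb Z\otimes_{\mathbb Z}A$, which you invoke in Step~1 and again when identifying the terms of the resolution, is false for $A=\mathbb Z[T]$ and infinite $I$.
\begin{itemize}
\item The family $(T^i)_{i\in\mathbb N}$ lies in $\prod_{\mathbb N}A$ but not in $\prod_{\mathbb N}\mathbb Z\otimes_{\mathbb Z}A$, whose elements are families of polynomials of bounded degree.
\item The identity does hold when $A$ is finite over $\mathbb Z$. Its failure here is exactly the nonzero cokernel in Observation~\ref{obs:cokernelAinftymodule}, i.e.\ the whole difference between $(A,\mathbb Z)_\solid$ and $A_\solid$.
\end{itemize}

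\textbf{Why the proof survives.} Nothing in your argument actually needs the identity.
\begin{itemize}
\item $\prod_I A$ lies in $(A,\mathbb Z)_\solid\Mod$ because that category is closed under products; this is your ``more invariant'' reason.
\item The terms of the resolution of $A_\infty\otimes^L_{(A,\mathbb Z)_\solid}\mathcal M[S]$ are $\prod_J\mathbb Z\otimes_{\mathbb Z}A$, and these are compact projective in $(A,\mathbb Z)_\solid\Mod$. By contrast, $\prod_J A$ is not even projective there for infinite $J$.
\end{itemize}

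\textbf{A smaller slip.} $\prod_{\mathbb N}\mathbb Z$ is not pseudocoherent in $\Cond(\Ab)$; it is not even finitely generated there, since its underlying space is not $\sigma$-compact. What you need, and what is true, is that it is compact projective in $\Solid$ and that the target $A$ is solid. That is enough to pull the direct sum out of $R\intHom_{\mathbb Z}(\mathbb Z[[U]],A)$.
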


\begin{observation}\label{obs:cokernelAinftymodule} For any set $I$, the cokernel of the injective map
\[
\prod_I \mathbb Z\otimes_{\mathbb Z} A\to \prod_I A
\]
is an $A_\infty$-module. Indeed, this cokernel can also be written as the cokernel of the map
\[
\prod_I \mathbb Z[[T^{-1}]]\otimes_{\mathbb Z[[T^{-1}]]} \mathbb Z((T^{-1}))\to \prod_I \mathbb Z((T^{-1}))
\]
as there is a natural map from the first sequence to the second sequence, and on cokernels one gets the identity map
\[
\prod_I T^{-1} \mathbb Z[[T^{-1}]]\to \prod_I T^{-1} \mathbb Z[[T^{-1}]]\ .
\]
But the second sequence is naturally a map of $A_\infty = \mathbb Z((T^{-1}))$-modules.
\end{observation}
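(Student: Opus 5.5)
The plan is to compare both sequences explicitly and identify their cokernels. Write $\prod_I A = \prod_I \mathbb Z[T]$ as the condensed group of $I$-indexed families of polynomials, whose $S$-valued points are families $(f_i)$ with $f_i\in C(S,\mathbb Z)[T]$ and with the degrees of the $f_i$ locally bounded on $S$; equivalently $\prod_I A = \varinjlim$ over ... — more cleanly, since $A=\bigoplus_{n\ge 0}\mathbb Z T^n$ and $\prod_I \mathbb Z\otimes_{\mathbb Z}A = \bigoplus_{n\ge0}\prod_I\mathbb Z\cdot T^n$, the source is the sub-object of families with uniformly bounded degree. Similarly, $\prod_I\mathbb Z[[T^{-1}]]\otimes_{\mathbb Z[[T^{-1}]]}\mathbb Z((T^{-1})) = \varinjlim_n T^n\prod_I\mathbb Z[[T^{-1}]]$ is the sub-object of $\prod_I\mathbb Z((T^{-1}))$ of families with uniformly bounded pole order. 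First I check injectivity of both maps (pointwise on extremally disconnected $S$, these are inclusions of groups of families of series/polynomials), and that the natural inclusion $\mathbb Z[T]\subset\mathbb Z((T^{-1}))$ induces a map of short exact sequences.

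The key step is to show the induced map on cokernels is an isomorphism, by decomposing each term as a direct sum of a ``polynomial part'' and a ``$T^{-1}\mathbb Z[[T^{-1}]]$ part''. Concretely, $\mathbb Z((T^{-1})) = \mathbb Z[T]\oplus T^{-1}\mathbb Z[[T^{-1}]]$ as condensed abelian groups, so $\prod_I\mathbb Z((T^{-1})) = \prod_I A\oplus\prod_I T^{-1}\mathbb Z[[T^{-1}]]$, while the bounded-pole sub-object splits as $(\prod_I\mathbb Z\otimes A)\oplus\prod_I T^{-1}\mathbb Z[[T^{-1}]]$, since bounded pole order constrains only the polynomial part (uniformly bounded degree), and products commute with this finite-type decomposition. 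Subtracting, both cokernels are $\prod_I A/(\prod_I\mathbb Z\otimes A)$, and the map between them is the identity. (I need to verify these identifications on $S$-points for extremally disconnected $S$, where $\prod_I$ and filtered colimits are computed pointwise; this is where care about ``locally bounded degree'' versus ``bounded degree'' enters, but on compact $S$ local boundedness is global boundedness.) Hence the cokernel of the first map identifies with that of the second.

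Finally, the second map is visibly $\mathbb Z((T^{-1}))$-linear, being the base change to $\mathbb Z((T^{-1}))$ of a map of $\mathbb Z[[T^{-1}]]$-modules, so its cokernel is an $A_\infty$-module; and since the identification is compatible with the $A$-action (all maps are $A$-linear via $A\to A_\infty$), the original cokernel carries an $A_\infty$-module structure extending its $A$-module structure. By Observation~\ref{obs:Ainftyidempotent} such a structure is unique, so this is the desired statement. I expect the main obstacle to be the second step: ensuring that the decomposition into polynomial and $T^{-1}$-power-series parts is an honest isomorphism of condensed abelian groups, compatible with the uniformity conditions in both tensor products.
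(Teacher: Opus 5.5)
Your proposal is correct and is essentially the paper's argument. The paper maps the first sequence to the second and observes that the cokernel of this map of sequences is the identity of $\prod_I T^{-1}\mathbb Z[[T^{-1}]]$, so the horizontal cokernels agree. Your splitting $\mathbb Z((T^{-1}))=\mathbb Z[T]\oplus T^{-1}\mathbb Z[[T^{-1}]]$ makes this explicit by exhibiting the second map as the first map plus the identity on that summand, and your concluding $\mathbb Z((T^{-1}))$-linearity step is the same as the paper's.
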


Now we can prove that $\mathbb Z[T]_\solid$ is an analytic ring. Let $C$ be any connective complex of condensed $A$-modules all of whose terms are sums of products of copies of $A$. We need to see that for any profinite set $S$, we have
\[
R\intHom_A(A[S],C)\cong R\intHom_A(A_\solid[S],C)\ .
\]
As $C\in D((A,\mathbb Z)_\solid)$, we already know that
\[
R\intHom_A(A[S],C)\cong R\intHom_A((A,\mathbb Z)_\solid[S],C)\ .
\]
Now $\mathbb Z_\solid[S]\cong \prod_I \mathbb Z$ for some set $I$, and then $(A,\mathbb Z)_\solid[S] = \prod_I \mathbb Z\otimes_{\mathbb Z} A$ and $A_\solid[S] = \prod_I A$. It remains to see that
\[
R\intHom_A(\prod_I \mathbb Z\otimes_{\mathbb Z} A,C) = R\intHom_A(\prod_I A,C)\ .
\]
This follows from Observations~\ref{obs:cokernelAinftymodule} and~\ref{obs:nomaps}.

In particular, we get the full inclusion
\[
j_\ast: D(A_\solid)\hookrightarrow D((A,\mathbb Z)_\solid)
\]
as both are full subcategories of $D(A\Mod)$. This admits a left adjoint $j^\ast$ which is $M\mapsto M\otimes^L_{(A,\mathbb Z)_\solid} A_\solid$, by Proposition~\ref{prop:functoriality}.

\begin{observation}\label{obs:kernelAinftymodules} The kernel of the localization functor
\[
j^\ast: D((A,\mathbb Z)_\solid)\to D(A_\solid)
\]
is precisely the full subcategory of all $M\in D((A,\mathbb Z)_\solid)$ that are $A_\infty$-modules. Indeed, if $M$ is an $A_\infty$-module, then $j^\ast M$ is a module over $A_\infty\otimes^L_{(A,\mathbb Z)_\solid} A_\solid=0$ by Observation~\ref{obs:nomaps}. Conversely, the kernel is generated by the objects considered in Observation~\ref{obs:cokernelAinftymodule}, which we have seen there to be $A_\infty$-modules. By Observation~\ref{obs:Ainftyidempotent}, we see that all objects in the kernel are $A_\infty$-modules.
\end{observation}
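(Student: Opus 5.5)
We show the two inclusions separately. Throughout, we use that $j^\ast$ is the Bousfield localization of $D((A,\mathbb Z)_\solid)$ onto the full subcategory $D(A_\solid)$, now known to be analytic. We also use that $j^\ast$ is symmetric monoidal, by Proposition~\ref{prop:analyticnice} together with Proposition~\ref{prop:functoriality} (both rings are commutative).

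\emph{$A_\infty$-modules lie in the kernel.} First we compute $j^\ast A_\infty=0$. For any $N\in D(A_\solid)$, adjunction gives
\[
R\Hom(j^\ast A_\infty,N)=R\Hom_A(A_\infty,j_\ast N).
\]
Take $N$ connective first. Since $A_\solid[S]=\prod_I A$, we may represent $N$ by a complex whose terms are direct sums of products of copies of $A$. Observation~\ref{obs:nomaps} then gives the vanishing. For general $N$, the group $\Hom(A_\infty,N[i])$ depends only on a connective truncation, because $A_\infty$ is compact with the two-term resolution of Observation~\ref{obs:compact}. Hence $j^\ast A_\infty=0$. Now let $M$ be an $A_\infty$-module, so $M\simeq M\otimes^L_{(A,\mathbb Z)_\solid}A_\infty$ by Observation~\ref{obs:Ainftyidempotent}. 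By monoidality,
\[
j^\ast M\simeq j^\ast M\otimes^L_{A_\solid} j^\ast A_\infty=0.
\]

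\emph{The kernel consists of $A_\infty$-modules.} Let $Q_I$ denote the cokernel of $\prod_I\mathbb Z\otimes_{\mathbb Z}A\to\prod_I A$; this map is injective, so $Q_I$ is its cone. The plan is first to show that the kernel of $j^\ast$ is the localizing subcategory $\mathcal L$ generated by the $Q_I$. Each $Q_I$ is killed by $j^\ast$, since $j^\ast$ sends $(A,\mathbb Z)_\solid[S]$ to $A_\solid[S]$. Each $Q_I$ is also compact, being a cone of compact projectives. Thus $\mathcal L$ is compactly generated, and Brown representability gives a right orthogonal $\mathcal L^\perp$ with a semi-orthogonal decomposition of $D((A,\mathbb Z)_\solid)$.

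Next we identify $\mathcal L^\perp$. An object $N$ lies in $\mathcal L^\perp$ iff $R\Hom(Q_I,N)=0$ for all $I$, i.e.\ iff $R\Hom_A(A_\solid[S],N)\to R\Hom_A((A,\mathbb Z)_\solid[S],N)=R\Hom_A(A[S],N)$ is an isomorphism for all $S$. By Proposition~\ref{prop:analyticnice}~(ii) for the analytic ring $A_\solid$, this says exactly $N\in D(A_\solid)$. Now take $K$ in the kernel, with fiber sequence $L\to K\to N$, $L\in\mathcal L$ and $N\in\mathcal L^\perp=D(A_\solid)$. Applying $j^\ast$ gives $j^\ast N=N$, while $j^\ast K=0$ and $j^\ast L=0$; hence $N=0$ and $K\in\mathcal L$.

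Finally, the $A_\infty$-modules form a localizing subcategory. By Observation~\ref{obs:Ainftyidempotent} they are the objects with $M\xrightarrow{\sim} M\otimes^L A_\infty$, and this condition is stable under shifts, cones and direct sums because $-\otimes^L A_\infty$ is exact and commutes with colimits. The generators $Q_I$ are $A_\infty$-modules by Observation~\ref{obs:cokernelAinftymodule}. Therefore all of $\mathcal L$, which is the kernel of $j^\ast$, consists of $A_\infty$-modules.

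The main point requiring care is the identification of the kernel with $\mathcal L$. There the compactness of the $Q_I$ and the description of $D(A_\solid)$ as objects local for the maps $(A,\mathbb Z)_\solid[S]\to A_\solid[S]$ are essential.
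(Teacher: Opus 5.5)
Your first half (that $j^\ast A_\infty=A_\infty\otimes^L_{(A,\mathbb Z)_\solid}A_\solid=0$ by Observation~\ref{obs:nomaps}, then monoidality) is correct. So are your identification $\mathcal L^\perp=D(A_\solid)$ and your last paragraph (the $A_\infty$-modules form a localizing subcategory containing the $Q_I$). The overall structure is exactly the paper's.

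The gap is in the step you yourself flag as essential: for infinite $I$, the $Q_I$ are \emph{not} compact in $D((A,\mathbb Z)_\solid)$. The object $\prod_I A$ is compact projective in $A_\solid\Mod$, but not in $(A,\mathbb Z)_\solid\Mod$, whose compact projectives are the $\prod_J\mathbb Z\otimes_{\mathbb Z}A$. Concretely, for $d\colon I\to\mathbb N$ let $M_d\subset\prod_I A$ be the sub-$A$-module of sequences $(y_i)$ with $\deg y_i\leq d(i)+K$ for some constant $K$. It is the image of $A\otimes_{\mathbb Z}\prod_i\mathbb Z[T]_{\deg\leq d(i)}$, hence an $(A,\mathbb Z)_\solid$-module. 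A continuous map from a profinite set to the discrete ring $A$ takes finitely many values, so $\prod_I A=\varinjlim_d M_d$ as a filtered union. But for infinite $I$ no $M_d$ is all of $\prod_I A$, so the identity factors through no $M_d$, and $\prod_I A$ is not compact. This holds already in the abelian category, hence also in the derived category, since filtered colimits are exact. As $\prod_I\mathbb Z\otimes_{\mathbb Z}A$ is compact, $Q_I$ cannot be compact either. So the claim that $\mathcal L$ is compactly generated by the $Q_I$ fails. Brown representability then does not give you the triangle $L\to K\to N$, and that triangle is exactly what yields $\ker j^\ast\subseteq\mathcal L$.

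The step can be repaired in two ways. One is to invoke Neeman's much deeper result that the localizing subcategory generated by a set of objects in a well-generated triangulated category has a right adjoint to its inclusion. The other is more elementary, is what the paper's phrase ``the kernel is generated by the objects considered in Observation~\ref{obs:cokernelAinftymodule}'' amounts to, and avoids $\mathcal L^\perp$ altogether:
\begin{enumerate}
\item[(a)] It suffices to show that the cone of $K\to j_\ast j^\ast K$ lies in $\mathcal L$ for every $K$. This cone commutes with colimits, so writing $K=\varinjlim_n\tau^{\leq n}K$ you may assume $K$ is bounded above.
\item[(b)] Represent $K$ by a bounded above complex $P_\bullet$ whose terms are direct sums of $\prod_J\mathbb Z\otimes_{\mathbb Z}A$. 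Since $j^\ast$ is the left derived functor of $M\mapsto M\otimes_{(A,\mathbb Z)_\solid}A_\solid$, $j^\ast K$ is represented by the termwise complex whose terms are the corresponding sums of $\prod_J A$.
\item[(c)] The termwise maps are injective with cokernels direct sums of $Q_J$'s. So the cone is a bounded above complex of sums of $Q_J$'s, which lies in $\mathcal L$ as the colimit of its bounded stupid truncations.
\item[(d)] Hence if $j^\ast K=0$, then $K[1]\in\mathcal L$.
\end{enumerate}
With this in place, your final paragraph completes the proof.
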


\begin{observation}\label{obs:jshriekexists} Consider the endofunctor
\[
j_!: D((A,\mathbb Z)_\solid)\to D((A,\mathbb Z)_\solid): M\mapsto M\otimes_{A\otimes^L_{\mathbb Z}\mathbb Z_\solid} (A_\infty/A)[-1]\ .
\]
This functor factors uniquely over a fully faithful functor
\[
j_!: D(A_\solid)\to D((A,\mathbb Z)_\solid)
\]
that is a left adjoint of $j^\ast$. To prove this, note first that fully faithfulness of $j_!$ follows from the other assertions, as $j^\ast j_!$ will be the left adjoint of $j^\ast j_\ast$ which is the identity, and so $j^\ast j_!$ is itself the identity, giving full faithfulness of $j_!$.

Now it suffices to see that for any $M,N\in D((A,\mathbb Z)_\solid)$, we have
\[
R\intHom_A(j_!M,N)\buildrel\cong\over\rightarrow R\intHom_A(j_!M,N\otimes^L_{(A,\mathbb Z)_\solid} A_\solid)\buildrel\cong\over\leftarrow R\intHom_A(M,N\otimes^L_{(A,\mathbb Z)_\solid} A_\solid)\ .
\]
Indeed, this implies that $R\Hom_A(j_!M,-)$ depends only on the image of $M$ in $D(A_\solid)$, giving the desired factorization; and then the displayed equation proves the desired adjointness. Note that the second equation follows from Observation~\ref{obs:nomaps} and the observation that the cone of $j_!M\to M$ is by definition of $j_!M$ an $A_\infty$-module. 

For the left map, we use that the cone of $N\to N\otimes^L_{(A,\mathbb Z)_\solid} A_\solid$ is an $A_\infty$-module by Observation~\ref{obs:kernelAinftymodules}, so that it is enough to see that
\[
j_!M\otimes^L_{(A,\mathbb Z)_\solid} A_\infty = 0\ .
\]
But this follows from the definition of $j_!M$ and Observation~\ref{obs:Ainftyidempotent}.
\end{observation}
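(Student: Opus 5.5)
The plan is to exploit the fibre sequence $(A_\infty/A)[-1]\to A\to A_\infty$ in $D((A,\mathbb Z)_\solid)$. Tensoring it with $M$ over $A\otimes^L_{\mathbb Z}\mathbb Z_\solid$ gives a natural triangle
\[
j_!M\to M\to M\otimes^L_{(A,\mathbb Z)_\solid} A_\infty\ .
\]
Its third term is an $A_\infty$-module. Tensoring the triangle with $A_\infty$ and using Observation~\ref{obs:Ainftyidempotent}, the second map becomes an isomorphism. Hence $j_!M\otimes^L_{(A,\mathbb Z)_\solid} A_\infty=0$ for every $M$. These two facts are the only inputs about $j_!$ I intend to use.

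Next I would record two orthogonality statements.

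\emph{(a)} For any $A_\infty$-module $K$ and any $X$ with $X\otimes^L A_\infty=0$, one has $R\intHom_A(X,K)=0$. Since $A_\infty$ is idempotent, $R\intHom_A(X,K)=R\intHom_{A_\infty}(X\otimes^L A_\infty,K)=0$.

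\emph{(b)} For any $A_\infty$-module $X$ and any $N'\in D(A_\solid)$, one has $R\intHom_A(X,N')=0$. Writing $X=X\otimes^L A_\infty$ and using tensor--Hom adjunction, this reduces to $R\intHom_A(A_\infty,N')=0$. Observation~\ref{obs:nomaps} gives this for complexes whose terms are sums of products of copies of $A$. Every object of $D(A_\solid)$ is represented by such a complex: one resolves by the compact projective generators $A_\solid[S]\cong\prod_I A$, after passing to Postnikov limits of truncations, which $R\intHom_A(A_\infty,-)$ preserves.

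I expect (b) to be the main technical point. It requires upgrading Observation~\ref{obs:nomaps} from explicit complexes to all of $D(A_\solid)$, and making the adjunction manipulations over the idempotent algebra $A_\infty$ work for unbounded objects. If that causes trouble, I would first prove (b) for $N'$ bounded above, then pass to limits.

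With (a) and (b) I would prove the displayed pair of isomorphisms.
\begin{itemize}
\item[$\bullet$] \emph{Left map.} The fibre of $N\to N\otimes^L_{(A,\mathbb Z)_\solid}A_\solid$ lies in $\ker j^\ast$. By Observation~\ref{obs:kernelAinftymodules} it is an $A_\infty$-module, so (a) applied with $X=j_!M$ shows the left map is an isomorphism.
\item[$\bullet$] \emph{Right map.} The cone of $j_!M\to M$ is the $A_\infty$-module $M\otimes^L A_\infty$, so (b) shows the right map is an isomorphism.
\end{itemize}

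It follows that $R\Hom_A(j_!M,-)$, and hence $j_!M$, depends only on $j^\ast M$. More directly, if $j^\ast M=0$ then $M$ is an $A_\infty$-module, so $M\otimes^L A_\infty=M$ and the triangle forces $j_!M=0$. Thus $j_!$ factors uniquely through $j^\ast$, namely as $j_!\circ j_\ast$ on $D(A_\solid)$. The displayed isomorphisms are then exactly the adjunction $\Hom(j_!M,N)=\Hom(M,j^\ast N)$.

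Finally, for full faithfulness: $j^\ast j_!$ is left adjoint to $j^\ast j_\ast$, and $j^\ast j_\ast=\mathrm{id}$ because $j_\ast$ is fully faithful. Hence $j^\ast j_!\cong\mathrm{id}$, which gives full faithfulness of $j_!$.
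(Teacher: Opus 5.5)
Your proposal is correct and follows the paper's argument. It proves the same pair of isomorphisms: the left one via $j_!M\otimes^L_{(A,\mathbb Z)_\solid} A_\infty=0$ together with the fibre of $N\to N\otimes^L_{(A,\mathbb Z)_\solid}A_\solid$ being an $A_\infty$-module, the right one via Observation~\ref{obs:nomaps} applied to the $A_\infty$-module cone of $j_!M\to M$, and full faithfulness by the same adjunction trick. Your orthogonality statements (a) and (b), including the extension of Observation~\ref{obs:nomaps} to all of $D(A_\solid)$ via resolutions and Postnikov limits, simply make explicit what the paper leaves implicit.
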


At this point, we have proved all of Theorem~\ref{thm:solidA}. We observe that the arguments are roughly similar to the arguments for $\mathbb Z_\solid$, with $\mathbb Z((T^{-1}))$ taking the role of $\mathbb R$. The case of $\mathbb Z_\solid$ was somewhat more subtle as we did not have yet have a nice analytic base ring to base our arguments on, and so the arguments involving $\mathbb R$ had to be done in a very ad-hoc fashion.

In order to prepare for the proof of Theorem~\ref{thm:Rfshriek}, in particular the preservation of compactness under $f_!$, we make another observation.

\begin{observation}\label{obs:compact2} For any set $I$, the natural map
\[
j_! \prod_I A\to \prod_I (A_\infty/A)[-1]
\]
is an isomorphism (and the right-hand side is compact as an object of $D(\mathbb Z_\solid)$). This follows from the following computation:
\[\begin{aligned}
j_! \prod_I A &= j_! j^\ast(\prod_I\mathbb Z\otimes_{\mathbb Z} A) = (\prod_I\mathbb Z\otimes_{\mathbb Z} A)\otimes^L_{(A,\mathbb Z)_\solid} (A_\infty/A)[-1]\\
&= \prod_I\mathbb Z\otimes^L_{\mathbb Z_\solid} (A_\infty/A)[-1] = \prod_I (A_\infty/A)[-1]\ .
\end{aligned}\]
Here we use the definition of $j_!j^\ast$ in the second equation, and Proposition~\ref{prop:tensorinfproducts} in the last equation.
\end{observation}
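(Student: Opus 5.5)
The plan is to compute $j_!\prod_I A$ by expressing $\prod_I A$ as $j^\ast$ of a simpler object, and then using the explicit formula for $j_!$ as a tensor product from Observation~\ref{obs:jshriekexists}. The natural candidate is $(A,\mathbb Z)_\solid[S]=\prod_I\mathbb Z\otimes_{\mathbb Z}A$. We first need $j^\ast(\prod_I\mathbb Z\otimes_{\mathbb Z}A)=\prod_I A$. Since $\prod_I\mathbb Z$ is a retract of some $\mathbb Z_\solid[S]$ (Corollary~\ref{cor:solidproperties}~(i)), it suffices to treat $\mathbb Z_\solid[S]\otimes A=(A,\mathbb Z)_\solid[S]$. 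This is compact projective, so $j^\ast$ is underived on it and sends it to $A_\solid[S]=\prod_I A$, by Proposition~\ref{prop:functoriality}. Alternatively, the cofiber of $\prod_I\mathbb Z\otimes A\to\prod_I A$ is an $A_\infty$-module by Observation~\ref{obs:cokernelAinftymodule}. It therefore lies in the kernel of $j^\ast$ by Observation~\ref{obs:kernelAinftymodules}, so $j^\ast$ identifies the two objects. Since $j_!$ factors through $j^\ast$, we get $j_!\prod_I A=j_!(\prod_I\mathbb Z\otimes_{\mathbb Z}A)$, with $j_!$ now computed on $D((A,\mathbb Z)_\solid)$.

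Next we apply the defining formula. It gives $(\prod_I\mathbb Z\otimes_{\mathbb Z}A)\otimes^L_{(A,\mathbb Z)_\solid}(A_\infty/A)[-1]$. Because $(A,\mathbb Z)_\solid$ is the base change of $\mathbb Z_\solid$ along $\mathbb Z\to A$, the relative tensor product of a base-changed module $P\otimes_{\mathbb Z}A$ with an $A$-module $N$ equals $P\otimes^L_{\mathbb Z_\solid}N$. This follows from symmetric monoidality of the base change and the description of $(A,\mathbb Z)_\solid$-modules as $A$-modules in $D(\mathbb Z_\solid)$ (cf.\ the proof of Proposition~\ref{prop:exanalytic}). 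We are thus reduced to $\prod_I\mathbb Z\otimes^L_{\mathbb Z_\solid}(A_\infty/A)[-1]$.

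The final step, which I expect to be the main point, is to identify $\prod_I\mathbb Z\otimes^L_{\mathbb Z_\solid}(A_\infty/A)$ with $\prod_I(A_\infty/A)$. By Observation~\ref{obs:compact}, $A_\infty/A$ (and hence $A_\infty$) is built out of finitely many copies of $\mathbb Z[[U]]\otimes A$; there is also a simpler direct description $A_\infty/A\cong T^{-1}\mathbb Z[[T^{-1}]]\cong\prod_{\mathbb N}\mathbb Z$ as a solid abelian group. Proposition~\ref{prop:tensorinfproducts} then gives $\prod_I\mathbb Z\otimes^L_{\mathbb Z_\solid}\prod_{\mathbb N}\mathbb Z=\prod_{I\times\mathbb N}\mathbb Z=\prod_I(A_\infty/A)$. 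One still has to check that the natural map in the statement agrees with this chain of identifications; this is a matter of naturality, since all the maps are induced by $\prod_I$ of the map in the case $I=\ast$. Compactness of the right-hand side in $D(\mathbb Z_\solid)$ is then clear, since it is $\prod_{I\times\mathbb N}\mathbb Z$ shifted, which is compact projective by Corollary~\ref{cor:solidproperties}~(i).
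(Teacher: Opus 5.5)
Your proposal is correct and follows the paper's own computation step by step. You identify $\prod_I A=j^\ast(\prod_I\mathbb Z\otimes_{\mathbb Z}A)$, apply the defining tensor-product formula for $j_!$, rewrite the tensor product over $(A,\mathbb Z)_\solid$ as one over $\mathbb Z_\solid$, and conclude with Proposition~\ref{prop:tensorinfproducts} via $A_\infty/A\cong T^{-1}\mathbb Z[[T^{-1}]]\cong\prod_{\mathbb N}\mathbb Z$; along the way you spell out the first equality, this identification of $A_\infty/A$ with a product, and the compactness claim, all of which the paper leaves implicit.
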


Finally, we give a proof of Theorem~\ref{thm:Rfshriek} when $A=\mathbb Z[T]$. By definition $f_!$ commutes with all direct sums (as $j_!$ does as a left adjoint, and the forgetful functor does). This implies formally that $f_!$ admits a right adjoint $f^!$. Moreover, in the case of compactly generated triangulated categories, it is known that $f^!$ commutes with all direct sums exactly when $f_!$ preserves compact objects. The compact objects of $D(A_\solid)$ are generated by $\prod_I A$'s, and for these the claim is Observation~\ref{obs:compact2}. For the projection formula, it is enough to prove the finer assertion
\[
j_!((M\otimes^L_{\mathbb Z_\solid} A_\solid)\otimes^L_{A_\solid} N)\cong (M\otimes^L_{\mathbb Z} A)\otimes^L_{(A,\mathbb Z)_\solid} j_! N
\]
in $D((A,\mathbb Z)_\solid)$. This holds true as it holds true after applying $j^\ast$, and as both sides vanish after tensoring with $A_\infty$.

Note that the adjunction
\[
R\Hom_{\mathbb Z}(f_! M,\mathbb Z) = R\Hom_A(M,f^!\mathbb Z)
\]
implies, by taking $M=A$, that
\[
f^! \mathbb Z = R\Hom_{\mathbb Z}(f_! A,\mathbb Z)
\]
which, as $f_! A$ is compact, implies that $f^! \mathbb Z$ is discrete. In fact, we can compute
\[
f^!\mathbb Z = R\Hom_{\mathbb Z}(\mathbb Z((T^{-1}))/\mathbb Z[T],\mathbb Z)[1]\cong \mathbb Z[T][1]\ ,
\]
which is an invertible $A$-module.

By formal nonsense, we have a natural map
\[
(M\otimes^L_{\mathbb Z_\solid} A_\solid)\otimes^L_{A_\solid} f^! \mathbb Z\to f^! M\ :
\]
indeed, defining this map is equivalent to defining an adjoint map
\[
f_!((M\otimes^L_{\mathbb Z_\solid} A_\solid)\otimes^L_{A_\solid} f^! \mathbb Z)\to M
\]
in $D(A_\solid)$. This comes from the projection formula
\[
f_!((M\otimes^L_{\mathbb Z_\solid} A_\solid)\otimes^L_{A_\solid} f^! \mathbb Z)\cong M\otimes^L_{\mathbb Z_\solid} f_! f^! \mathbb Z
\]
and the counit $f_! f^! \mathbb Z\to \mathbb Z$.

To prove that the map
\[
(M\otimes^L_{\mathbb Z_\solid} A_\solid)\otimes^L_{A_\solid} f^! \mathbb Z\to f^! M
\]
is an isomorphism, we note that both sides commute with all direct sums, so we can reduce to $M=\prod_I \mathbb Z$. As $f^!$ commutes with products, we need to see the same for the left-hand side. But $\prod_I \mathbb Z\otimes^L_{\mathbb Z_\solid} A_\solid = \prod_I A$ and $f^!\mathbb Z\cong A[1]$ is just a shift.
\newpage

\section*{Appendix to Lecture VIII: General case}

In this appendix, we prove the general version of Theorem~\ref{thm:solidA}, and in fact formulate a relative version. Fix a map of finitely generated $\mathbb Z$-algebras $R\to A$. We define a theory of measures $(A,R)_\solid$ on $A$, by
\[
(A,R)_\solid[S] := R_\solid[S]\otimes_R A\ .
\]
Note that $R_\solid[S]\otimes^L_R A = R_\solid[S]\otimes_R A$: More generally, for any $R$-module $M$ and any set $I$, one has
\[
\prod_I R\otimes^L_R M = \prod_I R\otimes_R M\ .
\]
As both sides commute with filtered colimits in $M$, it suffices to check this when $M$ is finitely generated. But then in fact
\[
\prod_I R\otimes^L_R M = \prod_I M
\]
(which is concentrated in degree $0$), as one sees by resolving $M$ by finite free $R$-modules.

With this definition, we have the following generalization of Theorems~\ref{thm:solidA} and~\ref{thm:Rfshriek}.

\begin{theorem}\label{thm:solidAR}\leavevmode
\begin{enumerate}
\item[{\rm (i)}] For any map of finitely generated $\mathbb Z$-algebras $R\to A$, the theory of measures $(A,R)_\solid$ defines an analytic ring.
\item[{\rm (ii)}] For any maps $R\to S\to A$ of finitely generated $\mathbb Z$-algebras, the forgetful functor
\[
j_\ast: D((A,S)_\solid)\hookrightarrow D((A,R)_\solid)
\]
admits the left adjoint $j^\ast = -\otimes^L_{(A,R)_\solid} (A,S)_\solid$, which in turn admits a left adjoint
\[
j_!: D((A,S)_\solid)\hookrightarrow D((A,R)_\solid)\ .
\]
The left adjoint $j_!$ satisfies the projection formula
\[
j_! j^\ast M = M\otimes_{(A,R)_\solid}^L j_! A\ .
\]
\item[{\rm (iii)}] For a map $f: \Spec A\to \Spec R$ of affine finite type $\mathbb Z$-schemes, consider the functor
\[
f_!: D(A_\solid)\to D(R_\solid)
\]
defined as the composite
\[
D(A_\solid)\xrightarrow{j_!} D((A,R)_\solid)\to D(R_\solid)\ .
\]
Then $f_!$ commutes with all direct sums and satisfies the projection formula
\[
f_!((M\otimes_{R_\solid}^L A_\solid)\otimes^L_{A_\solid} N)\cong M\otimes^L_{R_\solid} f_! N
\]
for $M\in D(R_\solid)$ and $N\in D(A_\solid)$. If $f$ is of finite Tor-dimension, then $f_!$ preserves compact objects. The formation of $f_!$ is compatible with composition, i.e.~if $g: \Spec R\to \Spec R^\prime$ is another map of affine finite type $\mathbb Z$-schemes, then $(g\circ f)_!\cong g_!\circ f_!$.
\item[{\rm (iv)}] In the situation of (iii), the functor $f_!$ admits a right adjoint
\[
f^!: D(R_\solid)\to D(A_\solid)\ .
\]
The object $f^! R\in D(A_\solid)$ is discrete and a bounded to the left complex of finitely generated $A$-modules. If $f$ is of finite Tor-dimension, then $f^!R$ is bounded, $f^!$ commutes with all direct sums, and is given by
\[
f^! M = (M\otimes_{R_\solid}^L A_\solid)\otimes^L_{A_\solid} f^! R\ .
\]
If $f$ is a complete intersection, then $f^! R\in D(A)$ is an invertible object, i.e.~locally a line bundle concentrated in some degree. The formation of $f^!$ is compatible with composition, i.e.~if $g: \Spec R\to \Spec R^\prime$ is another map of affine finite type $\mathbb Z$-schemes, then $(g\circ f)^!\cong f^!\circ g^!$.
\end{enumerate}
\end{theorem}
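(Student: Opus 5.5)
The plan is to reduce everything to the case of a polynomial generator treated in Lecture VIII, by induction on the number of generators of $A$ over $R$ and by handling quotients separately.

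\emph{Step 1: reduction of (i) to two basic moves.} Write $A$ as a quotient of $R[T_1,\dots,T_n]$. It then suffices to treat two cases: a polynomial extension $A=S[T]$ with $(A,S[T])$ versus $(A,S)$, and a surjection $S\to A$. For a surjection, the claim is $(A,S)_\solid[S']=S_\solid[S']\otimes_S A=\prod_I A$. I will show that $(A,S)_\solid=(A,A)_\solid$. Analyticity then follows from the base change argument of Proposition~\ref{prop:exanalytic}: any $A$-module $C$ in $D(S_\solid)$ satisfies $R\intHom_S(S_\solid[S'],C)=R\intHom_A(S_\solid[S']\otimes^L_S A,C)$, and $S_\solid[S']\otimes^L_S A=\prod_I A$ because $A$ is finitely generated over $S$, as in the computation opening this appendix. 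This gives analyticity of $(A,R)_\solid$ whenever $R\to A$ is surjective, provided $R_\solid$ is analytic. Combining with the polynomial step $R\to R[T_1,\dots,T_n]$, by induction on $n$, then gives analyticity of all $(A,R)_\solid$, and in particular of $A_\solid=(A,A)_\solid$ starting from $\mathbb Z_\solid$.

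\emph{Step 2: the polynomial step.} For $B=R[T]$ and a $B$-algebra $A$, I repeat Lecture VIII relative to the analytic ring $(A,R)_\solid$ in place of $(A,\mathbb Z)_\solid$. Set $A_\infty:=A\otimes_{R[T]}R((T^{-1}))$, presented as the cokernel of $UT-1$ on $R_\solid[[U]]\otimes_R A$. The four Observations go through verbatim. $A_\infty$ is compact and idempotent. Maps from $A_\infty$ into complexes of sums of products of $A$ vanish, via the two-term computation $A[U^{-1}]/A\xrightarrow{UT-1}A[U^{-1}]/A$. The cokernel of $\prod_I R\otimes_R A\to\prod_I A$ is an $A_\infty$-module, by rewriting it through $R[[T^{-1}]]$. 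Together these show that $(A,S)_\solid$ is analytic with $S=R[T]$, and they give $j_!M=M\otimes(A_\infty/A)[-1]$, its adjointness, and the projection formula. For general $R\to S\to A$ in (ii), I factor $S$ as a quotient of $R[T_1,\dots,T_n]$. Each polynomial variable contributes an idempotent algebra $A_{\infty,i}$, so $j_!$ is the composite of the successive $j_!$'s, and the quotient step changes nothing by Step 1. The projection formula follows formally from the factorization $j_!j^*M=M\otimes j_!A$, checked after $j^*$ and after tensoring with the idempotents, exactly as before. The main obstacle I expect here is verifying that idempotence and the vanishing of $R\intHom_A(A_\infty,C)$ hold when $A$ is not flat over $R[T]$. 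For this I would use the derived tensor product and Koszul-type resolutions, checking that $(R((T^{-1}))/R[T])\otimes^L_{R[T]}A$ behaves correctly.

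\emph{Step 3: parts (iii) and (iv).} The functor $f_!$ commutes with sums since $j_!$ is a left adjoint. Compatibility with composition follows from $(g\circ f)_!$ factoring through $D((A,R')_\solid)\supset D((A,R)_\solid)$ and uniqueness of left adjoints. For compactness, in the polynomial case Observation~\ref{obs:compact2} gives $j_!\prod_I A=\prod_I A\otimes_R(A_\infty/A)[-1]$. In the finite Tor-dimension case $f$ factors as a closed immersion, which is perfect as an $R[T_1,\dots,T_n]$-module, followed by polynomial steps. A closed immersion pushes the compacts $\prod_I A$ to compact objects of $D(R[\underline T]_\solid)$ exactly when $A$ is perfect over the polynomial ring. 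The right adjoint $f^!$ exists formally. Its value $f^!R=R\intHom_R(f_!A,R)$ is discrete and bounded to the left with finitely generated cohomology because $f_!A$ is pseudocoherent, and it is bounded, with $f^!$ commuting with sums, under finite Tor-dimension. The formula $f^!M=(M\otimes A_\solid)\otimes f^!R$ is proved by reduction to $M=\prod_I R$ as in Lecture VIII. For complete intersections, $f^!$ of a polynomial variable is $A[1]$, and for a regular sequence the Koszul complex gives an invertible shift of $\det$ of the conormal bundle, so the composite is invertible.
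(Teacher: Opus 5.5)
Your skeleton matches the paper's: the base-change argument of Proposition~\ref{prop:exanalytic} relative to $R$, the identification $(A,S)_\solid=A_\solid$ for surjections $S\to A$, and induction on polynomial variables by rerunning Lecture~VIII over $R$ with $R((T^{-1}))$. You state the base-change step only for surjections. You also need it for maps such as $R\to R[T]$, since $(R[T],R)_\solid$ is the base of your Step~2, but it holds verbatim because $\prod_I R\otimes^L_R M=\prod_I R\otimes_R M$ for every $R$-module $M$. For (i), Step~2 is only applied with $A=R[T]$, and there the argument works.

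\textbf{Main gap: Step~2 for general $A$.} You need Step~2 for a general $R[T]$-algebra $A$ in order to prove (ii), and there it contains a false step. The free objects of $(A,R[T])_\solid$ are $\prod_I R[T]\otimes_{R[T]}A$, not $\prod_I A$. These agree only when $A$ is finite over $R[T]$; the $\prod_I A$ are the free objects of the smaller category $A_\solid$. Consequently your claim that the cokernel of $\prod_I R\otimes_R A\to\prod_I A$ is an $A_\infty$-module is wrong.

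Take $A=R[T^{\pm1}]$, so $A_\infty=R((T^{-1}))$. The kernel of $D((A,R)_\solid)\to D(A_\solid)$ also contains nonzero $R((T))$-modules, obtained by running the lecture in the variable $T^{-1}$. An example is the summand of that cokernel coming from $\prod_I T^{-1}R[T^{-1}]$, for $I$ infinite. Such modules are not $R((T^{-1}))$-modules, because $R((T))\otimes^L_{(A,R)_\solid}R((T^{-1}))=R[[T,U]]/(TU-1)=0$, as $TU-1$ is a unit. Geometrically, $A_\solid$ removes both $T=0$ and $T=\infty$, whereas $(A,R[T])_\solid$ removes only $T=\infty$.

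The fix is to compare with $\prod_I R[T]\otimes_{R[T]}A$ instead. The cokernel is then $Q\otimes_{R[T]}A$, where $Q$ is the cokernel in the case $A=R[T]$, and this is visibly an $A_\infty$-module. The vanishing of maps out of $A_\infty$ into $(A,R[T])_\solid$-modules then follows by adjunction from the case $A=R[T]$. This is exactly the paper's reduction to $A=S$, carrying the $A$-module structure along.

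The obstacle you anticipated is not one. $UT-1$ is injective on $\prod_{\mathbb N}R\otimes_R A=\varinjlim_M\prod_{\mathbb N}M$ for every $A$, by the coefficient recursion. So $A_\infty$ is simply the derived base change of $R((T^{-1}))$.

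\textbf{Unjustified points in Step~3.}
\begin{enumerate}
\item[(a)] \emph{Composition of $f_!$.} Uniqueness of adjoints only shows that the $j_!$ for $A_\solid\subset(A,R')_\solid$ is the composite of the $j_!$'s for $A_\solid\subset(A,R)_\solid$ and $(A,R)_\solid\subset(A,R')_\solid$. For $(g\circ f)_!\cong g_!\circ f_!$ you also need the $j_!$ for $D((A,R)_\solid)\subset D((A,R')_\solid)$ to commute with restriction of scalars to $D(R_\solid)\subset D((R,R')_\solid)$. This is the commutative square in the paper's proof, and it holds because this $j_!$ is tensoring with the base change of $j_!R$.
\item[(b)] \emph{Finite generation of $f^!R$.} Pseudocoherence of $f_!A$ over $R$ gives discreteness and left boundedness of $f^!R=R\intHom_R(f_!A,R)$. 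It does not give finite generation of the cohomology over $A$: for $A=R[T]$ the cohomology is $R[T]$, which is not finite over $R$. Instead factor $R\xrightarrow{g}P=R[T_1,\ldots,T_n]\xrightarrow{h}A$ and use $f^!=h^!\circ g^!$ with $g^!R\cong P[n]$. This gives $f^!R\cong R\intHom_P(A,P)[n]$, which has finitely generated cohomology over $A$ since $P$ is noetherian.
\item[(c)] \emph{Projection formula.} You never address the projection formula for $f_!$ in (iii). It follows from the polynomial case together with the trivial case of restriction of scalars along a surjection, once composition is established.
\end{enumerate}
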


\begin{proof} For part (i), we note that by the proof of Proposition~\ref{prop:exanalytic}~(ii) and the observed identity $R_\solid[S]\otimes^L_R A = R_\solid[S]\otimes_R A$, the case of $(A,R)_\solid$ reduces to the case of $R_\solid$. Thus, it is enough to prove that for any finitely generated $\mathbb Z$-algebra $A$, $A_\solid$ is an analytic ring. But now we can choose a surjection $R=\mathbb Z[X_1,\ldots,X_n]\to A$, in which case $A_\solid = (A,R)_\solid$ as then $A$ is a finitely generated $R$-module and so
\[
\prod_I A\cong \prod_I R\otimes_R A\ .
\]
Thus, the case of $A_\solid$ is equivalent to the case of $(A,R)_\solid$, which reduces to $R_\solid$. In other words, it is enough to handle the case of $A_\solid$ when $A=\mathbb Z[X_1,\ldots,X_n]$ is a polynomial algebra. We handle this case by induction on $n$. Let $R=\mathbb Z[X_1,\ldots,X_{n-1}]\to A$. Then $R_\solid$ and hence $(A,R)_\solid$ are analytic rings by induction. Now we can repeat all arguments from the lecture with the base ring $\mathbb Z$ replaced by $R$, in particular using the ring $A_\infty = R((X_n^{-1}))$, to get the result for $A$.

For part (ii), the existence and description of $j^\ast$ is now a consequence of Proposition~\ref{prop:functoriality}. For the properties of $j_!$, we note that the question reduces formally to the case $A=S$, as afterwards one simply carries around an $A$-module structure on top. In particular, we can assume that $S\to A$ is surjective. We can then actually assume that $S=R[X_1,\ldots,X_n]$ (changing now $S$ while fixing $A$ and $R$), noting that $(A,S)_\solid=A_\solid$ is independent of $S$ when $S\to A$ is surjective. Now using again the reduction from $R\to S\to A$ to $R\to S=S$, we are reduced to the case that $A=S=R[X_1,\ldots,X_n]$ is a polynomial algebra over $R$. In that case, we use the factorization
\[
(A,R)_\solid\to (A,R[X_1,\ldots,X_{n-1}])_\solid\to (A,A)_\solid
\]
using which we can by induction reduce to the case $n=1$. Hence finally again $A=R[X]$ over some general base ring $R$, where we can repeat the arguments from the lecture.

In part (iii), one first checks compatibility with composition, i.e.~$(g\circ f)_!\cong g_!\circ f_!$. This comes down to the commutativity of the following diagram:
\[\xymatrix{
D((A,R)_\solid)\ar[r]^{j_!}\ar[d] & D((A,R^\prime)_\solid)\ar[d]\\
D(R_\solid)\ar[r]^{j_!} & D((R,R^\prime)_\solid)
}\]
As in the proof of (ii), this is formal as the functor on top is simply carrying around an additional $A$-module structure.

For the other assertions in (iii), we can now reduce to the case that either $A=R[X_1,\ldots,X_n]$ is a polynomial algebra over $R$ or $R\to A$ is surjective; and the case of the polynomial algebra can further by induction be reduced to $A=R[X]$. This case was handled in the lecture (modulo replacing $\mathbb Z$ by $R$ everywhere). If $R\to A$ is surjective, then $A_\solid=(A,R)_\solid$ and thus $j_!$ is the identity and $f_!$ is simply the forgetful functor. The projection formula is then easily verified, while if $f$ is of finite Tor-dimension, then the compact projective generators $\prod_I A$ of $D(A_\solid)$ are also compact as objects of $D(R_\solid)$, by taking a finite resolution of $A$ by finite projective $R$-modules.

In part (iv), the existence of $f^!$ is formal, and the commutation with colimits is automatic when $f$ is of finite Tor-dimension as then $f_!$ preserves compact objects. It is also clear that the formation of $f^!$ is compatible with composition. For the other assertions about $f^!$ we can now again reduce to the case $A=R[X]$ or $R\to A$ is surjective. The case $A=R[X]$ was analyzed in the lecture (modulo replacing $\mathbb Z$ by $R$). In particular, note that
\[
f^! R = R\intHom_R(R((X^{-1}))/R[X],R)[1]\cong R[X][1]
\]
is invertible over $A=R[X]$. If $R\to A$ is surjective, then
\[
f^! R = R\intHom_R(A,R)
\]
which is a bounded to the left complex of finitely generated $A$-modules. If $f$ is of finite Tor-dimension, it is in fact bounded (as seen by resolving $A$ by a finite complex of finite projective $R$-modules). In that case, one can also see that
\[
f^! M = (M\otimes^L_{R_\solid} A_\solid)\otimes^L_{A_\solid} f^! R
\]
by constructing the map as in the lecture, and then reducing (as both sides commute with colimits) to the compact projective generators $M=\prod_I R$, for which one observes that both sides commute with this product, as $f^! R$ is pseudocoherent.

Finally, if $f$ is a complete intersection, the usual Koszul computation shows that $f^! R = R\intHom_R(A,R)$ is invertible in $D(A)$.
\end{proof}
\newpage

\section{Lecture IX: Globalization}

In this lecture, we want to globalize the previous constructions. Recall that to any map $R\to A$ of finitely generated $\mathbb Z$-algebras, we have associated the derived category
\[
D((A,R)_\solid)\ .
\]
In fact, if $A^+\subset A$ is the integral closure of the image of $R$ in $A$, then $(A,R)_\solid = (A,A^+)_\solid$ as
\[
\prod_I R\otimes_R A = \prod_I A^+\otimes_{A^+} A\ ,
\]
noting that as $A^+$ is finite over $R$, one has
\[
\prod_I R\otimes_R A^+ = \prod_I A^+\ .
\]
In particular, we really have a functor that takes pairs $(A,A^+)$ consisting of a finitely generated $\mathbb Z$-algebra $A$ and an integrally closed finitely generated subalgebra $A^+\subset A$ to
\[
D((A,A^+)_\solid)\ .
\]

This indicates that the globalization is naturally done in the language of adic spaces. We refer to \cite{HuberContVal}, \cite{HuberGeneralization} for some basic assertions in the following.

\begin{definition} A discrete Huber pair is a pair $(A,A^+)$ consisting of a discrete ring $A$ together with an integrally closed subring $A^+\subset A$.
\end{definition}

There is a functor from discrete Huber pairs to analytic rings, given by
\[
(A,A^+)\mapsto (A,A^+)_\solid := \varinjlim_{(B,B^+)\to (A,A^+)} (B,B^+)_\solid\ ,
\]
where $(B,B^+)$ runs over discrete Huber pairs mapping to $(A,A^+)$ with both $B$ and $B^+$ finitely generated $\mathbb Z$-algebras.

To any discrete Huber pair $(A,A^+)$, one can associate the space
\[
\Spa(A,A^+) = \{|\cdot|: A\to \Gamma\cup \{0\}\mid |A^+|\leq 1\}/\cong
\]
of equivalence classes of valuations on $A$ that are $\leq 1$ on $A^+$. Here $\Gamma$ is a totally ordered abelian group, written multiplicatively, depending on the valuation $|\cdot|$, and the conditions of being a valuation are
\[
|0|=0, |1| = 1, |xy|=|x||y|, |x+y|\leq \max(|x|,|y|)
\]
(with the convention that $0< \gamma$ and $0\gamma = 0$ for all $\gamma\in \Gamma$). We will generally abuse notation and write $x\in \Spa(A,A^+)$ for a point of $\Spa(A,A^+)$ and $f\mapsto |f(x)|$ for the associated valuation. The space $\Spa(A,A^+)$ is given a topology where a basis of quasicompact open subsets is given by the rational subsets
\[
U(\frac{g_1,\ldots,g_n}f) := \{x\mid \forall i=1,\ldots,n: |g_i(x)|\leq |f(x)|\neq 0\}
\]
for varying $f,g_1,\ldots,g_n\in A$. With this topology $\Spa(A,A^+)$ is a spectral space.

The following proposition clarifies the role of $A^+\subset A$. We let
\[
\Spv(A) = \Spa(A,\widetilde{\mathbb Z}) = \{|\cdot|: A\to \Gamma\cup\{0\}\}/\cong
\]
be the space of all equivalence classes of valuations on $A$, where $\widetilde{\mathbb Z}\subset A$ denotes the integral closure of $\mathbb Z$ in $A$.

\begin{proposition}\label{prop:characterizeAplus} There is a bijection between integrally closed subrings $A^+\subset A$ and subsets $U\subset \Spv(A)$ that are intersections of subsets of the form $U_{1,f}$, given by
\[
A^+\mapsto U=\Spa(A,A^+)=\{x\in \Spv(A)\mid \forall f\in A^+, |f(x)|\leq 1\}=\bigcap_{f\in A^+} U_{1,f}\ ,
\]
\[
U\mapsto A^+=\{f\in A\mid \forall x\in U, |f(x)|\leq 1\}\ .
\]
In particular,
\[
A^+=\{f\in A\mid \forall x\in \Spa(A,A^+), |f(x)|\leq 1\}\ .
\]
\end{proposition}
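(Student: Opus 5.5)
The plan is to check that the two maps are mutually inverse. Here $U_{1,f}=\{x\in\Spv(A)\mid |f(x)|\leq 1\}$. Write $\alpha(A^+)=\bigcap_{f\in A^+}U_{1,f}$ and $\beta(U)=\{f\in A\mid \forall x\in U,\ |f(x)|\leq 1\}$. Formally, $\alpha$ and $\beta$ form an antitone Galois connection between subsets of $A$ and subsets of $\Spv(A)$. Hence $\alpha\beta\alpha=\alpha$ and $\beta\alpha\beta=\beta$. Every $\alpha(\text{anything})$ is an intersection of sets $U_{1,f}$, and conversely any such intersection $\bigcap_{f\in F}U_{1,f}$ equals $\alpha(F)$, so it satisfies $\alpha\beta(U)=U$.

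It remains to see two things: that $\beta(U)$ is always an integrally closed subring, and that $\beta\alpha(A^+)=A^+$ for integrally closed $A^+$.

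For the first, $\beta(U)$ is a subring because the ultrametric inequality and multiplicativity preserve the condition $|\cdot|\leq 1$; it contains $\mathbb Z$ since $|n|\leq 1$ always. It is integrally closed because if $f^n+a_{n-1}f^{n-1}+\dots+a_0=0$ with all $|a_i(x)|\leq1$ and $|f(x)|>1$, then $|f^n(x)|$ strictly dominates all the other terms, contradicting the relation.

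For the second, the inclusion $A^+\subset\beta\alpha(A^+)$ is trivial. The converse is the main point. Take $f\notin A^+$; I need a valuation $x$ on $A$ with $|A^+(x)|\leq 1$ and $|f(x)|>1$. The standard route is to pass to $B=A^+[f^{-1}]\subset A[f^{-1}]$ and observe that $f^{-1}$ is not a unit in $B$. Indeed, if $f\cdot b=1$ with $b=\sum_{i\le m} c_i f^{-i}$ and $c_i\in A^+$, multiplying by $f^{m}$ gives an integral equation for $f$ over $A^+$ (valid in $A[f^{-1}]$, hence in $A$ after multiplying by a power of $f$ to kill $f$-torsion). That would force $f\in A^+$, a contradiction; the torsion bookkeeping is the one place to be careful.

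So $f^{-1}$ lies in some prime, and hence in some maximal ideal $\mathfrak m$ of $B$. I would then choose a valuation ring $V$ of a residue field of $A[f^{-1}]$ dominating $B_{\mathfrak m}$, or rather its image. The cleanest way to do this is to take a minimal prime $\mathfrak q\subset\mathfrak m$ of $B$; localization ensures this prime comes from $A[f^{-1}]$. Then apply Chevalley's extension theorem to find a valuation ring of $\mathrm{Frac}(B/\mathfrak q)$ dominating $(B/\mathfrak q)_{\mathfrak m}$. The resulting valuation $x$ on $A$, via $A\to A[f^{-1}]\to \mathrm{Frac}(B/\mathfrak q)$, satisfies $|A^+(x)|\leq1$ and $|f^{-1}(x)|<1$, that is, $|f(x)|>1$.

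I expect this existence-of-valuations step to be the main obstacle. It is the classical characterization of the integral closure as the intersection of valuation rings, adapted to non-domains. The final sentence of the statement is then just the identity $\beta\alpha(A^+)=A^+$.
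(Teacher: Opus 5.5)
Your proof has the same structure as the paper's. The key step matches: pass to $B=A^+[f^{-1}]\subset A[f^{-1}]$, find a prime containing $f^{-1}$ and a minimal prime $\mathfrak q$ below it, use that minimal primes lift along the injection $B\hookrightarrow A[f^{-1}]$, and take a valuation ring. Your Galois-connection proof that $\alpha\beta(U)=U$ is just a repackaging of the paper's remark that $\bigcap_{f\in I}U_{1,f}=\Spa(A,A^+)$ when $A^+$ is the integral closure of the ring generated by $I$. Your handling of $f$-torsion is a detail the paper leaves implicit.

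Two things in the write-up need fixing.

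First, the unit computation should start from $f^{-1}b=1$, i.e.\ from $f=\sum_{i\le m}c_if^{-i}\in B$. As written, $f\cdot b=1$ only says $b=f^{-1}$, which always holds.

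Second, and more substantively, in general there is no map $A[f^{-1}]\to\mathrm{Frac}(B/\mathfrak q)$, since $A[f^{-1}]$ is larger than $B$. A valuation ring of $\mathrm{Frac}(B/\mathfrak q)$ gives a valuation on $B$, not yet on $A$. You need the prime $\mathfrak Q\subset A[f^{-1}]$ with $\mathfrak Q\cap B=\mathfrak q$ that your localization argument produces. Then $B/\mathfrak q\hookrightarrow\kappa(\mathfrak Q)$, so you can apply Chevalley's theorem inside $\kappa(\mathfrak Q)$ to the local ring $(B/\mathfrak q)_{\mathfrak m}$; equivalently, extend your valuation from $\mathrm{Frac}(B/\mathfrak q)$ to $\kappa(\mathfrak Q)$. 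The composite $A\to A[f^{-1}]\to\kappa(\mathfrak Q)$ with this valuation then has $|A^+|\le 1<|f|$. This is exactly the paper's step ``lift the valuation to $A[\tfrac 1f]$'', and it is the only reason the minimal prime is needed. Your own first formulation, a valuation ring of a residue field of $A[f^{-1}]$, was the right one.
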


\begin{proof} It is clear that for all $U$, the subset $A^+=\{f\in A\mid \forall x\in U, |f(x)|\leq 1\}\subset A$ is an integrally closed subring. We note to see that the maps are inverse. The association $A^+\mapsto U=\Spa(A,A^+)$ is by definition surjective: If $U=\bigcap_{f\in I} U_{1,f}$ for some set $I$, then we can take $A^+$ to be the integral closure of the subring of $A$ generated by all $f$ for $f\in I$. Thus, it remains to see that starting with $A^+$, one has
\[
A^+ = \{f\in A\mid \forall x\in \Spa(A,A^+), |f(x)|\leq 1\}\ .
\]
Assume that $f\not\in A^+$. Then $f$ is also not in $A^+[\tfrac 1f]\subset A[\tfrac 1f]$ as otherwise $f$ would be integral over $A^+$. In particular, we can find a prime ideal $\mathfrak p$ of $A^+[\tfrac 1f]$ that contains $\tfrac 1f$. Let $\mathfrak q$ be a minimal prime ideal of $A^+[\tfrac 1f]$ contained in $\mathfrak p$. We may then find a valuation ring $V$ with a map $\Spec V\to \Spec A^+[\tfrac 1f]$ taking the generic point to $\mathfrak q$ and the special point to $\mathfrak p$. As the image of $\Spec A[\tfrac 1f]\to \Spec A^+[\tfrac 1f]$ contains the minimal prime $\mathfrak q$, we can lift the valuation corresponding to $\Spec V\to \Spec A^+[\tfrac 1f]$ to $A[\tfrac 1f]$. The resulting valuation of $A$ takes values $\leq 1$ on all elements of $A^+$ by construction, and value $>1$ on $f$ as $\tfrac 1f\in \mathfrak p$. This gives the desired contradiction.
\end{proof}

\begin{proposition} Let $(A,A^+)\to (B,B^+)$ be a map of pairs as above such that the induced map
\[
\Spa(B,B^+)\to \Spa(A,A^+)
\]
factors over the rational subset $U=U(\frac{g_1,\ldots,g_n}f)\subset \Spa(A,A^+)$. Then the map $(A,A^+)\to (B,B^+)$ factors uniquely over the pair $(A[\tfrac 1f],\widetilde{A^+[\tfrac{g_1}f,\ldots,\tfrac{g_n}f]})$ where $\widetilde{A^+[\tfrac{g_1}f,\ldots,\tfrac{g_n}f]}\subset A[\tfrac 1f]$ is the integral closure of $A^+[\tfrac{g_1}f,\ldots,\tfrac{g_n}f]$ in $A[\tfrac 1f]$. Moreover, the map
\[
\Spa(A[\tfrac 1f],\widetilde{A^+[\tfrac{g_1}f,\ldots,\tfrac{g_n}f]})\to \Spa(A,A^+)
\]
is a homeomorphism onto $U$.
\end{proposition}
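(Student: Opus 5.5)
The plan has two parts: first the factorization, then the homeomorphism onto $U$. Throughout, write $C=\widetilde{A^+[\tfrac{g_1}f,\ldots,\tfrac{g_n}f]}\subset A[\tfrac 1f]$. The main tool is Proposition~\ref{prop:characterizeAplus}, applied to both $(B,B^+)$ and $(A[\tfrac 1f],C)$.

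For the factorization, I first check that $f$ becomes a unit in $B$. If the image of $f$ lay in a prime $\mathfrak q\subset B$, then the trivial valuation on $B/\mathfrak q$ (value $0$ on $\mathfrak q$, value $1$ elsewhere) takes values $\leq 1$ on $B^+$, so it defines a point of $\Spa(B,B^+)$. At this point $|f|=0$, so it does not map into $U$, which is a contradiction. Hence $A\to B$ factors uniquely over $A[\tfrac 1f]$. Next, the image of each $g_i/f$ in $B$ has $|g_i/f(x)|\leq 1$ at every $x\in\Spa(B,B^+)$, because the image point lies in $U$. By the last assertion of Proposition~\ref{prop:characterizeAplus}, each $g_i/f$ therefore lands in $B^+$. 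Since $B^+$ is an integrally closed subring containing the image of $A^+$, the ring $C$ maps into $B^+$. Uniqueness is automatic, since maps out of a localization are unique.

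For the homeomorphism, I start from the standard identification of $\Spv(A[\tfrac 1f])$ with the subset $\{|f|\neq 0\}\subset\Spv(A)$. Under it, the conditions $|A^+|\leq 1$ and $|g_i/f|\leq 1$ cut out exactly $U$, and integral closure does not change the set of valuations that are $\leq 1$. This gives a continuous bijection onto $U$, where continuity follows from functoriality of $\Spa$. It then remains to show the map is open, i.e.\ that rational subsets map to open subsets of $U$. A rational subset $U(\tfrac{h_1,\ldots,h_m}{h})$ of the source can be rewritten, after clearing denominators by a power of $f$, as $U(\tfrac{h'_1,\ldots,h'_m}{h'})\cap U$ with $h',h'_j\in A$. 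Since $|f|\neq 0$ on $U$, the condition $|h'|\neq 0$ is preserved under this rewriting, so the image is open in $U$.

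The one point that needs care is the clearing of denominators in the openness step. Everything else is formal once the valuation-theoretic characterization in Proposition~\ref{prop:characterizeAplus} is available.
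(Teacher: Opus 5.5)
Your proposal is correct and takes essentially the same approach as the paper. The factorization uses the same two steps: trivial valuations attached to primes of $B$ containing $f$ force $f$ to be a unit, and Proposition~\ref{prop:characterizeAplus} together with the integral closedness of $B^+$ sends $\widetilde{A^+[\tfrac{g_1}f,\ldots,\tfrac{g_n}f]}$ into $B^+$. For the homeomorphism onto $U$ the paper only says it follows by unraveling definitions, and your argument (identifying $\Spv(A[\tfrac 1f])$ with $\{|f|\neq 0\}$, noting that integral closure does not change the valuation conditions, and proving openness by clearing denominators with a power of $f$) correctly fills this in.
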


\begin{proof} As $\Spa(B,B^+)\to \Spa(A,A^+)$ factors over the locus where $|f(x)|\neq 0$, it follows that there are no $y\in \Spa(B,B^+)$ with $|f(y)|=0$. But there is a natural map $\Spec(B)\hookrightarrow \Spa(B,B^+)$ sending any prime ideal $\mathfrak p$ to the valuation $B\to \mathrm{Frac}(B/\mathfrak p)\to \{0,1\}$; if $y$ is not invertible on $B$, then any point of $\Spec(B/fB)$ would define a point $y\in \Spa(B,B^+)$ with $|f(y)|=0$, contradiction. Thus, $y$ is invertible in $B$, and we get the desired unique map $A[\tfrac 1f]\to B$. Moreover, we know that $h_i:=\frac{g_i}f\in B$ is an element such that $|h_i(y)|\leq 1$ for all $y\in \Spa(B,B^+)$. This implies that $h_i\in B^+$ by Proposition~\ref{prop:characterizeAplus}. In particular, we get the desired map $A^+[\tfrac{g_1}f,\ldots,\tfrac{g_n}f]\to B^+$, which moreover extends to the integral closure.

The final statement follows easily by unraveling definitions.
\end{proof}

It follows that the pair $(A[\tfrac 1f],\widetilde{A[\tfrac gf]})$ depends only on $U=U(\frac{g_1,\ldots,g_n}f)$, and we can define presheaves $\mathcal O_X$, $\mathcal O_X^+$ on the basis of rational opens of $X=\Spa(A,A^+)$ via $\mathcal O_X(U) = A[\tfrac 1f]$, $\mathcal O_X^+(U) = \widetilde{A[\tfrac gf]}$.

\begin{proposition} The presheaves $\mathcal O_X$ and $\mathcal O_X^+$ are sheaves. For all $x\in \Spa(A,A^+)$, the valuation $f\mapsto |f(x)|$ extends uniquely to $\mathcal O_{X,x}$. Moreover,
\[
\mathcal O_X^+(U) = \{f\in \mathcal O_X(U)\mid \forall x\in U, |f(x)|\leq 1\}\ .
\]
\end{proposition}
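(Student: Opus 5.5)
The plan is to prove the three assertions in the order: sheaf property of $\mathcal O_X$, then the extension of valuations to stalks, then the characterization of $\mathcal O_X^+$, which finally gives the sheaf property of $\mathcal O_X^+$.

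For $\mathcal O_X$, first relate $\Spa(A,A^+)$ to $\Spec A$. For a rational subset $U=U(\frac{g_1,\ldots,g_n}f)$ we have $\mathcal O_X(U)=A[\tfrac 1f]$, which depends only on the Zariski open $D(f)$. There is a continuous support map $\mathrm{supp}: X\to \Spec A$, $x\mapsto \{a\mid |a(x)|=0\}$. Under it, $U$ maps into $D(f)$. Conversely, every point of $\Spec A[\tfrac 1f]$ is the support of some point of $U$: take the trivial valuation $A\to \mathrm{Frac}(A/\mathfrak p)\to\{0,1\}$ as in the previous proof; it satisfies $|g_i|\leq 1=|f|$ and is $\leq 1$ on $A^+$.

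Given a cover of $U$ by rational $U_j=U(\frac{g_{j,\cdot}}{f_j})$ (finitely many by quasicompactness), apply this to each $U_j$. The $D(f_j)$ then cover $D(f)$ inside $\Spec A$, because every prime of $A[\tfrac1f]$ is the support of a point of $U$, and that point lies in some $U_j$. Intersections of rational subsets are rational, with $\mathcal O_X(U_j\cap U_k)=A[\tfrac{1}{f_jf_k}]$. So the sheaf axiom for $\mathcal O_X$ on the basis reduces to the Zariski sheaf property of $\widetilde{A}$ on $\Spec A$, which is standard. Since the rational subsets form a basis closed under finite intersections, the basis sheaf extends uniquely to a sheaf.

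For the stalks, $\mathcal O_{X,x}=\varinjlim_{U\ni x}A[\tfrac1f]$. Each element is $a/f^m$ with $|f(x)|\neq0$, so we may set $|a/f^m|:=|a(x)|/|f(x)|^m$. Multiplicativity and the ultrametric inequality then follow from those on $A$, after clearing denominators. Well-definedness: if $a/f^m=b/h^k$ in $A[\tfrac{1}{fh}]$, then $(fh)^N(ah^k-bf^m)=0$ in $A$, and $|fh(x)|\neq0$. Uniqueness is forced by multiplicativity, since $f$ is a unit.

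The characterization of $\mathcal O_X^+(U)$ is an application of Proposition~\ref{prop:characterizeAplus} to the pair $(A[\tfrac1f],\widetilde{A^+[\tfrac gf]})$. By the preceding proposition its $\Spa$ is homeomorphic to $U$, compatibly with the valuations on $A[\tfrac1f]$ just defined. Hence
\[
\widetilde{A^+[\tfrac gf]}=\{h\in A[\tfrac1f]\mid \forall x\in U, |h(x)|\leq 1\}.
\]
The sheaf property of $\mathcal O_X^+$ is then immediate. It is the subpresheaf of the sheaf $\mathcal O_X$ cut out by the pointwise condition $|h(x)|\leq 1$, and this condition is local and compatible with restriction by uniqueness of the stalk valuations.

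The main obstacle is the first step: making precise that the covering by rational subsets yields a Zariski covering, in particular that each $U_j\to D(f_j)$ hits every prime. The trivial-valuation construction is needed there.
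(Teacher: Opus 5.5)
Your proposal is correct and follows essentially the paper's route. The paper likewise reduces the sheaf property of $\mathcal O_X$ to Zariski descent via the trivial-valuation map $\psi:\Spec A\to\Spa(A,A^+)$, using $\psi^{-1}(U(\tfrac{g_1,\ldots,g_n}{f}))=D(f)$ and phrasing this as $\mathcal O_X=\psi_\ast\mathcal O_{\Spec A}$ on rational opens; it then obtains the description of $\mathcal O_X^+$ and its sheaf property from Proposition~\ref{prop:characterizeAplus} just as you do, so the support map you introduce is not actually needed.
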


\begin{proof} We have already seen that the valuations extend uniquely to $\mathcal O_X(U)$ whenever $x\in U$, so by passing to the colimit they extend uniquely to the stalks. Moreover, $\mathcal O_X^+(U) = \{f\in \mathcal O_X(U)\mid \forall x_\in U, |f(x)|\leq \}$ holds for the $U=U(\frac{g_1,\ldots,g_n}f)$ by Proposition~\ref{prop:characterizeAplus} applied to the pair $(\mathcal O_X(U),\mathcal O_X^+(U))$. This implies that $\mathcal O_X^+$ is a sheaf as soon as $\mathcal O_X$ is.

Now note that there is a map $\psi: \Spec A\to \Spa(A,A^+)$ taking any prime ideal $\mathfrak p$ to the composite $A\to \mathrm{Frac}(A/\mathfrak p)\to \{0,1\}$. The pullback of $U(\frac{g_1,\ldots,g_n}f)$ under this map is $D(f)$. We see that $\mathcal O_{\Spa(A,A^+)}$ is the pushforward of $\mathcal O_{\Spec A}$ under $\psi$, on the level of presheaves. As pushforward preserves sheaves, this shows that $\mathcal O_{\Spa(A,A^+)}$ is a sheaf.
\end{proof}

\begin{definition} A discrete adic space is a triple $(X,\mathcal O_X,(|\cdot(x)|)_{x\in X})$ consisting of a topological space $X$ equipped with a sheaf of rings $\mathcal O_X$ and with an equivalence class of valuations $|\cdot(x)|$ on $\mathcal O_{X,x}$ for all $x\in X$, such that $(X,\mathcal O_X,(|\cdot(x)|)_{x\in X})$ is locally of the form $(\Spa(A,A^+),\mathcal O_{\Spa(A,A^+)},(|\cdot(x)|)_{x\in \Spa(A,A^+)})$ for a discrete Huber pair $(A,A^+)$.
\end{definition}

There are two fully faithful functors $X\mapsto X^{\mathrm{ad}}$ from schemes $X$ to discrete adic spaces. The first is given by $X=\Spec A\mapsto X^{\mathrm{ad}} = \Spa(A,A)$, while the second is given by $X=\Spec A\mapsto X^{\mathrm{ad}/\mathbb Z} = \Spa(A,\widetilde{\mathbb Z})$. It is easy to see that both functors glue, and that there is in general a comparison map $X^{\mathrm{ad}}\to X^{\mathrm{ad}/\mathbb Z}$. In fact, more generally, for any ring $R$, one can define a fully faithful functor $X\mapsto X^{\mathrm{ad}/R}$ from schemes over $\Spec R$ to discrete adic spaces over $\Spa(R,R)$, given by sending $X=\Spec A$ to $X^{\mathrm{ad}/R} = \Spa(A,\widetilde{R})$.

One can specify the points of $X^{\mathrm{ad}}$ and $X^{\mathrm{ad}/R}$ in terms of scheme-theoretic data. Indeed, points of $X^{\mathrm{ad}}$ are equivalent to maps $\Spec V\to X$ from spectra of valuation rings, up to equivalence (where equivalence is generated by declaring $\Spec V\to X$ and $\Spec W\to \Spec V\to X$ equivalent when $V\to W$ is a faithfully flat map of valuation rings). Similarly, points of $X^{\mathrm{ad}/R}$ are equivalent to valuation rings $V$ with fraction field $K$ and a diagram
\[\xymatrix{
\Spec K\ar[d]\ar[r] &X\ar[d]\\
\Spec V\ar[r] & \Spec R\ ,
}\]
again up to the same notion of equivalence.

\begin{proposition} If $X$ is separated and of finite type over $\Spec R$ then $X^{\mathrm{ad}}\to X^{\mathrm{ad}/R}$ is an open immersion. If $X$ is proper over $\Spec R$, then $X^{\mathrm{ad}}\to X^{\mathrm{ad}/R}$ is an isomorphism. 
\end{proposition}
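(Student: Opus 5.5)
Both assertions are local on $\Spec R$ and on $X$, so I would first reduce to the affine case. Open immersions and isomorphisms of discrete adic spaces can be checked locally on the target. The target $X^{\mathrm{ad}/R}$ is covered by the $(X_i)^{\mathrm{ad}/R}$ only in the separated situation, and only after knowing the $X_i^{\mathrm{ad}}$ land correctly. So instead I would work with the valuative description of points given just before the statement. A point of $X^{\mathrm{ad}}$ is a map $\Spec V\to X$; a point of $X^{\mathrm{ad}/R}$ is a square with $\Spec K\to X$ and $\Spec V\to \Spec R$. The comparison map sends $\Spec V\to X$ to the square obtained by restricting to $\Spec K$.

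\emph{Step 1: injectivity and image.} By the valuative criterion of separatedness, a square admits at most one diagonal lift $\Spec V\to X$. Since $X$ is separated over $R$, the map $X^{\mathrm{ad}}\to X^{\mathrm{ad}/R}$ is therefore injective on points. Its image is the set of squares that admit a lift. If $X$ is proper, the valuative criterion of properness (existence part, using finite type) says every square lifts, so the map is bijective.

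\emph{Step 2: the open immersion in the affine case.} Let $X=\Spec A$ with $A$ of finite type over $R$, generated by $a_1,\dots,a_n$. Then $X^{\mathrm{ad}}=\Spa(A,A)$ and $X^{\mathrm{ad}/R}=\Spa(A,\widetilde R)$. By Proposition~\ref{prop:characterizeAplus}, $A$ is the integral closure of $\widetilde R[a_1,\dots,a_n]$, so
\[
\Spa(A,A)=\{x\mid |a_i(x)|\le 1\ \forall i\}=U\Big(\tfrac{a_1,\ldots,a_n}{1}\Big),
\]
which is a rational open subset of $\Spa(A,\widetilde R)$. The preceding proposition on rational subsets identifies this rational subset, together with its structure sheaf and valuations, with $\Spa(A,A)$. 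This settles the affine case.

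\emph{Step 3: globalization, which I expect to be the main obstacle.} Cover $X$ by affines $X_i=\Spec A_i$ over affine opens of $\Spec R$. The subtlety is that $X_i^{\mathrm{ad}/R}$ is not an open subset of $X^{\mathrm{ad}/R}$ in general, because the center of a valuation need not lie in $X_i$. I would handle this by showing that $X_i^{\mathrm{ad}}$ is nevertheless open in $X^{\mathrm{ad}/R}$. Consider the subset of $X^{\mathrm{ad}/R}$ where the generic point $\Spec K\to X$ lands in $X_i$; this is the open $(X_i)^{\mathrm{ad}/R}$ pulled back along the open immersion of schemes, and it is open by gluing. Inside it, $X_i^{\mathrm{ad}}$ is the rational open $\{|a_{ij}|\le 1\}$ from Step 2. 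Hence $X_i^{\mathrm{ad}}$ is open in $X^{\mathrm{ad}/R}$, and $X^{\mathrm{ad}}=\bigcup_i X_i^{\mathrm{ad}}$ maps homeomorphically onto an open subset. The map is injective by Step 1, and the structure sheaves and valuations agree locally by Step 2.

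For the proper case, surjectivity from Step 1 then gives an isomorphism. The one point to check carefully is that a valuation centered in $X_i$ actually has its generic point in $X_i$, which holds because $X_i$ is open and the generic point specializes to the center.
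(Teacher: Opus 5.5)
Your proof is correct and follows the paper's argument: the affine case, where $\Spa(A,A)$ is the rational subset $\{|a_i|\le 1\}$ of $\Spa(A,\widetilde R)$, shows that the map is locally on the source an open immersion, and the valuative criteria of separatedness resp.\ properness, applied via the description of points, give injectivity resp.\ bijectivity. One small correction: the ``subtlety'' you announce in Step 3 does not exist, because points of $X^{\mathrm{ad}/R}$ have no center on $X$, and $X_i^{\mathrm{ad}/R}\subset X^{\mathrm{ad}/R}$ is open by the gluing construction itself --- which is exactly the fact you then use.
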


\begin{proof} As long as $X$ is of finite type over $\Spec R$, the map $X^{\mathrm{ad}}\to X^{\mathrm{ad}/R}$ is locally on the source an open immersion: Indeed, this can be checked when $X$ is affine, in which case the map is indeed an open immersion. It follows that the assertions reduce to injectivity resp.~bijectivity of the underlying point sets, where it is precisely the valuative criterion of seperatedness/properness using the description of points.
\end{proof}

\begin{remark} Later, when we recover finiteness of coherent cohomology and Serre duality for proper maps, the properness of the map will be used via the previous proposition, as ensuring that $X^{\mathrm{ad}}$ and $X^{\mathrm{ad}/R}$ agree. In fact, the category of modules on $X^{\mathrm{ad}}$ will be glued from $D(A_\solid)$'s whereas the category of modules on $X^{\mathrm{ad}/R}$ will be glued from $D((A,R)_\solid)$'s; their agreement globally means that the functor $j_!$ appearing in the previous lecture will simply be the identity for proper maps.
\end{remark}

Finally, we can state the theorem on gluing. We would like to glue $(A,A^+)_\solid$-modules. Unfortunately, this cannot be done on the abelian level. Indeed, if $U\subset \Spa(A,A^+)$ is a rational subset, the base change functor
\[
-\otimes_{(A,A^+)_\solid} (\mathcal O_X(U),\mathcal O_X^+(U))_\solid
\]
is not exact. For example, if $(A,A^+)=(\mathbb F_p[T],\mathbb F_p)$ and $U=\{|T|\leq 1\}$, then $(\mathcal O_X(U),\mathcal O_X^+(U))=(\mathbb F_p[T],\mathbb F_p[T])$, and the injection
\[
\mathbb F_p[T]\hookrightarrow \mathbb F_p((T^{-1}))
\]
of $(\mathbb F_p[T],\mathbb F_p)_\solid$-modules becomes the map
\[
\mathbb F_p[T]\to 0
\]
of $(\mathbb F_p[T],\mathbb F_p[T])_\solid$-modules after base change.

We see that we should work with derived tensor products throughout, and thus glue on the level of derived categories. Unfortunately, gluing in derived categories is not possible because one has to keep track of homotopies between homotopies (between homotopies...). Thus, we need to pass to the $\infty$-categorical enhancements. Recall that for any abelian category $\mathcal A$, the derived category $D(\mathcal A)$ has a natural $\infty$-categorical enhancement $\mathcal D(\mathcal A)$. With this change, everything works:

\begin{theorem}\label{thm:globalization} Let $X$ be a discrete adic space. The association taking any open affinoid $U=\Spa(A,A^+)\subset X$ to the $\infty$-category
\[
\mathcal D((A,A^+)_\solid)\subset \mathcal D(A\Mod)
\]
defines a sheaf of $\infty$-categories on $X$. More precisely, the given association defines in an obvious way a covariant functor from the category of affinoid open $U\subset X$ (where maps induce the forgetful functors) towards the $\infty$-category of $\infty$-categories such that all functors admit left adjoints; passing to the corresponding diagram of left adjoint functors gives the implied functor $U=\Spa(A,A^+)\mapsto \mathcal D((A,A^+)_\solid)$.
\end{theorem}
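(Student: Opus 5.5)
We plan to use the standard descent criterion for categories of modules along a family of localizations. Passing to left adjoints, restriction to a smaller affinoid is $j^\ast = -\otimes^L_{(A,A^+)_\solid} (B,B^+)_\solid$. If these base change functors are Bousfield localizations whose kernels are generated by idempotent objects and which commute pairwise, then descent for finite covers holds. First we reduce to $X=\Spa(A,A^+)$ affinoid with $(A,A^+)$ finitely generated; the general discrete case follows by the colimit definition of $(A,A^+)_\solid$, since all constructions commute with filtered colimits of analytic rings. Because rational subsets form a basis closed under intersection, it suffices to show the following: for a finite cover of $X$ by rational subsets $U_1,\ldots,U_n$, the functor
\[
\mathcal D((A,A^+)_\solid)\to \varprojlim_{\emptyset\neq J\subset\{1,\ldots,n\}} \mathcal D((\mathcal O_X(U_J),\mathcal O_X^+(U_J))_\solid)
\]
is an equivalence, where $U_J=\bigcap_{j\in J} U_j$. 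By a refinement argument we may moreover assume that the cover is of one of two elementary types. The first type is $\{|f|\leq 1\}\cup\{|f|\geq 1\}$. The second type is a Zariski-type cover by $\{f_i\neq 0\}$, or more generally a standard rational cover generated by $f_1,\ldots,f_n$ with $(f_1,\ldots,f_n)=A$.

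Second, we identify each restriction functor explicitly. For a Zariski localization $A\to A[1/f]$, the base change $M\mapsto M\otimes^L_A A[1/f]$ is a smashing localization, idempotent in the classical sense. For $U=\{|f|\leq 1\}$ we have $(A,A^+)\to (A,A^+[f])$. Applying Theorem~\ref{thm:solidAR}(ii) with $R\to A^+\to A^+[f]$, this restriction is $j^\ast$, with kernel consisting of the $A((f^{-1}))$-modules (suitably solidified) as in Observations~\ref{obs:Ainftyidempotent} and~\ref{obs:kernelAinftymodules}. The projection formula $j_!j^\ast M = M\otimes^L j_!A$ then exhibits $j^\ast$ as tensoring with the idempotent algebra $\mathrm{cone}(j_!A\to A)$, so it is smashing. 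The complementary set $\{|f|\geq 1\}$ is the rational subset $U(\tfrac 1f)$, with ring $(A[1/f], (A^+[1/f])^\sim)$; we need to check that its module category corresponds to tensoring with the idempotent algebra $A((f^{-1}))$-type object, i.e.\ to the kernel of the previous localization together with the inverse of $f$.

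Third, we apply the general lemma. Let $\mathcal C$ be a presentable stable symmetric monoidal $\infty$-category and $e_1,\ldots,e_n$ idempotent commutative algebras with $\mathcal C_i=\mathrm{Mod}_{e_i}$. If $\bigotimes_i \mathrm{cofib}(1\to e_i)=0$, then $\mathcal C\simeq\varprojlim_J \mathcal C_J$ with $\mathcal C_J=\mathrm{Mod}_{\bigotimes_{j\in J} e_j}$. The proof runs as follows. Fully faithfulness follows from writing $M$ as a finite limit of the $M\otimes e_J$, a \v{C}ech/Koszul argument using the vanishing. Essential surjectivity then follows by constructing the limit of a compatible family and checking that it restricts correctly, which uses that finite limits commute with tensoring in the stable setting. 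Applied to the first type of cover, the vanishing condition is $j_!A\otimes A((f^{-1}))\otimes(\text{Zariski part})=0$, which follows from Observation~\ref{obs:Ainftyidempotent}. We must also check that intersections correspond to tensor products of the idempotent algebras, i.e.\ that $(\mathcal O(U\cap V),\mathcal O^+(U\cap V))_\solid$ is the pushout of analytic rings. This reduces to the finitely generated case and to Theorem~\ref{thm:solidAR}.

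The main obstacle is the identification of the localization for $U(\tfrac 1f)$ and, more generally, checking that every rational localization is a smashing localization compatible with intersections. The base change is not given by an obviously idempotent algebra, since it changes both $A$ and $A^+$. We would first try to factor $U(\tfrac{g_1,\ldots,g_n}{f})$ as a Zariski localization inverting $f$, followed by successive adjunctions of the elements $g_i/f$ to $A^+$. Each step is then one of the two localizations already understood, and composites of smashing localizations are smashing.
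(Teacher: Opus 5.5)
\textbf{There is a genuine gap.} Your argument needs every rational localization to be smashing, i.e.\ of the form $M\mapsto M\otimes E$ for an idempotent algebra $E$. This is false for $\{|f|\leq 1\}$ and $\{|f|\geq 1\}$.

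Take $(A,A^+)=(\mathbb Z[T],\mathbb Z)$ and $f=T$. By Observation~\ref{obs:jshriekexists}, the cone of $j_!M\to M$ is $M\otimes^L_{(A,\mathbb Z)_\solid}A_\infty$. Hence $\mathrm{cone}(j_!A\to A)=A_\infty=\mathbb Z((T^{-1}))$. Tensoring with it projects onto $A_\infty$-modules, and by Observation~\ref{obs:kernelAinftymodules} these form the \emph{kernel} of $j^\ast$, not its image. Your own identification of the kernel already contradicts the claim.

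In fact no idempotent $E$ can work. The unit $A$ already lies in $D(A_\solid)$, so $j_\ast j^\ast A=A$ would force $E=A$ and hence $j_\ast j^\ast=\mathrm{id}$. But $j^\ast A_\infty=0$. The actual localization is $M\mapsto R\intHom_A(A_\infty/A,M)[1]$, as in the proof of Proposition~\ref{prop:locbasechange}. So $D(A_\solid)$ is the open complement of the smashing ``closed'' piece $\mathrm{Mod}_{A_\infty}$, and its localization is an internal Hom, not a tensor product.

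Similarly, $\{|T|\geq 1\}$ corresponds to the complement of the $\mathbb Z[[T]]$-modules, not to the $\mathbb Z((T^{-1}))$-modules. For example, the discrete module $\mathbb Z[T^{\pm 1}]$ lies in $D((\mathbb Z[T^{\pm1}],\mathbb Z[T^{-1}])_\solid)$. But tensoring it with $\mathbb Z((T^{-1}))$ gives $\mathbb Z((T^{-1}))\neq \mathbb Z[T^{\pm1}]$, so it is not a $\mathbb Z((T^{-1}))$-module.

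Consequently, your descent lemma with the condition $\bigotimes_i\mathrm{cofib}(1\to e_i)=0$ is applied to the wrong objects. Your last paragraph (``composites of smashing localizations are smashing'') inherits the same error.

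\textbf{What is needed instead.} You need a descent criterion for arbitrary Bousfield localizations $L_U$, the left adjoints to the fully faithful inclusions of Proposition~\ref{prop:locfullyfaithful}. This is what Proposition~\ref{prop:inftydescent} provides, and it requires two inputs.
\begin{enumerate}
\item[{\rm (i)}] \emph{Compatibility with intersections:} $\mathcal C_{U\cap V}=\mathcal C_U\cap\mathcal C_V$ and $L_{U\cap V}=L_UL_V=L_VL_U$. Without smashing this is not automatic. The paper derives it from compatibility of localization with base change (Proposition~\ref{prop:locbasechange}), reducing to $\{f\neq 0\}$ and to $(\mathbb Z[T],\mathbb Z)$ with the explicit $R\intHom$ formula.
\item[{\rm (ii)}] \emph{Joint conservativity on covers.} For $\{|T|\leq 1\}\cup\{|T|\geq 1\}$, the condition $L_UM=0$ means that $M$ is a $\mathbb Z((T^{-1}))$-module, respectively a $\mathbb Z[[T]]$-module. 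One then uses $\mathbb Z[[T]]\otimes^L_{(\mathbb Z[T],\mathbb Z)_\solid}\mathbb Z((T^{-1}))=0$. General covers are handled by refining to standard rational covers (Lemma~\ref{lem:standardrationalcovers}) and inducting.
\end{enumerate}
So the vanishing condition concerns the idempotent algebras of the complementary closed pieces, not the localizations themselves.

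Once (i) and (ii) are in place, the rest of your argument goes through as you describe. The unit $M\to\varprojlim_J L_JM$ is an equivalence because finite limits commute with each $L_i$, and after localizing the cover splits.
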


In particular, we can define the $\infty$-category $\mathcal D((\mathcal O_X,\mathcal O_X^+)_\solid)$ by taking global sections; passing to the homotopy category, we also get a triangulated category $D((\mathcal O_X,\mathcal O_X^+)_\solid)$.

We will give the proof in the next lecture.

\newpage

\section{Lecture X: Globalization, II}

In this lecture, we prove Theorem~\ref{thm:globalization}. Although the statement is $\infty$-categorical, most results needed for the proof can actually be stated more concretely. The main idea of the proof is as for Zariski descent for schemes: If $X=\Spec A$ is covered by $U_i=\Spec A[\tfrac 1{f_i}]$ for some $f_1,\ldots,f_n$ that generate the unit ideal, then for any $A$-module $M$ the sequence
\[
0\to M\to \prod_{i=1}^n M[\tfrac 1{f_i}]\to \prod_{1\leq i<j\leq n} M[\tfrac 1{f_if_j}]\to \ldots \to M[\tfrac 1{f_1\cdots f_n}]\to 0
\]
is exact. To check this, it suffices to check after inverting each $f_i$; but then the cover is split, so the sequence is automatically exact.

The two key points needed for this is that localizations commute, and that a module is zero once it is locally zero. We will establish these results on the level of derived categories, and then observe that this gives the desired gluing once we formulate it in terms of $\infty$-categories.

First, we observe that all localizations can be regarded as full subcategories:

\begin{proposition}\label{prop:locfullyfaithful} Let $(A,A^+)$ be a discrete Huber pair, and let $U\subset X=\Spa(A,A^+)$ be a rational subset. Then the forgetful functor
\[
D((\mathcal O_X(U),\mathcal O_X^+(U))_\solid)\to D((A,A^+)_\solid)
\]
is fully faithful, and admits the left adjoint $-\otimes^L_{(A,A^+)_\solid} (\mathcal O_X(U),\mathcal O_X^+(U))_\solid$.
\end{proposition}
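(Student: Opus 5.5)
Write $U=U(\frac{g_1,\ldots,g_n}f)$ and $(B,B^+)=(\mathcal O_X(U),\mathcal O_X^+(U))$, so $B=A[\tfrac 1f]$ and $B^+$ is the integral closure of $A^+[\tfrac{g_1}f,\ldots,\tfrac{g_n}f]$. Both $D((A,A^+)_\solid)$ and $D((B,B^+)_\solid)$ are full subcategories of $D(A\Mod)$, resp.\ $D(B\Mod)$. The map $(A,A^+)_\solid\to(B,B^+)_\solid$ is a map of analytic rings, so by Proposition~\ref{prop:functoriality} the forgetful functor lands in $D((A,A^+)_\solid)$ and has left adjoint $-\otimes^L_{(A,A^+)_\solid}(B,B^+)_\solid$.

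This reduces full faithfulness of the forgetful functor to two points. First, the forgetful functor $D(B\Mod)\to D(A\Mod)$ must be fully faithful on the relevant objects. Second, an $A$-module complex lying in $D((A,A^+)_\solid)$ whose underlying $A$-structure extends to a $B$-structure must automatically satisfy the $(B,B^+)_\solid$-completeness condition. Equivalently, I would show that the counit $j^\ast j_\ast M\to M$ is an isomorphism for $M\in D((B,B^+)_\solid)$. Since everything commutes with colimits, it suffices to check this on the compact projective generators $(B,B^+)_\solid[S]$.

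I will factor the map of pairs into two steps:
\[
(A,A^+)\to (A[\tfrac 1f],\widetilde{A^+})\to (B,B^+),
\]
where the second step keeps the ring $B$ and only enlarges $A^+$.

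\textbf{First step (localization).} The ring $B=A[\tfrac1f]$ is a filtered colimit $\varinjlim(A\xrightarrow{f}A\xrightarrow{f}\cdots)$, so in condensed $A$-modules $B\otimes^L_A B=B$ and $B\otimes^L_A -$ is an idempotent localization. Hence $D(B\Mod)\subset D(A\Mod)$ fully faithfully, as the $f$-invertible objects. Moreover, $(B,\widetilde{A^+})_\solid[S]=(A,A^+)_\solid[S]\otimes_A B$, since $\widetilde{A^+}$-measures are just $A^+$-measures with $f$ inverted, using the finitely generated reduction to $\prod_I R\otimes_R -$. Therefore base change is simply $-\otimes^L_A B$, and the counit is an isomorphism because $B$ is idempotent.

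\textbf{Second step (adding $h_i=g_i/f$ to $A^+$).} Passing to the colimit description of $(A,A^+)_\solid$ over finitely generated subpairs, I may assume everything is finitely generated. I then add one $h=h_i$ at a time, by induction on $n$. Adding $h$ is the base change along
\[
(\mathbb Z[T],\mathbb Z)_\solid\to\mathbb Z[T]_\solid,\qquad T\mapsto h.
\]
By Theorem~\ref{thm:solidAR}(ii) and the relative version of Lecture VIII, this is the localization $j^\ast$, which has fully faithful right adjoint $j_\ast$. Its kernel consists of the modules over the idempotent algebra $R((T^{-1}))$, and $j^\ast M=\mathrm{cone}(j_!M\to M)$ is computed by the projection formula
\[
j^\ast M = M\otimes^L_{(A,A^+)_\solid}\big(\text{unit-type idempotent}\big).
\]
So the key identity is
\[
(B,B^+)_\solid[S]\cong(B,\widetilde{A^+})_\solid[S]\otimes^L (B,\widetilde{A^+}[h])_\solid,
\]
obtained by base changing Theorem~\ref{thm:solidAR} along $R=\widetilde{A^+}$ (or a finitely generated model) $\to R[h]$. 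From it I deduce that base change is a Bousfield localization, namely killing $B((h^{-1}))$-modules, so the forgetful functor is fully faithful. Passing to the integral closure changes nothing, by the observation at the start of Lecture IX that $(A,R)_\solid=(A,A^+)_\solid$ when $A^+$ is finite over $R$.

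The main obstacle is justifying the factorization of the second step through the relative theory: one must identify $(B,B^+)_\solid$ with the pushout of analytic rings $(B,A^+)_\solid\otimes_{(\mathbb Z[T],\mathbb Z)_\solid}\mathbb Z[T]_\solid$, uniformly for non-polynomial $B$. I would first try to do this directly on generators. Namely, show
\[
\prod_I A^+[h]\otimes_{A^+[h]}B=\Big(\prod_I A^+\otimes_{A^+}B\Big)\otimes^L_{\mathbb Z[T]}\mathbb Z[T]_\solid,
\]
using the presentation $0\to\prod\mathbb Z\otimes A\to\prod A\to(\text{$A_\infty$-module})\to0$ from Observation~\ref{obs:cokernelAinftymodule} with $\mathbb Z$ replaced by $A^+$. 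Then conclude with the idempotence of $A_\infty$ from Observation~\ref{obs:Ainftyidempotent}.
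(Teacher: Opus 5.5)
Your conclusion is right, but the proposal misidentifies what has to be proved, and the step you flag as the main obstacle is not needed.

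Full faithfulness requires only your ``first point''. You already noted that $D((B,B^+)_\solid)\subset D(B\Mod)$ and $D((A,A^+)_\solid)\subset D(A\Mod)$ are full subcategories. Moreover, $D(B\Mod)\to D(A\Mod)$ is fully faithful because $B=A[\tfrac 1f]$ satisfies $B\otimes^L_A B=B$. Hence the composite $D((B,B^+)_\solid)\to D(A\Mod)$ is fully faithful. Since it factors through the full subcategory $D((A,A^+)_\solid)$, the forgetful functor $D((B,B^+)_\solid)\to D((A,A^+)_\solid)$ is fully faithful. That is the paper's entire proof; the left adjoint comes from Proposition~\ref{prop:functoriality}, as you say.

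Your ``second point'' describes the essential image, which full faithfulness does not involve. It is also false whenever $B^+$ is larger than the integral closure of $A^+$ in $B$. Take $(A,A^+)=(\mathbb Z[T],\mathbb Z)$ and $U=\{|T|\leq 1\}$, so $B=A$. Then $\mathbb Z((T^{-1}))$ is a $B$-module in $D((\mathbb Z[T],\mathbb Z)_\solid)$. It is an $A_\infty$-module, so $-\otimes^L_{(\mathbb Z[T],\mathbb Z)_\solid}\mathbb Z[T]_\solid$ kills it (Observation~\ref{obs:kernelAinftymodules}). That functor is the identity on $D(\mathbb Z[T]_\solid)$, so $\mathbb Z((T^{-1}))$ cannot lie there. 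Thus your ``Equivalently'' is wrong: the counit criterion is equivalent to full faithfulness, not to your second point.

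For the same reason your second step is vacuous. $(B,\widetilde{A^+})_\solid$ and $(B,B^+)_\solid$ have the same underlying ring $B$. So the forgetful functor between them is an inclusion of full subcategories of $D(B\Mod)$, and its counit is tautologically an isomorphism. No pushout of analytic rings and no identification of the localization functor is required.

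As written, that step would also not go through. The identification you call the main obstacle is left open. The formulas are also off:
\begin{enumerate}
\item The cone of $j_!M\to M$ is $M\otimes A_\infty$. This is an $A_\infty$-module (Observation~\ref{obs:jshriekexists}), hence lies in the kernel of $j^\ast$; it is not $j^\ast M$.
\item $j^\ast$ is not given by tensoring with an idempotent algebra. It is $j_!j^\ast$ that is a tensor product, namely with $j_!A$. Meanwhile $j_\ast j^\ast=R\intHom_A(A_\infty/A,-)[1]$.
\end{enumerate}

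The reduction to $(\mathbb Z[T],\mathbb Z)$ with $g=T$, and this explicit description of the localization, are what the paper uses for the next result, Proposition~\ref{prop:locbasechange} (compatibility with base change). They are not needed for full faithfulness.
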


\begin{proof} We only need to prove that the functor is fully faithful. But we have compatible forgetful functors to $D(\mathcal O_X(U)\Mod)$ resp.~$D(A\Mod)$, and $\mathcal O_X(U) = A[\tfrac 1f]$ for some $f\in A$. As $D(A[\tfrac 1f]\Mod)\to D(A\Mod)$ is fully faithful, we get the result.
\end{proof}

Moreover, we need the following commutation of localization with base change.

\begin{proposition}\label{prop:locbasechange} Let $(A,A^+)\to (B,B^+)$ be a map of discrete Huber pairs, and let $U\subset X=\Spa(A,A^+)$ be a rational subset with preimage $V\subset Y=\Spa(B,B^+)$. Then the diagram of forgetful functors
\[\xymatrix{
D((\mathcal O_Y(V),\mathcal O_Y^+(V))_\solid)\ar@{^(->}[r]\ar[d] & D((B,B^+)_\solid)\ar[d]\\
D((\mathcal O_X(U),\mathcal O_X^+(U))_\solid)\ar@{^(->}[r] & D((A,A^+)_\solid)
}\]
is cartesian, i.e.~an object $M\in D((B,B^+)_\solid)$ lies in the essential image of $D((\mathcal O_Y(V),\mathcal O_Y^+(V))_\solid)$ if and only if the image of $M$ in $D((A,A^+)_\solid)$ lies in the essential image of $D((\mathcal O_X(U),\mathcal O_X^+(U))_\solid)$. Moreover, the diagram
\[\xymatrixcolsep{10pc}\xymatrix{
D((B,B^+)_\solid)\ar[r]^-{-\otimes^L_{(B,B^+)_\solid} (\mathcal O_Y(V),\mathcal O_Y^+(V))_\solid}\ar[d] & D((\mathcal O_Y(V),\mathcal O_Y^+(V))_\solid)\ar[d] \\
D((A,A^+)_\solid)\ar[r]^-{-\otimes^L_{(A,A^+)_\solid} (\mathcal O_X(U),\mathcal O_X^+(U))_\solid} & D((\mathcal O_X(U),\mathcal O_X^+(U))_\solid)
}\]
commutes naturally.
\end{proposition}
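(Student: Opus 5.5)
We first reduce to the case where both Huber pairs are finitely generated. By definition, $(A,A^+)_\solid$ is a colimit of $(A_0,A_0^+)_\solid$ over finitely generated pairs, and a rational subset of $\Spa(A,A^+)$ is pulled back from one of some finitely generated $(A_0,A_0^+)$. Membership in $D((A,A^+)_\solid)$ is tested on the compact projective generators. The base change functors also commute with these filtered colimits.

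In the finitely generated case, write $U=U(\frac{g_1,\ldots,g_n}{f})$, so that $\mathcal O_X(U)=A[\tfrac 1f]$. Then $V=U(\frac{g_1,\ldots,g_n}{f})$ with the images in $B$, and $\mathcal O_Y(V)=B[\tfrac 1f]$. The key point is to describe the essential image of
\[
D((A[\tfrac 1f],\widetilde{A^+[\tfrac gf]})_\solid)\subset D((A,A^+)_\solid)
\]
by conditions that visibly depend only on the underlying complex of condensed abelian groups with its action of $f$ and the $g_i$. The first condition is that $f$ acts invertibly on all cohomology groups; this is membership in the image of $D(A[\tfrac 1f]\Mod)$, and it is checked after forgetting to $D(\Cond(\Ab))$. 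The second condition is solidity with respect to the elements $h_i=g_i/f$. For this I would use the appendix to Lecture VIII: with $R=A^+$ finitely generated, $(A',R[h_1,\ldots,h_n])_\solid$ is obtained from $(A',R)_\solid$ by successively imposing $\mathbb Z[h_i]_\solid$-solidity. Here $A'=A[\tfrac1f]$, and passing to the integral closure does not change the analytic ring, as explained at the start of Lecture IX. By Observation~\ref{obs:kernelAinftymodules} and the relative version $R[X]\to R[X]_\solid$, an object $M$ is $h_i$-solid if and only if
\[
R\intHom_{\mathbb Z[X]}(\mathbb Z((X^{-1})),M)=0,
\]
where $X$ acts through $h_i$. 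I expect this follows by the argument of Observations~\ref{obs:nomaps} and~\ref{obs:jshriekexists}: the kernel of $j^\ast$ consists of $A_\infty$-modules, so the right orthogonal of $A_\infty$ equals the essential image of $j_\ast$. This criterion depends only on $M$ as a $\mathbb Z[X]$-module in condensed abelian groups. It therefore reads the same whether $M$ is regarded over $B$ or over $A$, which proves that the first square is cartesian. One should also check that $B$-modules satisfying both conditions are $(B[\tfrac 1f],\ldots)_\solid$-modules. This holds because they are already $(B,B^+)_\solid$-modules, and by the same description the target analytic ring is obtained from $(B,B^+)_\solid$ by inverting $f$ and imposing $h_i$-solidity.

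For the commutation of the base change diagram, I would take left adjoints of the square of forgetful functors. Going down then right, and going right then down, both give functors $D((B,B^+)_\solid)\to D((\mathcal O_X(U),\mathcal O_X^+(U))_\solid)$, and there is a natural comparison map between them. To see that it is an isomorphism, I would use the explicit formula for the localization. Inverting $f$ is the colimit $M\mapsto \varinjlim(M\xrightarrow{f}M\xrightarrow{f}\cdots)$. Imposing $h_i$-solidity is $M\mapsto \mathrm{cofib}(j_!j^\ast\text{-type})$, that is, the cofiber of $R\intHom$-type maps, or equivalently the fiber of $M\to M\otimes^L(\mathbb Z((X^{-1}))$-term$)$ arising from the recollement in Observation~\ref{obs:jshriekexists}. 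Concretely, $j^\ast M$ is the cofiber of
\[
M\otimes^L_{(\,\cdot\,[X],\,\cdot\,)_\solid}(A_\infty/A)[-1]\to M.
\]
These formulas only use the $\mathbb Z[X]$-module structure, and $\otimes^L$ over $(R[X],R)_\solid$ agrees with $\otimes^L$ over $(\mathbb Z[X],\mathbb Z)_\solid$ of underlying objects. So the localization commutes with the forgetful functor from $B$-modules to $A$-modules.

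The main obstacle is exactly this identification: showing that the localization functor to the rational subset is computed by formulas intrinsic to the underlying condensed $\mathbb Z[X]$-modules, independent of the base ring $A$ or $B$. The delicate part is comparing $\otimes^L_{(R[X],R)_\solid}$ with $\otimes^L_{(\mathbb Z[X],\mathbb Z)_\solid}$ via the projection formula. I would first try to establish this by checking it on the compact generators $\prod_I R[X]$, using Proposition~\ref{prop:tensorinfproducts} and Observation~\ref{obs:compact2}.
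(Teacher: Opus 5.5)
Your cartesianness argument is sound and close in spirit to the paper. Set $A'_\infty:=A'\otimes^L_{(\mathbb Z[X],\mathbb Z)_\solid}\mathbb Z((X^{-1}))$. For $M$ already in the source analytic category, adjunction gives $R\intHom(A'_\infty,M)\cong R\intHom_{\mathbb Z[X]}(\mathbb Z((X^{-1})),M)$. So the right-orthogonality criterion depends only on $M$ with its action of $f$ and the $h_i$.

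The gap is in the second half. First, your formula for the localization is wrong. Since $(A_\infty/A)[-1]=\mathrm{fib}(A\to A_\infty)$, the cofiber of $M\otimes^L(A_\infty/A)[-1]\to M$ is $M\otimes^L A_\infty$, which is \emph{killed} by $j^\ast$. The fiber $j_!j^\ast M$ is also not the object $j_\ast j^\ast M$ that the vertical forgetful functors see.

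Second, and fatally, the claim that $\otimes^L_{(R[X],R)_\solid}$ and $\otimes^L_{(\mathbb Z[X],\mathbb Z)_\solid}$ agree on underlying objects is false, so no check on generators can establish it. Take $(A,A^+)=(\mathbb Z[T],\mathbb Z)\to(B,B^+)=(\mathbb Z[Y,T],\mathbb Z[Y])$ and $M=B$. Then $M\otimes^L_{(B,B^+)_\solid}B_\infty=\mathbb Z[Y]((T^{-1}))$. By contrast, $M\otimes^L_{(A,\mathbb Z)_\solid}A_\infty=\mathbb Z[Y]\otimes_{\mathbb Z}\mathbb Z((T^{-1}))$ contains only Laurent series whose coefficients have bounded $Y$-degree, and it is not even a $\mathbb Z[Y]_\solid$-module. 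Tensor products must be recompleted for the target analytic ring, and that recompletion depends on the base. So the $j_!$/$\otimes$ side of the recollement does not commute with restriction from $B$ to $A$.

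What does commute is the right-adjoint side, which you already had in hand. The localization onto $h$-solid objects is $M\mapsto\mathrm{cofib}(R\intHom(A'_\infty,M)\to M)\cong R\intHom(A'_\infty/A',M)[1]$. By the same adjunction as in your cartesianness step, this equals $\mathrm{cofib}(R\intHom_{\mathbb Z[X]}(\mathbb Z((X^{-1})),M)\to M)$, which is visibly the same whether $M$ is viewed over $A$ or over $B$.

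This is exactly the paper's argument. After reducing, as you do, to the two elementary rational subsets, inverting $f$ is handled by $M\mapsto M[\tfrac 1f]$. For $\{|g|\leq 1\}$ the paper reduces to $(A,A^+)=(\mathbb Z[T],\mathbb Z)$ and $B=B^+[T]$ of finite type, where the localization is $R\intHom_A(A_\infty/A,-)[1]$. The identity $B_\infty/B=A_\infty/A\otimes_{\mathbb Z_\solid}B^+_\solid$ then gives $R\intHom_B(B_\infty/B,M)=R\intHom_A(A_\infty/A,M)$ for $M\in D((B,B^+)_\solid)$. That single identity yields both the cartesianness and the commutation of the base change square.
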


More precisely, in the final commutative diagram, there is a natural base change transformation, and we claim that this is an equivalence.

\begin{proof} Let $U=U(\frac{g_1,\ldots,g_n}f)\subset X$. This can be written as the intersection of the loci $\{|\frac{g_i}f(x)\leq 1\}$ inside $\{f\neq 0\}$. By induction, we may reduce to the case that either $U=U(\frac ff)=\{f\neq 0\}\subset X$ or $U=U(\frac g1)=\{|g|\leq 1\}$. In the first case, $\mathcal O_X(U) = A[\tfrac 1f]$ and $\mathcal O_X^+(U)$ is the integral closure of $A^+$ in $A[\tfrac 1f]$. This implies that
\[
(\mathcal O_X(U),\mathcal O_X^+(U))_\solid = (A[\tfrac 1f],A^+)_\solid = (A,A^+)_\solid\otimes_A A[\tfrac 1f]
\]
which implies that
\[
D((\mathcal O_X(U),\mathcal O_X^+(U))_\solid)\subset D((A,A^+)_\solid)
\]
is precisely the full subcategory of all $M\in D((A,A^+)_\solid)$ on which multiplication by $f$ is invertible. This also means that the left adjoint $M\mapsto M\otimes^L_{(A,A^+)_\solid} (\mathcal O_X(U),\mathcal O_X^+(U))_\solid$ is given by $M\mapsto M[\tfrac 1f]$. It is clear that this description is compatible with base change.

In the other case $U=U(\frac g1)=\{|g|\leq 1\}$, we can assume that $(A,A^+)=(\mathbb Z[T],\mathbb Z)$ and $g=T$. Then $(\mathcal O_X(U),\mathcal O_X^+(U))=(A,A)$ and $-\otimes^L_{(A,A^+)_\solid} (A,A)_\solid$ is given by
\[
R\intHom_A(A_\infty/A,-)[1]
\]
as proved in Lecture VIII; here we denote again $A_\infty = \mathbb Z((T^{-1}))$. To prove the assertions, it suffices to see that for any $M\in D((B,B^+)_\solid)$, the base change
\[
M\otimes^L_{(A,A^+)_\solid} (A,A)_\solid\in D(\Cond(B))
\]
lies in $D((B,\widetilde{B^+[T]})_\solid)\subset D(\Cond(B))$. To check this, we may assume that $(B,B^+)$ is of finite type over $\mathbb Z$, and that $B=B^+[T]$ is a polynomial algebra over $B^+$. In that case, we verify directly that
\[
M\otimes^L_{(A,A^+)_\solid} (A,A)_\solid = M\otimes^L_{(B,B^+)_\solid} (B,B)_\solid\ .
\]
Indeed $-\otimes_{(B,B^+)_\solid} (B,B)_\solid$ is given by
\[
R\intHom_B(B_\infty/B,-)[1]
\]
by the appendix to Lecture VIII, and $B_\infty/B = A_\infty/A\otimes_{\mathbb Z_\solid} B^+_\solid$, which formally implies the result.
\end{proof}

To prove Theorem~\ref{thm:globalization}, it is formal that we may assume that $X=\Spa(A,A^+)$ is affine, and work with the basis of rational open subsets $U=U(\frac{g_1,\ldots,g_n}f)\subset X$. For any such $U$, the forgetful functor
\[
D((\mathcal O_X(U),\mathcal O_X^+(U))_\solid)\to D((A,A^+)_\solid)
\]
is fully faithful, and admits a left adjoint. Thus, to $X=\Spa(A,A^+)$ we have associated the triangulated category $C:= D((A,A^+)_\solid)$, and to any rational open subset $U\subset X$, we have associated a full subcategory $C_U=D((\mathcal O_X(U),\mathcal O_X^+(U))_\solid)\subset C$ such that the inclusion admits a left adjoint $L_U: C\to C_U$.

First, we reformulate geometric properties in terms of these categories.

\begin{lemma}\leavevmode
\begin{enumerate}
\item[{\rm (i)}] Let $U,V\subset X$ be rational subsets. Then $C_{U\cap V} = C_U\cap C_V$ and $L_{U\cap V} = L_U\circ L_V = L_V\circ L_U$.
\item[{\rm (ii)}] Assume that $X=U_1\cup U_2\cup \ldots\cup U_n$ where all $U_i\subset X$ are rational subsets. If $M\in C$ and $L_{U_i}(M)=0$ for all $i=1,\ldots,n$, then $M=0$.
\end{enumerate}
\end{lemma}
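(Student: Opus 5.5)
For part (i), I will apply Proposition~\ref{prop:locbasechange} to the map $(A,A^+)\to (B,B^+):=(\mathcal O_X(V),\mathcal O_X^+(V))$. The preimage of $U$ in $Y=\Spa(B,B^+)\cong V$ is $U\cap V$, which is again rational. Its ring of sections is the pair attached to $U\cap V$ by the universal property of rational localizations, so $(\mathcal O_Y(U\cap V),\mathcal O_Y^+(U\cap V))=(\mathcal O_X(U\cap V),\mathcal O_X^+(U\cap V))$.

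The cartesian square then says: an object $M\in C_V$ lies in $C_{U\cap V}$ if and only if its image in $C$ lies in $C_U$. Hence $C_{U\cap V}=C_U\cap C_V$ as full subcategories of $C$. All these subcategories are full by Proposition~\ref{prop:locfullyfaithful}, and the forgetful functors are compatible.

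The second commutative square of Proposition~\ref{prop:locbasechange} says that for $M\in C_V$, the object $L_U(M)$ computed in $C$ agrees with the localization of $M$ to $U\cap V$ computed inside $C_V$. In particular $L_U$ preserves $C_V$. Since $L_{U\cap V}$ is left adjoint to the inclusion $C_{U\cap V}\subset C$, it is the composite of the left adjoints $C\to C_V\to C_{U\cap V}$. So $L_{U\cap V}=L_{U\cap V}^{C_V}\circ L_V=L_U\circ L_V$, and by symmetry this also equals $L_V\circ L_U$.

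For part (ii), I will reduce to the two basic types of cover, following the induction in the proof of Proposition~\ref{prop:locbasechange}.

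First I reduce to the case $X=U_1\cup U_2$. Any finite cover of $\Spa(A,A^+)$ by rational subsets is refined by a standard rational cover, and a standard rational cover is obtained by iterating two elementary covers:
\begin{itemize}
\item the cover $\{|f|\le 1\}\cup\{|f|\ge 1\}$, where the second piece is $\{|1|\le |f|\ne 0\}=U(\tfrac 1f)$;
\item covers $U(\tfrac{f_1,\dots,f_n}{f_i})$ with $(f_1,\dots,f_n)=A$, which after localizing reduce to Zariski covers of $\Spec A$.
\end{itemize}
Part (i) makes it legitimate to iterate. If $L_{U_i}M=0$ for all $i$ and $W$ is a refining subset contained in some $U_i$, then $L_W M=L_W L_{U_i}M=0$. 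Induction on the number of steps then passes vanishing down one elementary cover at a time.

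For a Zariski cover with $(f_1,\dots,f_n)=A$ and $M[\tfrac 1{f_i}]=0$ for all $i$, every cohomology group of $M$ is $f_i$-power torsion locally on $S$-points for each $i$. The usual partition-of-unity argument on each $H^j(M)(S)$ then gives $M=0$.

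The main obstacle is the elementary cover $\{|f|\le1\}\cup\{|f|\ge1\}$. Via base change along $(\mathbb Z[T],\mathbb Z)\to(A,A^+)$, $T\mapsto f$, I will work with the structure from Lecture VIII. Assume $L_{\{|f|\le1\}}M=0$. Then $M$ is in the kernel of $j^\ast$, so by Observation~\ref{obs:kernelAinftymodules} it is a module over $A_\infty=\mathbb Z((T^{-1}))\otimes$, in which $T$ is invertible. Hence $L_{\{|f|\ge1\}}$ acts on $M$ as localization at $(A[\tfrac1f],A^+[\tfrac1f]^\sim)_\solid$. I then plan to show that $M$ is already an $(A[\tfrac1f],\widetilde{A^+[\tfrac1f]})_\solid$-module, i.e.\ it is solid with respect to $T^{-1}$. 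This should follow because $A_\infty=\mathbb Z[[T^{-1}]][T]$ is $(\mathbb Z[T^{-1}],\mathbb Z[T^{-1}])_\solid$-complete: it is a quotient of products of copies of $\mathbb Z[T^{-1}]$. Consequently $L_{\{|f|\ge 1\}}M=M$, and the assumption $L_{\{|f|\ge1\}}M=0$ forces $M=0$.

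Verifying this solidity claim carefully, via Proposition~\ref{prop:locbasechange} applied to $U(\tfrac1T)$ in $\Spa(\mathbb Z[T^{\pm1}],\mathbb Z)$, is where I expect the real work to be.
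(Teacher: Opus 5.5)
Your part (i) is the paper's argument: both read it off Proposition~\ref{prop:locbasechange} applied to $(A,A^+)\to(\mathcal O_X(V),\mathcal O_X^+(V))$.

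For the elementary cover $\{|T|\le 1\}\cup\{|T|\ge 1\}$ of $\Spa(\mathbb Z[T],\mathbb Z)$ you take a different route, and it is valid. The paper identifies both kernels: $\ker L_{\{|T|\ge1\}}$ with $\mathbb Z[[T]]$-modules and $\ker L_{\{|T|\le1\}}$ with $\mathbb Z((T^{-1}))$-modules. It then concludes from $\mathbb Z[[T]]\otimes^L_{(\mathbb Z[T],\mathbb Z)_\solid}\mathbb Z((T^{-1}))=0$. You use only Observation~\ref{obs:kernelAinftymodules} and show $\ker L_{\{|T|\le 1\}}\subset C_{\{|T|\ge1\}}$, so that $L_{\{|T|\ge1\}}$ is the identity there. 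This spares you the description of $\ker L_{\{|T|\ge1\}}$, which the paper only asserts and which needs its own analysis (invert $T$, then solidify in $T^{-1}$).

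To make your step rigorous, you must check the containment for all $\mathbb Z((T^{-1}))$-modules, not just for $\mathbb Z((T^{-1}))$ itself. Also, ``quotient of products of copies of $\mathbb Z[T^{-1}]$'' already presupposes the solid structure. Here is one way to close it:
\begin{enumerate}
\item[(a)] These modules are generated under colimits by $(\prod_I\mathbb Z\otimes\mathbb Z[T])\otimes^L_{(\mathbb Z[T],\mathbb Z)_\solid}\mathbb Z((T^{-1}))=\prod_I\mathbb Z\otimes^L_{\mathbb Z_\solid}\mathbb Z((T^{-1}))$.
\item[(b)] By Proposition~\ref{prop:tensorinfproducts} and the colimit over $T^n\mathbb Z[[T^{-1}]]$, this is $\prod_I\mathbb Z[[T^{-1}]]$ with $T^{-1}$ inverted. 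It is not $\prod_I\mathbb Z((T^{-1}))$.
\item[(c)] $\prod_I\mathbb Z[[T^{-1}]]=\varprojlim_n\prod_I\mathbb Z[T^{-1}]/(T^{-n})$ is a $\mathbb Z[T^{-1}]_\solid$-module, so after inverting $T^{-1}$ it lies in $C_{\{|T|\ge1\}}$.
\item[(d)] $C_{\{|T|\ge1\}}$ is closed under colimits in $D(\mathbb Z[T]\Mod)$.
\end{enumerate}

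The step that is actually missing is the reduction to elementary covers. You assert that a standard rational cover is obtained by iterating Zariski covers and covers $\{|f|\le1\}\cup\{|f|\ge1\}$, but you give no argument. Moreover, the second ``elementary'' type in your list is the standard rational cover $\{U(\frac{f_1,\ldots,f_n}{f_i})\}$ itself, which is not a Zariski cover. This combinatorial step is where the paper does its work, by induction on $n$:
\begin{enumerate}
\item[(1)] Using the Zariski cover $\{f_i\neq 0\}$ (your torsion argument) and part (i), reduce to the case $f_n=1$.
\item[(2)] Split along $\{|f_1|\le 1\}\cup\{|f_1|\ge 1\}$.
\item[(3)] On $U(\frac{f_1}{1})$, with pair $(A,\widetilde{A^+[f_1]})$, one has $|f_1|\le |f_n|$. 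So $U_2,\ldots,U_n$ already cover, and they are the standard rational cover attached to $f_2,\ldots,f_n$.
\item[(4)] On $U(\frac{1}{f_1})$, $f_1$ becomes a unit with $f_1^{-1}$ in the new $A^+$. So $U_1,\ldots,U_{n-1}$ cover, and they form the standard rational cover attached to $1,f_2f_1^{-1},\ldots,f_{n-1}f_1^{-1}$.
\item[(5)] Apply the induction hypothesis to $L_VM\in C_V$ for each of the two pieces $V$; this is legitimate by part (i). Then the elementary case gives $M=0$.
\end{enumerate}
With this supplied, your argument is complete.
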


\begin{proof} For part (i), first observe that if $U\subset V$, then $C_U\subset C_V\subset C$ as there are natural maps $(A,A^+)_\solid\to (\mathcal O_X(V),\mathcal O_X^+(V))_\solid\to (\mathcal O_X(U),\mathcal O_X^+(U))_\solid$ (inducing fully faithful forgetful functors). This shows that in general that $C_{U\cap V}\subset C_U\cap C_V$. For the converse inclusion, it is enough to show that for all $M\in C=D((A,A^+)_\solid)$ that $L_{U\cap V}(M)=L_U(L_V(M))$; indeed, for $M\in C_U\cap C_V$, we then have $M = L_U(L_V(M)) = L_{U\cap V}(M)\in C_{U\cap V}$. But the equation $L_{U\cap V} = L_U\circ L_V$ follows by the compatibility of localization with base change.

For part (ii), assume first that $n=2$ and that there is some $f\in A$ such that $U_1 = U(\frac 1f)=\{|f|\geq 1\}$ and $U_2 = U(\frac f1)=\{|f|\leq 1\}$; these rational open subsets always form a cover. In that case, we can assume that $A=\mathbb Z[T]$ and $f=T$ (as the localizations commute with base change). Note that $L_{U_1}(M)=0$ if and only if $M$ is a $\mathbb Z[[T]]$-module; and $L_{U_2}(M)=0$ if and only if $M$ is a $\mathbb Z((T^{-1}))$-module. Thus, the assumption means that $M$ is a module over $\mathbb Z[[T]]\otimes^L_{(\mathbb Z[T],\mathbb Z)_\solid} \mathbb Z((T^{-1})) = 0$, and thus $M=0$ as desired.

In general, we use Lemma~\ref{lem:standardrationalcovers} below to reduce to the case that $U_i=U(\frac{f_1,\ldots,f_n}{f_i})$ for some $f_1,\ldots,f_n\in A$ generating the unit ideal. As it is enough to show that $M[\tfrac 1{f_i}]=0$ for all $i=1,\ldots,n$, we can also assume that one of the $f_i$, say $f_n$, is equal to $1$. Now we argue by induction on $n$, the case $n=2$ being handled already. By that induction base, it is enough to show that the localizations of $M$ to $U(\frac 1{f_1})$ and $U(\frac{f_1}1)$ vanish, so we may also assume that either $f_1\in A^+$ or $f_1^{-1}\in A^+$. In the first case, the
\[
U_i=U\left(\frac{f_1,\ldots,f_{n-1},1}{f_i}\right) = U\left(\frac{f_2,\ldots,f_{n-1},1}{f_i}\right)
\]
for $i=2,\ldots,n$ already form a cover, so we are done by induction. In the second case, the
\[
U_i=U\left(\frac{f_1,\ldots,f_{n-1},1}{f_i}\right) = U\left(\frac{1,f_2f_1^{-1},\ldots,f_{n-1}f_1^{-1}}{f_if_1^{-1}}\right)
\]
for $i=1,\ldots,n-1$ already form a cover, so again we are done by induction.
\end{proof}

In the proof, we used that all covers of $X=\Spa(A,A^+)$ by rational subsets can be refined by standard rational covers:

\begin{lemma}[{\cite[Lemma 2.6]{HuberGeneralization}}]\label{lem:standardrationalcovers} Let $(A,A^+)$ be a discrete Huber pair and $X=\Spa(A,A^+)$. Assume that $U_1,\ldots,U_n\subset X$ are rational subsets of $X$ that cover $X$. Then there exist $s_1,\ldots,s_N\in A$ generating the unit ideal so that each $U(\frac{s_1,\ldots,s_N}{s_j})$ is contained in some $U_i$; in particular, the rational covering of $X$ by the $U(\frac{s_1,\ldots,s_N}{s_j})$ refines the given cover.
\end{lemma}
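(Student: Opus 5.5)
The plan is to reduce the statement to the standard fact about covers of the valuation spectrum by rational subsets, using quasicompactness of $X$ and a translation between valuation-theoretic and ring-theoretic data.

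First, each $U_i$ is rational, so write $U_i = U(\frac{g_{i,1},\ldots,g_{i,m_i}}{f_i})$. Since $X$ is quasicompact and the $U_i$ cover it, the elements $f_1,\ldots,f_n$ have no common zero on $X$. In particular they have no common zero at the points of $\Spec A\subset X$, given by the trivial valuations $A\to \mathrm{Frac}(A/\mathfrak p)\to\{0,1\}$. Hence $f_1,\ldots,f_n$ generate the unit ideal of $A$.

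We may add the $f_i$ to the list of numerators of $U_i$, so assume $f_i\in\{g_{i,j}\}_j$. The candidate $s$'s are then the products
\[
s_{\underline j} = g_{1,j_1}g_{2,j_2}\cdots g_{n,j_n}
\]
over all choices of indices $\underline j=(j_1,\ldots,j_n)$. These generate the unit ideal. Indeed, the ideal they generate contains the product of the ideals $(g_{i,1},\ldots,g_{i,m_i})$, and each of these ideals contains $f_i$, so if it were proper, some maximal ideal $\mathfrak m$ would contain all $g_{i,j}$ for some $i$. Taking instead, for each $i$, the ideal $I_i=(g_{i,j})_j$, the product $I_1\cdots I_n$ has zero locus $\bigcup_i V(I_i)$. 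A point of $V(I_i)$ gives a trivial valuation $x$ with all $|g_{i,j}(x)|=0$. Such an $x$ must lie in some $U_k$, which forces $|f_k(x)|\neq 0$. The issue is whether $V(I_i)$ can be nonempty while $x\notin U_i$ but $x\in U_k$. In that case we need $I_i$ itself to be the unit ideal. This suggests working instead with the Huber trick: replace $U_i$ by a description where the numerators generate the unit ideal (adding the $f_k$'s from the cover argument appropriately, with care). This is the step I would check carefully.

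Next, show that each $U(\frac{s}{s_{\underline j}})$ lies in some $U_i$. For $x$ in this set, $s_{\underline j}(x)\neq0$ and $|s(x)|\le|s_{\underline j}(x)|$ for all $s$. Since $x\in U_k$ for some $k$, we have $|g_{k,l}(x)|\le|f_k(x)|$. Comparing $s_{\underline j}$ with the product obtained by replacing $g_{k,j_k}$ with $f_k$ gives $|g_{k,j_k}(x)|\ge|f_k(x)|$. Combined with the reverse inequality from $x\in U_k$, we get $|g_{k,j_k}(x)| = |f_k(x)|$, and in fact $g_{k,j_k}$ is maximal among $g_{k,\cdot}$ at $x$. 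The natural target is the stronger statement that $U(\frac{s}{s_{\underline j}})\subseteq \{|g_{k,l}|\le|g_{k,j_k}|\neq0\}$ for every $k$. This holds by varying one factor at a time and cancelling the nonzero others. Then, for any $y$ in $U(\frac{s}{s_{\underline j}})$ lying in $U_k$, the argument forces $j_k$ to correspond to $f_k$ up to equal absolute value. Making this independent of $y$ is the main obstacle; I would resolve it by choosing $k$ via the fixed index pattern (the $k$ with $g_{k,j_k}=f_k$), and including the cases where no such $k$ exists by showing those $U(\frac{s}{s_{\underline j}})$ are empty.
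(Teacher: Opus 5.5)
Your proposal has two genuine gaps.

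\textbf{First gap: the choice of products.} Taking \emph{all} products $s_{\underline j}=g_{1,j_1}\cdots g_{n,j_n}$ does not work. For an index pattern in which no factor is a denominator $f_k$, the set $U(\frac{s_1,\ldots,s_N}{s_{\underline j}})$ is in general neither empty nor contained in any $U_i$, so your plan to show such sets are empty fails.

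Here is an example. Let $A=\mathbb Z[x^{\pm 1}]$, $A^+=\mathbb Z$, $u=1+x$, $v=1+x^{-1}$. Then $uv=u+v$, which forces $\min(|u|,|v|)\le 1$ everywhere. So $U_1=U(\frac u1)$ and $U_2=U(\frac v1)$ cover $X$, and here the numerator ideals $(u,1)$, $(v,1)$ are even the unit ideal. Your list is $\{uv,u,v,1\}$, and $U(\frac{uv,u,v,1}{uv})=\{|u|\ge 1,|v|\ge 1\}$. This set contains the $x$-adic valuation, where $|u|=1<|v|$, so it is not in $U_2$. It also contains the $x^{-1}$-adic valuation, which is not in $U_1$.

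The paper keeps only the set $S$ of products having at least one factor equal to the corresponding $f_i$. Your one-factor-replacement observation is then exactly the needed argument. If $x\in U(\frac{S}{s})$ lies in $U_k$, replacing the $k$-th factor of any product $t$ by $f_k$ gives an element of $S$ dominating $t$ at $x$. Hence $U(\frac{S}{s})=U(\frac{T}{s})$, where $T$ is the set of all products. If $s$ has the factor $f_i$ in slot $i$, then $U(\frac{T}{s})\subset U_i$. You should also check that $S$ still generates the unit ideal, e.g.\ by testing at the trivial valuations $A\to\mathrm{Frac}(A/\mathfrak p)\to\{0,1\}$.

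\textbf{Second gap: the unit-ideal condition.} All of this needs each ideal $(g_{i,1},\ldots,g_{i,m_i},f_i)$ to be the unit ideal, and you leave this step open. It cannot be achieved by rewriting the individual $U_i$. For instance, $\{T\neq 0\}=U(\frac TT)\subset\Spa(\mathbb Z[T],\mathbb Z)$ is not of the form $U(\frac{g_1,\ldots,g_m}{h})$ with $(g_1,\ldots,g_m)=\mathbb Z[T]$. Indeed, any such set containing the $T$-adic valuation also contains the trivial valuation with support $(T)$.

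One has to \emph{refine} the cover instead. The paper shows that every closed point $x$ has a basis of neighborhoods of the required form. Closedness forces the value group at $x$ to be generated, as a convex subgroup, by the values $>1$; otherwise passing to the characteristic subgroup would give a proper specialization. So for $x\in U(\frac{g_1,\ldots,g_m}{f})$ there is $h$ with $|fh(x)|\ge 1$, whence $x\in U(\frac{g_1h,\ldots,g_mh,1}{fh})\subset U(\frac{g_1,\ldots,g_m}{f})$. Every point of the spectral space $X$ specializes to a closed point, and open subsets are stable under generalization. Hence such neighborhoods, each chosen inside some $U_i$, cover $X$. Quasicompactness then gives a finite refinement to which the first step applies.
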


\begin{proof} Let $U_i=U(\frac{g_{1i},\ldots,g_{mi},f_i}{f_i})$ for various $g_{ji},f_i\in A$. Assume first that for all $i$, the ideal generated by $g_{1i},\ldots,g_{ni},f_i$ is all of $A$. In that case, let $T$ be the set of all products $h_1\cdots h_n$ where each $h_i\in \{g_{1i},\ldots,g_{mi},f_i\}$ and let $S\subset T$ be the subset of those elements for which at least one $h_i=f_i$. We claim that $S=\{s_1,\ldots,s_N\}$ has the required property. Take any $j$. First, note that
\[
U\left(\frac{s_1,\ldots,s_N}{s_j}\right) = U\left(\frac{T}{s_j}\right)
\]
as any $x\in X$ is contained in some $U_i$, and so replacing any $t\in T$ by a corresponding $s\in S$ changing only the factor at $i$ to $f_i$, we have $|t(x)|\leq |s(x)|$, giving the displayed equality. But now $s_j$ contains some factor $f_i$, and then
\[
U\left(\frac{T}{s_j}\right)\subset U\left(\frac{g_{1i},\ldots,g_{mi}}{f_i}\right)=U_i\ ,
\]
so we get the desired refinement.

In general, it now suffices to show that any closed point of $X$ has a basis of open neighborhoods of the form $U(\frac{g_1,\ldots,g_n}f)=U(\frac{g_1,\ldots,g_n,f}f)$ where $g_1,\ldots,g_n,f$ generate $A$. Let $\Gamma=\Gamma_x$ be the value group at $x$, and let $\Gamma^\prime\subset \Gamma$ be the subgroup generated by all $|f(x)|$, $f\in A$, that are $>1$. We get an induced valuation $x^\prime$ with values in $\Gamma^\prime$, where $|f(x^\prime)|=|f(x)|$ if $|f(x)|\in \Gamma^\prime$ and otherwise $|f(x^\prime)|=0$. Then $x^\prime$ is a specialization of $x$, so we actually see that $x^\prime=x$ by our assumption that $x$ is a closed point. Now assume that $x\in U(\frac{g_1,\ldots,g_n}f)$. Then $|f(x)|\neq 0$, so as $\Gamma=\Gamma^\prime$, we can find some $h\in A$ such that $|h(x)|>1$ and $|fh(x)|\geq 1$. Then
\[
x\in U\left(\frac{g_1h,\ldots,g_nh,1}{fh}\right)\subset U\left(\frac{g_1,\ldots,g_n}f\right)
\]
gives a rational neighborhood of the desired form.
\end{proof}

Finally, Theorem~\ref{thm:globalization} follows from the following general categorical statement.

\begin{proposition}\label{prop:inftydescent} Let $X$ be some spectral space equipped with a basis $B$ of quasicompact open subsets of $X$, stable under intersections. Let $\mathcal C$ some stable $\infty$-category. Let $U\mapsto \mathcal C_U\subset \mathcal C$ be a (covariant) functor from $B$ to full subcategories of $\mathcal C$, with the property that the inclusion $\mathcal C_U\hookrightarrow \mathcal C$ admits a left adjoint $L_U$. Moreover, assume that
\begin{enumerate}
\item[{\rm (i)}] If $U,V\in B$, then $\mathcal C_{U\cap V} = \mathcal C_U\cap \mathcal C_V$ and $L_{U\cap V} = L_U\circ L_V = L_V\circ L_U$.
\item[{\rm (ii)}] If $U\in B$ is covered by $U_1,\ldots,U_n\in B$ and $M\in \mathcal C_U$ with $L_{U_i}(M)=0$ for all $i=1,\ldots,n$, then $M=0$.
\end{enumerate}
Then the contravariant functor $U\mapsto \mathcal C_U$, with the left adjoints $L_U$ as functors, defines a sheaf of $\infty$-categories.
\end{proposition}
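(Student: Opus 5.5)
We must show that for $U\in B$ covered by $U_1,\ldots,U_n\in B$ (and since covers of a quasicompact $U$ by basis elements can be refined to finite ones, it suffices to treat finite covers, plus the general fact that a sheaf on a basis closed under finite intersections extends), the natural functor
\[
\mathcal C_U\to \varprojlim_{\emptyset\neq J\subset\{1,\ldots,n\}} \mathcal C_{U_J},\qquad U_J=\bigcap_{j\in J}U_j,
\]
induced by the $L_{U_J}$, is an equivalence. Here the limit is over the \v{C}ech cubical diagram; by (i) all the functors are compatible ($L_{U_J}=\prod_{j\in J}L_{U_j}$ in any order) and we may replace $\mathcal C$ by $\mathcal C_U$ and $L_{U_i}$ by their restrictions, so without loss $U=X$ and $\mathcal C_X=\mathcal C$. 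The plan is the usual descent argument: construct a right adjoint $G$ to $F:\mathcal C\to\lim_J \mathcal C_{U_J}$, namely $G((M_J)_J)=\varprojlim_J M_J$ (a finite limit in $\mathcal C$, existing since $\mathcal C$ is stable and the $M_J$ lie in $\mathcal C$ via the full inclusions), and check that unit and counit are equivalences.

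For the unit $M\to \varprojlim_J L_{U_J}M$, the fibre $K$ is a finite limit. Apply $L_{U_i}$: by (i), $L_{U_i}$ is an exact functor (a left adjoint between stable categories) so commutes with the finite limit, and $L_{U_i}L_{U_J}=L_{U_{J\cup\{i\}}}$. The cubical diagram $J\mapsto L_{U_{J\cup\{i\}}}M$ together with $L_{U_i}M$ is then a split (degenerate) cube: the face indexed by $J\ni i$ and $J\not\ni i$ agree, so its limit is $L_{U_i}M$, giving $L_{U_i}K=0$ for all $i$. By (ii), $K=0$. This is the analogue of the Zariski exact sequence argument recalled at the start of the lecture.

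For the counit, given a compatible family $(M_J)$ with $L_{U_{J'}}M_J\simeq M_{J'}$ for $J\subset J'$, set $M=\varprojlim_J M_J$ and compute $L_{U_i}M=\varprojlim_J L_{U_i}M_J=\varprojlim_J M_{J\cup\{i\}}$, again a degenerate cube with limit $M_{\{i\}}$; more generally $L_{U_{J_0}}M\simeq M_{J_0}$, compatibly. So $FG\simeq\mathrm{id}$, and together with the unit being an equivalence, $F$ is an equivalence; note full faithfulness of $G$ is not even needed separately.

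The main obstacle is $\infty$-categorical bookkeeping rather than mathematics: one must make the assignment $U\mapsto\mathcal C_U$ with functors $L_U$ into an honest functor $B^{\mathrm{op}}\to\mathrm{Cat}_\infty$ (done by taking left adjoints of the inclusion diagram, as in Theorem~\ref{thm:globalization}), describe the limit of a cubical diagram of full subcategories with localizations concretely as compatible families, and verify the cofinality/degeneracy claim that a cube which is constant in one direction has limit equal to its initial face. I would handle the latter by the standard lemma that if $i\in J$ for a cube $J\mapsto N_J$ with $N_J\simeq N_{J\cup\{i\}}$ then the total fibre vanishes, iterating over coordinates. Finally, sheafiness for arbitrary (infinite) covers follows since every $U\in B$ is quasicompact, and the passage from the basis $B$ to all opens is the formal right Kan extension.
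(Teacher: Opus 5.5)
Your proof is correct and follows essentially the same argument as the paper. You take the right adjoint given by the finite limit, apply $L_{U_i}$ (resp.\ $L_{U_J}$) so that by (i) the cover becomes split, and use (ii) to show the fibre of the unit vanishes. Like the paper (``the limit is a finite limit and thus commutes with localization''), you need each $L_U$ to preserve finite limits, which you justify by treating the $\mathcal C_U$ as stable subcategories; this holds in the application but is not literally among the hypotheses. Your cofinality argument for the split cube and your remarks on infinite covers only spell out steps the paper leaves implicit.
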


\begin{proof} Let $U\in B$ with a cover by $U_1,\ldots,U_n\in B$. For each nonempty finite subset $I\subset \{1,\ldots,n\}$ we get the category $\mathcal C_I = C_{\bigcap_{i\in I} U_i}\subset \mathcal C_U$, and the inclusion $\mathcal C_I\hookrightarrow \mathcal C_U$ has a left adjoint $L_I: \mathcal C_U\to \mathcal C_I$. We need to see that the natural functor
\[
F=(L_I)_I: \mathcal C_U\to \varprojlim_{I} \mathcal C_I
\]
is an equivalence. Formally, we may replace $X$ by $U$ to assume $U=X$ and so $\mathcal C_U=\mathcal C$.

It is formal that $F$ has a right adjoint, which is given by taking a collection of objects $M_I\in \mathcal C_I$, compatible under localization, to their limit $\varprojlim_I M_I\in \mathcal C$. First, we need to see that the natural unit map
\[
M\to \varprojlim_I L_I(M)
\]
is an equivalence. The limit is a finite limit and thus commutes with localization. By assumption (ii), it suffices to prove the result after applying $L_i$ for any $i=1,\ldots,n$. By assumption (i), the corresponding limit will look the same, except that we are now in a situation where some $U_i=X$. In that case, the cover is split and so $F$ is automatically an equivalence.

It follows that $F$ is fully faithful. Now assume that $(M_I)_I\in \varprojlim_I \mathcal C_I$ and define $M=\varprojlim_I M_I\in \mathcal C$. We need to see that the counit maps $L_I(M)\to M_I$ are equivalences. Again, $L_I$ commutes with the finite limit, and we are again formally reduced to the case that the cover is split.
\end{proof}

In particular, we can define $D((\mathcal O_X,\mathcal O_X^+)_\solid)$ by gluing. Let us end by giving an explicit description of it independently of $\infty$-categories. Assume that $X$ is separated, and let $X=\bigcup_{i=1}^n U_i$ be an affine cover of $X$, $U_i=\Spa(A_i,A_i^+)$. For any finite nonempty subset $I\subset \{1,\ldots,n\}$, we have the intersection $U_I=\bigcap_{i\in I} U_i=\Spa(A_I,A_I^+)$. Then we can consider the abelian category $\Cond_{\{U_I\}}(\mathcal O_X)$ of presheaves of condensed $\mathcal O_X$-modules which are defined only on the $\{U_I\}$; concretely, functors from $I$ to condensed $A_I$-modules $M_I$ together with maps $M_I\otimes_{A_I} A_J\to M_J$ for $I\subset J$. This is an abelian category with compact projective generators (here it is important to work with presheaves, not sheaves). Then
\[
D((\mathcal O_X,\mathcal O_X^+)_\solid)\to D(\Cond_{\{U_I\}}(\mathcal O_X))
\]
is fully faithful, and the essential image consists of all those $M\in D(\Cond_{\{U_I\}}(\mathcal O_X))$ such that all $M_I\in D(\Cond(A_I))$ lie in $D((A_I,A_I^+)_\solid)$, and the base change maps
\[
M_I\otimes^L_{(A_I,A_I^+)_\solid} (A_J,A_J^+)_\solid\to M_J
\]
are equivalences. Theorem~\ref{thm:globalization} asserts that this category does not depend on the chosen cover of $X$.
\newpage

\section{Lecture XI: Coherent duality}

Finally, we can give a proof of coherent duality. Let us first recall the classical statements.

Serre duality says that for a $d$-dimensional proper smooth variety $X$ over a field $k$, taking $\omega_X := \omega_{X/k} := \bigwedge^d \Omega^1_{X/k}$, there is a canonical trace map
\[
\mathrm{tr}: H^d(X,\omega_X)\to k
\]
such that for any coherent sheaf $\mathcal E$ on $X$, the pairing
\[
H^i(X,\mathcal E)\otimes_k \mathrm{Ext}^{d-i}_{\mathcal O_X}(\mathcal E,\omega_X)\to H^d(X,\omega_{X/k})\to k
\]
is perfect.

In the projective case, this can be proved by choosing an embedding into projective space and proving it by hand for the projective space. In this approach, it is not clear that the trace map is canonical. With more effort, one can give a proof in the proper case.

Grothendieck duality is a generalization to a general base. Let $f: X\to \Spec R$ be a proper smooth map of dimension $d$. Let $\omega_{X/R} = \bigwedge^d \Omega^1_{X/R}$, a line bundle on $X$. Then there is a canonical trace map
\[
\mathrm{tr}: H^d(X,\omega_{X/R})\to R
\]
such that for any quasicoherent sheaf $\mathcal M$ on $X$, the natural map
\[
R\Hom_{\mathcal O_X}(\mathcal M,\omega_{X/R})[d]\to R\Hom_R(R\Gamma(X,\mathcal M),R)
\]
is an isomorphism in the derived category $D(R)$ (where the map is again induced by the trace map and a Yoneda pairing, noting that the trace map can equivalently be regarded as a map $R\Gamma(X,\omega_{X/R})[d]\to R$).

Generalizations of this result to the nonsmooth situations have been widely studied. In this lecture, we will prove a generalization to the non-proper case (but retaining for simplicity the smoothness assumption, although our techniques will apply in general).

To state the generalization, we need to work with solid modules, and consider the functor taking a scheme $X$ to
\[
D(\mathcal O_{X,\solid}) := D((\mathcal O_{X^{\mathrm{ad}}},\mathcal O_{X^{\mathrm{ad}}}^+)_\solid)\ .
\]
If $X=\Spec A$, then by Theorem~\ref{thm:globalization} and the definition of $X^{\mathrm{ad}}=\Spa(A,A)$, we have $D(\mathcal O_{X,\solid}) = D(A_\solid)$; in general, it is defined by gluing.

\begin{theorem}\label{thm:openduality} Let $f: X\to \Spec R$ be a separated smooth map of finite type, of dimension $d$. Let $\omega_{X/R} = \bigwedge^d \Omega^1_{X/R}$. There is a canonical functor
\[
f_!: D(\mathcal O_{X,\solid})\to D(R_\solid)
\]
that agrees with $R\Gamma(X,-)$ in case $f$ is proper. It preserves compact objects. There is a natural trace map
\[
f_! \omega_{X/R}[d]\to R
\]
such that for all $C\in D(\mathcal O_{X,\solid})$, the natural map
\[
R\Hom_{\mathcal O_X}(C,\omega_{X/R})[d]\to R\Hom_R(f_! C,R)
\]
is an isomorphism.
\end{theorem}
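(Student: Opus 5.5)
Our plan is to construct $f_!$ by globalizing the affine construction of Theorem~\ref{thm:solidAR}, using the two adic spaces $X^{\mathrm{ad}}\to X^{\mathrm{ad}/R}$. By Theorem~\ref{thm:globalization} we have $D(\mathcal O_{X,\solid})=D((\mathcal O_{X^{\mathrm{ad}}},\mathcal O^+_{X^{\mathrm{ad}}})_\solid)$ and also the glued category $D((\mathcal O_{X^{\mathrm{ad}/R}},\mathcal O^+_{X^{\mathrm{ad}/R}})_\solid)$. Since $f$ is separated of finite type, $j\colon X^{\mathrm{ad}}\to X^{\mathrm{ad}/R}$ is an open immersion. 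On affines $\Spec A\subset X$ the local functor is $j_!\colon D(A_\solid)\to D((A,R)_\solid)$ from Theorem~\ref{thm:solidAR}(ii).

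The first step is to show that these local $j_!$ glue to a functor $j_!\colon D(\mathcal O_{X,\solid})\to D(\mathcal O_{X^{\mathrm{ad}/R},\solid})$, left adjoint to the restriction $j^\ast$. For this we need $j_!$ to commute with the localizations $L_U$ for rational $U\subset X^{\mathrm{ad}/R}$. We will use the projection formula $j_!j^\ast M=M\otimes^L j_!A$ together with Proposition~\ref{prop:locbasechange}: because $L_U$ is itself given by tensoring with an idempotent algebra, it commutes with $-\otimes j_!A$. For $U$ not meeting $X^{\mathrm{ad}}$ we check $L_Uj_!=0$ via the vanishing $A_\infty\otimes j_!A=0$.

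Next we set $f_!=R\Gamma(X^{\mathrm{ad}/R},-)\circ j_!$. The global sections on $X^{\mathrm{ad}/R}$ are the finite limit over a \v{C}ech cover, computed with the forgetful functors $D((A_I,R)_\solid)\to D(R_\solid)$. If $f$ is proper, then $j$ is an isomorphism by the proposition of Lecture IX, so $f_!=R\Gamma$. Preservation of compact objects is local: compact objects are generated by $j_{U!}$ of compact objects on affines $U$, and $f_!\circ j_{U!}$ equals the affine $f_{U!}$, which preserves compacts by Theorem~\ref{thm:solidAR}(iii) since smooth maps have finite Tor-dimension. It follows that $f_!$ has a right adjoint $f^!$ that commutes with colimits. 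By the projection formula, $f^!M=f^\ast M\otimes f^!R$, and the object $f^!R$ is discrete and invertible. Indeed, on affines it agrees with $f_U^!R$, a line bundle in one degree by the complete-intersection case of Theorem~\ref{thm:solidAR}(iv).

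The main obstacle is to identify $f^!R\cong\omega_{X/R}[d]$ canonically, compatibly with gluing, which then defines the trace $f_!\omega_{X/R}[d]=f_!f^!R\to R$ as the counit. The duality isomorphism $R\Hom(C,\omega[d])\cong R\Hom(f_!C,R)$ is then just the adjunction. To get the identification, we will use compatibility of $f^!$ with composition and with open immersions ($f^!$ restricts along opens) to work locally. Locally, $X$ is étale over $\mathbb A^d_R$, and we factor $X\to\mathbb A^d_R\to\Spec R$. For $\mathbb A^1$, the lecture computed $f^!R=R\intHom(R((T^{-1}))/R[T],R)[1]$. The residue pairing $\mathrm{res}\colon R((T^{-1}))\,dT\to R$ gives a canonical generator, identifying this with $\Omega^1[1]$ canonically, that is, independently of coordinate changes. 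For étale $g$ we want $g^!=g^\ast$, and we would first try to obtain this from the diagonal: $g^!$ is pulled back along an open and closed embedding. To show the resulting local isomorphisms $f^!R\cong\omega[d]$ glue, we check they are independent of the chosen étale coordinates. The plan is to compare two coordinate systems on $X\times_R X$ via the diagonal: since $\Delta$ is a regular embedding, the Koszul computation gives $\Delta^!$ as a twist by $\det N^\vee$, and we verify that this matches $\bigwedge^d\Omega^1$. This is where the real work lies.
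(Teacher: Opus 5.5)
There is a genuine gap, and it sits at the step you yourself flag as the main obstacle. Your overall architecture matches the paper's. You set $f_!=f^{\mathrm{ad}/R}_\ast\circ j_!$ and take $f^!$ as its right adjoint. The twist formula $f^!M\simeq f^\ast M\otimes f^!R$ and the invertibility of $f^!R$ come from the affine results of the appendix to Lecture VIII. The trace and the duality isomorphism are then just the counit and the adjunction. But the actual content of the theorem is a \emph{canonical} isomorphism $f^!R\simeq\omega_{X/R}[d]$ that is compatible with localization, and you never produce it. Your route would need several things that are only announced:
\begin{enumerate}
\item[(a)] a residue identification for $\mathbb A^1_R$ that is independent of the coordinate (asserted, not proved);
\item[(b)] iteration to $\mathbb A^d_R$, which needs flat base change for $f^!$ (you never invoke it) together with control of signs and of the ordering of coordinates;
\item[(c)] a canonical $g^!\simeq g^\ast$ for étale $g$, where you only say what you ``would try'' --- this is not formal here, since $g^!$ is defined through $j_!$ and the compactification $X^{\mathrm{ad}/Y}$;
\item[(d)] independence of the étale chart, for which ``compare two coordinate systems on $X\times_RX$ via the diagonal'' does not say what is compared with what.
\end{enumerate}
This is the classical bookkeeping of residues and duality, and as written none of it is carried out.

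The missing idea is to apply the diagonal argument directly to $f^!R$, rather than using it to compare charts; then no coordinates are ever chosen. Let $p_1,p_2\colon\Spec(A\otimes_RA)\to\Spec A$ be the projections and $\Delta$ the diagonal. Flat base change for $f^!$ (Proposition~\ref{prop:basechange}) gives $p_1^!A\simeq p_2^\ast f^!R$. The twist formulas for $p_1^!$ and $\Delta^!$ then give
\[
f^!R\simeq\Delta^!p_1^!f^!R\simeq\Delta^!\bigl(p_1^\ast f^!R\otimes^L p_2^\ast f^!R\bigr)\simeq f^!R\otimes^L_{A_\solid}f^!R\otimes^L_{A_\solid}\Delta^!(A\otimes_RA).
\]
Since you have already shown $f^!R$ is invertible, this yields $f^!R\simeq(\Delta^!(A\otimes_RA))^{-1}$. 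The Koszul computation for the regular immersion $\Delta$ gives $\Delta^!(A\otimes_RA)\simeq\det(I/I^2)^\ast[-d]$ with $I/I^2\simeq\Omega^1_{A/R}$. Crucially, that isomorphism is independent of the chosen regular sequence (Proposition~\ref{prop:regularclosed}).

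Every map in this chain is natural, so it commutes with localization and globalizes for free. The étale case is simply $d=0$, and the only local input is the invertibility you already have; no residues or chart comparisons are needed.

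A smaller point concerns gluing $j_!$. For $U=\{|g|\le1\}$ the localization $L_U$ is not tensoring with an idempotent algebra: it is $R\intHom_A(A_\infty/A,-)[1]$, which does not commute with $-\otimes\prod_I A$ in the way you need. Use instead that $L_U$ is symmetric monoidal, together with Proposition~\ref{prop:locbasechange}.
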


In fact, we can build a whole $6$-functor formalism. Recall that a $6$-functor formalism consists essentially of the following data:

\begin{itemize}
\item A functor taking any scheme $X$ (or other type of geometric object currently under consideration) to a triangulated (or stable $\infty$-) category $D(X)$, which should in fact be a closed symmetric monoidal category, i.e.~be equipped with a symmetric monoidal tensor product $-\otimes_X-$ that admits a partial right adjoint $\intHom_X(-,-)$, i.e.~
\[
\Hom_{D(X)}(A\otimes_X B,C)\cong \Hom_{D(X)}(A,\intHom_X(B,C))\ .
\]
\item For each map $f: X\to Y$, a symmetric monoidal pullback functor $f^\ast: D(Y)\to D(X)$ that admits a right adjoint $f_\ast: D(X)\to D(Y)$.
\item For each map $f: X\to Y$ (possibly under extra conditions, for example separated of finite type), a ``pushforward with compact support" functor $f_!: D(X)\to D(Y)$ that admits a right adjoint $f^!: D(Y)\to D(X)$.
\end{itemize}

This gives in total the $6$ functors $(\otimes,\Hom,f^\ast,f_\ast,f_!,f^!)$. Usually, the first four of these are easy to construct, and the essential difficulty is to construct $f_!$.

The standard example of a $6$-functor formalism is the derived category of $\ell$-adic sheaves $D_{\mathrm{\acute{e}t}}(X,\mathbb Z_\ell)$.

Some properties that a $6$-functor formalism should satisfy are the following:

\begin{itemize}
\item If $f$ is proper, then $f_! = f_\ast$.
\item If $f$ is \'etale, then $f^! = f^\ast$. In particular, $f_!$ is in this case a left adjoint of $f^\ast$.
\end{itemize}

Note that these properties, together with the obvious requirement that the functors should behave well with respect to composition, already determine $f_!$ in general. Indeed, let $f: Y\to X$ be a separated map of finite type between (qcqs) schemes. Then by the Nagata compactification theorem, there is a factorization
\[\xymatrix{
Y\ar[r]^j\ar[dr]_f & \overline{Y}\ar[d]^{\overline{f}}\\
& X
}\]
where $j$ is an open immersion and $\overline{f}$ is proper. We necessarily have
\[
f_! = \overline{f}_! j_! = \overline{f}_\ast j_!
\]
where $j_!$ is a left adjoint of $j^\ast$; so $f_!$ is determined by the first four functors. It is then a nontrivial exercise to verify that this definition of $f_!$ is independent of the choices made.

Let us note some other properties a $6$-functor formalism should satisfy:

\begin{itemize}
\item (Verdier duality) Let $f: X\to Y$ be a map and $A\in D(X)$, $B\in D(Y)$. There is a natural functorial isomorphism
\[
f_\ast \Hom_X(A,f^!B)\cong \Hom_Y(f_! A,B)\ .
\]
\item (Projection formula) Let $f: X\to Y$ be a map and $A\in D(X)$, $B\in D(Y)$. There is a natural functorial isomorphism
\[
f_!(A\otimes_X f^\ast B)\cong f_!A\otimes_Y B\ .
\]
\item (Proper base change) Let
\[\xymatrix{
X^\prime\ar[r]^{g^\prime}\ar[d]_{f^\prime} & X\ar[d]^f \\
Y^\prime\ar[r]^g & Y
}\]
be a cartesian diagram.\footnote{In the coherent case, we will have to work with derived schemes here, in case the fibre product is not Tor-independent.} Then there is a natural equivalence
\[
g^\ast f_!\cong f^\prime_! g^{\prime\ast}: D(X)\to D(Y^\prime)\ .
\]
\item (Poincar\'e duality) Assume that $f: X\to Y$ is smooth. Then for all $B\in D(Y)$ the natural map
\[
f^! 1_Y\otimes_X f^\ast B\to f^! B
\]
(coming via adjointness from the projection formula) is an isomorphism, and $f^! 1_Y\in D(X)$ is an invertible object (ideally explicitly described). Here $1_Y\in D(Y)$ denotes the symmetric monoidal unit.
\end{itemize}

Much has happened in the general theory of six-functor formalisms in the last few years, starting from the foundational work of Liu--Zheng \cite{LiuZheng}, Gaitsgory--Rozenblyum \cite{GaitsgoryRozenblyum}, to the work of Heyer--Mann \cite{HeyerMann}, cf.~also \cite{ScholzeSixFunctors}. In particular, there are general construction mechanisms for six-functor formalisms at the $\infty$-categorical level. In this lecture, we will content ourselves with a sketch; a full construction is given in \cite[Lecture IX]{ScholzeSixFunctors}.

Let us now describe the $6$-functor formalism in coherent duality, based on solid modules. We take any scheme $X$ to the closed symmetric monoidal triangulated category $D(\mathcal O_{X,\solid})$, and any morphism $f: X\to Y$ to the usual pullback $f^\ast$ and pushforward $f_\ast$ functors. This could have been (and has been) done in the same way without any condensed mathematics.

It remains to define $f_!$ (then $f^!$ will be its right adjoint). If $f=j: \Spec A[\tfrac 1f]\to \Spec A$ is an open immersion, we want $j_!$ to be a left adjoint of $j^\ast$. This requires $j^\ast$ to commute with all limits, in particular we need that for any set $I$, we have
\[
\prod_I A\otimes_{A_\solid} A[\tfrac 1f]_\solid\cong \prod_I A[\tfrac 1f]\ .
\]
This is what fails entirely in abstract $A$-modules, but holds true in solid $A$-modules!

In fact, we can give the following general direct definition of $f_!$. Let $f: X\to Y$ be a separated map of finite type. We get a commutative diagram of adic spaces
\[\xymatrix{
X^{\mathrm{ad}}\ar[r]^j\ar[dr]^{f^{\mathrm{ad}}} & X^{\mathrm{ad}/Y}\ar[d]^{f^{\mathrm{ad}/Y}}\\
& Y^{\mathrm{ad}}\ .
}\]
The map $j$ is an open immersion, and an isomorphism if $f$ is proper.

\begin{proposition}\label{prop:jlowershriek} The functor
\[
j^\ast: D((\mathcal O_{X^{\mathrm{ad}/Y}},\mathcal O_{X^{\mathrm{ad}/Y}}^+)_\solid)\to D((\mathcal O_{X^{\mathrm{ad}}},\mathcal O_{X^{\mathrm{ad}}}^+)_\solid)
\]
admits a left adjoint
\[
j_!: D((\mathcal O_{X^{\mathrm{ad}}},\mathcal O_{X^{\mathrm{ad}}}^+)_\solid)\to D((\mathcal O_{X^{\mathrm{ad}/Y}},\mathcal O_{X^{\mathrm{ad}/Y}}^+)_\solid)\ .
\]
\end{proposition}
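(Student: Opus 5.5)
The plan is to construct $j_!$ locally on affinoid pieces, where it is already available from Lecture VIII, and then glue using Theorem~\ref{thm:globalization}. First I reduce to $X$ and $Y$ affine. Take an affine open cover $Y=\bigcup_k \Spec R_k$ and affine opens of $X$ over these. The adic spaces $X^{\mathrm{ad}}$ and $X^{\mathrm{ad}/Y}$ are then covered by affinoids $\Spa(A,A)\subset \Spa(A,\widetilde R)$, where $R\to A$ is a map of finitely generated $\mathbb Z$-algebras (or filtered colimits of such, via the colimit definition of $(A,A^+)_\solid$). For such a piece, Theorem~\ref{thm:solidAR}~(ii), applied to $R\to A\to A$, gives a fully faithful left adjoint
\[
j_!: D(A_\solid)\to D((A,R)_\solid)
\]
of $j^\ast$. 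It is given by $M\mapsto M\otimes^L_{(A,R)_\solid} j_!A$ and satisfies the projection formula. Since $(A,R)_\solid=(A,\widetilde R)_\solid$, this is the functor on the piece $\Spa(A,\widetilde R)$.

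Next I show these local $j_!$ are compatible with restriction to rational opens $V\subset \Spa(A,\widetilde R)$, in the sense that $L_V\circ j_!\cong j_!\circ L_{V\cap X^{\mathrm{ad}}}$. I would deduce this from the projection-formula description $j_!j^\ast M = M\otimes j_!A$ together with Proposition~\ref{prop:locbasechange}. Localization is a symmetric monoidal base change, so it sends $j_!A$ to the corresponding object over $V$. The latter is characterized as the idempotent kernel object whose tensor with the ``boundary'' idempotent algebra vanishes and which restricts to the unit on $X^{\mathrm{ad}}\cap V$, and that characterization is stable under base change. Concretely, $j_!$ on $D((A,R)_\solid)$ is the tensor with a fiber $\mathbf 1\to A_\infty$-type idempotent, and tensoring with an idempotent commutes with $L_V$.

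I then glue. Theorem~\ref{thm:globalization} and Proposition~\ref{prop:inftydescent} present both $\infty$-categories as limits over affinoid/rational covers. The adjunction $j_!\dashv j^\ast$ on each piece, together with the Beck--Chevalley compatibility from the previous step, glues to a global left adjoint. The reason is that a compatible family of left adjoints with invertible base-change maps induces a left adjoint on the limit. Concretely, $j_!$ is ``tensor with the global idempotent object $j_!\mathcal O$'', identifying $D(\mathcal O_{X^{\mathrm{ad}},\solid})$ with the full subcategory of objects killed by the complementary idempotent algebra. Because the open immersion $X^{\mathrm{ad}}\to X^{\mathrm{ad}/Y}$ is not covered by pieces of the form $\Spa(A,A)\subset\Spa(A,\widetilde R)$ alone, I also need that $j^\ast$ on a rational open $V\subset X^{\mathrm{ad}/Y}$ not meeting $X^{\mathrm{ad}}$ is zero, or more generally that $X^{\mathrm{ad}}\cap V$ is a finite union of such standard pieces. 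There $j_!$ is built from Observation~\ref{obs:jshriekexists} and Mayer--Vietoris, using that left adjoints commute with the finite limits of Proposition~\ref{prop:inftydescent}.

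The main obstacle I expect is this base-change compatibility of $j_!$ with rational localization on $X^{\mathrm{ad}/Y}$, especially for opens that are not of product form. To handle it, I would reduce via Lemma~\ref{lem:standardrationalcovers} and the two elementary cases $\{f\neq 0\}$ and $\{|g|\le 1\}$ in the proof of Proposition~\ref{prop:locbasechange}. In both cases localization is given by an explicit idempotent algebra, either $M\mapsto M[1/f]$ or $R\intHom(A_\infty/A,-)[1]$, and commutes with tensoring by another idempotent.
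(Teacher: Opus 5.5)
\textbf{Verdict: genuine gap in the non-affine case.} Your outline matches the paper's sketch: construct $j_!$ locally from Theorem~\ref{thm:solidAR}~(ii), check compatibility with rational localization via the projection formula and symmetric monoidality of $L_V$, then glue via Theorem~\ref{thm:globalization}. The compatibility with $L_V$ is actually the easy part, not the main obstacle. The outline is complete when $f$ is affine. For affine $Y_k\subset Y$ and $X_k=f^{-1}(Y_k)$ one has $X^{\mathrm{ad}}\cap X_k^{\mathrm{ad}/Y_k}=X_k^{\mathrm{ad}}$, so every local piece is $\Spa(A,A)\subset\Spa(A,\widetilde R)$.

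\textbf{Where it breaks.} When $X$ is not affine you notice the difficulty but do not resolve it; this is exactly what the paper leaves as ``deduce it from that case''. Write $X=\bigcup_k U_k$ with $U_k=\Spec A_k$ and $V_i=U_i^{\mathrm{ad}/Y}$. Then $X^{\mathrm{ad}}\cap V_i=\bigcup_k W_k$ with $W_k=U_k^{\mathrm{ad}}\cap V_i=\Spa(A_{ik},\widetilde{A_k})$, and for $k\neq i$ these are not standard pieces.
\begin{itemize}
\item Example: take $X=\mathbb A^2\setminus\{0\}$ over $\mathbb Z$ with $U_1=\{x\neq 0\}$ and $U_2=\{y\neq 0\}$. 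Inside $V_1=\Spa(\mathbb Z[x^{\pm1},y],\mathbb Z)$ the piece $W_2$ is $\{|x|\le 1,\ |y|=1\}$. It contains points with $|x^{-1}|>1$, so it is not covered by rational $V'\subset V_1$ with $\mathcal O^+(V')=\mathcal O(V')$.
\item Neither Observation~\ref{obs:jshriekexists} nor Theorem~\ref{thm:solidAR}~(ii) gives a left adjoint for $W_k\subset V_i$. The latter only handles $W_k\subset \Spa(A_{ik},\widetilde R)$.
\item The remaining inclusion $\Spa(A_{ik},\widetilde R)\subset V_i$ is a locus $\{g\neq 0\}$. Its restriction $M\mapsto M[1/g]$ does not commute with infinite products, e.g.\ $(\prod_{\mathbb N}\mathbb Z[T])[1/T]\neq \prod_{\mathbb N}\mathbb Z[T^{\pm1}]$, so it has no left adjoint.
\item Your Mayer--Vietoris construction presupposes exactly these missing $j_{k!}$.
\end{itemize}

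\textbf{The missing ingredient.} Loci $\{|h|\ge 1\}$ also admit $j_!$, not only $\{|h|\le 1\}$. Pull back along $T\mapsto h$ from $\Spa(\mathbb Z[T],\mathbb Z)$. There the kernel of the localization consists of modules over the idempotent algebra $\mathbb Z[[T]]$; this is used in the proof of part (ii) of the first Lemma of Lecture X. So the argument of Observation~\ref{obs:jshriekexists} works with $\mathbb Z[[T]]$ in place of $\mathbb Z((T^{-1}))$. Opens $W\subset V$ whose restriction functor has a left adjoint are stable under composition, under restriction to rational subsets, and (by your Mayer--Vietoris) under finite unions.

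\textbf{How separatedness finishes the local analysis.} Suppose $U_i\cap U_k=D(g)\subset U_i$. Surjectivity of $A_i\otimes_R A_k\to A_i[1/g]$ gives $1/g=\sum_\alpha a_\alpha c^\alpha$ with $a_\alpha\in A_i$ and $c_1,\ldots,c_m$ generators of $A_k$. Hence
\[
W_k=\bigcup_\alpha\Bigl(\{|a_\alpha g|\ge 1\}\cap\bigcap_l\{|c_l|\le 1\}\Bigr),
\]
and each term admits $j_!$ into $V_i$. In the example, $W_2=\{|y|\ge1\}\cap\{|y|\le 1\}\cap\{|x|\le 1\}$. With these local $j_!$'s and their compatibility with base change, your gluing step goes through.
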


In the affine (and finite type over $\mathbb Z$) case, this follows from the results of the appendix to Lecture VIII; in general, one can deduce it from that case.

\begin{definition}\label{def:flowershriek} The functor
\[
f_!: D(\mathcal O_{X,\solid})\to D(\mathcal O_{Y,\solid})
\]
is defined as the composite $f_! = f^{\mathrm{ad}/Y}_\ast \circ j_!$.
\end{definition}

It is formal that $f_!$ commutes with all direct sums (as $j_!$ does as a left adjoint, and $f^{\mathrm{ad}/S}_\ast$ does as, after passing to an affine open subset of $S$, it can be written as a finite limit of forgetful functors by taking an affine open cover of $X$), and thus admits a right adjoint $f^!: D(\mathcal O_{S,\solid})\to D(\mathcal O_{X,\solid})$. This finishes the definition of the $6$ functors.

With this definition, all of the properties of a $6$-functor formalism stated above hold true. Much of this has been proved in the affine case in the appendix to Lecture 8, and in general the assertions can be reduced to that case.

In order to show that the present approach leads to a simplification in the foundations, we show that for smooth $f$, we can indeed identify $f^! \mathcal O_Y$ as $\omega_{X/Y}[d]$, which gives Theorem~\ref{thm:openduality}.

First, we note that $f^!$ is a twist of $f^\ast$ and commutes with flat base change if $f$ is of finite Tor-dimension. This can be checked in the affine case as everything localizes well (as for open immersions $j^!=j^\ast$).

\begin{proposition}\label{prop:basechange} Let $f: R\to A$ be a map of rings that is the base change of a finitely generated map of finite Tor-dimension between noetherian rings. Then the natural map
\[
f^! R\otimes_{A_\solid}^L f^\ast \to f^!
\]
of functors $D(R_\solid)\to D(A_\solid)$ is an equivalence. Moreover, if $g: R\to S$ is a flat map of rings and $f^\prime: S\to B=A\otimes_R S$ is the base change of $f$ along $g$, so we also have $g^\prime: A\to B$, then there is a natural equivalence $g^{\prime\ast} f^!\simeq f^{\prime !} g^\ast$ of functors
\[
D(R_\solid)\to D(B_\solid)\ .
\]
Here the map $g^{\prime\ast} f^!\to f^{\prime!} g^\ast$ is adjoint to
\[
f^!\to f^!g_\ast g^\ast\simeq g^\prime_\ast f^{\prime!} g^\ast
\]
using the equivalence $f^! g_\ast\simeq g^\prime_\ast f^{\prime!}$ adjoint to the proper base change isomorphism.
\end{proposition}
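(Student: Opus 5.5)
The first claim is that $f^! R\otimes^L_{A_\solid} f^\ast M\to f^! M$ is an equivalence. I would first reduce to the case where $R$ and $A$ are noetherian (or finitely generated over $\mathbb Z$) and $f$ is of finite Tor-dimension. Base change along $R_0\to R$, with $A=A_0\otimes_{R_0}R$, should be compatible with $f^!$. I would deduce this from the description of $f_!$ as $j_!$ followed by the forgetful functor: by the appendix to Lecture VIII, $j_!$ is given by tensoring with an explicit object such as $(A_\infty/A)[-1]$, and this object base changes along $R_0\to R$. In that reduced situation, Theorem~\ref{thm:solidAR}~(iv) gives exactly the formula $f^!M=(M\otimes^L_{R_\solid}A_\solid)\otimes^L_{A_\solid}f^!R$. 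For the non-noetherian base change I would run the same proof directly. Factor $f$ as a polynomial algebra $R[X_1,\ldots,X_n]$ followed by a surjection that is pseudocoherent of finite Tor-dimension (the base change of one). Both sides commute with direct sums: $f_!$ preserves compact objects because finite Tor-dimension is preserved under base change. It therefore suffices to check the generators $M=\prod_I R$. There $f^!$ commutes with products, and the left side does too because $f^!R$ is pseudocoherent and bounded, as in the proof of Theorem~\ref{thm:solidAR}~(iv).

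The second claim is flat base change. By the first part, $f^!=f^!R\otimes f^\ast$ and $f'^!=f'^!S\otimes f'^\ast$. Since $g^{\prime\ast}f^\ast=f'^\ast g^\ast$, the claim reduces to showing that the map $g^{\prime\ast}f^!R\to f'^!S$ is an equivalence, and that the defining transformation matches this identification. Matching the transformations is a formal diagram check using the projection formula for $f_!$ and the proper base change equivalence $g^\ast f_!\simeq f'_! g^{\prime\ast}$; the latter is again reduced to the explicit form of $j_!$.

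To compute $g^{\prime\ast}f^!R$, I would use $f^!R=R\intHom_R(f_!A,R)$. Here $f_!A$ is compact in $D(R_\solid)$, hence dualizable in the appropriate sense, so $R\intHom_R(f_!A,R)\otimes^L_{R_\solid}S\simeq R\intHom_S(f_!A\otimes^L_{R_\solid}S,S)$. By proper base change $f_!A\otimes S=f'_!B$, which gives $f'^!S$. Alternatively, I would reduce along the factorization to the two cases $A=R[X]$, where $f^!R=R[X][1]$ visibly base changes, and $R\to A$ surjective, where $f^!R=R\intHom_R(A,R)$ commutes with flat base change since $A$ is pseudocoherent over $R$.

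I expect the main obstacle to be the non-noetherian base change. There I need pseudocoherence of $A$ over $R$ and boundedness of $f^!R$ after base change. Flatness of $g$ and finite Tor-dimension handle this, but pseudocoherence has to be transported from the noetherian model. I would check it carefully via a finite resolution by finite projective modules over the model ring, base changed to $R$.
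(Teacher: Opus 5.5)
For the twist formula you follow the paper. You reduce via $R\to R[X_1,\ldots,X_n]\to A$ to the polynomial and surjective cases, or equivalently check on compact generators using that $f^!$ commutes with direct sums and products and that $f^!R$ is pseudocoherent and bounded, as in Lecture VIII and Theorem~\ref{thm:solidAR}~(iv). You can drop the preliminary reduction to the noetherian model: justifying it would need $f^!$ to commute with base change along the non-flat map $R_0\to R$, which is no easier than the proposition itself.

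For flat base change your main route genuinely differs from the paper's. The paper again splits $f$ into the two elementary cases and checks by hand that $f^!R$ base changes: in the surjective case $f^!R=R\Hom_R(A,R)$ with $A$ perfect over $R$, and for $A=R[T]$ one has $f^!R\cong A[1]$. That is your fallback. You instead treat $f$ in one stroke through $f^!R=R\intHom_R(f_!A,R)$, compactness of $f_!A$, and proper base change $g^\ast f_!A\simeq f'_!B$. This avoids the factorization and the compatibility of the base change maps with composition. It also shows that flatness of $g$ enters only to make $B=A\otimes_R S$ a derived tensor product. The price is that it presupposes proper base change for $f_!$, whose proof runs through the same explicit description of $j_!$. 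So the case analysis is relocated rather than removed.

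Two justifications need repair, though neither is a gap relative to the paper.

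\begin{enumerate}
\item[{\rm (i)}] Compact objects of $D(R_\solid)$ are not dualizable. For example, the identity of $\prod_{\mathbb N}\mathbb Z$ is not in the image of
$\prod_{\mathbb N}\mathbb Z\otimes^{L\solid}\bigoplus_{\mathbb N}\mathbb Z=\bigoplus_{\mathbb N}\prod_{\mathbb N}\mathbb Z\to\prod_{\mathbb N}\bigoplus_{\mathbb N}\mathbb Z=R\intHom(\prod_{\mathbb N}\mathbb Z,\prod_{\mathbb N}\mathbb Z)$.
The isomorphism you actually need,
$R\intHom_R(C,R)\otimes^L_{R_\solid}S_\solid\simeq R\intHom_S(C\otimes^L_{R_\solid}S_\solid,S)$ for compact $C$, still holds. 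Both sides are exact in $C$, and for $C=R_\solid[T]$ with $T$ extremally disconnected both equal the discrete module $C(T,S)$.
\item[{\rm (ii)}] Finite Tor-dimension survives only Tor-independent base change. It fails, for instance, for $k[x,y]\to k[x,y]/(x)$ base changed to $k[x,y]/(xy)$. So your argument, like the paper's appeal to ``$A$ is perfect as $R$-module'', tacitly reads the hypothesis in that sense.
\end{enumerate}
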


\begin{proof} As usual, we can reduce to the case that either $A=R[T]$ is a polynomial algebra, or $f$ is surjective. If $f$ is surjective, then $f_! = f_\ast$ (and $f^\prime_! = f^\prime_\ast$) are the forgetful functors. The right adjoint $f^!$ is given by $R\Hom_R(A,-) = R\Hom_R(A,R)\otimes^L_{A_\solid} f^\ast -$ (as $A$ is perfect as $R$-module by the assumption of finite Tor-dimension), and this description commutes with base change.

On the other hand, if $A=R[T]$, then $f_!$ is given by
\[
f_! M = M\otimes^L_{(A,R)_\solid} R((T^{-1}))/R[T] [-1]\ ,
\]
we have computed $f^!$ to be a twist of $f^\ast$ in Lecture VIII, and the statement amounts to the base change
\[
R\Hom_R(R((T^{-1}))/R[T],R)\otimes^L_{A_\solid} B_\solid\cong R\Hom_S(S((T^{-1}))/S[T],S)\ .
\]
But $f^! R=R\Hom_R(R((T^{-1}))/R[T],R)[1]\cong R[T][1] = A[1]$ and similarly for $f^\prime$, giving the result.
\end{proof}

In the case of regular closed immersions one can compute the dualizing complex.

\begin{proposition}\label{prop:regularclosed} Let $f: R\to A$ be a regular closed immersion, i.e.~$f$ is surjective and the kernel $I$ of $f$ is locally on $\Spec R$ generated by a regular sequence. Assume that $\Spec A\subset \Spec R$ is everywhere of pure codimension $c$. Then there is a natural isomorphism
\[
f^! R = R\Hom_R(A,R)\cong \det_A(I/I^2)^\ast[-c]\ .
\]
\end{proposition}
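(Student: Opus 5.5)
First, the identification $f^!R = R\Hom_R(A,R)$ needs no new input. For $f$ surjective we have $A_\solid=(A,R)_\solid$, so $f_!$ is the forgetful functor $D(A_\solid)\to D(R_\solid)$. Its right adjoint is $R\intHom_R(A,-)$, and this was already computed in the proof of Theorem~\ref{thm:solidAR}~(iv). Since $A$ is a perfect $R$-module (it has a finite Koszul resolution locally), $R\Hom_R(A,R)$ is discrete. The whole content is therefore the classical computation of $\Ext^\ast_R(A,R)$ for a regular closed immersion, and the task is to do it canonically so that it glues. The plan is:
\begin{enumerate}
\item show $\Ext^i_R(A,R)=0$ for $i\neq c$;
\item construct a canonical isomorphism $\Ext^c_R(A,R)\cong \det_A(I/I^2)^\ast$ that is independent of choices.
\end{enumerate}
Together these give a canonical isomorphism in $D(A)$, since a complex with cohomology in a single degree is canonically its shifted cohomology.

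\textbf{Local step.} Work Zariski locally on $\Spec R$, where $I=(x_1,\ldots,x_c)$ is generated by a regular sequence. Here the purity hypothesis guarantees that the length of the sequence is $c$ everywhere. The Koszul complex $K(x_1,\ldots,x_c)$ is a finite free resolution of $A$. Applying $\Hom_R(-,R)$ to it gives the Koszul complex again, in cohomological degrees $0,\ldots,c$, with top term $\wedge^c R^c{}^\ast$. So $R\Hom_R(A,R)$ is concentrated in degree $c$, and
\[
\Ext^c_R(A,R)=\Hom_R(\wedge^c R^c,R)\otimes_R A .
\]
The images of the $x_i$ form a basis of $I/I^2$, so this is identified with $\det_A(I/I^2)^\ast$, giving $R\Hom_R(A,R)\cong \det_A(I/I^2)^\ast[-c]$ locally.

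\textbf{Independence of choices.} This is the main obstacle. Given two regular generating sequences $x,y$ with $y=Mx$ for a matrix $M$, compare the two identifications. Lift the identity of $A$ to a map of Koszul complexes $K(y)\to K(x)$ given by $\wedge^\bullet M$. In top degree it is multiplication by $\det M$, while the induced map on $\det(I/I^2)$ is $\det(\bar M)$. The two factors cancel, so the identification is invariant. It is also compatible with restriction to smaller opens.

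One subtlety remains: the ambiguity of $K(y)\to K(x)$ up to homotopy. This is harmless because only the induced map on $\Ext^c$ matters, and that map is determined by the map on $H_0$.

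\textbf{Gluing.} Since both sides are quasi-coherent sheaves on $\Spec A$, the canonical local isomorphisms on $\Ext^c$ glue to a global one. This uses that the formation of $R\Hom_R(A,R)$ commutes with flat localization, which is the base change of Proposition~\ref{prop:basechange}. The global complex is then concentrated in degree $c$, and the global isomorphism follows.
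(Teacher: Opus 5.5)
Your proposal is correct and follows the same route as the paper: a local Koszul computation showing $R\Hom_R(A,R)\cong A[-c]$, an identification with $\det_A(I/I^2)^\ast$ via the basis $\bar x_1\wedge\cdots\wedge\bar x_c$, and a check that the result does not depend on the chosen regular sequence. The paper only asserts that last check (``one checks''), and your $\det M$ versus $\det\bar M$ cancellation is precisely that verification; for the gluing, classical flat base change of $R\Hom_R(A,R)$ for the perfect $R$-module $A$ already suffices, so you do not need Proposition~\ref{prop:basechange}.
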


\begin{proof} This is a standard result. Locally one has $A=R/(f_1,\ldots,f_c)$ where $f_1,\ldots,f_c$ is a regular sequence. Then a Koszul complex calculation shows that
\[
R\Hom_R(A,R)\cong A[-c]\ .
\]
We also have an isomorphism $\det_A(I/I^2)\cong A$ where $f_1\wedge\ldots\wedge f_c$ serves as a basis element of $\det(I/I^2)$. Now one checks that the composite isomorphism
\[
R\Hom_R(A,R)\cong \det_A(I/I^2)^\ast[-c]
\]
is independent of the choice of $f_1,\ldots,f_c$.
\end{proof}

By using the diagonal, we can now also compute the dualizing complex for smooth morphisms.

\begin{theorem}\label{thm:smoothdualizing} Let $f: R\to A$ be a smooth map of dimension $d$. Then there is a natural isomorphism
\[
f^! R\cong \det_A(\Omega^1_{A/R})[d]\ .
\]
\end{theorem}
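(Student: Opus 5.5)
We will compute $f^!R$ through the diagonal. Write $\Delta: A\otimes_R A\to A$ for the multiplication map, viewed as a closed immersion $\Delta: \Spec A\to \Spec(A\otimes_R A)$, and let $p_1,p_2: R$-algebra maps $A\to A\otimes_R A$ be the two inclusions, with $p_i\circ$ (base change of $f$) giving the two projections. As $f$ is smooth, $f$ is flat and of finite Tor-dimension, so Proposition~\ref{prop:basechange} applies to $f$ and its base change $f^\prime: A\to A\otimes_R A$ along the flat map $f$ itself (say via $p_2$). This gives
\[
f^{\prime !} A\cong p_1^\ast f^! R = f^!R\otimes_{A} (A\otimes_R A)\ ,
\]
where $f^{\prime}$ is the structure map $A\xrightarrow{p_2} A\otimes_R A$. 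On the other hand, $f^\prime\circ\Delta^\sharp$ is the identity of $A$, i.e.~the composite $\Spec A\xrightarrow{\Delta}\Spec(A\otimes_RA)\xrightarrow{}\Spec A$ is the identity. By compatibility of $(-)^!$ with composition (Theorem~\ref{thm:solidAR}~(iv)), $A = \mathrm{id}^! A\cong \Delta^! f^{\prime !} A$.

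Next we compute $\Delta^!$. Since $f$ is smooth of dimension $d$, $\Delta$ is a regular closed immersion of pure codimension $d$ with ideal $I$ satisfying $I/I^2\cong \Omega^1_{A/R}$. Proposition~\ref{prop:regularclosed} gives $\Delta^!(A\otimes_R A)\cong \det_A(\Omega^1_{A/R})^\ast[-d]$, and since $\Delta$ has finite Tor-dimension, Proposition~\ref{prop:basechange} (first part) gives $\Delta^! N\cong \Delta^!(A\otimes_RA)\otimes^L_{A} \Delta^\ast N$ for any $N$. Applying this to $N=f^{\prime!}A = p_1^\ast f^! R$ and using $\Delta^\ast p_1^\ast = \mathrm{id}$, we get
\[
A\cong \Delta^! f^{\prime!}A\cong \det_A(\Omega^1_{A/R})^\ast[-d]\otimes^L_A f^!R\ ,
\]
whence $f^!R\cong \det_A(\Omega^1_{A/R})[d]$, as $\det$ is invertible.

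Two points need care, and the main obstacle is naturality. First, Proposition~\ref{prop:basechange} is stated for maps that are base changes of finite type maps of finite Tor-dimension between noetherian rings; smooth maps are locally of this form (descend $A$ to a finitely generated $\mathbb Z$-algebra over which it is smooth), and everything localizes Zariski-locally on $\Spec A$ using $j^!=j^\ast$, so we may work locally and glue the isomorphism, which requires it to be canonical. Second, the isomorphism must be independent of choices: the regular sequence for $I$ is handled by Proposition~\ref{prop:regularclosed}, which is already choice-free; the remaining choice is which projection to base change along, and we would check that the construction is intrinsic by expressing it solely via the canonical maps ($\Delta^! p_1^\ast\to$ identity through the base change equivalence), so that the local isomorphisms agree on overlaps and glue. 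Checking compatibility of the composition isomorphism $(\,f^\prime\circ\Delta)^!\cong\Delta^!f^{\prime!}$ with the base change isomorphism is the fiddly step; we would reduce it to the polynomial case $A=R[T]$ (étale-locally a smooth $A$ is étale over a polynomial ring, and étale maps have $g^!=g^\ast$ with $\Omega^1$ compatible), where both sides are explicit: $f^!R\cong R[T][1]$ from Lecture VIII and $\det\Omega^1=A\,dT$, and one verifies the generator matches.
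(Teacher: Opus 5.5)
Your proposal is correct and follows the same diagonal argument as the paper. It uses the same three inputs: flat base change of $f^!$ (Proposition~\ref{prop:basechange}), the twist formula $\Delta^!\cong \Delta^!(A\otimes_R A)\otimes^L\Delta^\ast$, and the Koszul computation of Proposition~\ref{prop:regularclosed}.

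The one difference is a small streamlining. You apply $\Delta^! f^{\prime !}\cong\mathrm{id}$ to $A$ rather than to $f^!R$, which gives $A\cong \det_A(\Omega^1_{A/R})^\ast[-d]\otimes^L_A f^!R$ directly. You therefore need neither the twist formula for $p_1^!$ nor the paper's preliminary step (embedding into affine space) showing that $f^!R$ is invertible before cancelling it from $f^!R\cong f^!R\otimes f^!R\otimes g^!(A\otimes_R A)$.
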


As the isomorphism is natural, and everything is defined locally, the result immediately globalizes.

\begin{proof} Any embedding of $\Spec A$ into an affine space is a regular closed immersion. By the computations for affine space and the case of regular closed immersions, we see that $f^!R$ is a line bundle concentrated in degree $d$. It remains to find a canonical isomorphism to $\det_A(\Omega^1_{A/R})[d]$.

Let $g: A\otimes_R A\to A$ be the multiplication map, geometrically corresponding to the diagonal $\Delta_f: \Spec A\hookrightarrow \Spec A\times_{\Spec R} \Spec A$. Then $\Delta_f$ is a regular closed immersion of codimension $c$. Let $p_1,p_2: \Spec A\times_{\Spec R} \Spec A\to \Spec A$ be the projection to one factor. We find that
\[\begin{aligned}
f^! R &= g^!p_1^!f^! R = g^!(p_1^\ast f^!R \otimes^L_{(A\otimes_R A)_\solid} p_1^! A)\\
&= g^!(p_1^\ast f^!R \otimes^L_{(A\otimes_R A)_\solid} p_2^\ast f^! R)\\
&= g^\ast(p_1^\ast f^!R \otimes^L_{(A\otimes_R A)_\solid} p_2^\ast f^! R)\otimes^L_{A_\solid} g^! (A\otimes_R A)\\
&= f^! R\otimes^L_{A_\solid} f^! R\otimes^L_{A_\solid} g^!(A\otimes_R A)\ .
\end{aligned}\]
Here, the first equality is clear, the second uses the formula for $p_1^!$ as the twist of $p_1^\ast$ by $p_1^! A$ from Proposition~\ref{prop:basechange}, the third uses the base change part of Proposition~\ref{prop:basechange}, the fourth uses the formula for $g^!$ as the twist of $g^\ast$ by $g^!(A\otimes_R A)$, and the last is simplifying all terms.

As $f^! R$ is invertible, this amounts to an isomorphism
\[
f^! R\cong (g^!(A\otimes_R A))^\ast\ .
\]
But by the previous proposition, $g^!(A\otimes_R A)\cong \det_A(I/I^2)^\ast[-d]$ where $I/I^2\cong \Omega^1_{A/R}$; this gives the desired result.
\end{proof}

\begin{remark} Theorem~\ref{thm:openduality} also states that $f_!$ preserves compact objects. This can be proved locally, where it can be proved by embedding into affine space and eventually reducing to the affine line, where it follows from the computations in Lecture VIII. Note that for proper $f$ and applied to discrete modules, this gives a new proof of finiteness of coherent cohomology!
\end{remark}

Thus, modulo establishing the formalism and checking various abstract results, this gives a proof of the finiteness of coherent cohomology and Grothendieck duality where the only computation one has to do is for $\mathbb A^1$.
\newpage

\bibliographystyle{amsalpha}

\bibliography{Condensed}

\end{document}